\definecolor{dartmouthgreen}{rgb}{0.05, 0.5, 0.06}
\numberwithin{equation}{section}
\theoremstyle{plain}
\newtheorem{theorem}{Theorem}
\newtheorem{hypo}{Hypothesis}
\newtheorem{prop}{Proposition}
\newtheorem{lemma}{Lemma}[section]
\theoremstyle{definition}
\theoremstyle{remark}
\newtheorem{remark}{Remark}
\renewenvironment{proof}{\noindent\textbf{Proof }}{\hspace*{\fill}$\Box$\medskip}
\def \ind {\mathds{1}}
\def \ppp {\ldots}
\def \lc {\left\lbrace}
\def \rc {\right\rbrace}
\def \uzet {\underline{\zeta}}
\def \Im {\mathrm{Im} \,}
\def \A {\mathbb{A}}
\def \C {\mathbb{C}}
\def \D {\mathbb{D}}
\def \N {\mathbb{N}}
\def \R {\mathbb{R}}
\def \S {\mathbb{S}}
\def \U {\mathbb{U}}
\def \Z {\mathbb{Z}}
\def \Cc {\mathcal{C}}
\def \Ec {\mathcal{E}}
\def \Fc {\mathcal{F}}
\def \Gc {\mathcal{G}}
\def \Lc {\mathcal{L}}
\def \Mc {\mathcal{M}}
\def \Nc {\mathcal{N}}
\def \Oc {\mathcal{O}}
\def \Rc {\mathcal{R}}
\def \Tc {\mathcal{T}}
\def \Uc {\mathcal{U}}
\def \Ccc {\mathscr{C}}
\def \Gcc {\mathscr{G}}
\def \Lcc {\mathscr{L}}
\def \rg {\textbf{\textit{r}}}
\def \lg {\textbf{\textit{l}}}
\def \Pg {\textbf{\textit{P}}}
\def \xg {\textbf{\textit{x}}}
\def \lambg {\boldsymbol{\lambda}}
\def \nug {\boldsymbol{\nu}}
\def \dsp {\overline{u}^s}
\begin{document}
	
	\begin{center}
		{\large \bfseries Linear stability of discrete shock profiles for systems of conservation laws}
	\end{center}
	
	\begin{center}
		Lucas \textsc{Coeuret}\footnote{Dipartimento di Matematica “Tullio Levi-Civita”, Università di Padova, Via Trieste 63, 35121 Padova, Italy. ORCID: 0009-0009-6746-4786. E-mail: lucas.coeuret@math.unipd.it}
	\end{center}
	
	\vspace{5mm}
	
	\begin{center}
		\textbf{Abstract}
	\end{center}
	We prove the linear orbital stability of spectrally stable stationary discrete shock profiles for conservative finite difference schemes applied to systems of conservation laws. The proof relies on an accurate description of the pointwise asymptotic behavior of the Green's function associated with those discrete shock profiles, improving on the result of Lafitte-Godillon \cite{Godillon}. The main novelty of this stability result is that it applies to a fairly large family of schemes that introduce some artificial possibly high-order viscosity. The result is obtained under a sharp spectral assumption rather than by imposing a smallness assumption on the shock amplitude.
	
	\vspace{4mm}
	
	\textbf{AMS classification:} 35L65, 65M06
	
	\textbf{Keywords:} systems of conservation laws, finite difference scheme, discrete shock profiles, semigroup estimates, linear stability
	
	\textbf{Acknowledgements:} Research of the author was supported by the Agence Nationale de la Recherche project Indyana (ANR-21-CE40-0008), by the Labex Centre International de Mathématiques et Informatique de Toulouse under grant agreement ANR-11-LABX-0040, as well as by the PRIN PNRR P2022XJ9SX of the European Union — Next Generation EU. The author thanks Jean-François Coulombel and Grégory Faye for several fruitful discussions on the subject and their proofreading. Their help has been essential for this paper. The author is also thankful to the referee for meaningful comments to improve the article.
	
	\tableofcontents
	
	\section*{Notations}
	
	Throughout this article, we define the following sets:
	$$\U:=\lc z\in \C, |z|>1\rc,\quad \D:=\lc z\in \C, |z|<1\rc, \quad \S^1:=\lc z\in \C, |z|=1\rc,$$
	$$\overline{\U}:=\S^1\cup \U,\quad  \overline{\D}:=\S^1\cup \D.$$
	
	For $z\in \C$ and $r>0$, we let $B(z,r)$ denote the open ball in $\C$ centered at $z$ with radius $r$. We also introduce the Kronecker symbol $\delta_{i,j}$ which equals $1$ if $i=j$ and $0$ when $i\neq j$.
	
	For $E$ a Banach space, we denote $\Lc(E)$ the space of bounded operators acting on $E$ and $\left\|\cdot\right\|_{\Lc(E)}$ the operator norm. For $T$ in $\Lc(E)$, the notation $\sigma(T)$ stands for the spectrum of the operator $T$ and $\rho(T)$ denotes the resolvent set of $T$.
	
	We let $\Mc_{n,k}(\C)$ denote the space of complex valued $n\times k$ matrices and we use the notation $\Mc_n(\C)$ when $n=k$. For an element $M$ of $\Mc_{n,k}(\C)$, the notation $M^T$ stands for the transpose of $M$. For a square matrix $M$, $\mathrm{com}(M)$ corresponds to the cofactor matrix associated with $M$. 
	
	We use the notation $\lesssim$ to express an inequality up to a multiplicative constant. Eventually, we let $C$ (resp. $c$) denote some large (resp. small) positive constants that may vary throughout the text (sometimes within the same line). Furthermore, we use the usual Landau notation $O(\cdot)$ to introduce a term uniformly bounded with respect to the argument. For more clarity, we will also occasionally use the notation $O_\C(\cdot)$ to precise the fact that the term is a complex scalar and $O_{\Mc_{1,d}}\left(\cdot\right)$ to say that it belongs to $\Mc_{1,d}(\C)$.
	
	We let $\mathrm{Res}(f,a)$ denote the residue of a meromorphic function $f$ at the point $a$.

		The following table of symbols summarizes most of the essential and recurring notations introduced in the article. We separate them by section.

			\begin{center}
			
			\vspace{0.1cm}
			
			\textbf{Section \ref{sec:Intro}: Introduction}
			
			\vspace{0.1cm}
			
			\begin{longtable}{p{0.2\textwidth}p{0.75\textwidth}}
				\raggedleft$d$, $f$, $\Uc$ & dimension, flux and space of states of the system of conservation laws \eqref{def:EDP}\\
				\raggedleft$u^-,u^+$ & states of the stationary shock \eqref{def:Choc}\\
				\raggedleft$\lambg^\pm_k,\rg^\pm_k,\lg^\pm_k$& eigenvalues and right and left eigenvectors of $df(u^\pm)$\\
				\raggedleft$\Pg^\pm$ & matrix \eqref{def:P} composed with the vectors $\rg^\pm_k$\\
				\raggedleft$I$ & index associated with the Lax shock hypothesis (Hypothesis \ref{H:Lax})\\
				\raggedleft$\nug$ &ratio between the space and time steps $\Delta x$ and $\Delta t$\\
				\raggedleft$\Nc$& nonlinear operator defined by \eqref{def:evol} corresponding to the evolution operator of the finite difference scheme \eqref{def:SchemeNum} \\
				\raggedleft$p,q$& integers determining the stencil of the finite difference scheme scheme \eqref{def:SchemeNum} \\
				\raggedleft$\dsp$ & stationary discrete shock profile associated with the shock \eqref{def:Choc}\\
				\raggedleft$\Lcc^\pm$ & linearization \eqref{def:linearizedSchemeEndState} of the operator $\Nc$ about the constant state $u^\pm$\\
				\raggedleft$A^\pm_k,B^\pm_k$ & matrices \eqref{def:Ak} and \eqref{def:Bk} appearing in the definition of the operator $\Lcc^\pm$\\
				\raggedleft$\Lcc$ & linearization \eqref{def:linearizedScheme} of the operator $\Nc$ about the discrete shock profile $\dsp$\\
				\raggedleft$A_{j,k},B_{j,k}$ & matrices \eqref{def:Ajk} and \eqref{def:Bjk} appearing in the definition of the operator $\Lcc$\\
				\raggedleft$\Fc_l^\pm$& eigenvalue of the amplification matrix $\sum_{k=-p}^q\kappa^kA_k^\pm$, defined by \eqref{def:Fcl}\\
				\raggedleft$\alpha_l^\pm,\beta_l^\pm,\mu$& constants defined by \eqref{eg:FcFin} and \eqref{F} which characterize the speed and viscosity  parameters of the different waves in the decomposition \eqref{decompoGreen}\\
				\raggedleft$\Oc$ & unbounded connected component of $\C\backslash\left(\sigma(\Lcc^+)\cup\sigma\left(\Lcc^-\right)\right)$ (see Figure \ref{fig:spec})\\
				\raggedleft$\mathrm{Ev}$& Evans function defined by \eqref{def:Ev} which intervenes in Hypothesis \ref{H:Evans} \\
				\raggedleft$V$& sequence defined by \eqref{def:V} spanning the eigenspace of $\Lcc$ associated with the eigenvalue $1$, satisfying \eqref{egV} and \eqref{decExpoV}\\
				\raggedleft$\Gcc(n,j_0,j)$ & temporal Green's function defined by \eqref{defGreenTempo}\\
				\raggedleft$G(z,j_0,j)$ & spatial Green's function defined by \eqref{defGreenSpatial}\\
				\raggedleft$H_{2\mu}(\beta;\cdot),E_{2\mu}(\beta;\cdot)$& generalized Gaussian and generalized Gaussian error function defined by \eqref{def:H2mu_et_E2mu}\\
				\raggedleft$S_l^\pm(n,j_0,j)$ &  outgoing/incoming waves in the decomposition \eqref{decompoGreen}, defined by \eqref{def:S+} and \eqref{def:S-}\\
				\raggedleft$R_{l^\prime,l}^\pm(n,j_0,j)$ & reflected waves in the decomposition \eqref{decompoGreen}, defined by \eqref{def:R+} and \eqref{def:R-}\\
				\raggedleft$T_{l^\prime,l}^\pm(n,j_0,j)$ & transmitted waves in the decomposition \eqref{decompoGreen}, defined by \eqref{def:T+} and \eqref{def:T-}\\
				\raggedleft$E_{l^\prime}^\pm(n,j_0)$ & vectors defined by \eqref{def:E+} and \eqref{def:E-}, associated in the decomposition \eqref{decompoGreen} with the activation of the sequence $V$ \\
				\raggedleft$\Rc(n,j_0,j)$& fast decaying residual term in the decomposition \eqref{decompoGreen}, defined by \eqref{def:ResTh}
			\end{longtable}
		
		\end{center}

		\begin{center}

			\vspace{0.1cm}
			
			\textbf{Section \ref{sec:GSloin}: Local exponential bounds on the spatial Green's function for $z$ far from $1$}
			
			\vspace{0.1cm}
			
			\begin{longtable}{p{0.25\textwidth}p{0.7\textwidth}}
				\raggedleft$\A_{j,k}(z),\A^\pm_k(z)$& matrices defined by \eqref{def:AAjk}\\
				\raggedleft$M_j(z),M^\pm(z)$ & matrices defined by \eqref{def:MjM} which intervene in the dynamical systems \eqref{syst_dyn} and \eqref{systDyn+-}\\
				\raggedleft$X_j(z)$& fundamental matrices of the dynamical system \eqref{syst_dyn}, defined by \eqref{mat_fond}\\
				\raggedleft$E^\pm(z),E_0^\pm(z)$ & sets defined by \eqref{def:EpmE0pm} which correspond respectively to the solutions of the  dynamical system \eqref{syst_dyn} tending towards $0$ at $\pm\infty$ and their traces at $0$\\
				\raggedleft$M^\pm_l(z)$ & matrices defined by \eqref{def:Mlpm}\\
				\raggedleft$E^s(M^\pm(z) ),E^u(M^\pm(z))$ & strictly stable and unstable subspaces of $M^\pm(z)$ which verify \eqref{decompo:C^d(p+q)}\\
				\raggedleft$P_s^\pm(z),P_u^\pm(z)$ & projectors associated with the decomposition \eqref{decompo:C^d(p+q)} of $\C^{d(p+q)}$\\
				\raggedleft$Q_U^\pm(z),Q(z)$ & local and extended geometric dichotomies respectively defined in Lemmas \ref{lem_geo_dich} and \ref{lemGeoDichStronger}\\
				\raggedleft$\Oc_\rho,\Oc_\sigma$ & sets defined by \eqref{def:Oc_rho_sigma} corresponding to the intersection of $\Oc$ with the resolvent set and spectrum of the operator $\Lcc$\\
				\raggedleft$\Pi$ & linear operator \eqref{def:Pi} which extracts the "center" values of a vector of size $d(p+q)$\\
				\raggedleft$W(z,j_0,j,\textbf{e})$& vector defined by \eqref{def:WGreenspatial} and composed of $G(z,j_0,j+q-1)\textbf{e},\hdots,G(z,j_0,j-p)\textbf{e}$
			\end{longtable}
			
		\end{center}

		\begin{center}
			
			\vspace{0.1cm}
			
			\textbf{Section \ref{sec:GSnear1}: Extension of the spatial Green's function near $1$}
			
			\vspace{0.1cm}
			
			\begin{longtable}{p{0.26\textwidth}p{0.69\textwidth}}
				\raggedleft$\delta_0,\delta_1,\delta_2$& radii defined respectively by the construction of the functions $\zeta_m^\pm$ and Lemmas  \ref{lem_choice_base} and \ref{lem:Delta}\\
				\raggedleft$\uzet_m^\pm,\zeta_m^\pm(z)$& eigenvalues respectively of $M^\pm(1)$ and $M^\pm(z)$ which verify \eqref{prop:uzet} and \eqref{inZeta}\\
				\raggedleft$I_{ss}^\pm,I_{cs}^\pm,I_{cu}^\pm,I_{su}^\pm$ & partitions \eqref{def:Iss,cs,cu,su^pm} of the set $\lbrace1,\hdots,d(p+q)\rbrace$\\
				\raggedleft$I_{ss},I_{cs},I_{cu},I_{su}$ & sets defined by \eqref{def:Iss,cs,cu,su}\\
				\raggedleft$R_m^\pm(z),L_m^\pm(z)$ & right and left eigenvectors associated with the eigenvalue $\zeta_m^\pm(z)$ for the matrix $M^\pm(z)$, defined respectively by \eqref{def:Rm} and \eqref{def:Lm}\\
				\raggedleft$W_m^\pm(z,j):=\zeta_m^\pm(z)^jV_m^\pm(z,j)$ & solution of the dynamical system \eqref{syst_dyn} defined in Lemma \ref{lem_choice_base}\\
				\raggedleft$\theta_{s,m},\theta_{u,m}$ & coefficients defined by \eqref{egWssWsu} and \eqref{def:theta_su}\\
				\raggedleft$\Phi_m(z,j)$ & solution of the dynamical system \eqref{syst_dyn} defined by \eqref{defPhi}\\
				\raggedleft$\Gc^\Phi(z,j) ,\Gc^\pm(z,j),\widetilde{\Gc}^\pm(z,j)$ & matrices defined by \eqref{def:Mat_et_det}\\
				\raggedleft$D^\Phi(z) ,D^\pm(z)$ & determinant of matrices defined by \eqref{def:Mat_et_det} and satisfy \eqref{eg:Ev}\\
				\raggedleft$\Pi_m(z)$ & projector \eqref{def:Pim} on $\mathrm{Span}(\Phi_m(z,0))$ along $\mathrm{Span}(\Phi_{\nu}(z,0))_{\nu\in\lc1,\ppp,d(p+q)\rc\backslash\lc m\rc}$\\
				\raggedleft$\widehat{D}_m(z,j_0,\textbf{e})$ & complex valued functions defined by \eqref{defDChap}\\
				\raggedleft$\Ccc_m^\pm(z,j_0,\textbf{e}),\widetilde{\Ccc}_m^\pm(z,j_0,\textbf{e})$& complex valued functions defined by \eqref{def:C_Ctilde}\\
				\raggedleft$g_{m^\prime,m}^\pm(z),\widetilde{g}_{m^\prime,m}^\pm(z)$ & complex valued functions defined respectively by \eqref{defM} and \eqref{def:gtilde}\\
				\raggedleft$\Delta_m^\pm(z,j_0,\textbf{e})$ & complex valued functions defined by \eqref{def:NDelta}
			\end{longtable}
			
		\end{center}

		\begin{center}
			
			\vspace{0.1cm}
			
			\textbf{Section \ref{sec:GT}: Temporal Green's function and proof of Theorem \ref{th:Green}}
			
			\vspace{0.1cm}
			
			\begin{longtable}{p{0.23\textwidth}p{0.72\textwidth}}
				\raggedleft$\varepsilon_0^*,\varepsilon_1^*,\varepsilon_2^*$& radii defined respectively by \eqref{def:varepsilon_0^*}, \eqref{def:varpi} and Lemma \ref{lem:BorneVarphiVarpi} \\
				\raggedleft$\varepsilon,\eta$& radius and width defined by the conditions \eqref{condEta}, \eqref{condEta2} and \eqref{defIextr}\\
				\raggedleft$r_\varepsilon$& function defined by \eqref{defreps} which determines the imaginary part of the extremities of $-\eta+i\R\cap B(0,\varepsilon)$\\
				\raggedleft$\varpi_l^\pm(\tau)$& complex function \eqref{def:varpi} corresponding to the logarithm of $\zeta_m^\pm\left(e^\tau\right)$ for $m\in I_{cs}^\pm\cup I^\pm_{cu}$ \\
				\raggedleft$\varphi_l^\pm(\tau),\xi_l^\pm(\tau)$& complex functions defined by \eqref{def:varphi} and \eqref{eg:LienVarpiVarphi} corresponding respectively to the principal part and the rest of the asymptotic expansion of $\varpi_l^\pm$ at $\tau=0$ \\
				\raggedleft$\Gamma_{out}(\eta),\Gamma_{in}^\pm(\eta),\Gamma_{in}^0(\eta),$ & paths defined by \eqref{def:Paths} and represented in Figure \ref{FigChem}\\
				\raggedleft$\Gamma_{in}(\eta),\Gamma(\eta),\Gamma_{d}(\eta)$&\\
				\raggedleft$X$ & set of the paths going from $-\eta-ir_\varepsilon(\eta)$ to $-\eta +ir_\varepsilon(\eta)$ whilst remaining in $B(0,\varepsilon)$\\
				\raggedleft$\widehat{\Pi}$ & projector on the vector space $\mathrm{Span}\left(\sum_{j\in\Z}V(j)\right)$ along the vector space spanned by the vectors $\rg_1^-,\hdots,\rg_{I-1}^-,\rg_{I+1}^+,\hdots,\rg_{d}^+$
			\end{longtable}
			
		\end{center}

	\section{Introduction}\label{sec:Intro}
	
	\subsection{Context}
	
	A fundamental issue on the subject of systems conservation laws is understanding how discontinuities that can arise in solutions are handled by conservative finite difference schemes. At the center of this question stands the notion of discrete shock profiles which are defined as solutions of the numerical scheme that also are traveling waves linking two states. They correspond to numerical approximations of shocks. A desirable feature of the numerical scheme should be that stable shock waves for the system of conservation laws should yield stable discrete shock profiles (or a family of them) for the numerical scheme. For a general introduction on the questions of existence and stability of discrete shock profiles, we highly encourage the interested reader to take a close look at \cite{Serre}.
	
	In the present paper, we will consider conservative finite difference schemes that \textit{introduce numerical viscosity} and will focus on the study of the discrete shock profiles associated with \textit{standing Lax shocks}. We can assume that there exists a continuous one-parameter family of discrete shock profiles associated with such a shock. Such an existence result has been proved for instance in \cite{Majda-Ralston,Michelson} under a weakness assumption on the shock, i.e. when the difference between the two states is sufficiently small. Let us introduce two notions of stability for the family of discrete shock profiles:
	\begin{itemize}
		\item Spectral stability amounts to asking for the operators obtained by linearizing the numerical scheme about the discrete shock profiles to have no unstable or marginally stable eigenvalues except for $1$ which is always an eigenvalue because of the existence of the continuous one-parameter family of discrete shock profiles. Furthermore, we ask for $1$ to be a simple eigenvalue (in the sense of Hypothesis \ref{H:Evans} related to the so-called Evans function) of the linearized operator. Spectral stability thus corresponds to Hypotheses \ref{H:spec} and \ref{H:Evans} introduced below. 
		
		\item Nonlinear orbital stability signifies that for initial conditions of the numerical scheme which are small enough perturbations of a discrete shock profile, the solution of the numerical scheme that ensues stays close to the manifold of the discrete shock profiles with respect to some well-chosen norm of a Banach space. This is essentially a stronger and more concrete stability property.
	\end{itemize}
	
	There are some results surrounding nonlinear stability that have already been proven. Most of them introduce a weakness assumption on the amplitude of the underlying shocks and/or focus on fairly specific schemes or situations. For instance, \cite{Liu-Xin,Liu-Xin2,Ying} focus on proving a nonlinear orbital stability result for discrete shock profiles moving with rational speeds associated with weak Lax shocks for the Lax-Friedrichs scheme. Another example is \cite{Smyrlis} which also proves a nonlinear orbital stability result on some stationary discrete shock profiles for the Lax-Wendroff scheme without any weakness assumption. Two of the main results that we can point out are the following:
	\begin{itemize}
		\item In \cite{Michelson2}, Michelson proves the nonlinear orbital stability of the family of discrete shock profiles associated with weak standing Lax shocks for schemes of any odd order under an assumption of stability of the viscous shock profiles associated with some scalar problem.
		
		\item In \cite{Jennings}, Jennings focuses on the particular case of \textit{monotone} schemes for \textit{scalar} conservation laws. The main results are the existence and uniqueness of continuous one-parameter family of discrete shock profiles with rational speeds and a proof of nonlinear orbital stability for them when they are associated with Lax shocks. In this paper, no weakness assumption on the associated shocks is introduced.
	\end{itemize}
	Compared with the nonlinear stability theory for viscous shock profiles \cite{ZH} or for semi-discrete shock profiles \cite{BenzoniHuotRousset,BeckHupkesSandstedeZumbrun}, one can hope to prove that spectrally stable discrete shock profiles associated with stationary Lax shocks verify nonlinear orbital stability in some well-chosen Banach space, at least when the numerical scheme introduces some numerical viscosity. This new result would generalize the previously cited articles by proving a result of nonlinear stability that holds for \emph{systems} of conservation laws, for a fairly large family of finite difference schemes, while avoiding to introduce a weakness assumption on the shocks. This would answer Open problem 5.3 of \cite{Serre}.

		Just like in \cite{ZH,MasciaZumbrun,BenzoniHuotRousset,BeckHupkesSandstedeZumbrun}, proving that spectral stability implies nonlinear orbital stability can essentially be separated in three steps:
		\begin{enumerate}
			\item We first need to precisely describe the long time behavior of the temporal Green's function $\Gcc(n,j_0,j)$ (defined below by \eqref{defGreenTempo}) associated with the operator $\Lcc$ (defined below by \eqref{def:linearizedScheme}) obtained by linearizing the numerical scheme about the spectrally stable discrete shock profile. We also need an accurate description of the discrete derivative $\Gcc(n,j_0,j)-\Gcc(n,j_0-1,j)$ of the Green's function.
			\item One can then use this description of the Green's function and its discrete derivative to prove decay estimates in suitable Banach spaces (for instance weighted $\ell^r$-spaces) for the families of operators $(\Lcc^n)_{n\in\N}$ and $\left(\Lcc^n\left(Id-\Tc\right)\right))_{n\in\N}$ where  the shift operator $\Tc$ is defined by \eqref{def:Tc}. This amount to proving a linear stability result for the studied discrete shock profile.
			\item Finally, one can use the decay estimates obtained in the second point to hopefully conclude a nonlinear orbital stability result in suitable norms.
		\end{enumerate}
		
		The main result of the present article (Theorem \ref{th:Green} below) provides an accurate description of the long time behavior of the Green's function and its discrete derivative for spectrally stable discrete shock profiles associated with \textit{stationary Lax shocks} when the numerical scheme \textit{introduces numerical viscosity} (and under a few additional technical hypotheses). As a consequence, we also prove decay estimates on the semi-group $(\Lcc^n)_{n\in\N}$ when it acts on $\ell^r$-spaces (see Theorem \ref{thStab} below) as an example of linear stability result. Though the decay estimates of Theorem \ref{thStab} might not be sufficient to conclude a nonlinear stability result, we claim that the statement of Theorem \ref{th:Green} should be precise enough to find a suitable functional framework to achieve a nonlinear stability result for these discrete shock profiles. This was achieved in the particular case of \textit{scalar} conservation laws by the author in a following paper \cite{Coeuret2024} using Theorem \ref{th:Green}. A generalization in the general case of \textit{systems} of conservation laws could be investigated in a future paper.

	Let us now focus on the study of the Green's function and the statement of Theorem \ref{th:Green}. It can be seen as an improvement on the result of \cite{Godillon} that highly influenced the analysis performed in the present paper. In \cite{Godillon,Godillon_these}, Lafitte-Godillon generalizes in the fully discrete setting several tools introduced in \cite{ZH} that are necessary to study the Green’s function for the linearized operator. More precisely, she constructs the Evans function for this problem and introduces in her thesis \cite{Godillon_these} the notion of geometric dichotomies (an equivalent version of the exponential dichotomies for the discrete dynamical systems). Those tools will be redefined and used intensively in the present paper. Lafitte-Godillon then attempts to obtain precise estimates on the Green's function of the linearized operator. However, the result of \cite{Godillon} has two limitations:
		\begin{itemize}
				\item The proof is done specifically for the modified Lax-Friecrichs scheme. This is not a strong limitation as it is quite clear that the content of the paper \cite{Godillon} can be generalized to a larger class of numerical schemes (at least for odd ordered schemes).
				
				\item The estimates on the Green's function proved in \cite[Theorem 1.1]{Godillon} are not sufficient to conclude on the nonlinear stability as they are only local with respect to the initial localization of the Dirac mass associated with the Green’s function (the parameter $l$ in \cite[Theorem 1.1]{Godillon} which corresponds to the parameter $j_0$ in Theorem \ref{th:Green}). This is a consequence of the analysis on the so-called spatial Green's function (defined below by \eqref{defGreenSpatial}) done in \cite{Godillon} which is not precise enough for linear/nonlinear stability purposes.
			\end{itemize}
		In the present paper, we solve those issues by describing precisely the leading order of the Green’s function and proving sharp and \emph{uniform} estimates on the remainder. We also consider schemes of any odd order, in particular with only few restrictions on the size of the stencil of the scheme.

		Before finishing this introduction section, we wish to point out to the reader that, even though the aim of this article is to obtain a result for the study of the Green's function of discrete shock profiles with a choice of context and a proof that are as general as possible, we still have to introduce fairly restrictive assumptions. This also opens up several direction to improve the analysis of the present article.
		\begin{itemize}
			\item The stability analysis has been restricted to discrete shock profiles associated with \textit{stationary Lax shocks}. It would interesting to investigate the case of discrete shock profiles associated with under- and overcompressive shocks (like in \cite{Godillon}) as well as the case of moving shocks, and more precisely of moving discrete shock profiles with \textit{rational speeds}. We point out that the latter case might be more difficult to study. 
			\item Results on the verification of the spectral stability of discrete shock profiles (Hypotheses \ref{H:spec} and \ref{H:Evans} below), which is the starting point of our analysis, are scarce. Known results are usually proved in very restrictive contexts (see for instance \cite[Chapter 3]{Coulombel2024} for such an investigation specifically for the Lax-Wendroff scheme and scalar conservation laws). We also highlight the Open Question 5.1 \cite{Serre} which tackles a necessary criterion for the spectral stability of weak discrete shock profiles, which stills remains to be investigated.
			\item The proof of the main result of the present article relies on additional structural assumptions related to the choice of finite difference schemes. For instance, we introduce a joint reduction assumption (Hypothesis \ref{H:VPAk} below) that plays a central role in Section \ref{sec:GSnear1} to construct the Evans function and prove a precise and useful expression of the spatial Green's function. Similarly, the finite difference scheme must satisfy a viscosity assumption (Hypothesis \ref{H:F} below) which is verified for first order schemes and possibly for higher odd ordered schemes. In might be possible (and most importantly desirable) to relax or even remove those assumptions to generalize the result of the present paper for a larger family of finite difference schemes.
		\end{itemize}

	\subsection{Definition of stationary discrete shock profiles (SDSP)}
	
	We consider a one-dimensional system of conservation laws 
	\begin{equation}\label{def:EDP}
		\begin{array}{c}
			\partial_t u+\partial_x f(u)=0, \quad t\in\R_+, x\in\R,\\
			u:\R_+\times\R\rightarrow \Uc,
		\end{array}
	\end{equation}
	where $d\in\N\backslash\lc0\rc$ corresponds to the number of unknown $u=:(u_1,\ppp,u_d)$ of \eqref{def:EDP}, the space of states $\Uc$ is an open set of $\R^d$ and the flux $f:\Uc\rightarrow \R^d$ is a $\Ccc^\infty$ function. We will suppose that the system of conservation laws is hyperbolic, meaning that for all $u\in\Uc$, the jacobian matrix $df(u)$ is diagonalisable with real eigenvalues.
	
	We fix two states $u^-,u^+\in\Uc$ such that
	\begin{equation}\label{eq:RankineHugioniot}
		f(u^-)=f(u^+).
	\end{equation}
	This is the well-known Rankine-Hugoniot condition which allows to state that the standing shock defined by
	\begin{equation}\label{def:Choc}
		\forall t\in\R_+,\forall x\in\R,\quad u(t,x):=\lc\begin{array}{cc}u^- & \text{ if }x<0, \\ u^+ & \text{ else},\end{array}\right.
	\end{equation}
	is a weak solution of \eqref{def:EDP}.
	
	Since the system of conservation laws we consider is hyperbolic at the states $u^\pm$, we introduce the eigenvalues $\lambg_1^\pm,\ppp,\lambg_d^\pm\in\R$ and a basis of nonzero eigenvectors $\rg_1^\pm,\ppp,\rg_d^\pm\in\R^d$ of $df(u^\pm)\in\Mc_d(\R)$ associated with those eigenvalues. We also define the invertible matrix 
	\begin{equation}\label{def:P}
		\Pg^\pm:=\begin{pmatrix}
			\rg_1^\pm& |\ppp| & \rg_d^\pm
		\end{pmatrix}\in \Mc_d(\R)
	\end{equation}
	and the dual basis $\lg_1^\pm,\ppp,\lg_d^\pm\in\R^d$ associated with the eigenvectors $\rg_1^\pm,\ppp,\rg_d^\pm$:
	\begin{equation}\label{def:l}
		\begin{pmatrix}
			\lg_1^\pm& |\ppp| & \lg_d^\pm
		\end{pmatrix}^T:=\left({\Pg^\pm}\right)^{-1}.
	\end{equation}
	The vectors $\lg_l^\pm$ are then eigenvectors of $df(u^\pm)^T$ associated with the eigenvalues $\lambg_l^\pm$. We organize the eigenvalues so that
	$$\lambg_1^\pm\leq\ppp\leq \lambg_d^\pm.$$
	In this paper, we focus our attention on Lax shocks.
	
	\begin{hypo}[Lax shock]\label{H:Lax}
		We assume that the shock is non-characteristic, i.e.: $$0\notin\sigma(df(u^\pm))=\lc\lambg_1^\pm,\ppp,\lambg_d^\pm\rc.$$
		Furthermore, we assume that there exists an index $I\in\lc1,\ppp,d\rc$ such that
		$$\begin{array}{c}
			\lambg_{I}^+<0<\lambg_{I+1}^+,\\
			\lambg_{I-1}^-<0<\lambg_{I}^-,
		\end{array}$$
		where we use the convention $\lambg_0^\pm:=-\infty$ and $\lambg_{d+1}^\pm:=+\infty$ if $I=1$ or $I=d$.
	\end{hypo}
	
	We consider a constant $\nug>0$ and introduce a space step $\Delta x>0$ and a time step $\Delta t:=\nug \Delta x>0$. The constant $\nug$ then corresponds to the ratio between the space and time steps. We impose the following CFL condition on the constant $\nug$:\footnote{Up to considering that the space of state $\Uc$ is a close neighborhood of the SDSP defined underneath in Hypothesis \ref{H:SDSP}, we should be able to satisfy such a condition.}
	\begin{equation}\label{cond:CFL}
		\forall u\in\Uc,\quad \nug\min\sigma(df(u))>-q \quad \text{and} \quad \nug\max\sigma(df(u))< p.
	\end{equation}
	We introduce the discrete evolution operator $\Nc: \Uc^\Z \rightarrow (\R^d)^\Z$ defined for $u=(u_j)_{j\in\Z}\in\Uc^\Z$ as
	\begin{equation}\label{def:evol}
		\forall j\in \Z, \quad (\Nc (u))_j := u_j- \nug\left(F\left(\nug;u_{j-p+1},\ppp, u_{j+q}\right)-F\left(\nug;u_{j-p},\ppp, u_{j+q-1}\right)\right),
	\end{equation}
	where $p,q\in\N\backslash\lc0\rc$ and the numerical flux $F:(\nu;u_{-p},\ppp, u_{q-1})\in]0,+\infty[\times \Uc^{p+q}\rightarrow \R^d$ is a $\Cc^1$ function. We are interested in solutions of the conservative one-step explicit finite difference scheme defined by 
	\begin{equation}\label{def:SchemeNum}
		\forall n\in\N,\quad u^{n+1}=\Nc (u^n)
	\end{equation}
	where $u^0\in\Uc^\Z$. 
	
	We assume that the numerical scheme satisfies the following consistency condition with regards to the PDE \eqref{def:EDP}
	\begin{equation}\label{cond:consistency}
		\forall \nu\in]0,+\infty[, \forall u \in\Uc, \quad F(\nu;u,\ppp,u)=f(u).
	\end{equation}
	This condition is often a consequence of the condition \eqref{cond:CFL} verified by $\nug$.
	 
	Traveling wave solutions of the numerical scheme \eqref{def:SchemeNum} linking two end states of a shock wave are the so-called discrete shock profiles. Since we are considering stationary shocks \eqref{def:Choc} in the present paper, the associated discrete shock profiles will also be stationary and will thus correspond to fixed points of the operator $\Nc$.
	
	\begin{hypo}[Existence of a stationary discrete shock profile (SDSP)]\label{H:SDSP}
		We suppose that there exists a sequence $\dsp=(\dsp_j)_{j\in\Z}\in\Uc^\Z$ that satisfies
		$$\Nc(\dsp)=\dsp \quad \text{and} \quad \dsp_j\underset{j\rightarrow \pm\infty}\rightarrow u^\pm.$$
	\end{hypo}
	
	Let us point out that in \cite{Serre}, it is proved that the existence of a SDSP implies that the Rankine-Hugoniot \eqref{eq:RankineHugioniot} is verified. However, the existence of a SDSP for all admissible and physically significant standing shock is not fully answered. Existence results tend to actually prove the existence of a continuous one-parameter family of discrete shock profiles. The main results tackling the issue of existence of SDSP would be \cite{Majda-Ralston,Michelson,Jennings}:
	\begin{itemize}
		\item In \cite{Jennings}, Jennings focuses on discrete shock profiles for monotone conservative schemes applied to scalar conservation laws. In this context, he proves the existence and uniqueness of a continuous one-parameter family of discrete shock profiles associated with shocks of any strength for rational speeds. He also proves nonlinear orbital stability for such DSPs.
		
		\item In \cite{Majda-Ralston}, Majda and Ralston tackle the case of system of conservation laws and prove the existence of a continuous one-parameter family of DSPs with rational speeds. They introduce two limitations though: They consider schemes of order 1 (this corresponds to the case where $\mu=1$ in Hypothesis \ref{H:F} below) and they only consider weak shocks, i.e. shocks where the difference between the two states must be small enough. The result is generalized in \cite{Michelson} for schemes of order 3 (i.e. $\mu=2$ in Hypothesis \ref{H:F} below).
	\end{itemize}
 	
 	The following assumption on the convergence of the SDSP $\overline{u}^s$ towards its limit state is important in our work as it it used to construct some of the main tools needed to carry out the analysis (for instance to prove the geometric dichotomy in Section \ref{subsec:GeoDich} or for the proof of Lemma \ref{lem_choice_base}).
 	
 	\begin{hypo}[Exponential convergence of the SDSP towards its limit states]\label{H:CVexpo}
 		There exist some constants $C,c>0$ such that
 		\begin{equation}\label{in:CVexpoSDSP}
 			\forall j \in\N, \quad \begin{array}{c}  |\overline{u}^s_j-u^+|\leq Ce^{-cj}, \\
 				|\overline{u}^s_{-j}-u^-|\leq Ce^{-cj}.
 			\end{array}
 		\end{equation}
 	\end{hypo}
 	
 	Hypothesis \ref{H:CVexpo} can most likely be proved to be a consequence of the shock being non-characteristic (Hypothesis \ref{H:Lax}). We refer to \cite[Corollary 1.2]{ZH} for a proof of this fact in the continuous setting and \cite[Lemma 1.1]{BeckHupkesSandstedeZumbrun} in the semi-discrete case.
 	
 	\subsection{Linearized scheme about the end states \texorpdfstring{$u^\pm$}{u+-}}
 	
 	Let us now introduce some hypotheses on the end states $u^+$ and $u^-$ and on the considered numerical scheme. To summarize briefly the main assumptions, we mainly ask for the numerical schemes we consider to introduce numerical viscosity and to have linear $\ell^r$-stability at the states $u^+$ and $u^-$ for any $r\in[1,+\infty]$.
 	
 	We linearize the discrete evolution operator $\Nc$ about the constant states $u^-$ and $u^+$ and thus introduce the bounded operators $\Lcc^\pm$ acting on $\ell^r(\Z,\C^d)$ with $r\in[1,+\infty]$ defined by
 	\begin{equation}\label{def:linearizedSchemeEndState}
 		\forall h\in \ell^r(\Z,\C^d),\forall j\in \Z, \quad (\Lcc^\pm h)_j := \sum_{k=-p}^{q}A^\pm_kh_{j+k},
 	\end{equation}
 	where for $k\in\lc-p,\ppp,q-1\rc$, we first define
 	\begin{equation}\label{def:Bk}
 		B_k^\pm:=\displaystyle\nug \partial_{u_k}F\left(\nug;u^\pm,\ppp,u^\pm\right) \in\Mc_d(\C)
 	\end{equation}
 	and then for $k\in\lc-p,\ppp,q\rc$, we let
 	\begin{equation}\label{def:Ak}
	 	A_k^\pm:=
	 	\lc\begin{array}{cc}
	 		-B_{q-1}^\pm & \text{ if }k=q,\\
	 		B_{-p}^\pm & \text{ if }k=-p,\\
	 		\delta_{k,0}Id +B_k^\pm - B_{k-1}^\pm & \text{ else.}
	 	\end{array}\right.
 	\end{equation}
 	
 	We start by introducing the following assumption on the matrices $B_k^\pm$ and $A_k^\pm$.
 	\begin{hypo}\label{H:VPAk}
 		For $k\in\lc-p,\ppp,q-1\rc$, the eigenvectors $\rg_1^\pm,\ppp,\rg_d^\pm$ of $df(u^\pm)$ are also eigenvectors of the matrices $B_k^\pm$ defined by \eqref{def:Bk}. If the system of conservation laws \eqref{def:EDP} is strictly hyperbolic at the states $u^\pm$, this is equivalent to the matrices $df(u^\pm)$ and $B_k^\pm$ commuting.
 	\end{hypo} 
 	
 	Hypothesis \ref{H:VPAk} implies that the eigenvectors $\rg_1^\pm,\ppp,\rg_d^\pm$ of $df(u^\pm)$ are also eigenvectors of the matrices $A^\pm_k$ defined by \eqref{def:Ak} for $k\in\lc-p,\ppp,q\rc$. Hypothesis \ref{H:VPAk} is fairly usual and is not that far fetched since the consistency condition \eqref{cond:consistency} links the numerical flux $F$ and the flux $f$ and that, most of the time, the matrices $B_k^\pm$ defined by \eqref{def:Bk} are expressed using $df(u^\pm)$. We can then introduce the notation for $k\in\lc-p,\ppp,q\rc$:
 	\begin{equation}\label{def:lambda_k}
 		\begin{pmatrix}
 			\lambda_{1,k}^\pm & &\\
 			&\ddots &\\
 			& & \lambda_{d,k}^\pm
 		\end{pmatrix}:= {\Pg^\pm}^{-1}A_k^\pm \Pg^\pm.
 	\end{equation}
 	For $l\in\lc1,\ppp,d\rc$, we then define the meromorphic function $\Fc^\pm_l$ on $\C\backslash\lc0\rc$ by:
 	\begin{equation}\label{def:Fcl}
 		\forall \kappa\in\C\backslash\lc0\rc, \quad \Fc_l^\pm(\kappa):=\sum_{k=-p}^q \lambda_{l,k}^\pm\kappa^k\in\C.
 	\end{equation}
 	
 		By \eqref{def:lambda_k} and \eqref{def:Fcl}, the functions $\Fc_l^\pm$ correspond to the eigenvalues of the amplification matrix $\sum_{k=-p}^q\kappa^kA_k^\pm$ of the linearization of the numerical scheme at the states $u^\pm$. 
 	 
 	They will thus allow us to characterize the spectrum of the operators $\Lcc^\pm$. We refer for instance to \cite{Thomee,D-S,R-S,C-F, CoeuLLT} for a study in the scalar case of similar convolution operators as $\Lcc^\pm$. Fourier analysis and in particular the well-known Wiener theorem \cite{Newman} imply that:
 	\begin{equation}\label{spec_Li}
 		\sigma(\Lcc^\pm)=\bigcup_{\kappa\in\S^1}\sigma\left(\sum_{k=-p}^q\kappa^kA_k^\pm\right)=\bigcup_{l=1}^d\Fc_l^\pm(\S^1).
 	\end{equation}
 	
 	The definition \eqref{def:Ak} of the matrices $A_k^\pm$ and the consistency condition \eqref{cond:consistency} imply that:
 	$$\sum_{k=-p}^qA_k^\pm=Id \quad \text{ and } \quad \sum_{k=-p}^q kA_k^\pm=-\nug df(u^\pm)$$
 	which translates into having:
 	\begin{equation}\label{eg:FcFin}
 		\forall l\in\lc1,\ppp,d\rc,\quad \Fc_l^\pm(1)=1 \quad \text{and} \quad \alpha_l^\pm:=-{\Fc_l^\pm}^\prime(1)=\nug \lambg_l^\pm\neq 0
 	\end{equation}
 	with $\lambg_l^\pm$ defined in Hypothesis \ref{H:Lax}.
 	
 	The following assumption is linked to the linear $\ell^r$-stability of the numerical scheme \eqref{def:SchemeNum} at the end states which corresponds to the $\ell^r$-power boundedness of the operators $\Lcc^\pm$.
 	
 	\begin{hypo}\label{H:F}
 		For all $l\in\lc1,\ppp,d\rc$, we have:
 		$$\forall \kappa\in \S^1\backslash \lc1\rc, \quad |\Fc^\pm_l(\kappa)|<1.\quad \text{(Dissipativity condition)}$$
 		Moreover, we suppose that there exists an integer $\mu\in \N\backslash\lc0\rc$ and for all $l\in\lc1,\ppp,d\rc$, there exists a complex number $\beta^\pm_l$ with positive real part such that:
 		\begin{equation}\label{F}
 			\Fc^\pm_l(e^{i\xi})\underset{\xi\rightarrow0}= \exp(-i\alpha_l^\pm\xi - \beta_l^\pm \xi^{2\mu} + O(|\xi|^{2\mu+1})).\quad \text{(Diffusivity condition)}
 		\end{equation}
 	\end{hypo}

 		Hypothesis \ref{H:F} implies that the linearization of the numerical scheme at the states $u^\pm$ is dissipative with an order of dissipation $2\mu$ (see \cite[Section IV.1.3]{Godlewski}), i.e. there exists a constant $\delta>0$ such that:
 		\begin{equation}\label{def:Dissip}
 			\forall l\in\lbrace1,\hdots,d\rbrace,\forall \xi\in[-\pi,\pi],\quad \left|\Fc^\pm_l(e^{i\xi})\right|\leq 1-\delta |\xi|^{2\mu}.
 		\end{equation} 
 		Let us point out that, reciprocally, the dissipation property \eqref{def:Dissip} with $\mu=1$ also immediately implies Hypothesis \ref{H:F}. Most finite difference schemes of order of accuracy $1$ satisfying some CFL condition will verify \eqref{def:Dissip} with $\mu=1$. For instance, this is true for monotone schemes in the case of scalar conservation laws.
 		
 		However, when considering the case where the dissipation property \eqref{def:Dissip} is verified for $\mu>1$, it is still possible to have terms of the form $i c\xi^k$ with $c\in \R\backslash\lbrace0\rbrace$ and $k\in\lbrace2,\hdots,2\mu-1\rbrace$ that appear in the asymptotic expansion of the logarithm of $\Fc_l^\pm(e^{i\xi})$ at $\xi=0$ and Hypothesis \ref{H:F} would thus not be verified. This is for instance the case of the Lax-Wendroff scheme which is dissipative of order $4$ but a term of order $3$ appears in the asymptotic expansion of the logarithm of $\Fc_l^\pm(e^{i\xi})$. The possibly additional terms appearing in this asymptotic expansion are linked with the order of accuracy of the numerical scheme (see \cite{Hedstrom1975} for details in the case of finite difference schemes for the scalar transport equation). Verifying Hypothesis \ref{H:F} and thus having the expansion \eqref{F} essentially translates the fact that the modified equation truncated at order $2\mu$ and associated with the linearization of the numerical scheme about the states $u^\pm$ introduces only a numerical viscosity term of order $2\mu$ (see \cite[(1.7),(1.8)]{Hedstrom1975}).
 		
 		From the standpoint of the power-boundedness of $\Lcc^\pm$, simply asking for the $\ell^2$-power boundedness of the operator $\Lcc^\pm$ would be equivalent to asking that $\sigma(\Lcc^\pm)\subset\overline{\D}$ (Von Neumann condition). The stronger Hypothesis \ref{H:F} is thus also inspired by the fundamental contribution \cite{Thomee} due to Thomée and has much further consequences, as the asymptotic expansion \eqref{F} assures the $\ell^r$-power boundedness of the operator $\Lcc^\pm$ for every $r$ in $[1,+\infty]$ (see \cite[Theorem 1]{Thomee} which focuses in the scalar case on the $\ell^\infty$-power boundedness but also studies the $\ell^r$-power boundedness as a consequence).

 	We conclude this section by defining the open set $\Oc$ which corresponds to the unbounded connected component of $\C\backslash(\sigma(\Lcc^+)\cup\sigma(\Lcc^-))$ represented on Figure \ref{fig:spec}. Hypothesis \ref{H:F} implies that $\overline{\U}\backslash\lc1\rc\subset \Oc$. 
 	
 	\begin{figure}
 		\begin{center}
 			\begin{tikzpicture}[scale=2]
 				\fill[color=gray!20] (-1.5,-1.5) -- (-1.5,1.5) -- (1.5,1.5) -- (1.5,-1.5) -- cycle;
 				\fill[color=white] plot [samples = 100, domain=0:2*pi] ({1/4+3*cos(\x r)/4},{sin(\x r)/3}) -- cycle ;
 				\fill[color=white] plot [samples = 100, domain=0:2*pi] ({1/2.5+1.5*cos(\x r)/2.5},{sin(\x r)/2}) -- cycle ;
 				
 				\draw (0,0) circle (1);
 				
 				\draw[thick,red] plot [samples = 100, domain=0:2*pi] ({(1/4+3*cos(\x r)/4},{sin(\x r)/3});
 				\draw[thick,red] plot [samples = 100, domain=0:2*pi] ({1/2.5+1.5*cos(\x r)/2.5},{sin(\x r)/2});
 				\draw[dartmouthgreen] (-0.7,0.2) node {$\bullet$};
 				\draw[dartmouthgreen] (0.5,0.6) node {$\bullet$};
 				\draw[dartmouthgreen] (-0.1,-0.7) node {$\bullet$};
 				
 				\draw (45:1.2) node {$\S^1$};
 				\draw[color=black!70] (-1,1.2) node {$\Oc$};
 				\draw[red] (-0.2,0.55) node {$\sigma(\Lcc^\pm)$};
 				\draw[dartmouthgreen] (1,0) node {$\bullet$} node[right] {$1$};
 			\end{tikzpicture}
 			\caption{In red, we have the spectrum of the operators $\Lcc^\pm$ which corresponds to the union of the curves $\Fc_l^\pm(\S^1)$. Here we chose $d=1$, i.e. one curve corresponds to the spectrum of $\Lcc^+$ and the other corresponds to the spectrum of $\Lcc^-$. In gray, we represent the set $\Oc$ which corresponds to the unbounded component of $\C\backslash(\sigma(\Lcc^+)\cup\sigma(\Lcc^-))$. The elements of the set $\Oc$ are either eigenvalues of the operator $\Lcc$ (represented by green dots) or belong to the resolvent set $\rho(\Lcc)$. We know that $1$ is an eigenvalue of $\Lcc$ and Hypothesis \ref{H:spec} implies that the eigenvalues of $\Lcc$ in $\Oc$ are located within the open unit disk.}
 			\label{fig:spec}
 		\end{center}
 	\end{figure}
 	
 	\subsection{Linearized scheme about the SDSP \texorpdfstring{$\dsp$}{dsp}}
 	
 	We now linearize the discrete evolution operator $\Nc$ about the discrete shock profile $\dsp$ and thus define the bounded operator $\Lcc$ acting on $\ell^r(\Z,\C^d)$ with $r\in[1,+\infty]$ defined by:
 	\begin{equation}\label{def:linearizedScheme}
 		\forall h\in \ell^r(\Z,\C^d),\forall j\in \Z, \quad (\Lcc h)_j := \sum_{k=-p}^{q}A_{j,k}h_{j+k},
 	\end{equation}
 	where for $j\in\Z$ and $k\in\lc-p,\ppp,q-1\rc$, we first define the matrix
 	\begin{equation}\label{def:Bjk}
 		B_{j,k}:=\displaystyle\nug \partial_{u_k}F\left(\nug;\dsp_{j-p},\ppp,\dsp_{j+q-1}\right)\in\Mc_d(\C) 
 	\end{equation}
 	and for $j\in\Z$ and $k\in\lc-p,\ppp,q\rc$:
 	\begin{equation}\label{def:Ajk}
 		A_{j,k}:=
 		\lc\begin{array}{cc}
 			-B_{j+1,q-1} & \text{ if }k=q,\\
 			B_{j,-p} & \text{ if }k=-p,\\
 			\delta_{k,0}Id +B_{j,k} - B_{j+1,k-1} & \text{ otherwise.}
 		\end{array}\right.
 	\end{equation}
 	
 	We observe that since the SDSP $(\dsp_j)_{j\in\Z}$ converges exponentially fast towards its limit states $u^\pm$, we have that the matrices $A_{j,k}$ (resp. $B_{j,k}$) converge exponentially fast towards the matrices $A_k^\pm$ (resp. $B_k^\pm$) defined by \eqref{def:Ak} (resp. \eqref{def:Bk}) as $j$ tends towards $\pm\infty$.
 	
 	We will now focus on the spectral properties of the operator $\Lcc$ when it acts on $\ell^2(\Z,\C^d)$. The following proposition which localizes the essential spectrum of the operator $\Lcc$ is central.
 	\begin{prop}\label{prop:SpectreEssentielLcc}
 		We have that
 		$$\sigma_{ess}(\Lcc)\cap \Oc = \emptyset.$$
 	\end{prop}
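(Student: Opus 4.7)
The plan is to prove the stronger statement $\sigma_{ess}(\Lcc) \subseteq \sigma(\Lcc^+) \cup \sigma(\Lcc^-)$, which directly yields the claim since $\Oc$ is by definition disjoint from this union. Two classical tools will be combined: stability of the essential spectrum under compact perturbations (Weyl's theorem), and a Fredholm parametrix construction using the resolvents of the limit operators $\Lcc^\pm$.

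First, I would introduce the \emph{frozen} operator $\Lcc_\infty$ acting on $\ell^2(\Z,\C^d)$ by
$$(\Lcc_\infty h)_j := \sum_{k=-p}^q \widehat{A}_{j,k} h_{j+k},$$
where $\widehat{A}_{j,k} := A_k^+$ for $j \geq 0$ and $\widehat{A}_{j,k} := A_k^-$ for $j < 0$. The smoothness of $F$ together with the exponential convergence of $\dsp$ given by Hypothesis \ref{H:CVexpo} imply $|A_{j,k} - \widehat{A}_{j,k}| \leq C e^{-c|j|}$ uniformly in $k \in \lc -p, \ppp, q \rc$. Hence $K := \Lcc - \Lcc_\infty$ is compact on $\ell^2(\Z,\C^d)$: its band-matrix truncations to $|j| \leq N$ are finite-rank, and the tails have operator norm at most $C' e^{-cN}$ by Schur-type bounds. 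Weyl's theorem then gives $\sigma_{ess}(\Lcc) = \sigma_{ess}(\Lcc_\infty)$, so it suffices to show $\Oc \cap \sigma_{ess}(\Lcc_\infty) = \emptyset$.

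Next, I would prove that $\Lcc_\infty - \lambda$ is Fredholm of index $0$ for every $\lambda \in \Oc$. Since $\lambda \notin \sigma(\Lcc^\pm)$, the resolvents $R^\pm(\lambda) := (\Lcc^\pm - \lambda)^{-1}$ are bounded on $\ell^2(\Z,\C^d)$ by \eqref{spec_Li}. Choose cutoff sequences $\chi^\pm, \widetilde{\chi}^\pm : \Z \to [0,1]$ (identified with the corresponding multiplication operators) such that $\widetilde{\chi}^+ + \widetilde{\chi}^- \equiv 1$ and $\chi^\pm \equiv 1$ on a sufficiently wide neighborhood of the support of $\widetilde{\chi}^\pm$, and form the parametrix
$$P(\lambda) := \chi^+ R^+(\lambda) \widetilde{\chi}^+ + \chi^- R^-(\lambda) \widetilde{\chi}^-.$$
A direct computation, using that $\Lcc_\infty$ coincides locally with $\Lcc^\pm$ away from a bounded transition region around $0$, shows that both $(\Lcc_\infty - \lambda) P(\lambda) - I$ and $P(\lambda)(\Lcc_\infty - \lambda) - I$ are finite-rank. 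Since $\Oc$ is connected and contains $\lambda$ with $|\lambda|$ arbitrarily large (where $\Lcc_\infty - \lambda$ is invertible because $\Lcc_\infty$ is bounded), the Fredholm index of $\Lcc_\infty - \lambda$ vanishes throughout $\Oc$. Hence $\Oc \cap \sigma_{ess}(\Lcc_\infty) = \emptyset$, and combining with the first step concludes the proof.

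The main technical obstacle lies in the parametrix step, specifically verifying that the two defects $(\Lcc_\infty - \lambda) P(\lambda) - I$ and $P(\lambda)(\Lcc_\infty - \lambda) - I$ are finite-rank. The key observation is that away from a band of width $O(p+q)$ around the transition region, the operator $\Lcc_\infty$ coincides locally with either $\Lcc^+$ or $\Lcc^-$, so the defect reduces to commutators of the finite-range convolution operators $\Lcc^\pm$ with the cutoffs $\chi^\pm$ and $\widetilde{\chi}^\pm$. These commutators are supported in a fixed-width band around the transition region, and therefore have finite rank, yielding the desired conclusion.
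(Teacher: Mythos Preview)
Your argument is correct and follows a well-trodden path in the spectral theory of asymptotically constant-coefficient difference operators: reduce to the frozen operator $\Lcc_\infty$ by a compact perturbation, then build a two-sided parametrix from the resolvents $R^\pm(\lambda)$ glued with cutoffs, and finally read off index zero from connectedness of $\Oc$ and invertibility at large $|\lambda|$. The finite-rank claim for the defects is right: the operators $E^\pm:=\Lcc_\infty\chi^\pm-\chi^\pm\Lcc^\pm$ and $F^\pm:=\widetilde\chi^\pm\Lcc_\infty-\Lcc^\pm\widetilde\chi^\pm$ have range supported on finitely many lattice sites (the transition band of width $O(p+q)$), hence are finite rank, and composing with the bounded operators $R^\pm$, $\chi^\pm$, $\widetilde\chi^\pm$ preserves this.

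The paper, however, takes a genuinely different route (Lemma~\ref{lem_spec_ess}). It recasts the eigenvalue problem as a first-order dynamical system in $\C^{d(p+q)}$, establishes the geometric (exponential) dichotomy of Lemma~\ref{lem_geo_dich}, and then characterizes $\ker(zI-\Lcc)$ and $\mathrm{Im}(zI-\Lcc)$ directly in terms of the stable/unstable subspaces $E_0^\pm(z)$ and the dichotomy projectors. Your approach is more economical if the only goal is the Fredholm property. The paper's approach is longer but not gratuitous: the identity $\dim\ker(zI-\Lcc)=\dim\big(E_0^+(z)\cap E_0^-(z)\big)$ that it produces is exactly what is needed to define the Evans function, and the dichotomy projectors $Q^\pm_U$, $Q$ are reused throughout Sections~\ref{sec:GSloin} and~\ref{sec:GSnear1} to obtain pointwise bounds on the spatial Green's function. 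So the paper's proof is really setting up machinery for the rest of the analysis, whereas yours gives a clean standalone argument for Proposition~\ref{prop:SpectreEssentielLcc} alone.
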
 	
 	Proposition \ref{prop:SpectreEssentielLcc} allows us to conclude that for $z\in \Oc$, $zId_{\ell^2}-\Lcc$ is a Fredholm operator of index $0$ and thus that $z$ either belongs to the resolvent set of $\Lcc$ or is an eigenvalue of $\Lcc$. Proposition \ref{prop:SpectreEssentielLcc} is proved for instance in \cite[Theorem 4.1]{Serre} using the so-called geometric dichotomy developped in the thesis of Lafitte-Godillon \cite[Section III.1.5]{Godillon_these}. We will have to reintroduce the geometric dichotomy in Section \ref{sec:GSloin} and we will thus provide the reader with the proof of Proposition \ref{prop:SpectreEssentielLcc} (see Lemma \ref{lem_spec_ess} hereafter).
 	
 	We will now introduce the spectral stability assumption that we impose on our SDSP $\dsp$. It can be separated in two parts.
 	\begin{hypo}\label{H:spec}
 		The operator $\Lcc$ has no eigenvalue of modulus equal or larger than $1$ other than $1$.
 	\end{hypo}
 	Combining Hypothesis \ref{H:spec} with Proposition \ref{prop:SpectreEssentielLcc}, we can then conclude that the set $\overline{\U}\backslash\lc1\rc$ is included in the resolvent set of $\Lcc$. 
 	
 	The second part of the spectral stability assumption and the last hypothesis we will introduce on the spectrum of the operator $\Lcc$ has to do with the so-called Evans function $\mathrm{Ev}$ defined later on in the article by \eqref{def:Ev}. This is a complex holomorphic function defined in a neighborhood of $1$ that vanishes at the eigenvalues of $\Lcc$. The Evans function plays the role of a characteristic polynomial for the operator $\Lcc$.
 	
 	We will show that under the previous hypotheses, $1$ is an eigenvalue of the operator $\Lcc$ and thus that the Evans function $\mathrm{Ev}$ vanishes at $1$. This is the consequence of the existence of a differentiable one-parameter family of SDSPs associated with the one we are studying. In continuous and semi-discrete settings as in \cite{ZH,BeckHupkesSandstedeZumbrun}, there is an underlying regular profile that describes the differentiable one-parameter family of traveling waves studied in those papers as translations of said profile. The one-parameter family then corresponds to an invariance by translation. The derivative of the regular profile then belongs to the kernel of the linearized operator in these settings. In our present fully discrete setting, the existence of such a regular underlying profile that describes the different SDSPs of the one-parameter family is not clear.
 	
 	Here, we will make a strong hypothesis on the behavior of the Evans function at $1$.
 	\begin{hypo}\label{H:Evans}
 		We have that $1$ is a simple zero of the Evans function $\mathrm{Ev}$ defined below by \eqref{def:Ev}, i.e.
 		$$\mathrm{Ev}(1)=0 \quad \text{ and } \quad \frac{\partial \mathrm{Ev}}{\partial z}(1)\neq 0.$$
 	\end{hypo}
 	
	 	We will show that Hypothesis \ref{H:Evans} implies that the eigenspace of $\Lcc$ associated with the eigenvalue $1$ is of dimension $1$. More precisely, we will define a sequence $V\in \ell^2(\Z,\C^d)\backslash\lc0\rc$ below in \eqref{def:V} such that:
	 
 	\begin{equation}\label{egV}
 		\ker(Id_{\ell^2}-\Lcc)=\mathrm{Span} V
 	\end{equation}
 	and we will prove that this sequence $V$ converges exponentially fast towards $0$ at infinity, i.e. there exist two positive constants $C,c$ such that
 	\begin{equation}\label{decExpoV}
 		\forall j\in \Z, \quad |V(j)|\leq Ce^{-c|j|}.
 	\end{equation}
 	Coming back to the above discussion, if there exists a regular profile that allows us to describe the one-parameter family of SDSPs as this profile up to translations, then the sequence $V$ and the derivative of the said profile would be collinear.
 	
 	We finalize this section by introducing two last (technical) hypotheses.
 	\begin{hypo}\label{H:inv}
 		The matrices $A_{j,-p}=B_{j,-p}$ and $A_{j,q}=-B_{j+1,q-1}$ are invertible for all $j\in\Z$ and the matrices $A_{-p}^\pm=B_{-p}^\pm$ and $A_q^\pm=-B_{q-1}^\pm$ are also invertible. 
 	\end{hypo}
 	This hypothesis is usually a consequence of the CFL condition \eqref{cond:CFL}. Hypothesis \ref{H:inv} serves us in the article to express the eigenvalue problem associated with the operators $\Lcc$, $\Lcc^+$ and $\Lcc^-$ as dynamical systems (see Section \ref{subsec:GSloin:ExpEigen} consecrated to the so-called "spatial dynamics"). Finally, we impose for the following assumption to be verified.
 	\begin{hypo}\label{H:Mpm1}
 		For all $l\in \lc1, \ppp,d\rc$, the equation 
 		\begin{equation}\label{eg:Fcl}
 			\Fc_l^\pm(\kappa)=1
 		\end{equation}
 		has $p+q$ distinct solutions $\kappa\in \C\backslash\lc0\rc$.
 	\end{hypo}
 	Hypothesis \ref{H:Mpm1} will be used to prove that the matrix $M_l^\pm(1)$ defined by \eqref{def:Mlpm} is diagonalizable with simple eigenvalues. This will allow us to study the eigenvalues and eigenvectors of the matrix $M^\pm(1)$ defined by \eqref{def:M} in Section \ref{sec:GSnear1}. Let us observe that the expression \eqref{def:Fcl} of $\Fc_l^\pm$ implies that searching for solutions $\kappa\in \C^{p+q}\backslash\lc0\rc$ of \eqref{eg:Fcl} is equivalent to searching for zeroes of the polynomial:
 	$$\kappa\in\C\backslash\lc0\rc\mapsto \kappa^p-\sum_{k=-p}^q\lambda_{l,k}^\pm \kappa^{k+p}.$$
 	The function above has $p+q$ zeroes counted with multiplicity and Hypothesis \ref{H:Mpm1} is then just equivalent to asking for the zeroes of the above function to be simple. 
 	
 	\subsection{Temporal and spatial Green's functions}
 	
 	For $j_0\in\Z$, we define the temporal Green's function recursively as:
 	\begin{equation}\label{defGreenTempo}
 		\begin{array}{cc}
 			&\Gcc(0,j_0,\cdot) : = \delta_{j_0}\\
 			\forall n\in\N, & \Gcc(n+1,j_0,\cdot) : = \Lcc  \Gcc(n,j_0,\cdot) ,
 		\end{array}
 	\end{equation}
 	where the sequence $\delta_{j_0}$ is defined by
 	$$\delta_{j_0}:=\left(\delta_{j_0,j}Id\right)_{j\in\Z}\in\ell^2(\Z, \Mc_d(\C)).$$
 	For $z \in \rho(\Lcc)$, we also define the spatial Green's function $G(z,j_0,\cdot)$ as the only element of $\ell^2(\Z, \Mc_d(\C))$ such that:
	\begin{equation}\label{defGreenSpatial}
		(zId_{\ell^2}-\Lcc)G(z,j_0,\cdot)= \delta_{j_0}.
	\end{equation}

		The following lemma proved via a simple recurrence is a direct consequence of the definition \eqref{defGreenTempo} of the temporal Green's function and the finite speed propagation of the linearized scheme.
		\begin{lemma}\label{lem:GreenTempoOutside}
			For all $n\in \N$, $j_0,j\in \Z$, we have that
			$$j-j_0\notin \lc-nq,\ppp,np\rc\Rightarrow \Gcc(n,j_0,j)=0.$$
		\end{lemma}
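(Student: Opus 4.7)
The plan is a straightforward induction on $n$, exploiting the fact that the linearized operator $\Lcc$ defined in \eqref{def:linearizedScheme} has a stencil of width $p+q+1$, so one application of $\Lcc$ can broaden the support of a sequence by at most $p$ to the left and $q$ to the right.

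For the base case $n=0$, the set $\lc-nq,\ppp,np\rc$ reduces to $\lc0\rc$. By the initialization in \eqref{defGreenTempo}, $\Gcc(0,j_0,j)=\delta_{j_0,j}Id$, so if $j-j_0\neq 0$ the Green's function vanishes, as required.

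For the inductive step, assume the statement holds at rank $n$. By definition \eqref{defGreenTempo} and \eqref{def:linearizedScheme}, we have
$$\Gcc(n+1,j_0,j)=(\Lcc\Gcc(n,j_0,\cdot))_j=\sum_{k=-p}^{q}A_{j,k}\Gcc(n,j_0,j+k).$$
By the induction hypothesis, each term $A_{j,k}\Gcc(n,j_0,j+k)$ vanishes unless $(j+k)-j_0\in\lc-nq,\ppp,np\rc$, i.e.\ unless $j-j_0\in\lc-nq-k,\ppp,np-k\rc$. Taking the union over $k\in\lc-p,\ppp,q\rc$, the set of possible values of $j-j_0$ for which the sum can be nonzero is exactly $\lc-nq-q,\ppp,np+p\rc=\lc-(n+1)q,\ppp,(n+1)p\rc$. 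Contrapositively, whenever $j-j_0\notin\lc-(n+1)q,\ppp,(n+1)p\rc$, every term of the sum vanishes, and hence $\Gcc(n+1,j_0,j)=0$.

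There is no real obstacle here: the argument uses only the definition of the temporal Green's function, the finite support of the stencil of $\Lcc$, and a book-keeping of the worst-case shift at each iteration. The only thing to be careful about is the sign convention in the shifted index $j+k$ versus $j_0$, which determines that the leftmost reachable position after $n+1$ steps corresponds to $k=q$ (giving $-(n+1)q$) and the rightmost to $k=-p$ (giving $(n+1)p$).
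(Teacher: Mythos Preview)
Your proof is correct and is precisely the ``simple recurrence'' the paper alludes to without spelling out: an induction on $n$ using only the stencil width of $\Lcc$ from \eqref{def:linearizedScheme} and the initialization \eqref{defGreenTempo}. There is nothing to add.
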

		
		The main result of the present article (Theorem \ref{th:Green}) is a precise description of the temporal Green's function $\Gcc(n,j_0,j)$ and of its discrete derivative $\Gcc(n,j_0,j)-\Gcc(n,j_0-1,j)$ respectively when $j-j_0\in\lbrace-nq,\hdots,np\rbrace$ and when $j-j_0\in\lbrace-nq-1,\hdots,np\rbrace$. This result is \textit{essential} in order to hopefully conclude the nonlinear stability analysis of the discrete shock profile $\dsp$. Indeed, as exemplified in the following paper by the author \cite{Coeuret2024} or in \cite[Chapter 5]{Coulombel2024}, the nonlinear stability analysis about the discrete shock profile $\dsp$ relies crucially on proving sharp decay estimates on the families of operators $(\Lcc^n)_{n\in\N}$ and $(\Lcc^n(Id-\Tc))_{n\in\N}$ acting on well-chosen weighted $\ell^r$-spaces, where the shift operator $\Tc$ is defined by:
		\begin{equation}\label{def:Tc}
			\Tc: (h_j)_{j\in\Z}\in\C^\Z\mapsto (h_{j+1})_{j\in\Z}\in\C^\Z.
		\end{equation}
		Yet, for all $h\in \ell^r(\Z,\C^d)$ with $r\in[1,+\infty]$, we have that:
		\begin{align}
			\forall n\in\N,\forall j\in\Z,\quad & (\Lcc^nh)_j = \sum_{j_0\in\Z}\Gcc(n,j_0,j)h_{j_0},\label{eg:LnGreen}\\
			\forall n\in\N,\forall j\in\Z,\quad & (\Lcc^n(Id-\Tc)h)_j = \sum_{j_0\in\Z}\left(\Gcc(n,j_0,j)-\Gcc(n,j_0-1,j)\right)h_{j_0}.\label{eg:Ln(Id-T)GreenDer}
		\end{align}
		Therefore, Theorem \ref{th:Green} below should allow us to prove the required sharp bounds on the families of operators $(\Lcc^n)_{n\in\N}$ and $(\Lcc^n(Id-\Tc))_{n\in\N}$. As an example, we prove in Theorem \ref{thStab} such decay estimates for the semi-group $\left(\Lcc^n\right)_{n\in\N}$ acting on $\ell^r(\Z)$. More useful bounds when the operators $\Lcc^n$ and $\Lcc^n(Id-\Tc)$ act on polynomially weighted $\ell^r$-spaces are proved in \cite[Proposition 2]{Coeuret2024} using Theorem \ref{th:Green} whilst focusing on the case of \textit{scalar} conservation laws.
		
		Let us briefly present how the rest of the introduction will be structured. We start by defining the different terms \eqref{def:Termes_decompo_Green} appearing in the decomposition \eqref{decompoGreen} of the temporal Green's function in Theorem \ref{th:Green}. We will then state Theorem \ref{th:Green} describing precisely the long time behavior of the temporal Green's function and its discrete derivative and follow it up with a more informal and legible description of the statement of Theorem \ref{th:Green}. Finally, in accordance with the discussion in the paragraph above, we will state Theorem \ref{thStab} which tackles decay estimates of the semi-group $\left(\Lcc^n\right)_{n\in\N}$ acting on $\ell^r$-spaces.

	Let us start by defining the functions $H_{2\mu},E_{2\mu}: \R\rightarrow \C$ such that for $\beta \in \C$ with positive real part, we have:
	\begin{align}\label{def:H2mu_et_E2mu}
		\begin{split}
			\forall x \in \R,\quad &H_{2\mu}(\beta;x) := \frac{1}{2\pi} \int_\R e^{ixu}e^{-\beta u^{2\mu}}du,\\
			\forall x \in \R,\quad &E_{2\mu}(\beta;x) := \int_x^{+\infty} H_{2\mu}(\beta;y)dy,
		\end{split}
	\end{align}
	where we recall that the integer $\mu$ is defined in Hypothesis \ref{H:F}. Let us point out that Lemma \ref{lemHE} implies that the function $E_{2\mu}$ is well-defined. We call the functions $H_{2\mu}$ generalized Gaussians and the functions $E_{2\mu}$ generalized Gaussian error functions since for $\mu=1$, we have 
	$$\forall x \in \R,\quad H_{2}(\beta;x)=\frac{1}{\sqrt{4\pi\beta}}e^{-\frac{x^2}{4\beta}}.$$
	Noticing that the function $H_{2\mu}$ is an even function and that it is the inverse Fourier transform of $u\mapsto e^{-\beta u^{2\mu}}$, we observe that
	\begin{subequations}
		\begin{align}
			\lim_{x\rightarrow -\infty}E_{2\mu}(\beta;x)=\int_{-\infty}^{+\infty} H_{2\mu}(\beta;y)dy= 1 \label{eq:E_en_-infty}\\
			\forall x\in \R, \quad E_{2\mu}(\beta,-x)= 1-E_{2\mu}(\beta,x).
		\end{align}
	\end{subequations}
	
	The following lemma introduces some useful inequalities on the functions $H_{2\mu}$ and $E_{2\mu}$ defined by \eqref{def:H2mu_et_E2mu}.
	\begin{lemma}\label{lemHE}
		Let us consider a compact subset $A$ of $\lc z\in\C,\Re(z)>0\rc$ and integers $\mu,m\in\N\backslash\lc0\rc$. There exist two positive constants $C,c$ such that for all $\beta\in A$ 
		\begin{subequations}
			\begin{align}
				\forall x\in\R,\quad & |\partial_x^mH_{2\mu}(\beta;x)|\leq C\exp(-c|x|^\frac{2\mu}{2\mu-1}),\label{inH}\\
				\forall x\in]0,+\infty[,\quad & |E_{2\mu}(\beta;x)|\leq C\exp(-c|x|^\frac{2\mu}{2\mu-1}),\label{inE+}\\
				\forall x\in]-\infty,0[, \quad& |1-E_{2\mu}(\beta;x)|\leq C\exp(-c|x|^\frac{2\mu}{2\mu-1}).\label{inE-}
			\end{align}
		\end{subequations}
	\end{lemma}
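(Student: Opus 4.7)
The plan is to establish \eqref{inH} first by a contour-deformation/saddle-point argument for the oscillatory integral defining $H_{2\mu}$, then deduce \eqref{inE+} from \eqref{inH} via a standard super-exponential tail bound, and finally derive \eqref{inE-} from \eqref{inE+} using a reflection identity for $E_{2\mu}$.

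For \eqref{inH}, the super-polynomial decay of $e^{-\beta u^{2\mu}}$ in $u$ for $\Re\beta > 0$ justifies differentiating under the integral, yielding
\[
\partial_x^m H_{2\mu}(\beta;x) = \frac{1}{2\pi}\int_\R (iu)^m e^{ixu - \beta u^{2\mu}}\,du.
\]
The change of variable $u \to -u$ shows that $H_{2\mu}(\beta;\cdot)$ is even, so by parity $|\partial_x^m H_{2\mu}(\beta;x)| = |\partial_x^m H_{2\mu}(\beta;-x)|$ and it suffices to treat $x \geq 1$ (the range $x \in [0,1]$ being immediate by continuity). Rescaling $u = x^{1/(2\mu-1)} w$ brings the integral to the Laplace form
\[
\partial_x^m H_{2\mu}(\beta;x) = \frac{x^{(m+1)/(2\mu-1)}}{2\pi}\int_\R (iw)^m e^{\tau\,\phi(w)}\,dw, \qquad \tau := x^{\frac{2\mu}{2\mu-1}},\quad \phi(w) := iw - \beta w^{2\mu},
\]
with large parameter $\tau \geq 1$. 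The saddle of $\phi$ is $w_\star(\beta) = (i/(2\mu\beta))^{1/(2\mu-1)}$, at which $\phi(w_\star) = \frac{2\mu-1}{2\mu}\, i w_\star$. Writing $iw_\star = |w_\star| e^{i\theta(\beta)}$ with $\theta(\beta) = (\mu\pi - \arg\beta)/(2\mu-1)$, one checks that $\theta(\beta) \in (\pi/2, 3\pi/2)$ whenever $|\arg\beta| < \pi/2$, hence $\Re\phi(w_\star(\beta)) < 0$; by compactness, $\Re\phi(w_\star(\beta)) \leq -c_0 < 0$ uniformly for $\beta \in A$. Deforming the real axis into a polygonal contour $\Gamma_\beta$ that passes through $w_\star(\beta)$ and returns to $\R$ outside a compact set (along which $\Re(-\beta w^{2\mu})$ dominates and $\Re\phi \to -\infty$), one sees that $\Re\phi(w) \leq -c_0/2$ on $\Gamma_\beta$, uniformly in $\beta \in A$. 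Integrating then gives $|\partial_x^m H_{2\mu}(\beta;x)| \leq C x^{(m+1)/(2\mu-1)} e^{-c_0 \tau/2}$, and absorbing the polynomial prefactor into the exponential by slightly decreasing $c_0$ yields \eqref{inH}.

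For \eqref{inE+}, integrating the $m=0$ case of \eqref{inH} gives, for $x > 0$,
\[
|E_{2\mu}(\beta;x)| \leq \int_x^{+\infty} C e^{-c y^{2\mu/(2\mu-1)}}\,dy \leq C' e^{-c' x^{2\mu/(2\mu-1)}},
\]
the second inequality being the standard tail bound for integrands $e^{-c y^\alpha}$ with $\alpha > 1$. For \eqref{inE-}, the evenness of $H_{2\mu}(\beta;\cdot)$ combined with \eqref{eq:E_en_-infty} yields the reflection identity $1 - E_{2\mu}(\beta;x) = E_{2\mu}(\beta;-x)$; applying \eqref{inE+} at $-x > 0$ then gives the claim. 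The main difficulty is the uniform control of the contour deformation in $\beta \in A$: using a piecewise-linear contour rather than the exact steepest-descent path, together with the compactness of $A$, keeps the argument clean, the only delicate ingredient being the sign assertion $\Re\phi(w_\star(\beta)) < 0$ verified above.
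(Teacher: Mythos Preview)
Your proposal is correct. The contour-deformation/saddle-point argument for \eqref{inH} is the standard approach, and your deduction of \eqref{inE+} by integrating the $m=0$ bound (which your argument yields just as well as $m\geq 1$) together with the reflection identity for \eqref{inE-} is exactly what the paper does.

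The only difference is one of presentation rather than substance: the paper does not carry out the proof of \eqref{inH} in the text but instead refers to \cite[Lemma 9]{CoeuLLT} and \cite[Proposition 5.3]{Rob} for the case of a single $\beta$, and then invokes the continuous dependence of the constants $C,c$ on $\beta$ to pass to a compact set $A$. You instead build the uniformity in $\beta\in A$ directly into the contour argument, via the observation that $\Re\phi(w_\star(\beta))\leq -c_0<0$ uniformly on $A$ by compactness. Both routes are equivalent here; yours is self-contained, while the paper's is shorter but depends on external references. One small point worth making explicit in your write-up: the claim that one can choose a polygonal contour $\Gamma_\beta$ on which $\Re\phi\leq -c_0/2$ \emph{everywhere} is the step that actually requires care (near the endpoints where $\Gamma_\beta$ rejoins $\R$, $\Re\phi$ is close to $0$); in practice one bounds the contribution of the outer pieces separately using the decay of $e^{-\tau\Re(\beta)w^{2\mu}}$ rather than a uniform lower bound on $-\Re\phi$.
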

	The interested reader can find a proof of \eqref{inH} when the subset $A$ is a one point set in \cite[Lemma 9]{CoeuLLT} or in \cite[Proposition 5.3]{Rob} for a more general point of view. By observing that the constants $C,c$ constructed in those proofs depend continuously on $\beta$, we can then conclude the proof \eqref{inH} for general sets $A$. Inequalities \eqref{inE+} and \eqref{inE-} for the function $E_{2\mu}$ are directly deduced by integrating the function $H_{2\mu}$ and using \eqref{eq:E_en_-infty} and \eqref{inH}.

		Let us now introduce the different terms that will describe the leading order of the pointwise asymptotic behavior of the Green's function $\Gcc(n,j_0,j)$ in \eqref{decompoGreen}. A more informal description of those terms will be given below Theorem \ref{th:Green}. We define for $n\in\N\backslash\lc0\rc$ and $j,j_0\in\Z$ the following functions:
	
	\begin{subequations}\label{def:Termes_decompo_Green}
		\begin{itemize}
			\item For $j_0\geq 0$ and $l\in\lc 1,\ppp,d\rc$
			\begin{equation}\label{def:S+}
				S_l^+(n,j_0,j):= \ind_{j\geq0}\ind_{\frac{j-j_0}{\alpha_l^+}\in\left[\frac{n}{2},2n\right]}\frac{1}{n^\frac{1}{2\mu}}H_{2\mu}\left(\beta_l^+,\frac{n\alpha_l^++j_0-j}{n^\frac{1}{2\mu}}\right)\rg_l^+{\lg_l^+}^T,
			\end{equation}
			
			\item For $j_0\geq 0$, $l^\prime\in\lc 1,\ppp,I\rc$ and $l\in\lc I+1,\ppp d\rc$ (i.e. such that $\alpha_{l^\prime}^+<0$ and $\alpha_l^+>0$)
			\begin{equation}\label{def:R+}
				R_{l^\prime,l}^+(n,j_0,j):= \ind_{j\geq0}\ind_{\frac{j}{\alpha_l^+}-\frac{j_0}{\alpha_{l^\prime}^+}\in\left[\frac{n}{2},2n\right]}\frac{1}{n^\frac{1}{2\mu}}H_{2\mu}\left(\frac{j}{n\alpha_l^+}\beta_l^+ -\frac{j_0}{n\alpha_{l^\prime}^+} \beta_{l^\prime}^+ \left(\frac{\alpha_l^+}{\alpha_{l^\prime}^+}\right)^{2\mu},\frac{n\alpha_l^++j_0\frac{\alpha_l^+}{\alpha_{l^\prime}^+}-j}{n^\frac{1}{2\mu}}\right)\rg_l^+{\lg_{l^\prime}^+}^T,
			\end{equation}
			
			\item For $j_0\geq 0$, $l^\prime\in\lc 1,\ppp,I\rc$ and $l\in\lc 1,\ppp I-1\rc$ (i.e. such that $\alpha_{l^\prime}^+<0$ and $\alpha_l^-<0$)
			\begin{equation}\label{def:T+}
				T_{l^\prime,l}^+(n,j_0,j):= \ind_{j<0}\ind_{\frac{j}{\alpha_l^-}-\frac{j_0}{\alpha_{l^\prime}^+}\in\left[\frac{n}{2},2n\right]}\frac{1}{n^\frac{1}{2\mu}}H_{2\mu}\left(\frac{j}{n\alpha_l^-}\beta_l^- -\frac{j_0}{n\alpha_{l^\prime}^+} \beta_{l^\prime}^+ \left(\frac{\alpha_l^-}{\alpha_{l^\prime}^+}\right)^{2\mu},\frac{n\alpha_l^-+j_0\frac{\alpha_l^-}{\alpha_{l^\prime}^+}-j}{n^\frac{1}{2\mu}}\right)\rg_l^-{\lg_{l^\prime}^+}^T,
			\end{equation}
			
			\item For $j_0\geq 0$ and $l^\prime\in\lc 1,\ppp,I\rc$ (i.e. such that $\alpha_{l^\prime}^+<0$)
			\begin{equation}\label{def:E+}
				E_{l^\prime}^+(n,j_0):= E_{2\mu}\left(\beta_{l^\prime}^+;\frac{n\alpha_{l^\prime}^+ +j_0}{n^\frac{1}{2\mu}}\right){\lg_{l^\prime}^+}^T,
			\end{equation}
			
			\item For $j_0< 0$ and $l\in\lc 1,\ppp,d\rc$
			\begin{equation}\label{def:S-}
				S_l^-(n,j_0,j):= \ind_{j\leq0}\ind_{\frac{j-j_0}{\alpha_l^-}\in\left[\frac{n}{2},2n\right]}\frac{1}{n^\frac{1}{2\mu}}H_{2\mu}\left(\beta_l^-,\frac{n\alpha_l^-+j_0-j}{n^\frac{1}{2\mu}}\right)\rg_l^-{\lg_l^-}^T,
			\end{equation}
			
			\item For $j_0< 0$, $l^\prime\in\lc I,\ppp,d\rc$ and $l\in\lc 1,\ppp I-1\rc$ (i.e. such that $\alpha_{l^\prime}^->0$ and $\alpha_l^-<0$)
			\begin{equation}\label{def:R-}
				R_{l^\prime,l}^-(n,j_0,j):= \ind_{j\leq0}\ind_{\frac{j}{\alpha_l^-}-\frac{j_0}{\alpha_{l^\prime}^-}\in\left[\frac{n}{2},2n\right]}\frac{1}{n^\frac{1}{2\mu}}H_{2\mu}\left(\frac{j}{n\alpha_l^-}\beta_l^- -\frac{j_0}{n\alpha_{l^\prime}^-} \beta_{l^\prime}^- \left(\frac{\alpha_l^-}{\alpha_{l^\prime}^-}\right)^{2\mu},\frac{n\alpha_l^-+j_0\frac{\alpha_l^-}{\alpha_{l^\prime}^-}-j}{n^\frac{1}{2\mu}}\right)\rg_l^-{\lg_{l^\prime}^-}^T,
			\end{equation}
			
			\item For $j_0< 0$, $l^\prime\in\lc I,\ppp,d\rc$ and $l\in\lc I+1,\ppp d\rc$ (i.e. such that $\alpha_{l^\prime}^->0$ and $\alpha_l^+>0$)
			\begin{equation}\label{def:T-}
				T_{l^\prime,l}^-(n,j_0,j):= \ind_{j>0}\ind_{\frac{j}{\alpha_l^+}-\frac{j_0}{\alpha_{l^\prime}^-}\in\left[\frac{n}{2},2n\right]}\frac{1}{n^\frac{1}{2\mu}}H_{2\mu}\left(\frac{j}{n\alpha_l^+}\beta_l^+ -\frac{j_0}{n\alpha_{l^\prime}^-} \beta_{l^\prime}^- \left(\frac{\alpha_l^+}{\alpha_{l^\prime}^-}\right)^{2\mu},\frac{n\alpha_l^++j_0\frac{\alpha_l^+}{\alpha_{l^\prime}^-}-j}{n^\frac{1}{2\mu}}\right)\rg_l^+{\lg_{l^\prime}^-}^T,
			\end{equation}
			
			\item For $j_0<0$, $l^\prime\in\lc I,\ppp,d\rc$ (i.e. such that $\alpha_{l^\prime}^->0$)
			\begin{equation}\label{def:E-}
				E_{l^\prime}^-(n,j_0):= E_{2\mu}\left(\beta_{l^\prime}^-;\frac{-n\alpha_{l^\prime}^- -j_0}{n^\frac{1}{2\mu}}\right){\lg_{l^\prime}^-}^T.
			\end{equation}
		\end{itemize}
	\end{subequations}

		We observe that the functions $S_l^\pm$, $R_{l^\prime,l}^\pm$ and $T_{l^\prime,l}^\pm$ are valued in $\Mc_d(\C)$ and the functions $E_{l^\prime}^\pm$ are valued in $\Mc_{1,d}(\C)$. 
		
		Let us now state Theorem \ref{th:Green} which gives an accurate description of the temporal Green's function and its discrete derivative. Below the statement of Theorem \ref{th:Green}, the reader will find a more legible explanation of Theorem \ref{th:Green} as well as one of its consequence (Theorem \ref{thStab}). We recall that the Landau notations $O$, $O_{\Mc_{1,d}}$ and $O_\C$ are described in the notations section at the beginning of the paper and are used respectively to differentiate matrices, line vectors and scalars.
		
		\begin{theorem}\label{th:Green}
			Let us assume that Hypotheses \ref{H:Lax}-\ref{H:Mpm1} are verified. Then, there exist:
			\begin{itemize}
				\item A sequence $V\in\ell^2(\Z,\C^d)\backslash\lc0\rc$ defined by \eqref{def:V} satisfying \eqref{egV} and \eqref{decExpoV},
				\item Families of complex scalars $(C^{E,\pm}_{l^\prime})_{l^\prime}$, $(C^{R,\pm}_{l^\prime,l})_{l^\prime,l}$ and $(C^{T,\pm}_{l^\prime,l})_{l^\prime,l}$,
			\end{itemize}
			such that for all $n\in\N\backslash\lc0\rc$, $j_0\in\N$ and $j\in\Z$ which verify $j-j_0\in\lc-nq,\ppp,np\rc$, we have that
			\begin{multline}\label{decompoGreen}
				\Gcc(n,j_0,j) = \sum_{l=1}^d S_l^+(n,j_0,j) +\sum_{l^\prime=1}^I\left(\sum_{l=I+1}^d C^{R,+}_{l^\prime,l}R_{l^\prime,l}^+(n,j_0,j) + \sum_{l=1}^{I-1} C^{T,+}_{l^\prime,l}T_{l^\prime,l}^+(n,j_0,j)\right)\\
				+V(j)\left(\sum_{l^\prime=1}^IC^{E,+}_{l^\prime}E_{l^\prime}^+(n,j_0) \right)+ \Rc(n,j_0,j)
			\end{multline}
			where $\Rc(n,j_0,j)$ is a faster decaying residual of the following form for some constant $c>0$ independent from $n,j_0,j$:

			\begin{subequations}\label{def:ResTh}
				$\bullet$ For $j\geq0$ such that $j-j_0\in\lc-nq,\ppp,np\rc$:
				\begin{align}
					\begin{split}
						&\Rc(n,j_0,j) = O(e^{-cn})\\
						& +\sum_{l=1}^d  \exp\left(-c\left(\frac{\left|n-\left(\frac{j-j_0}{\alpha_l^+}\right)\right|}{n^\frac{1}{2\mu}}\right)^\frac{2\mu}{2\mu-1}\right)\left(O\left(\frac{e^{-c|j|}}{n^\frac{1}{2\mu}}\right)+\rg_l^+O_{\Mc_{1,d}}\left(\frac{e^{-c|j_0|}}{n^\frac{1}{2\mu}}\right)+ O_\C\left(\frac{1}{n^\frac{1}{\mu}}\right)\rg_l^+{\lg_l^+}^T\right)\\
						& + \sum_{l^\prime=1}^I\sum_{l=I+1}^d \exp\left(-c\left(\frac{\left|n-\left(\frac{j}{\alpha_l^+}-\frac{j_0}{\alpha_{l^\prime}^+}\right)\right|}{n^\frac{1}{2\mu}}\right)^\frac{2\mu}{2\mu-1}\right)\left(O\left(\frac{e^{-c|j|}}{n^\frac{1}{2\mu}}\right)+\rg_l^+O_{\Mc_{1,d}}\left(\frac{e^{-c|j_0|}}{n^\frac{1}{2\mu}}\right) + O_\C\left(\frac{1}{n^\frac{1}{\mu}}\right)\rg_l^+{\lg_{l^\prime}^+}^T\right)\\
						& + \sum_{l^\prime=1}^I O\left(\frac{e^{-c|j|}}{n^\frac{1}{2\mu}}\exp\left(-c\left(\frac{\left|n+\frac{j_0}{\alpha_{l^\prime}^+}\right|}{n^\frac{1}{2\mu}}\right)^\frac{2\mu}{2\mu-1}\right)\right)  +\sum_{l=I+1}^d \rg_l^+O_{\Mc_{1,d}}\left(\frac{e^{-c|j_0|}}{n^\frac{1}{2\mu}}\exp\left(-c\left(\frac{\left|n-\frac{j}{\alpha_l^+}\right|}{n^\frac{1}{2\mu}}\right)^\frac{2\mu}{2\mu-1}\right)\right),
					\end{split}\label{def:ResTh1}
				\end{align}
				
				$\bullet$ For $j<0$ such that $j-j_0\in\lc-nq,\ppp,np\rc$:
				\begin{align}
					\begin{split}
						&\Rc(n,j_0,j) =O(e^{-cn})\\
						& +\sum_{l^\prime=1}^I\sum_{l=1}^{I-1}  \exp\left(-c\left(\frac{\left|n-\left(\frac{j}{\alpha_l^-}-\frac{j_0}{\alpha_{l^\prime}^+}\right)\right|}{n^\frac{1}{2\mu}}\right)^\frac{2\mu}{2\mu-1}\right)\left(O\left(\frac{e^{-c|j|}}{n^\frac{1}{2\mu}}\right)+\rg_l^-O_{\Mc_{1,d}}\left(\frac{e^{-c|j_0|}}{n^\frac{1}{2\mu}}\right)+ O_\C\left(\frac{1}{n^\frac{1}{\mu}}\right)\rg_l^-{\lg_{l^\prime}^+}^T\right) \\
						& + \sum_{l^\prime=1}^I O\left(\frac{e^{-c|j|}}{n^\frac{1}{2\mu}}\exp\left(-c\left(\frac{\left|n+\frac{j_0}{\alpha_{l^\prime}^+}\right|}{n^\frac{1}{2\mu}}\right)^\frac{2\mu}{2\mu-1}\right)\right) + \sum_{l=1}^{I-1} \rg_l^-O_{\Mc_{1,d}}\left(\frac{e^{-c|j_0|}}{n^\frac{1}{2\mu}}\exp\left(-c\left(\frac{\left|n-\frac{j}{\alpha_l^-}\right|}{n^\frac{1}{2\mu}}\right)^\frac{2\mu}{2\mu-1}\right)\right).
					\end{split}\label{def:ResTh2}
				\end{align}
			\end{subequations}
			
			The coefficients $C_{l^\prime,l}^{R,+}$, $C_{l^\prime,l}^{T,+}$ and $C_{l^\prime}^{E,+}$ are defined by \eqref{coeff}. There is a similar result when $j_0\in-\N\backslash\lc0\rc$ using the families of complex scalars $(C^{E,-}_{l^\prime})_{l^\prime}$, $(C^{R,-}_{l^\prime,l})_{l^\prime,l}$ and $(C^{T,-}_{l^\prime,l})_{l^\prime,l}$ and the functions $E_{l^\prime}^-$, $S_l^-$, $R_{l^\prime,l}^-$ and $T_{l^\prime,l}^-$.
			
			Furthermore, we can choose the constant $c>0$ such that the discrete derivative with regards to the parameter $j_0$ of the temporal Green's function verifies that, for all $n\in\N\backslash\lc0\rc$, $j_0\in\N$ and $j\in\Z$ which verify $j-j_0\in\lc-nq-1,\ppp,np\rc$:
			
			\begin{subequations}\label{decompoDerGreen}
				$\bullet$ For $j\geq0$:
				\begin{align}
					\begin{split}
						&\Gcc(n,j_0,j)- \Gcc(n,j_0-1,j) = O(e^{-cn})\\
						& +\sum_{l=1}^d  \exp\left(-c\left(\frac{\left|n-\left(\frac{j-j_0}{\alpha_l^+}\right)\right|}{n^\frac{1}{2\mu}}\right)^\frac{2\mu}{2\mu-1}\right)\left(O\left(\frac{e^{-c|j|}}{n^\frac{1}{2\mu}}\right)+\rg_l^+O_{\Mc_{1,d}}\left(\frac{e^{-c|j_0|}}{n^\frac{1}{2\mu}}\right)+ O_\C\left(\frac{1}{n^\frac{1}{\mu}}\right)\rg_l^+{\lg_l^+}^T\right)\\
						& + \sum_{l^\prime=1}^I\sum_{l=I+1}^d \exp\left(-c\left(\frac{\left|n-\left(\frac{j}{\alpha_l^+}-\frac{j_0}{\alpha_{l^\prime}^+}\right)\right|}{n^\frac{1}{2\mu}}\right)^\frac{2\mu}{2\mu-1}\right)\left(O\left(\frac{e^{-c|j|}}{n^\frac{1}{2\mu}}\right)+\rg_l^+O_{\Mc_{1,d}}\left(\frac{e^{-c|j_0|}}{n^\frac{1}{2\mu}}\right) + O_\C\left(\frac{1}{n^\frac{1}{\mu}}\right)\rg_l^+{\lg_{l^\prime}^+}^T\right)\\
						& + \sum_{l^\prime=1}^I O\left(\frac{e^{-c|j|}}{n^\frac{1}{2\mu}}\exp\left(-c\left(\frac{\left|n+\frac{j_0}{\alpha_{l^\prime}^+}\right|}{n^\frac{1}{2\mu}}\right)^\frac{2\mu}{2\mu-1}\right)\right)  +\sum_{l=I+1}^d \rg_l^+O_{\Mc_{1,d}}\left(\frac{e^{-c|j_0|}}{n^\frac{1}{2\mu}}\exp\left(-c\left(\frac{\left|n-\frac{j}{\alpha_l^+}\right|}{n^\frac{1}{2\mu}}\right)^\frac{2\mu}{2\mu-1}\right)\right),
					\end{split}
				\end{align}
				
				$\bullet$ For $j<0$:
				\begin{align}
					\begin{split}
						&\Gcc(n,j_0,j)- \Gcc(n,j_0-1,j) = O(e^{-cn})\\
						& +\sum_{l^\prime=1}^I\sum_{l=1}^{I-1}  \exp\left(-c\left(\frac{\left|n-\left(\frac{j}{\alpha_l^-}-\frac{j_0}{\alpha_{l^\prime}^+}\right)\right|}{n^\frac{1}{2\mu}}\right)^\frac{2\mu}{2\mu-1}\right)\left(O\left(\frac{e^{-c|j|}}{n^\frac{1}{2\mu}}\right)+\rg_l^-O_{\Mc_{1,d}}\left(\frac{e^{-c|j_0|}}{n^\frac{1}{2\mu}}\right)+ O_\C\left(\frac{1}{n^\frac{1}{\mu}}\right)\rg_l^-{\lg_{l^\prime}^+}^T\right) \\
						& + \sum_{l^\prime=1}^I O\left(\frac{e^{-c|j|}}{n^\frac{1}{2\mu}}\exp\left(-c\left(\frac{\left|n+\frac{j_0}{\alpha_{l^\prime}^+}\right|}{n^\frac{1}{2\mu}}\right)^\frac{2\mu}{2\mu-1}\right)\right) + \sum_{l=1}^{I-1} \rg_l^-O_{\Mc_{1,d}}\left(\frac{e^{-c|j_0|}}{n^\frac{1}{2\mu}}\exp\left(-c\left(\frac{\left|n-\frac{j}{\alpha_l^-}\right|}{n^\frac{1}{2\mu}}\right)^\frac{2\mu}{2\mu-1}\right)\right).
					\end{split}
				\end{align}
			\end{subequations}
			A similar description of the discrete derivative of the temporal Green's function holds for $j_0\in-\N\backslash\lc0\rc$.
		\end{theorem}

	\begin{figure}
		\begin{center}
			\begin{tikzpicture}[scale=1]
				\draw[->,thick] (-3,0) --(7,0) node[right] {$j$};
				\draw[->,thick] (0,-0.1) -- (0, 5) node[above] {$n$};
				\draw (3,0) node[below] {$j_0$};
				
				\draw[thick,blue] (3,0) -- (0,2.5);
				\draw[thick,dartmouthgreen] (0,2.5) -- (-1.5,5);
				\draw[thick,dartmouthgreen] (0,2.5) -- (-3,5);
				\draw[thick,purple] (0,2.5) -- (7,4.25);
				
				\draw[blue] (3,1) node {$S_l^+(n,j_0,j)$};
				\draw[dartmouthgreen] (-2.5,3.5) node {$T_{l^\prime,l}^+(n,j_0,j)$};
				\draw[purple] (4.5,4.1) node {$R_{l^\prime,l}^+(n,j_0,j)$};
				\draw[red] (1.25,4.25) node {$E_{l^\prime}^+(n,j_0)V(j)$};
				
				\draw[thick,blue] (3,0) -- (5,0.5);
				\draw[thick, dashed,blue] (5,0.5) -- (7,1);
				
				\draw[thick,blue] (3,0) -- (2,0.5);
				\draw[thick, dashed,blue] (2,0.5) -- (0,1.5);
				
				\draw[thick,blue] (3,0) -- (2,0.2);
				\draw[thick, dashed,blue] (2,0.2) -- (0,0.6);
				
				\draw[thick,red] plot [samples = 100, domain=-pi:pi] ({\x*0.3/pi},{4+0.2*cos(\x r)});
			\end{tikzpicture}
		\end{center}
		\caption{A schematic representation of the result of Theorem \ref{th:Green} on the Green's function $\Gcc(n,j_0,j)$. Here we represent a case where $j_0\in\N$, $d=4$ and $I=3$. We recall that the integer $I$ is defined by Hypothesis \ref{H:Lax}. We have $d$ generalized Gaussian waves (in blue) arising from the Dirac mass at $j_0$ which travel along the characteristics of the right state $u^+$. The ones reaching the shock location are decomposed into reflected waves (in purple), transmitted waves (in green) and the activation of the component of the Green's function along the vector space $\ker(Id_{\ell^2}-\Lcc)$ (in red). We only represent this decomposition for one of the incoming waves.}
		\label{FigGreen}
	\end{figure}
		
	Let us describe more clearly what the decomposition \eqref{decompoGreen} of the temporal Green's function $\Gcc(n,j_0,j)$ conveys for $j_0\geq 0$. The same description can be done for $j_0<0$ as well as for the discrete derivative of the temporal Green's function using \eqref{decompoDerGreen}. Figure \ref{FigGreen} is a schematic representation of the decomposition \eqref{decompoGreen}. The first term on the right hand-side using the function $S_{l}^+$  of \eqref{decompoGreen} corresponds to $d$ generalized Gaussian waves arising from the Dirac mass at $j_0$ which travel along the characteristics of the right state $u^+$ (the blue waves in Figure \ref{FigGreen}). The generalized Gaussian behavior of the different waves originates from the smearing effect caused by the diffusivity condition in Hypothesis \ref{H:F} which corresponds to the introduction of artificial viscosity at the states $u^\pm$. These waves correspond to the leading order of the Green's function of the operator $\Lcc^+$ associated with the "right" state $u^+$. Recalling that we are considering a Lax shock under Hypothesis \ref{H:Lax}, we observe the following distinction:
	\begin{itemize}
		\item The first $I$ generalized Gaussian waves follow the characteristics incoming the shock since $\alpha_l^+<0$ for $l\in\lc1,\ppp,I\rc$ and will eventually reach the shock (located at $j=0$).
		\item The last $d-I$ generalized Gaussian waves follow the outgoing characteristics with respect to the shock since $\alpha_l^+>0$ for $l\in\lc I+1,\ppp,d\rc$ and travel towards $+\infty$ without interaction with the shock.
	\end{itemize}
	When the generalized Gaussian waves following incoming characteristics reach the shock location, we observe that they lead to different behaviors for three new types of waves:
	\begin{itemize}
		\item There are reflected generalized Gaussian waves along the outgoing characteristics of the state $u^+$ (i.e. the purple waves in Figure \ref{FigGreen}). It corresponds to the second term using the function $R_{l^\prime,l}^+$ in \eqref{decompoGreen}. This explains the restriction of $l\in\lc I+1,\ppp,d\rc$ which implies that $\alpha_l^+>0$.
		\item There are transmitted generalized Gaussian waves along the outgoing characteristics of the state $u^-$ (i.e. the green waves in Figure \ref{FigGreen}). It corresponds to the third term using the function $T_{l^\prime,l}^+$ in \eqref{decompoGreen}. This explains the restriction of $l\in\lc 1,\ppp,I-1\rc$ which implies that $\alpha_l^-<0$.
		\item Because of the properties of the function $E_{2\mu}$ defined by \eqref{def:H2mu_et_E2mu}, we have that the vectors $E^+_{l^\prime}(n,j_0)$ are close to $0$ for small times $n$ and converge towards ${\lg_{l^\prime}^+}^T$ as $n$ tends towards $+\infty$.  Thus, the last term in the decomposition \eqref{decompoGreen} could be described as the progressive construction of the component of the Green's function along the vector subspace $\ker(Id_{\ell^2}-\Lcc)$ (i.e. the red wave in Figure \ref{FigGreen}). Each wave with speed $\alpha_{l^\prime}^+$ activates part of this profile as they reach the shock. Since $V$ is an eigenvector of $\Lcc$ for the eigenvalue $1$, this part of the Green's function does not decay as $n$ tends towards $+\infty$.
	\end{itemize}

		Before stating Theorem \ref{thStab}, we want to address a slight observation that could be made on Theorem \ref{th:Green}. One might expect that it could be possible to obtain the decomposition \eqref{decompoDerGreen} of the discrete derivative of the temporal Green's function $\Gcc(n,j_0,j)- \Gcc(n,j_0-1,j)$ using the difference of the decomposition \eqref{decompoGreen} of the temporal Green's function for $\Gcc(n,j_0,j)$ and $\Gcc(n,j_0-1,j)$ since \eqref{decompoDerGreen} and \eqref{decompoGreen} look fairly similar. However, we claim that this is non trivial since the reflected and transmitted waves $R_{l^\prime,l}^\pm(n,j_0,j)$ and $T_{l^\prime,l}^\pm(n,j_0,j)$, which are defined in \eqref{def:Termes_decompo_Green}, use the function $H_{2\mu}$ with varying viscosity parameters in their definitions. The known estimates \eqref{inH} of the functions $H_{2\mu}$ would not allow to conclude the proof \eqref{decompoDerGreen} this way. To prove the decomposition \eqref{decompoDerGreen} of the discrete derivative of the temporal Green's function presented in the article, we will rather slightly adapt the proof of the decomposition \eqref{decompoGreen} for the temporal Green's function (see Section \ref{sec:GT}).
		
		As explained at the beginning of this section, the main use of Theorem \ref{th:Green} is to prove decay estimates on the families of operators $\left(\Lcc^n\right)_{n\in\N}$ and $\left(\Lcc^n(Id-\Tc)\right)_{n\in\N}$ on well chosen Banach spaces. Of course, an important step remains to choose the correct Banach spaces on which those operators must act to conclude a nonlinear orbital stability result. This has been done in the scalar case in \cite{Coeuret2024} by considering polynomially weighted $\ell^r$-spaces. Investigating the case of systems of conservation laws remains to be done. However, to display an example of the usefulness and versatility of Theorem \ref{th:Green}, we prove the following decay estimates of the semi-group $(\Lcc^n)_{n\in\N}$ when it acts on $\ell^r(\Z,\C^d)$.

	\begin{theorem}\label{thStab}
		Let us assume that Hypotheses \ref{H:Lax}-\ref{H:Mpm1} are verified. For $r_1,r_2\in[1,+\infty]$ such that $r_1\leq r_2$, there exists positive constant $C$ such that
		$$\forall h\in\ell^{r_1}(\Z,\C^d),\forall n\in\N,\quad \min_{V\in\ker(Id_{\ell^2}-\Lcc)} \left\|\Lcc^n h - V\right\|_{\ell^{r_2}}\leq \frac{C}{n^{\frac{1}{2\mu}\left(\frac{1}{r_1}-\frac{1}{r_2}\right)}}\left\|h\right\|_{\ell^{r_1}}.$$
	\end{theorem}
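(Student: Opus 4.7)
The strategy is to transfer the pointwise information on $\Gcc$ provided by Theorem \ref{th:Green} into $\ell^{r_1} \to \ell^{r_2}$ estimates via discrete Young inequalities applied to the various generalized Gaussian kernels appearing in the decomposition \eqref{decompoGreen}.

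Starting from $(\Lcc^n h)_j = \sum_{j_0 \in \Z} \Gcc(n, j_0, j) h_{j_0}$ and substituting the decomposition \eqref{decompoGreen} together with its analogue for $j_0 < 0$, I would isolate the piece that carries the non-decaying content, namely the contribution proportional to $E_{l^\prime}^\pm(n, j_0) V_j$, and set
$$(V_n^\star)_j := V_j \cdot \left( \sum_{l^\prime = 1}^I {C^E_{l^\prime}}^+ \sum_{j_0 \geq 0} E_{l^\prime}^+(n, j_0) h_{j_0} + \sum_{l^\prime = I}^d {C^E_{l^\prime}}^- \sum_{j_0 < 0} E_{l^\prime}^-(n, j_0) h_{j_0} \right).$$
By \eqref{egV} this vector lies in $\ker(Id_{\ell^2} - \Lcc) = \mathrm{Span}(V)$ and is therefore admissible in the minimization appearing in Theorem \ref{thStab}. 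The difference $\Lcc^n h - V_n^\star$ then contains only the propagating waves $S_l^\pm$, the reflected and transmitted waves $R_{l^\prime, l}^\pm$ and $T_{l^\prime, l}^\pm$, and the residual $\Rc$.

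The heart of the argument is to estimate each of these blocks via a discrete Young-type inequality. The kernel $S_l^\pm(n, j_0, j)$ depends on $(j_0, j)$ only through the difference $j - j_0$ and, by Lemma \ref{lemHE}, is controlled pointwise by a generalized Gaussian of width $n^{1/(2\mu)}$, so that $\|S_l^\pm(n, \cdot)\|_{\ell^p(\Z)} \lesssim n^{-\frac{1}{2\mu}(1 - 1/p)}$; Young's inequality with $1 + 1/r_2 = 1/p + 1/r_1$ then produces precisely the rate $n^{-\frac{1}{2\mu}(1/r_1 - 1/r_2)}$ announced in the statement. The reflection and transmission kernels depend on $(j_0, j)$ only through the affine combination $j - (\alpha_l^\pm / \alpha_{l^\prime}^\pm) j_0$; since the ratios of characteristic speeds are nonzero and bounded thanks to the non-characteristic Lax condition (Hypothesis \ref{H:Lax}), an affine change of variables in $j_0$ reduces them to the convolution situation handled above, with the same Gaussian estimate from Lemma \ref{lemHE}. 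The residual $\Rc$ contributes a Young-type bound of the same form but with strictly faster decay, and is therefore absorbed by the main term.

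The main obstacle I foresee is the rigorous definition of $V_n^\star$ for $h \in \ell^{r_1}$ when $r_1 > 1$: the function $E_{l^\prime}^\pm(n, \cdot)$ is only bounded on $\Z$, so the series $\sum_{j_0} E_{l^\prime}^\pm(n, j_0) h_{j_0}$ need not converge absolutely. The natural remedy is to argue first with compactly supported $h$, for which every sum is finite, and then to extend by density using that $h \mapsto \Lcc^n h - V_n^\star$ is, for each fixed $n$, a bounded linear map from $\ell^{r_1}$ into $\ell^{r_2}$ whose operator norm is controlled by the bounds above; the case $r_1 = +\infty$ can be treated by truncation and a diagonal extraction. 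Once this technicality is cleared, the proof reduces to collecting finitely many generalized Gaussian convolution bounds.
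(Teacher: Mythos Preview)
Your approach is essentially the one taken in the paper: isolate the $E_{l'}^\pm$ contribution as a multiple of $V$ and bound the remaining kernels $S$, $R$, $T$, $\Rc$ as $\ell^{r_1}\to\ell^{r_2}$ operators with the announced rate. Two minor points deserve comment.

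First, your claim that $R_{l',l}^\pm$ and $T_{l',l}^\pm$ depend on $(j_0,j)$ only through an affine combination is not quite accurate: the first argument of $H_{2\mu}$ in \eqref{def:R+}--\eqref{def:T-} involves $j/n$ and $j_0/n$ separately. What is true, thanks to the indicator restricting $\frac{j}{\alpha_l}-\frac{j_0}{\alpha_{l'}}$ to $[n/2,2n]$, is that this first argument stays in a fixed compact subset of $\{\Re\beta>0\}$, so Lemma~\ref{lemHE} yields a uniform generalized Gaussian bound in the affine variable $j-\frac{\alpha_l}{\alpha_{l'}}j_0$. From there a literal discrete change of variables does not work since the ratio is generically non-integer; the paper instead checks the three endpoint cases $(r_1,r_2)\in\{(1,1),(1,\infty),(\infty,\infty)\}$ directly (these are Schur-type bounds, immediate from the Gaussian pointwise estimate) and concludes by Riesz--Thorin interpolation. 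Your Young-inequality phrasing is morally the same once recast this way.

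Second, your worry about the convergence of $V_n^\star$ for $r_1>1$ is unfounded: since $\alpha_{l'}^+<0$ for $l'\le I$, the argument of $E_{2\mu}$ in \eqref{def:E+} becomes positive for $j_0>|\alpha_{l'}^+|n$, and \eqref{inE+} then gives super-exponential decay of $E_{l'}^+(n,j_0)$ in $j_0$. The series therefore converges absolutely for any $h\in\ell^{r_1}$, including $r_1=\infty$; no density argument is needed. The paper disposes of this in one sentence.
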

	
	Let us point out that such decay estimates are similar to the one associated with the heat equation for instance. This is a consequence of the diffusive nature of the numerical scheme introduced in Hypothesis \ref{H:F}.
	
	\subsection{Plan of the paper}
	
	Firstly, Section \ref{sec:Th1<-Th2} will be dedicated to the proof of Theorem \ref{thStab} using Theorem \ref{th:Green}. The main part of the article however (from Sections \ref{sec:GSloin} to \ref{sec:GT}) will concern the proof of Theorem \ref{th:Green}:
	\begin{itemize}
		\item In Section \ref{sec:GSloin}, we will prove Proposition \ref{prop:SpectreEssentielLcc} which describes the spectrum of the operator $\Lcc$ in the set $\Oc$ and will allow to define the spatial Green's function defined by \eqref{defGreenSpatial} on $\overline{\U}\backslash\lc1\rc$. We will then prove Proposition \ref{GreenSpatialFar} which implies exponential bounds on the spatial Green's function in the neighborhood of any point of $\overline{\U}\backslash\lc1\rc$.
		
		\item In Section \ref{sec:GSnear1}, we prove that Proposition \ref{prop:GS_near_1} which claims that the spatial Green's function can be meromorphically extended in a neighborhood of $1$ through the essential spectrum of the operator $\Lcc$. We will show that it has a pole of order $1$ at $z=1$ and find precise expressions \eqref{ExpGs: l geq 0 : j geq l+1}-\eqref{ExpGs: l geq 0 : j < 0} on it that will be essential in Section \ref{sec:GT}.
		
		\item In Section \ref{sec:GT}, we express the temporal Green's function defined by \eqref{defGreenTempo} with the spatial Green's function \eqref{defGreenSpatial} using the inverse Laplace Transform. Using the different results proved on the spatial Green's function in Sections \ref{sec:GSloin} and \ref{sec:GSnear1}, we will conclude the proof of Theorem \ref{th:Green}.
	\end{itemize}
	Section \ref{sec:Num} will be dedicated to a concrete example. We will first show that some of the previously introduced hypotheses are verified for the modified Lax-Friedrichs scheme and then present a numerical example of the result of Theorem \ref{th:Green} when the conservation law we consider is the Burgers equation. Section \ref{sec:Appendix} is the Appendix and contains the proof of some technical lemmas used throughout the paper. 
	
	The author feels like it is important to point out that the proofs of some lemmas  in the present paper are done in other papers. However, the author feels like they needed to be reproved either to correct mistakes or because the way they are presented here is fairly different from the statement in other papers. For instance, the geometric dichotomy (Lemma \ref{lem_geo_dich} below) uses the same proof with slight variations as in \cite[Section III.1.5]{Godillon_these} where it is first introduced.
		
	\section{Proof of linear stability (Theorem \ref{thStab})}\label{sec:Th1<-Th2}
	
	The goal of this section is to prove Theorem \ref{thStab} using the description of the temporal Green's function obtained in Theorem \ref{th:Green}. We recall that for $r_1,r_2\in[1,+\infty]$ such that $r_1\leq r_2$, we have that for all $n\in\N$ and $h\in \ell^{r_1}(\Z,\C^d)$
	\begin{equation}\label{lienLccGreen}
		\Lcc^nh= \left(\sum_{j_0\in\Z}\Gcc(n,j_0,j)h_{j_0}\right)_{j\in \Z}\in\ell^{r_2}(\Z,\C^d).
	\end{equation}
	
	\textbf{\underline{Step 1:}} Let us prove that for all integers $l\in\lc1,\ppp,d\rc$ and for all couples $r_1,r_2\in[1,+\infty]$ such that $r_1\leq r_2$, the operators 
	\begin{equation}\label{def:ope}
		\begin{array}{ccc}
			\ell^{r_1}(\Z,\C^d) & \rightarrow & \ell^{r_2}(\Z,\C^d)\\ 
			h & \mapsto & \left(\sum_{j_0\in \N} \ind_{j-j_0\in\lc-nq,\ppp,np\rc} S_l^+(n,j_0,j)h_{j_0}\right)_{j\in\Z}
		\end{array}
	\end{equation}
	for $n\in\N\backslash\lc0\rc$ are well-defined and that there exists a positive constants $C$ such that
	\begin{equation}\label{in:ope}
		\forall n\in\N\backslash\lc0\rc, \forall h\in \ell^{r_1}(\Z,\C^d),\quad  \left\| \left(\sum_{j_0\in \N} \ind_{j-j_0\in\lc-nq,\ppp,np\rc}S_l^+(n,j_0,j)h_{j_0}\right)_{j\in\Z}\right\|_{\ell^{r_2}}\leq \frac{C}{n^{\frac{1}{2\mu}\left(\frac{1}{r_1}-\frac{1}{r_2}\right)}}\left\|h\right\|_{l^{r_1}}.
	\end{equation}
	We fix an integer $l\in\lc1,\ppp,d\rc$. Using \eqref{inH}, we have that there exist two positive constants $C,c$ such that
	\begin{multline}\label{th2:1}
		\forall n\in\N\backslash\lc0\rc,\forall h\in\ell^{\infty}(\Z,\C^d),\forall j\in\Z,\\ \left|\sum_{j_0\in\N} \ind_{j-j_0\in\lc-nq,\ppp,np\rc}S_l^+(n,j_0,j)h_{j_0}\right|\leq \sum_{j_0\in\N} \frac{C}{n^\frac{1}{2\mu}} \exp\left(-c\left(\frac{|n\alpha_l^++j_0-j|}{n^\frac{1}{2\mu}}\right)^\frac{2\mu}{2\mu-1}\right)\left|h_{j_0}\right|.
	\end{multline}
	We also observe that there exists a constant $\tilde{C}>0$ such that
	\begin{equation}\label{th2:2}
		\forall n\in\N\backslash\lc0\rc,\quad \sum_{j\in\Z} \frac{C}{n^\frac{1}{2\mu}}\exp\left(-c\left(\frac{|n\alpha_l^+-j|}{n^\frac{1}{2\mu}}\right)^\frac{2\mu}{2\mu-1}\right)\leq\tilde{C}.
	\end{equation}
	Let us now prove that, for different choices of couples $r_1,r_2$, the operator defined by \eqref{def:ope} is well-defined and that \eqref{in:ope} is verified.
	\begin{itemize}
		\item Using \eqref{th2:1} and \eqref{th2:2}, we have that:
		$$\forall n\in\N\backslash\lc0\rc,\forall h\in\ell^{\infty}(\Z,\C^d), \forall j\in\Z,\quad \left|\sum_{j_0\in\N} \ind_{j-j_0\in\lc-nq,\ppp,np\rc}S_l^+(n,j_0,j)h_{j_0}\right|\leq \tilde{C}\left\|h\right\|_{\ell^\infty}.$$
		Thus, for $r_1,r_2=+\infty$, the operator defined by \eqref{def:ope} is well-defined and \eqref{in:ope} is verified.
		\item Using \eqref{th2:1}, we have that:
		$$\forall n\in\N\backslash\lc0\rc,\forall h\in\ell^{1}(\Z,\C^d), \forall j\in\Z,\quad \left|\sum_{j_0\in\N} \ind_{j-j_0\in\lc-nq,\ppp,np\rc}S_l^+(n,j_0,j)h_{j_0}\right|\leq \frac{C}{n^\frac{1}{2\mu}}\left\|h\right\|_{\ell^1}.$$
		Thus, for $r_1=1$ and $r_2=+\infty$, the operator defined by \eqref{def:ope} is well-defined and \eqref{in:ope} is verified.
		\item Using \eqref{th2:1}, Fubini-Tonelli's theorem and \eqref{th2:2}, we have that for $n\in\N\backslash\lc0\rc$ and $h\in\ell^{1}(\Z,\C^d)$:
		\begin{multline*}
			\sum_{j\in\Z}\left|\sum_{j_0\in\N} \ind_{j-j_0\in\lc-nq,\ppp,np\rc}S_l^+(n,j_0,j)h_{j_0}\right| \\ \leq \sum_{j_0\in\N} \left(\sum_{j\in\Z}\frac{C}{n^\frac{1}{2\mu}} \exp\left(-c\left(\frac{|n\alpha_l^++j_0-j|}{n^\frac{1}{2\mu}}\right)^\frac{2\mu}{2\mu-1}\right)\right)\left|h_{j_0}\right|\leq \tilde{C}\left\|h\right\|_{\ell^1}.
		\end{multline*}
		Thus, for $r_1=1$ and $r_2=1$, the operator defined by \eqref{def:ope} is well-defined and \eqref{in:ope} is verified.
		\item Up until now, we have proved that for all couple $(r_1,r_2)\in\lc(1,1),(1,+\infty),(+\infty,+\infty)\rc$ the operator defined by \eqref{def:ope} is well-defined and that \eqref{in:ope} is verified. Using Riesz-Thorin Theorem, we can then conclude that, for all couple $(r_1,r_2)\in[1,+\infty]^2$ such that $r_1\leq r_2$, the operator defined by \eqref{def:ope} is well-defined and that \eqref{in:ope} is verified.
	\end{itemize}

	\textbf{\underline{Step 2:}} We claim that, using an identical proof as in Step 1, for all couple $r_1,r_2\in[1,+\infty]$ such that $r_1\leq r_2$, the operators from $\ell^{r_1}(\Z,\C^d)$ to $\ell^{r_2}(\Z,\C^d)$ defined by
	\begin{align}
		\begin{split}
			h\mapsto \left(\sum_{j_0\in \N} \ind_{j-j_0\in\lc-nq,\ppp,np\rc}R_{l^\prime,l}^+(n,j_0,j)h_{j_0}\right)_{j\in\Z} \quad &\quad h \mapsto \left(\sum_{j_0<0} \ind_{j-j_0\in\lc-nq,\ppp,np\rc}R_{l^\prime,l}^-(n,j_0,j)h_{j_0}\right)_{j\in\Z}\\
			h \mapsto \left(\sum_{j_0\in \N} \ind_{j-j_0\in\lc-nq,\ppp,np\rc}T_{l^\prime,l}^+(n,j_0,j)h_{j_0}\right)_{j\in\Z} \quad &\quad h \mapsto \left(\sum_{j_0<0} \ind_{j-j_0\in\lc-nq,\ppp,np\rc}T_{l^\prime,l}^-(n,j_0,j)h_{j_0}\right)_{j\in\Z}\\
			h \mapsto \left(\sum_{j_0<0} \ind_{j-j_0\in\lc-nq,\ppp,np\rc}S_{l}^-(n,j_0,j)h_{j_0}\right)_{j\in\Z}\quad &\quad h\in  \mapsto \left(\sum_{j_0\in\Z} \ind_{j-j_0\in\lc-nq,\ppp,np\rc}\Rc(n,j_0,j)h_{j_0}\right)_{j\in\Z}
		\end{split}\label{th2:3}
	\end{align}
	for $n\in\N\backslash\lc0\rc$ are well-defined and satisfy similar bounds as \eqref{in:ope}. Let us now introduce the linear operators $\Psi_n$ from $\ell^{r_1}(\Z,\C^d)$ to $\ell^{r_2}(\Z,\C^d)$ defined by:
	
		\begin{multline}\label{def:Psi}
			\forall n\in\N\backslash\lc0\rc,\forall h\in\ell^{r_1}(\Z,\C^d),\forall j\in\Z,\\ (\Psi_n h)_j:=\sum_{j_0\in\N}\ind_{j-j_0\in\lc-nq,\ppp,np\rc}\left(\sum_{l^\prime=1}^I {C^E_{l^\prime}}^+E^+_{l^\prime}(n,j_0)h_{j_0}\right)V(j) \\  
			+\sum_{j_0<0}\ind_{j-j_0\in\lc-nq,\ppp,np\rc}\left(\sum_{l^\prime=I}^d {C^E_{l^\prime}}^-E^-_{l^\prime}(n,j_0)h_{j_0} \right) V(j).
		\end{multline}
	
	We recall that the Green's function $\Gcc(n,j_0,j)$ is equal to $0$ when $j-j_0\notin\lc-nq,\ppp,np\rc$. Thus, using the equality \eqref{lienLccGreen} and the decomposition of the Green's function $\Gcc(n,j_0,j)$ when $j-j_0\in\lc-nq,\ppp,np\rc$ given by Theorem \ref{th:Green}, the previous results \eqref{in:ope} we proved on the operators \eqref{def:ope} and \eqref{th2:3} allow us to prove that for all $r_1,r_2\in[1,+\infty]$ such that $r_1\leq r_2$, we have that there exists a positive constant $C$ such that:
	\begin{equation}\label{th2:4}
		\forall h\in \ell^{r_1}(\Z,\C^d), \forall n\in\N\backslash\lc0\rc, \quad \left\|\Lcc^n h -\Psi_nh \right\|_{\ell^{r_2}}\leq \frac{C}{n^{\frac{1}{2\mu}\left(\frac{1}{r_1}-\frac{1}{r_2}\right)}}\left\|h\right\|_{\ell^{r_1}}
	\end{equation}
	
	\textbf{\underline{Step 3:}} Let us prove that for all integers $l^\prime\in\lc1,\ppp,I\rc$, the operators 
	\begin{equation}\label{def:ope2}
		\begin{array}{ccc}
			\ell^{\infty}(\Z,\C^d) & \rightarrow & \ell^{1}(\Z,\C^d)\\ 
			h & \mapsto & \left(\sum_{j_0\in\N} \ind_{j-j_0\notin\lc-nq,\ppp,np\rc} C_{l^\prime}^{E,+} E_{l^\prime}^+(n,j_0)h_{j_0}V(j)\right)_{j\in\Z}
		\end{array}
	\end{equation}
	for $n\in\N\backslash\lc0\rc$ are well-defined and that there exist two positive constants $C,c$ such that
	\begin{equation}\label{in:ope2}
		\forall n\in\N\backslash\lc0\rc, \forall h\in \ell^{\infty}(\Z,\C^d),\quad  \left\| \left(\sum_{j_0\in\N} \ind_{j-j_0\notin\lc-nq,\ppp,np\rc} C_{l^\prime}^{E,+} E_{l^\prime}^+(n,j_0)h_{j_0}V(j)\right)_{j\in\Z}\right\|_{\ell^{1}}\leq Ce^{-cn}\left\|h\right\|_{\ell^{\infty}}.
	\end{equation}
	We recall that \eqref{decExpoV} implies that there exist two positive constants $C_V,c_V$ such that:
	\begin{equation}\label{th2:inV}
		\forall j\in\Z,\quad |V(j)|\leq C_Ve^{-c_V|j|}.
	\end{equation}
	Furthermore, for $l^\prime\in\lc1,\ppp,I\rc$, the CFL condition \eqref{cond:CFL} implies that
	\begin{equation}\label{consCFL}
		0<-\alpha_{l^\prime}^+<\frac{q-\alpha_{l^\prime}^+}{2}<q.
	\end{equation}
	We fix an integer $l^\prime\in\lc1,\ppp,I\rc$. For $h\in\ell^{\infty}(\Z,\C^d)$, $n\in\N\backslash\lc0\rc$ and $j\in\Z$, we have that:
	\begin{multline}\label{th2:6}
		\sum_{j_0\in\N} \left|\ind_{j-j_0\notin\lc-nq,\ppp,np\rc} C_{l^\prime}^{E,+} E_{l^\prime}^+(n,j_0)h_{j_0}V(j)\right|\\ \leq \left|C_{l^\prime}^{E,+}\right|C_V \left[\sum_{j_0\in\left[0,n\frac{q-\alpha_{l^\prime}^+}{2}\right]\cap\N}\ind_{j-j_0\notin\lc-nq,\ppp,np\rc}e^{-c_V|j|} |E_{l^\prime}^+(n,j_0)||h_{j_0}|\right.\\\left.+\sum_{j_0\in\left[n\frac{q-\alpha_{l^\prime}^+}{2},+\infty\right[\cap\N}\ind_{j-j_0\notin\lc-nq,\ppp,np\rc}e^{-c_V|j|} |E_{l^\prime}^+(n,j_0)||h_{j_0}|\right] .
	\end{multline}
	
	\begin{itemize}
		\item Using \eqref{consCFL}, we observe that there exists a positive constant $c>0$ such that for all $j_0\in \left[0,n\frac{q-\alpha_{l^\prime}^+}{2}\right]\cap\N$ and $j\in\Z$ which verify $j-j_0\notin\lc-nq,\ppp,np\rc$, then:
		$$|j|\geq cn.$$
		Therefore, since the function $E_{2\mu}$ is bounded, we have that there exists a positive constant $C$ such that for all $h\in\ell^{\infty}(\Z,\C^d)$ and $n\in\N\backslash\lc0\rc$:
		$$\sum_{j\in\Z} \sum_{j_0\in\left[0,\frac{q-\alpha_{l^\prime}^+}{2}n\right]\cap\N}\ind_{j-j_0\notin\lc-nq,\ppp,np\rc}e^{-c_V|j|} |E_{l^\prime}^+(n,j_0)||h_{j_0}|\leq C\left(\sum_{j\in\Z}e^{-\frac{c_V}{2}|j|}\right)ne^{-\frac{c_V}{2}cn}\left\|h\right\|_{\ell^\infty}. $$
		Therefore, there exist two new positive constants $C,c$ such that for all $h\in\ell^{\infty}(\Z,\C^d)$ and $n\in\N\backslash\lc0\rc$:
		\begin{equation}\label{th2:7}
			\sum_{j\in\Z} \sum_{j_0\in\left[0,\frac{q-\alpha_{l^\prime}^+}{2}n\right]\cap\N}\ind_{j-j_0\notin\lc-nq,\ppp,np\rc}e^{-c_V|j|} |E_{l^\prime}^+(n,j_0)||h_{j_0}|\leq Ce^{-cn}\left\|h\right\|_{\ell^\infty}. 
		\end{equation}
		\item Using \eqref{inE+}, there exist two positive constants $C,c$ such that for all $h\in\ell^{\infty}(\Z,\C^d)$ and $n\in\N\backslash\lc0\rc$:
		\begin{multline*}
			\sum_{j\in\Z} \sum_{j_0\in\left[\frac{q-\alpha_{l^\prime}^+}{2}n,+\infty\right[\cap\N}\ind_{j-j_0\notin\lc-nq,\ppp,np\rc}e^{-c_V|j|} |E_{l^\prime}^+(n,j_0)||h_{j_0}| \\ \leq C\left\|h\right\|_{\ell^\infty} \left(\sum_{j\in\Z}e^{-c_V|j|}\right)\sum_{j_0\in\left[\frac{q-\alpha_{l^\prime}^+}{2}n,+\infty\right[\cap\N}\exp\left(-c\left(\frac{|n\alpha_{l^\prime}^++j_0|}{n^\frac{1}{2\mu}}\right)^\frac{2\mu}{2\mu-1}\right). 
		\end{multline*}
		
		Therefore, using \eqref{consCFL},  there exist two new positive constants $C,c$ such that for all $h\in\ell^{\infty}(\Z,\C^d)$ and $n\in\N\backslash\lc0\rc$:
		\begin{equation}\label{th2:8}
			\sum_{j\in\Z} \sum_{j_0\in\left[\frac{q-\alpha_{l^\prime}^+}{2}n,+\infty\right[\cap\N}\ind_{j-j_0\notin\lc-nq,\ppp,np\rc}e^{-c_V|j|} |E_{l^\prime}^+(n,j_0)||h_{j_0}|\leq Ce^{-cn}\left\|h\right\|_{\ell^\infty}. 
		\end{equation}
	\end{itemize}
	
	Thus, combining \eqref{th2:6}, \eqref{th2:7} and \eqref{th2:8}, we can conclude the proof of \eqref{in:ope2}.

			\textbf{\underline{Step 4:}} Using a similar proof as in Step 3 for the operator from $\ell^\infty(\Z,\C^d)$ to $\ell^1(\Z,\C^d)$ defined by:
		\begin{align*}
			h  \mapsto  \left(\sum_{j_0<0} \ind_{j-j_0\notin\lc-nq,\ppp,np\rc} C_{l^\prime}^{E,-} E_{l^\prime}^-(n,j_0)h_{j_0}V(j)\right)_{j\in\Z},
		\end{align*}
		we can obtain similar bounds as \eqref{in:ope2}. We can then prove that there exist two positive constants $C,c$ such that: 
		\begin{multline}\label{th2:5}
			\forall h\in \ell^{r_1}(\Z,\C^d), \forall n\in\N\backslash\lc0\rc, \\ \left\|\Psi_n h - \left( \sum_{j_0\in\N}\sum_{l^\prime=1}^I {C^E_{l^\prime}}^+E^+_{l^\prime}(n,j_0)h_{j_0} +\sum_{j_0<0}\sum_{l^\prime=I}^d {C^E_{l^\prime}}^-E^-_{l^\prime}(n,j_0) h_{j_0} \right) V\right\|_{\ell^{r_2}} \\ \leq \frac{C}{n^{\frac{1}{2\mu}\left(\frac{1}{r_1}-\frac{1}{r_2}\right)}}\left\|h\right\|_{\ell^{r_1}}
		\end{multline} 
		where the operator $\Psi_n$ is defined by \eqref{def:Psi}. Using \eqref{th2:4} and \eqref{th2:5}, we can conclude that there exists a positive constant $C>0$ such that:
		\begin{multline*}
			\forall h\in \ell^{r_1}(\Z,\C^d), \forall n\in\N\backslash\lc0\rc, \\ \left\|\Lcc^n h - \left( \sum_{j_0\in\N}\sum_{l^\prime=1}^I {C^E_{l^\prime}}^+E^+_{l^\prime}(n,j_0)h_{j_0} +\sum_{j_0<0}\sum_{l^\prime=I}^d {C^E_{l^\prime}}^-E^-_{l^\prime}(n,j_0) h_{j_0} \right) V\right\|_{\ell^{r_2}}\\\leq \frac{C}{n^{\frac{1}{2\mu}\left(\frac{1}{r_1}-\frac{1}{r_2}\right)}}\left\|h\right\|_{\ell^{r_1}}.
		\end{multline*}
		Since the sequence $V\in \ell^1(\Z,\C^d)$ defined in Theorem \ref{th:Green} verifies that:
		$$\ker(Id_{\ell^2}-\Lcc)=\mathrm{Span} V,$$
		this allows us to conclude the proof of Theorem \ref{thStab}.

	 \section{Local exponential bounds on the spatial Green's function for \texorpdfstring{$z$}{z} far from \texorpdfstring{$1$}{1}}\label{sec:GSloin}
	 
	 In this section, the goal is twofold:
	 
	 \begin{itemize}
	 	\item In order to determine where the spatial Green's function is defined, we want to study the spectrum of the operator $\Lcc$ in the set $\Oc$ (i.e. outside of the curves representing the spectrum of the limit operators $\Lcc^\pm$). More precisely, we will prove Lemma \ref{lem_spec_ess} below that characterizes the eigenvalues of $\Lcc$ in the set $\Oc$ and states that there is no essential spectrum of the operator $\Lcc$ which lies in the set $\Oc$. This result was already proved in \cite[Theorem 4.1]{Serre}. As a direct consequence, we will have proved that the elements of the set $\Oc$ are either in the resolvent set of the operator $\Lcc$ or are eigenvalues of $\Lcc$. Using Hypothesis \ref{H:spec}, we can thus deduce that the set $\overline{\U}\backslash\lc1\rc$ is included in the resolvent set of $\Lcc$ and that the spatial Green's function can be defined in a neighborhood of any point of $\overline{\U}\backslash\lc1\rc$.
	 	\item We will prove Proposition \ref{GreenSpatialFar} below that introduces locally uniform exponential bounds on the spatial Green's function $G(z,j_0,\cdot)$ when $z$ belongs to $\overline{\U}\backslash\lc1\rc$ and $j_0\in\Z$. We will see later on in Section \ref{sec:GSnear1} that the study of the spatial Green's function for $z$ near $1$ will require some special care and that it is a more refined analysis of the case where $z$ is in $\overline{\U}\backslash\lc1\rc$. It might be important to keep in mind that a lot of the tools we will introduce in Section \ref{sec:GSloin} will also be useful in Section \ref{sec:GSnear1} to deal with the case where $z$ is close to $1$.
	 \end{itemize}
	 
	 The main ideas of this section will be to characterize the solutions of the eigenvalue problem associated with the operator $\Lcc$ by using solutions of a discrete dynamical system of finite dimension.
	 
	 We will then define a central tool for our analysis : the \textit{geometric dichotomy} introduced by Lafitte-Godillon in her thesis \cite{Godillon_these} and based on the exponential dichotomy coined by Coppel in \cite{Coppel}. We will take some time to rewrite the proofs of some lemmas even though most of the ideas can already be found in the previously cited texts. This will motivate the introduction of several notations and will make several crucial bounds precise.
	 
	 \subsection{Rewriting the eigenvalue problem as a dynamical system}\label{subsec:GSloin:ExpEigen}
	 
	As we explained, one of our objectives is to study the spectrum of the operator $\Lcc$. In this section, we express the eigenvalue problem $(zId_{\ell^2}-\Lcc)u=0$ as a discrete dynamical system. We will define a few important mathematical objects that will appear often throughout this article.

		For $z \in \C$, $j\in\Z$ and $k\in\lc-p,\ppp,q\rc$,  we define the matrices 
		\begin{equation}
				\A_{j,k}(z)=z\delta_{k,0}Id-A_{j,k},\quad \text{ and } \quad  \A_k^\pm(z)=z\delta_{k,0}Id-A_k^\pm.\label{def:AAjk}
		\end{equation}
		We recall that Hypothesis \ref{H:inv} implies that for all $z \in \C$ and $j\in \Z$, the matrices $\A_{j,-p}(z)$, $\A_{j,q}(z)$, $\A_{-p}^\pm(z)$ and $\A_q^\pm(z)$ are invertible and we can thus introduce the matrices :
		\begin{subequations}\label{def:MjM}
			\begin{equation}\label{def:Mj}
				\forall j\in\Z, \forall z \in \C, \quad M_j(z):=\begin{pmatrix}-\A_{j,q}(z)^{-1}\A_{j,q-1}(z) & \ppp & \ppp & -\A_{j,q}(z)^{-1}\A_{j,-p}(z) \\
				Id & 0 &\ppp   &  0\\
				0 & \ddots  &\ddots & \vdots \\
				0 &  0 & Id & 0\end{pmatrix}\in\Mc_{d(p+q)}(\C)
			\end{equation}
			and :
			\begin{equation}\label{def:M}
				\forall z \in\C, \quad M^\pm(z):=\begin{pmatrix}-\A_q^\pm(z)^{-1}\A_{q-1}^\pm(z) & \ppp & \ppp & -\A_q^\pm(z)^{-1}\A_{-p}^\pm(z) \\
				Id & 0 &\ppp   &  0\\
				0 & \ddots  &\ddots & \vdots \\
				0 &  0 & Id & 0\end{pmatrix}\Mc_{d(p+q)}(\C).
			\end{equation}
		\end{subequations}
		which are well-defined and are furthermore invertible. We observe that for all $z\in \C$ the family of matrices $(M_j(z))_{j\in \Z}$ converges towards $M^\pm(z)$ as $j$ tends towards $\pm\infty$. If we define for every $z \in \C$ and $j\in \Z$ the matrices
		$$\Ec_j^\pm(z):= M_j(z)-M^\pm(z),$$
		then Hypothesis \ref{H:CVexpo} implies the following lemma.

	\begin{lemma}
		There exists a constant $\alpha>0$ such that for every bounded set $U$ of $\C$, there exists a constant $C>0$ such that
		\begin{equation}\label{CV_expo_mat}
			\forall z \in U, \forall j\in\N, \quad \begin{array}{c}  |\Ec_j^+(z)|\leq Ce^{-\alpha j}, \\
				|\Ec_{-j}^-(z)|\leq Ce^{-\alpha j}.
			\end{array}
		\end{equation}
	\end{lemma}
	
	\begin{proof}
		We have that
		$$\Ec^\pm_j(z)= \begin{pmatrix}
			\varepsilon_{j,q-1}^\pm(z) & \ppp & \varepsilon_{j,-p}^\pm(z) \\
			0 & \ppp & 0 \\
			\vdots & \vdots & \vdots \\
			0 & \ppp & 0
		\end{pmatrix}$$
		where 
		$$\varepsilon_{j,k}^\pm(z):= \lc\begin{array}{cc}
			(A_q^\pm)^{-1}A_k^\pm -(A_{j,q})^{-1}A_{j,k} & \text{ if }k\in\lc-p,\ppp,q-1\rc\backslash \lc0\rc, \\ 
			(A_q^\pm)^{-1}A_0^\pm -(A_{j,q})^{-1}A_{j,0} - z ((A_q^\pm)^{-1}-(A_{j,q})^{-1}) & \text{ if }k=0.
		\end{array}\right.$$
		Using Hypothesis \ref{H:CVexpo}, we conclude the proof of \eqref{CV_expo_mat} and observe that the constant $\alpha$ can be taken uniformly on $\C$ but the constant $C$ must depend on $z$.
	\end{proof}
	
	Let us consider $u\in \ell^2(\Z,\C^d)$ such that 
	$$(zId_{\ell^2}-\Lcc) u=0.$$
	If we define for all $j\in \Z$ the vector
	$$W_j=\begin{pmatrix}u_{j+q-1}\\ \vdots \\ u_{j-p}\end{pmatrix},$$
	we observe that $(W_j)_{j\in\Z} \in\ell^2\left(\Z,\C^{d(p+q)}\right)$ which implies that the vector $W_j$ converges towards $0$ as $j$ tends towards $\pm\infty$. We also observe that 
	\begin{equation}\label{syst_dyn}
		\forall j \in \Z, \quad W_{j+1} = M_j(z)W_j.
	\end{equation}
	
	To study the solutions of the dynamical system \eqref{syst_dyn}, we define the family $(X_j(z))_{j\in\Z}\in\Mc_{d(p+q)}(\C)$ of fundamental matrices defined by
	\begin{equation}\label{mat_fond}
		\begin{array}{c} \forall j \in \Z, \quad X_{j+1}(z)= M_j(z)X_j(z),\\ X_0(z) =Id.\end{array} 
	\end{equation}
	We observe that a solution $(W_j)_{j\in\Z}$ of the dynamical system \eqref{syst_dyn} thus verifies:
	$$\forall j\in \Z,\quad W_j = X_j(z)W_0.$$
	To find the eigenvalues of the operator $\Lcc$, we thus need to search for the solutions $(W_j)_{j\in\Z}$ of the dynamical system \eqref{syst_dyn} which belong to $\ell^2$ and therefore converge towards $0$ when $j$ tends to $\pm\infty$. We thus introduce the following sets for any $z\in \C$:
	\begin{subequations}\label{def:EpmE0pm}
		\begin{align}
			E^\pm(z)&:= \lc(W_j)_{j\in \Z}\in \left(\C^{(p+q)d}\right)^\Z \text{ solution of }\eqref{syst_dyn} \text{ such that } W_j\underset{j\rightarrow \pm\infty}\rightarrow0\rc \label{def:Epm},\\
			E^\pm_0(z)&:= \lc W_0\in \C^{(p+q)d}, \quad (X_j(z)W_0)_{j\in \Z}\in E^\pm(z)\rc. \label{def:E0pm}
		\end{align}
	\end{subequations}
	The sets correspond to the solutions of the dynamical system \eqref{syst_dyn} which converge towards $0$ as $j$ tends towards $\pm\infty$ and to their traces at $j=0$.

	\subsection{Spectral splitting: study of the spectrum of \texorpdfstring{$M^\pm(z)$}{M+-(z)}}\label{subsec:GSloin:SpecSpl}
	
	Since the matrices $M_j(z)$ converge towards $M^\pm(z)$ as $j$ converges towards $\pm\infty$, the dynamical system \eqref{syst_dyn} can be considered to be perturbations respectively for $j\in \N$ and $j\in -\N$ of the dynamical systems
	\begin{subequations}\label{systDyn+-}
		\begin{align}
			\forall j\in\N, \quad &W_{j+1} = M^+(z) W_j,  \label{systDyn+}\\
			\forall j\in-\N, \quad &W_{j+1} = M^-(z) W_j.  \label{systDyn-}
		\end{align}
	\end{subequations}
	To study the solutions of \eqref{syst_dyn} which converge towards $0$ as $j$ tends to $\pm\infty$, we will study solutions converging towards $0$ of the dynamical systems \eqref{systDyn+} and \eqref{systDyn-}. This relies on studying the spectrum of the matrices $M^\pm(z)$. 
	
	Using the eigenvalues $\lambda_{l,k}^\pm$ of the matrix $A_k^\pm$ defined by \eqref{def:lambda_k}, we introduce the scalar quantities:
	\begin{equation}\label{def:Lambda}
		\forall z \in \C, \forall l\in\lc 1 ,\ppp, d\rc , \forall k \in \lc-p,\ppp,q\rc, \quad \Lambda_{l,k}^\pm(z):= z \delta_{k,0}-\lambda_{l,k}^\pm,
	\end{equation}
	and the matrices:
	\begin{equation}\label{def:Mlpm}
		\forall z \in \C, \forall l\in\lc 1 ,\ppp, d\rc , \quad M^\pm_l(z):=\begin{pmatrix}-\Lambda_{l,q}^\pm(z)^{-1}\Lambda_{l,q-1}^\pm(z) & \ppp & \ppp & -\Lambda_{l,q}^\pm(z)^{-1}\Lambda_{l,-p}^\pm(z) \\
			1 & 0 &\ppp   &  0\\
			0 & \ddots  &\ddots & \vdots \\
			0 &  0 & 1 & 0\end{pmatrix}.
	\end{equation}
	Hypothesis \ref{H:inv} implies that the matrices $A_{-p}^\pm$ and $A_q^\pm$ are invertible so $\Lambda_{l,-p}^\pm(z),\Lambda_{l,q}^\pm(z)\neq 0$. Thus, the matrices $M^\pm_l(z)$ are well-defined and invertible. We have the following result.
	
	\begin{lemma}\label{decomp_mat}
		There exist invertible matrices $Q^\pm\in \Mc_{(p+q)d}(\C)$ such that
		$$\forall z \in \C, \quad M^\pm(z) = \left(Q^\pm\right)^{-1}\begin{pmatrix}
			M_1^\pm(z) & & \\ & \ddots & \\ & & M_d^\pm(z)
		\end{pmatrix}Q^\pm.$$
	\end{lemma}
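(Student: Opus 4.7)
The plan is to reduce $M^\pm(z)$ to a block diagonal form by performing two successive changes of basis: first a simultaneous diagonalization at the level of the $d\times d$ blocks using Hypothesis~\ref{H:VPAk}, then a permutation that regroups the scalar contributions by eigendirection.

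First, observe that $M^\pm(z)$ has a $(p+q)\times(p+q)$ block structure whose entries are $d\times d$ matrices, each of which is a polynomial in $z$, the $A_k^\pm$'s, and $(A_q^\pm)^{-1}$. By Hypothesis~\ref{H:VPAk}, all the matrices $A_k^\pm$ are simultaneously diagonalized by the eigenbasis matrix $\Pg^\pm$: namely $(\Pg^\pm)^{-1} A_k^\pm \Pg^\pm = \mathrm{diag}(\lambda_{1,k}^\pm,\ldots,\lambda_{d,k}^\pm)$ by \eqref{def:lambda_k}, hence also $(\Pg^\pm)^{-1} \A_k^\pm(z) \Pg^\pm = \mathrm{diag}(\Lambda_{1,k}^\pm(z),\ldots,\Lambda_{d,k}^\pm(z))$ thanks to \eqref{def:Lambda}. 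Therefore, conjugating $M^\pm(z)$ by the block-diagonal matrix $D:=\mathrm{diag}(\Pg^\pm,\ldots,\Pg^\pm)\in\Mc_{d(p+q)}(\C)$ (with $p+q$ copies of $\Pg^\pm$ on the diagonal) yields a matrix $\widehat{M}^\pm(z):=D^{-1}M^\pm(z)D$ whose $(p+q)\times(p+q)$ block structure has diagonal $d\times d$ blocks. The invertibility of $\Lambda_{l,-p}^\pm(z)$ and $\Lambda_{l,q}^\pm(z)$ ensured by Hypothesis~\ref{H:inv} makes $M_l^\pm(z)$ well defined, and the expression of $\widehat{M}^\pm(z)$ matches the one obtained by stacking the scalar companion matrices $M_l^\pm(z)$ along each of the $d$ scalar directions.

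Next, introduce the permutation $\sigma$ of $\{1,\ldots,d(p+q)\}$ that sends $(l-1)(p+q)+i$ to $(i-1)d + l$ for $l\in\{1,\ldots,d\}$ and $i\in\{1,\ldots,p+q\}$, and let $P\in\Mc_{d(p+q)}(\C)$ be the associated permutation matrix. Because conjugation by $P^{-1}$ regroups the $l$-th entries of each block together, the computation of $P^{-1}\widehat{M}^\pm(z)P$ factors over $l$: for each $l$, the subspace spanned by the vectors associated with index $l$ is invariant, and the induced action is precisely the scalar companion matrix $M_l^\pm(z)$ given by \eqref{def:Mlpm}. Thus
\[
P^{-1}\widehat{M}^\pm(z)P = \begin{pmatrix} M_1^\pm(z) & & \\ & \ddots & \\ & & M_d^\pm(z) \end{pmatrix}.
\]

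Setting $Q^\pm := D P$, which is invertible and independent of $z$ since both $D$ and $P$ are, yields the claimed identity. There is no real obstacle: the content reduces to carefully tracking the two changes of basis, and the main verification to do carefully is that the combinatorics of the permutation $\sigma$ correctly matches the companion block structure of $\widehat{M}^\pm(z)$ with the companion matrices $M_l^\pm(z)$, which is immediate from the diagonal nature of the blocks of $\widehat{M}^\pm(z)$.
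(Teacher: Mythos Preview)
Your proof is correct and follows essentially the same approach as the paper: first conjugate by the block-diagonal matrix $\mathrm{diag}(\Pg^\pm,\ldots,\Pg^\pm)$ to make all $d\times d$ blocks diagonal, then apply a permutation that regroups the scalar entries by eigendirection. You are simply more explicit than the paper in writing down the permutation $\sigma$.
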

	
	\begin{proof}
		Recalling the definition \eqref{def:P} of the matrices $\Pg^\pm$, we observe that
		\begin{equation}\label{eq_mat}\begin{pmatrix}
				\Pg^\pm & & \\
				&\ddots & \\
				& & \Pg^\pm
			\end{pmatrix}^{-1} M^\pm(z)\begin{pmatrix}
				\Pg^\pm & & \\
				&\ddots & \\
				& & \Pg^\pm
			\end{pmatrix}= \begin{pmatrix}-\D_{q}^\pm(z)^{-1}\D_{q-1}^\pm(z) & \ppp & \ppp & -\D_{q}^\pm(z)^{-1}\D_{-p}^\pm(z) \\
				Id & 0 &\ppp   &  0\\
				0 & \ddots  &\ddots & \vdots \\
				0 &  0 & Id & 0\end{pmatrix}\end{equation}
		where 
		$$\forall k \in\lc-p,\ppp,q\rc,\quad \D^\pm_k(z)=\begin{pmatrix}\Lambda^\pm_{1,k}(z)& & \\ & \ddots & \\ & &\Lambda^\pm_{d,k}(z) \end{pmatrix}.$$
		Then, in the right hand term's matrix of \eqref{eq_mat}, by reassembling the first columns of each blocks, then the second columns, \ppp and then doing the same for the lines, we prove that the matrix $M^\pm(z)$ is similar to the block diagonal matrix
		$$\begin{pmatrix}
			M_1^\pm(z) & & \\
			& \ddots & \\
			& & M_d^\pm(z)
		\end{pmatrix}.$$
	\end{proof}
	
	The following lemma is due to Kreiss (see \cite{Kreiss}) and describes precisely the spectrum of the matrix $M_l^\pm(z)$ as $z$ belongs to $\Oc\cup\lc1\rc$ and $l\in\lc1,\ppp,d\rc$.
	\begin{lemma}[Spectral Splitting]\label{lem:SpecSpl}
		\begin{itemize}
			\item For $z\in \C$ and $l\in\lc1,\ppp,d\rc$, $\kappa\in\C$ is an eigenvalue of $M_l^\pm(z)$ if and only if $\kappa\neq 0$ and 
			$$\Fc_l^\pm(\kappa)=z.$$
			\item Let $z\in \Oc$ and $l\in\lc1,\ppp,d\rc$. Then the companion matrix $M_l^\pm(z)$ has
			\begin{itemize}
				\item no eigenvalue on $\S^1$,
				\item $p$ eigenvalues in $\D\backslash\lc0\rc$ (that we call stable eigenvalues),
				\item  $q$ eigenvalues in $\U$ (that we call unstable eigenvalues).
			\end{itemize}
			\item We also have that
			\begin{itemize}
				\item if $\alpha_l^\pm>0$, $M_l^\pm(1)$ has $1$ as a simple eigenvalue, $p-1$ eigenvalues in $\D\backslash\lc0\rc$ and $q$ eigenvalues in $\U$.
				\item if $\alpha_l^\pm<0$, $M_l^\pm(1)$ has $1$ as a simple eigenvalue, $p$ eigenvalues in $\D\backslash\lc0\rc$ and $q-1$ eigenvalues in $\U$.
			\end{itemize}
		\end{itemize}
	\end{lemma}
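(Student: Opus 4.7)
The plan is to handle the three bullets in turn using (i) a companion-matrix computation, (ii) the argument principle combined with the connectivity of $\Oc$, and (iii) a perturbation analysis near $(z,\kappa)=(1,1)$.

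For the first bullet, $M_l^\pm(z)$ is a companion-type matrix of size $p+q$. Writing the eigenvalue equation $M_l^\pm(z)W = \kappa W$ row by row, the lower rows force $W_i = \kappa^{p+q-i}W_{p+q}$, and substitution into the top row produces $P_l(\kappa;z)W_{p+q}=0$, where
\begin{equation*}
P_l(\kappa;z) := \sum_{k=-p}^{q}\Lambda_{l,k}^\pm(z)\kappa^{p+k} = \kappa^p\bigl(z - \Fc_l^\pm(\kappa)\bigr).
\end{equation*}
Hypothesis \ref{H:inv} ensures $\Lambda_{l,q}^\pm(z),\Lambda_{l,-p}^\pm(z)\neq 0$, so $P_l(\cdot\,;z)$ is a polynomial of degree $p+q$ with nonzero constant term; its $p+q$ nonzero roots counted with multiplicity are exactly the eigenvalues of $M_l^\pm(z)$, which is the first bullet.

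For the second bullet, I view $\Fc_l^\pm$ as a rational map on $\hat{\C}$ of total degree $p+q$ with a pole of order $p$ at $0$ and order $q$ at $\infty$. The argument principle applied on $\S^1$ yields
\begin{equation*}
\#\bigl\{\kappa\in\D : \Fc_l^\pm(\kappa)=z\bigr\} = p + \mathrm{wind}\bigl(\Fc_l^\pm(\S^1),z\bigr),
\end{equation*}
where the winding number is locally constant on $\C\setminus\Fc_l^\pm(\S^1)$ and vanishes on its unbounded component. Since $\Fc_l^\pm(\S^1)\subset \sigma(\Lcc^+)\cup\sigma(\Lcc^-)$, the connected set $\Oc$ sits inside that unbounded component and the winding vanishes throughout $\Oc$. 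Hence for $z\in\Oc$ exactly $p$ eigenvalues lie in $\D$, and by the first bullet they lie in $\D\setminus\{0\}$; the remaining $q$ lie in $\U$, and none on $\S^1$ since $z\notin\sigma(\Lcc^\pm)$.

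For the third bullet, \eqref{eg:FcFin} gives $\Fc_l^\pm(1)=1$ and $(\Fc_l^\pm)'(1)=-\alpha_l^\pm\neq 0$, so $\kappa=1$ is a simple zero of $\Fc_l^\pm(\cdot)-1$ and hence a simple eigenvalue of $M_l^\pm(1)$. The dissipativity condition in Hypothesis \ref{H:F} gives $|\Fc_l^\pm(\kappa)|<1$ on $\S^1\setminus\{1\}$, so $\kappa=1$ is the only eigenvalue of $M_l^\pm(1)$ on $\S^1$. To count the remaining $p+q-1$ roots, I apply the implicit function theorem near $(z,\kappa)=(1,1)$ to obtain the branch $\kappa(z)=1-(z-1)/\alpha_l^\pm+O((z-1)^2)$, then take $z=1+\epsilon$ with $\epsilon>0$ small; dissipativity yields $\sigma(\Lcc^\pm)\subset\overline{\D}$, so the ray $(1,+\infty)\subset\U$ sits in $\Oc$ and the second bullet applies at $z=1+\epsilon$. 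Continuity of roots in $z$ together with the uniqueness of $\kappa=1$ on $\S^1$ imply that only this branch reaches $\S^1$ as $\epsilon\downarrow 0$, and the sign of $\alpha_l^\pm$ dictates whether $\kappa(1+\epsilon)$ lies in $\D$ (case $\alpha_l^\pm>0$) or in $\U$ (case $\alpha_l^\pm<0$), giving the claimed redistribution. The main obstacle is precisely this direction-tracking: tracking whether the "missing" eigenvalue is drawn out of $\D$ or $\U$, and ruling out simultaneous crossings of $\S^1$. Both points are secured by the diffusivity/dissipativity expansion \eqref{F}, which simultaneously provides the approach path in $\Oc$ and the uniqueness of $1$ as a preimage of $1$ on $\S^1$.
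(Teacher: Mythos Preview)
Your proof is correct and follows essentially the standard Kreiss-type argument that the paper invokes by citing \cite[Lemma 1]{C-F} rather than giving its own proof. The companion-matrix computation, the argument-principle count on $\Oc$ via the vanishing winding number on the unbounded component, and the perturbation along $z=1+\epsilon$ with the implicit-function branch $\kappa(z)=1-(z-1)/\alpha_l^\pm+O((z-1)^2)$ are precisely the ingredients one expects; nothing is missing.
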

	
	Lemma \ref{lem:SpecSpl} is proved in \cite[Lemma 1]{C-F} (see also \cite{Kreiss}). For $z\in\Oc$, combining the consequences of Lemmas \ref{decomp_mat} and \ref{lem:SpecSpl}, the matrix $M^\pm(z)$ only has eigenvalues in $\D$ or $\U$. Also, if we define the space $E^s(M^\pm(z))$ (resp. $E^u(M^\pm(z))$) which is the strictly stable (resp. strictly unstable) subspace of $M^\pm(z)$ corresponding to the subspace spanned by the generalized eigenvectors of $M^\pm(z)$ associated with eigenvalues in $\D$ (resp. $\U$), then we have $\dim E^s(M^\pm(z))= dp$, $\dim E^u(M^\pm(z))= dq$ and
	\begin{equation}\label{decompo:C^d(p+q)}
		\C^{(p+q)d} = E^s(M^\pm(z)) \oplus E^u(M^\pm(z)).
	\end{equation}
	
		We consider $P_s^\pm(z)$ and $P_u^\pm(z)$ the associated projectors in $\C^{d(p+q)}$. Those projectors can be expressed as contour integrals (see \cite[Chapter 2 Section 1.4]{Kato}). For instance, we have:
	
	$$P_s^\pm(z) = \frac{1}{2i\pi}\int_{\gamma} (t Id - M^\pm(z))^{-1} dt$$
	where $\gamma$ is a simple closed positively oriented contour which surrounds the stable eigenvalues of $M^\pm(z)$ and not the unstable ones ($\S^1$ is a good candidate). Therefore, the projectors $P_s^\pm(z)$ and $P_u^\pm(z)$ depend holomorphically on $z\in \Oc$.

	\subsection{"Local" geometric dichotomy}\label{subsec:GeoDich}
	
	The conclusion of the study of the spectrum of the matrices $M^\pm(z)$ done in Section \ref{subsec:GSloin:SpecSpl} is that the vector space of solutions of \eqref{systDyn+} (resp. \eqref{systDyn-}) converging towards $0$ as $j$ tends towards $+\infty$ (resp. $-\infty$) has dimension $dp$ (resp. $dq$) and can be characterized by using the spectral projector $P^+_s(z)$ (resp. $P^-_u(z)$). We recall that \eqref{syst_dyn} is a perturbation of the dynamical systems \eqref{systDyn+} and \eqref{systDyn-}. Thus, we could expect for the vector spaces $E^+(z)$ and $E^+_0(z)$ (resp. $E^-(z)$ and $E^-_0(z)$) defined by \eqref{def:Epm} and \eqref{def:E0pm} to be of dimension $dp$ (resp. $dq$) and we would want some way to characterize their elements. 
	
	In the present section, our goal is to construct projectors which will play for the dynamical system \eqref{syst_dyn} a similar role as the spectral projectors $P^+_s(z)$ and $P^-_u(z)$ for the dynamical systems \eqref{systDyn+} and \eqref{systDyn-}. This is the aim of the following lemma.
	
	\begin{lemma}[Geometric dichotomy]\label{lem_geo_dich}
		For any bounded open set $U$ such that $\overline{U}\subset \Oc$, there exist two holomorphic functions $Q^\pm_U:U\rightarrow \Mc_{(p+q)d}(\C)$ such that
		\begin{itemize}
			\item For all $z \in U$, $Q^\pm_U(z)$ is a projector and we have
			$$\dim \Im Q^+_U(z) = \dim\ker Q^-_U(z) = dp \quad \text{ and } \quad \dim \Im Q^-_U(z) = \dim\ker Q^+_U(z) = dq.$$
			\item There exist two positive constants $C,c$ such that for all $z\in U$, there holds:
			\begin{subequations}
				\begin{align}
					\forall j\geq k\geq 0, &\quad \left|X_j(z)Q^+_U(z)X_k(z)^{-1}\right|\leq C e^{-c|j-k|},\label{in_geo_dich_+_1}\\
					\forall k\geq j\geq 0, &\quad \left|X_j(z)(Id-Q^+_U(z))X_k(z)^{-1}\right|\leq C e^{-c|j-k|},\label{in_geo_dich_+_2}\\
					\forall j\leq k \leq 0,& \quad \left|X_j(z)Q^-_U(z)X_k(z)^{-1}\right|\leq C e^{-c|j-k|},\label{in_geo_dich_-_1}\\
					\forall k\leq j \leq 0, &\quad \left|X_j(z)(Id-Q^-_U(z))X_k(z)^{-1}\right|\leq C e^{-c|j-k|}\label{in_geo_dich_-_2}.
				\end{align}
			\end{subequations}
		\end{itemize}
	\end{lemma}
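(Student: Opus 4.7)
The plan is to prove this by a discrete roughness theorem for exponential dichotomies, exploiting that \eqref{syst_dyn} is, for $j \to +\infty$ (resp.\ $j \to -\infty$), an exponentially small perturbation of the autonomous system \eqref{systDyn+} (resp.\ \eqref{systDyn-}), as quantified by \eqref{CV_expo_mat}. I will construct $Q_U^+$ in detail; the construction of $Q_U^-$ is entirely symmetric, with $M^-(z)$ and its unstable projector in place of $M^+(z)$ and its stable projector.

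The first step is to establish a uniform autonomous dichotomy. By Lemmas \ref{decomp_mat} and \ref{lem:SpecSpl}, for $z \in \Oc$ the spectrum of $M^+(z)$ is disjoint from $\S^1$, with $dp$ eigenvalues in $\D$ and $dq$ in $\U$; since $\overline{U} \subset \Oc$ is compact, the spectral gap is uniform in $z$. The spectral projectors $P_s^+(z), P_u^+(z)$, defined by contour integrals around the stable and unstable parts of the spectrum, are holomorphic on $\Oc$ and satisfy, for some $\rho \in (0,1)$ and $C > 0$,
$$\bigl|(M^+(z))^j P_s^+(z)\bigr| \leq C \rho^j, \qquad \bigl|(M^+(z))^{-j} P_u^+(z)\bigr| \leq C \rho^j, \qquad z \in \overline{U},\ j \geq 0.$$

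Next, write $M_j(z) = M^+(z) + \Ec_j^+(z)$ with $|\Ec_j^+(z)| \leq C e^{-\alpha j}$ uniformly on $\overline{U}$. For $v \in \mathrm{Im}\, P_s^+(z)$, I seek an exponentially decaying solution $(W_j)_{j \geq 0}$ of \eqref{syst_dyn} with $P_s^+(z) W_0 = v$; by discrete variation of constants this amounts to solving the Lyapunov--Perron equation
$$W_j = (M^+(z))^j v + \sum_{k=0}^{j-1}(M^+(z))^{j-k-1} P_s^+(z) \Ec_k^+(z) W_k - \sum_{k=j}^{+\infty}(M^+(z))^{j-k-1} P_u^+(z) \Ec_k^+(z) W_k.$$
On the Banach space of sequences $(W_j)$ bounded by $\eta^j$ for some fixed $\eta \in (\rho, 1)$, the right-hand side defines, on a tail $j \geq J_0(\overline{U})$, a contraction uniformly in $z \in \overline{U}$; I then back-propagate through $W_j = M_j(z)^{-1} W_{j+1}$ down to $j = 0$. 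The resulting map $\Phi_z : \mathrm{Im}\, P_s^+(z) \to \C^{(p+q)d}$, $v \mapsto W_0$, is linear, injective, and its image coincides with $E_0^+(z)$ from \eqref{def:E0pm}. I set $Q_U^+(z) := \Phi_z P_s^+(z) \Phi_z^{-1}$, a projector of rank $dp$ with $\mathrm{Im}\, Q_U^+(z) = E_0^+(z)$.

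Estimate \eqref{in_geo_dich_+_1} at $k = 0$ is the exponential decay of the fixed point just constructed; running the analogous construction with initial time $k$ (the shifted perturbation still satisfies an exponential bound) extends it to all $j \geq k \geq 0$. Estimate \eqref{in_geo_dich_+_2} follows dually: on $\ker Q_U^+(z)$, solutions grow exponentially forward in $j$, so $X_j(z)(Id - Q_U^+(z)) X_k(z)^{-1}$ decays at rate $e^{-c|j-k|}$ for $k \geq j \geq 0$. Holomorphy of $Q_U^+$ on $U$ follows because $M^+(z)$, $P_s^+(z)$, $\Ec_j^+(z)$ are holomorphic, the contraction is uniform in $z$, and Hypothesis \ref{H:inv} keeps $M_j(z)^{-1}$ well defined throughout the back-propagation. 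The main obstacle I anticipate is the fixed-point step: since $\Ec_j^+(z)$ need not be small at $j = 0$, the contraction has to be set up on a tail and transported back, all while preserving linearity, holomorphy, and the correct rank; once this is done, the bookkeeping of the three rates ($\rho$, $e^{-\alpha j}$, and the target dichotomy rate $e^{-c|j-k|}$) is routine.
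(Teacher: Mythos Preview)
Your overall strategy---a Lyapunov--Perron fixed point on a tail $j\geq J_0$ followed by back-propagation via $M_j(z)^{-1}$---is exactly the route the paper takes, and your identification of the main difficulty (the perturbation $\Ec_j^+(z)$ is not small near $j=0$) is correct. Two points, however, are not yet in order.

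First, the formula $Q_U^+(z):=\Phi_z P_s^+(z)\Phi_z^{-1}$ does not define a projector on $\C^{(p+q)d}$: your map $\Phi_z$ has domain $\mathrm{Im}\,P_s^+(z)$, so $\Phi_z^{-1}$ is only defined on the $dp$-dimensional subspace $E_0^+(z)$, and the composite cannot be applied to a generic vector. A projector needs a kernel as well as an image. The paper resolves this by running the fixed-point argument directly on \emph{matrix}-valued sequences (with inhomogeneous term $(M^+(z))^{j-J}P_s^+(z)$) and then checking that the value $(Y(z))_J$ of the fixed point is itself a projector, with kernel equal to $E^u(M^+(z))$; conjugating by $X_J(z)$ gives $Q_U^+(z)$ with the correct rank and a holomorphically varying kernel.

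Second, your derivation of \eqref{in_geo_dich_+_1}--\eqref{in_geo_dich_+_2} is too quick. Knowing that each solution in $E_0^+(z)$ decays exponentially does not by itself yield a bound on $|X_j(z)Q_U^+(z)X_k(z)^{-1}|$ that is uniform in $k$: this requires the propagated projectors $X_k(z)Q_U^+(z)X_k(z)^{-1}$ to be uniformly bounded for $k\geq 0$, which is not automatic (and ``rerunning the construction at time $k$'' produces a projector that a priori has a different kernel). The paper obtains this uniform bound by deriving integral-type identities for the matrix sequences $(Y(z))_j$ and $(Z(z))_j:=X_j(z)(Id-Q_U^+(z))X_J(z)^{-1}$ (Lemma~\ref{lemYuZu}), feeding them into a discrete Gronwall-type lemma (Lemma~\ref{lemDecExp}), and then comparing $X_j Q_U^+ X_j^{-1}$ with $P_s^+(z)$ to close the estimate. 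This step is the technical heart of the proof and is not ``routine bookkeeping.''
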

	
	Lemma \ref{lem_geo_dich} has been developped in the thesis of Lafitte-Godillon \cite[Section III.1.5]{Godillon_these} and is inspired by the exponential dichotomy discussed by Coppel in \cite{Coppel}. As it is explained in \cite[Chapter 2]{Coppel}, to better understand the meaning of this lemma, it is interesting to see that the inequalities \eqref{in_geo_dich_+_1} and \eqref{in_geo_dich_+_2} imply that for all vector $\xi\in \C^{(p+q)d}$, there holds:
	\begin{align*}
		\forall j\geq 0, &\quad \left|X_j(z)Q^+_U(z)\xi\right|\leq C e^{-cj}\left|\xi\right|,\\
		\forall k\geq 0, &\quad \left|(Id-Q^+_U(z))\xi\right|\leq C e^{-ck}\left|X_k(z)\xi\right|.
	\end{align*}
	The first inequality implies that there exists a $dp$-dimensional subspace of solutions $(W_j)_{j\in \Z}$ of the dynamical system \eqref{syst_dyn} which converge exponentially fast toward $0$ at $+\infty$ (and which thus belong to $\ell^2(\N)$). The second inequality translates to the fact that there exists a supplementary of the previous subspace of solutions for which the solutions explode exponentially at $+\infty$. Thus, $Q_U^+(z)$ plays a similar role for the dynamical system \eqref{syst_dyn} as any projector for which the range is $E^+_s(z)$ (for instance the spectral projector $P_s^+(z)$) for the dynamical system \eqref{systDyn+}. The same kind of conclusion can be achieved with $Q^-_U$ for the behavior at $-\infty$. 
	
	Thus, the construction of those two projectors $Q^\pm_U$ is fundamental to study the solutions $(W_j)_{j\in\Z}$ of \eqref{syst_dyn} that converge toward $0$ as $j$ tends to $\pm\infty$, i.e. the elements of the set $E^\pm(z)$ defined by \eqref{def:Epm}. We will see in Lemma \ref{lem_id_E0} that the projectors $Q_U^\pm(z)$ allow to characterize the elements of the vector spaces $E^\pm_0(z)$.
	
	We give below the proof of Lemma \ref{lem_geo_dich} which is overall the same as in \cite[Section III.1.5]{Godillon_these}. The main modification is that this new iteration of the proof pinpoints more clearly the holomorphicity of the projectors $Q_U^\pm$ and most importantly why the projectors $Q_U^\pm$ we construct depend on the relatively compact subset $U$ of $\Oc$ which is considered. This last point is not really addressed in \cite{Godillon_these,Godillon} and we feel that it is quiet important. For instance, for two sets $U_1$ and $U_2$ that satisfy the conditions of Lemma \ref{lem_geo_dich}, the construction of the proof of Lemma \ref{lem_geo_dich} does not imply that the projectors $Q^\pm_{U_1}$ and $Q^\pm_{U_2}$ are equal on $U_1\cap U_2$, even if $U_1\subset U_2$. Therefore, we cannot immediately construct two functions $Q^\pm$ that are defined on $\Oc$ which would verify similar properties as $Q^\pm_U$. However, it turns out that the ranges $\Im Q^\pm_{U_1}(z)$ and $\Im Q^\pm_{U_2}(z)$ coincide for $z\in U_1\cap U_2$. We will prove this fact later on and use it to extend uniformly the geometric dichotomy on a large part of $\Oc$ (see Lemma \ref{lemGeoDichStronger} below). \newline
	
	\begin{proof}\textbf{of Lemma \ref{lem_geo_dich}}
			
		The construction of both functions $Q^\pm_U$ is similar so we focus here on the construction of $Q^+_U$. The proof will be separated in four steps. In the first step, we will construct the function $Q_U^+$ using a fixed point argument. The second step will be dedicated to proving that for all $z\in U$, $Q_U^+(z)$ is a projector for which the kernel and the range are respectively of dimension $dq$ and $dp$. The third and fourth steps concern the proof of the inequalities \eqref{in_geo_dich_+_1} and \eqref{in_geo_dich_+_2}.
		
		\textbf{\underline{Step 1:}} Construction of $Q_U^+$.
		
		We set for $z\in \Oc$ that
		$$\begin{array}{c}
			\eta_s^\pm(z):=\max\big\{\ln(|\zeta|), \quad \zeta\in\sigma(M^\pm(z))\cap \D\big\},\\
			\eta_u^\pm(z):= \min\big\{\ln(|\zeta|), \quad \zeta\in\sigma(M^\pm(z))\cap \U\big\}.
		\end{array}$$
		The functions $\eta_s^+$ and $\eta_u^+$ are continuous on $\Oc$ and verify that 
		$$\forall z \in \Oc, \quad \eta_s^+(z)<0\quad \text{ and }\quad \eta_u^+(z)>0.$$
		The set $\overline{U}$ being compact and included in $\Oc$, there exists a constant $c_H$ such that
		$$\max_{z\in\overline{U}}\eta_s^+(z)<-c_H<0\quad \text{ and } \quad 0<c_H<\min_{z\in \overline{U}}\eta_u^+(z).$$
		We will also ask that $c_H<\alpha$ where $\alpha$ is the constant appearing in \eqref{CV_expo_mat}. By definition of $\eta_s^+$ and $\eta_u^+$, there exists a positive constant $C_H$ such that
		\begin{equation}\label{in:expo}
			\forall z \in U, \forall j \in \N, \quad \begin{array}{c} \left|M^+(z)^jP_s^+(z)\right|\leq C_H e^{-c_H j}, \\ \left|M^+(z)^{-j}P_u^+(z)\right|\leq C_H e^{-c_H j}.\end{array} 
		\end{equation}
		Furthermore, using \eqref{CV_expo_mat}, since $U$ is bounded, there exists a positive constant $C_\Ec$ such that
		\begin{equation}\label{in:Ec}
			\forall z\in U, \forall j\in \N, \quad |\Ec_j^+(z)|\leq C_\Ec e^{-\alpha j}.
		\end{equation}
		
		We consider an integer $J\in\N$ and we will make a more precise choice later. We define the Banach space 
		$$\ell^\infty_J:= \lc \left(Y_j\right)_{j\geq J}\in\Mc_{(p+q)d}(\C)^{\lc j\in \N, j\geq J\rc},\quad \sup_{j\geq J} |Y_j|<+\infty\rc $$
		with the norm
		$$\left\|Y\right\|_{\infty,J}:=\sup_{j\geq J} |Y_j|.$$
		Furthermore, for $z\in U$, we define the linear map $\varphi(z) \in \Lc\left(\ell^\infty_J\right)$	and $T(z):\ell^\infty_J \rightarrow \ell^\infty_J$ such that for $Y\in\ell^\infty_J$ and $j\geq J$, we have
		$$\left(\varphi(z)Y\right)_j:= \sum_{k=J}^{j-1} M^+(z)^{j-1-k}P_s^+(z) \Ec_k^+(z)Y_k -  \sum_{k=j}^{+\infty} M^+(z)^{j-1-k}P_u^+(z) \Ec_k^+(z)Y_k$$
		and
		$$T(z)Y:= \left(M^+(z)^{j-J}P^+_s(z)\right)_{j\geq J} + \varphi(z)Y .$$
		
		We observe that 
		$$\forall Y\in\ell^\infty_J,\forall j\geq J,\quad (\varphi(z)Y)_{j+1} = M^+(z) (\varphi(z)Y)_j +\Ec_j^+(z)Y_j  ,$$
		and thus
		\begin{equation}\label{Tuj+1}
			\forall Y\in\ell^\infty_J,\forall j\geq J,\quad (T(z)Y)_{j+1} = M^+(z) (T(z)Y)_j +\Ec_j^+(z)Y_j  .
		\end{equation}
		Our goal will be to find a fixed point of $T(z)$. It will be a solution of the dynamical system \eqref{syst_dyn} for $j\geq J$. To do so, we will have to prove that there exists $J$ large enough so that 
		$$\left\|\varphi(z)\right\|_{\Lc(\ell^\infty_J)}<1.$$
		
		We begin by proving that the applications $\varphi(z)$ and $T(z)$ are well-defined. We consider $Y\in\ell^\infty_J$ and $j\geq J$. Using \eqref{in:expo} and \eqref{in:Ec}, we have the estimates:
		\begin{align*}
			\left|(\varphi(z)Y)_j\right| & \leq\sum_{k=J}^{j-1}C_HC_\Ec e^{-c_H|j-1-k|}e^{-\alpha k}|Y_k| + \sum_{k=j}^{+\infty}C_HC_\Ec e^{-c_H|j-1-k|}e^{-\alpha k}|Y_k|\\
			& \leq \left\|Y\right\|_{\infty,J}C_HC_\Ec e^{-\alpha J}\sum_{k=J}^{+\infty}e^{-c_H|j-1-k|}\\
			& \leq \left\|Y\right\|_{\infty,J}C_HC_\Ec e^{-\alpha J}\frac{1+e^{-c_H}}{1-e^{-c_H}}.
		\end{align*}
		
		If we set 
		\begin{equation}\label{def:theta}
			\theta:=C_HC_\Ec e^{-\alpha J}\frac{1+e^{-c_H}}{1-e^{-c_H}},
		\end{equation}
		then we have just proved that the operator $\varphi(z)$ is well-defined, bounded and
		$$\left\|\varphi(z)\right\|_{\Lc(\ell^\infty_J)}\leq \theta.$$
		We also observe that \eqref{in:expo} implies that $(M^+(z)^{j-J}P_s^+(z))_{j\geq J}\in \ell_J^\infty$. Therefore, $T(z)$ is well-defined.
		
		Let us choose the integer $J$ large enough so that $\theta<1$. For $z\in U$, we have that $Id-\varphi(z)$ is invertible. Thus, we can define
		$$Y(z):=(Id-\varphi(z))^{-1}\left(M^+(z)^{j-J}P^+_s(z)\right)_{j\geq J}.$$
		This sequence $Y(z)\in\ell^\infty_J$ is the only fixed point of $T(z)$ and it depends holomorphically on $z$. We observe that \eqref{Tuj+1} implies that
		\begin{equation}\label{eg:YU}
			\forall z\in U, \forall j\geq J,\quad Y_{j+1}(z)=M_j(z)Y_j(z).
		\end{equation}
		We now define 
		\begin{equation}\label{def:QU}
			\forall z\in U, \quad Q^+_U(z) :=  X_{J}(z)^{-1}Y_{J}(z)X_{J}(z).
		\end{equation}
		Since $Y$ depends holomorphically on $z$ and is bounded on $U$, $Q_U^+(z)$ also depends holomorphically on $z$ for $z\in U$ and is bounded on $U$. 
		
		\textbf{\underline{Step 2:}}  We now show that $Q_U^+$ is a projector.
		
		We are going to prove that for all $z\in U$ the matrix $Q_U^+(z)$ we have just constructed is a projector such that
		$$\ker Q_U^+(z) = X_{J}(z)^{-1} E^u(M^+(z)) \quad \text{ and } \quad \dim \Im Q_U^+(z)=dp.$$
		
		By observing that $P_s^+(z)^2=P_s^+(z)$, we can prove that $(Y_j(z) P_s^+(z))_{j\geq J}$ is another fixed point $T(z)$. Since $Y(z)$ is the only fixed point of $T(z)$ in $\ell^\infty_J$, we thus have that:
		\begin{equation}\label{eq_proj_1}
			Y_J(z)P_s^+(z)=Y_J(z).
		\end{equation}
		
		Using that $Y(z)$ is a fixed point of $T(z)$, we also have that
		\begin{align*}
			P_s^+(z) Y_J(z) &= P_s^+(z) (T(z)Y(z))_{J} \\
			& = P_s^+(z)\left(P_s^+(z)- \sum_{k=J}^{+\infty} M^+(z)^{J-1-k}P_u^+(z)\Ec_k^+(z)Y_k(z)\right).
		\end{align*}
		Because $P_s^+(z)$ commutes with $M^+(z)$, $P_s^+(z)^2=P_s^+(z)$ and $P_s^+(z)P_u^+(z)=0$, we have proved
		\begin{equation}\label{eq_proj_2}
			P_s^+(z)=P_s^+(z)Y_J(z).
		\end{equation}
		Using \eqref{eq_proj_2}, we prove that $(Y_j(z) Y_J(z))_{j\geq J}$ is a fixed point $T(z)$. Since $Y(z)$ is the only fixed point of $T(z)$ in $\ell^\infty_J$, we have in particular that:
		$${Y_{J}(z)}^2=Y_J(z)$$
		which means that $Y_J(z)$ is a projector. The definition \eqref{def:QU} of $Q_U^+(z)$ then implies that $Q_U^+(z)$ is a projector. The equalities \eqref{eq_proj_1} and \eqref{eq_proj_2} allow us to prove that $\ker Y_J(z) = \ker P_s^+(z) = E^u(M^+(z))$ which implies that:
		$$\Im Q_U^+(z) = X_{J}(z)^{-1}\Im Y_J(z)\quad \text{ and } \quad \ker Q_U^+(z) =X_{J}(z)^{-1} E^u(M^+(z)) .$$
		
		\textbf{\underline{Step 3:}}  We now show that $Q_U^+$ satisfies the inequalities \eqref{in_geo_dich_+_1} and \eqref{in_geo_dich_+_2}.
		
		First, we are going to prove the inequality \eqref{in_geo_dich_+_1} for $j\geq k\geq J$ and the inequality \eqref{in_geo_dich_+_2} for $k\geq j\geq J$. We observe that \eqref{eg:YU} implies that
		$$\forall z\in U,\forall j\geq J,\quad Y_{j+1}(z)= M_j(z)Y_j(z).$$
		and thus
		\begin{subequations}\label{def:YZ}
			\begin{equation}
				\forall z\in U,\forall j\geq J,\quad Y_j(z) = X_j(z)X_{J}(z)^{-1}Y_{J}(z)=X_j(z)Q_U^+(z)X_{J}(z)^{-1}.
			\end{equation}
			We introduce
			\begin{equation}
				\forall z\in U,\forall j\geq J, \quad Z_j(z):=X_j(z)X_{J}(z)^{-1}\left(Id-Y_{J}(z)\right)=X_j(z)\left(Id-Q_U^+(z)\right)X_{J}(z)^{-1}.
			\end{equation}
		\end{subequations}
		We have the following lemma.
		\begin{lemma}\label{lemYuZu}
			We have that
			\begin{align*}
				\forall j\geq k\geq J, \quad Y_j(z)= &M^+(z)^{j-k}P_s^+(z)Y_k(z)+\sum_{l=k}^{j-1}M^+(z)^{j-1-l}P_s^+(z)\Ec_l^+(z)Y_l(z)\\ & - \sum_{l=j}^{+\infty}M^+(z)^{j-1-l}P_u^+(z)\Ec_l^+(z)Y_l(z),
			\end{align*}
			and 
			\begin{align*}
				\forall k\geq j\geq J, \quad Z_j(z) =& M^+(z)^{j-k}P_u^+(z)Z_k(z) +\sum_{l=J}^{j-1}M^+(z)^{j-1-l}P_s^+(z)\Ec_l^+(z)Z_l(z)\\
				&-\sum_{l=j}^{k-1}M^+(z)^{j-1-l}P_u^+(z)\Ec_l^+(z)Z_l(z).
			\end{align*}
		\end{lemma}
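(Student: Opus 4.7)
The plan is to treat the two identities separately. The identity for $Y$ is essentially a rewriting of the fixed point equation $T(z)Y(z) = Y(z)$, while the identity for $Z$ relies on a backward Duhamel argument combined with an auxiliary identity controlling the stable projection of $Z$.

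For the first identity, I start from the fixed point equation
$$Y(z)_j = M^+(z)^{j-J}P_s^+(z) + \sum_{l=J}^{j-1}M^+(z)^{j-1-l}P_s^+(z)\Ec_l^+(z)Y(z)_l - \sum_{l=j}^{+\infty}M^+(z)^{j-1-l}P_u^+(z)\Ec_l^+(z)Y(z)_l,$$
write the analogous expression for $Y(z)_k$, and substitute it into $M^+(z)^{j-k}P_s^+(z)Y(z)_k$. The commutation of $P_s^+(z)$ with $M^+(z)$, the idempotence $P_s^+(z)^2=P_s^+(z)$, and the orthogonality $P_s^+(z)P_u^+(z)=0$ kill the $P_u$-term arising from $Y(z)_k$ and merge the two $P_s$-sums into a single sum ranging over $l\in\lc J,\ppp,k-1\rc$. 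Adding the remaining pieces from the target right-hand side then reconstitutes exactly the fixed point equation at index $j$, so the identity follows by a direct algebraic verification.

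For the second identity, I exploit the decomposition $M_j(z) = M^+(z) + \Ec_j^+(z)$ to rewrite the recursion $Z(z)_{j+1}=M_j(z)Z(z)_j$ as $Z(z)_{j+1}=M^+(z)Z(z)_j+\Ec_j^+(z)Z(z)_j$. Since Hypothesis \ref{H:inv} ensures $M^+(z)$ is invertible, iterating this identity backward from index $k$ to index $j$ yields the Duhamel-type formula
$$Z(z)_j = M^+(z)^{j-k}Z(z)_k - \sum_{l=j}^{k-1}M^+(z)^{j-1-l}\Ec_l^+(z)Z(z)_l.$$
Inserting $Id = P_s^+(z) + P_u^+(z)$ in both the leading term and the sum immediately accounts for all $P_u^+$-contributions in the target. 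To absorb the $P_s^+$-contribution coming from $M^+(z)^{j-k}P_s^+(z)Z(z)_k$ and the sum $\sum_{l=j}^{k-1}M^+(z)^{j-1-l}P_s^+(z)\Ec_l^+(z)Z(z)_l$ into the single sum $\sum_{l=J}^{j-1}M^+(z)^{j-1-l}P_s^+(z)\Ec_l^+(z)Z(z)_l$ appearing in the statement, I would first establish by induction on $k\geq J$ the auxiliary identity
$$P_s^+(z)Z(z)_k = \sum_{l=J}^{k-1}M^+(z)^{k-1-l}P_s^+(z)\Ec_l^+(z)Z(z)_l.$$
The base case $k=J$ reduces to $P_s^+(z)(Id-Q_U^+(z))=0$, which is an immediate consequence of \eqref{eq_proj_2}, and the inductive step uses the recursion for $Z(z)$ together with the commutation of $P_s^+(z)$ and $M^+(z)$. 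Combining this auxiliary identity (applied at index $k$ and rescaled by $M^+(z)^{j-k}$) with the Duhamel formula then collapses everything to the desired expression.

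The argument is essentially bookkeeping: neither step is conceptually difficult and the algebraic machinery provided by the spectral projectors $P_s^+(z), P_u^+(z)$ and their interaction with $M^+(z)$ does all the work. The only mild subtlety is keeping track of index ranges when splitting the Duhamel sum in the second step and when regrouping the two $P_s^+$-sums in the first step.
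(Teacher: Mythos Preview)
Your proposal is correct and follows essentially the same approach as the paper. For $Y$, both you and the paper reduce the claim to the fixed point equation $T(z)Y(z)=Y(z)$; for $Z$, the paper obtains your auxiliary identity $P_s^+(z)(Z(z))_k=\sum_{l=J}^{k-1}M^+(z)^{k-1-l}P_s^+(z)\Ec_l^+(z)(Z(z))_l$ directly from the forward variation-of-constants formula \eqref{ZUj} projected by $P_s^+(z)$ (using $P_s^+(z)(Z(z))_J=0$) rather than by induction, and extracts the $P_u^+$-part from \eqref{ZUk}, which is just your backward Duhamel formula rearranged---so the two arguments differ only in presentation.
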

		\begin{proof}\textbf{of Lemma \ref{lemYuZu}}
			\begin{itemize}
				\item Since we have that
				$$\forall j \geq J, \quad Y_{j+1}(z)=M_j(z)Y_j(z) = (M^+(z)+\Ec_j^+(z))Y_j(z),$$
				using the Duhamel formula, we find that
				$$\forall  k\geq J, \quad Y_k(z) = M^+(z)^{k-J}Y_J(z) +\sum_{l=J}^{k-1}M^+(z)^{k-1-l}\Ec_l^+(z)Y_l(z).$$
				Knowing that $Y(z)$ is a fixed point of $T(z)$ and that $P_s^+(z)Y_J(z)=P_s^+(z)$, we have for $j\geq k\geq J$
				\begin{align*}
					&Y_j(z) \\
					=& (T(z)Y(z))_j \\
					=& M^+(z)^{j-J}P_s^+(z)+\sum_{l=J}^{j-1}M^+(z)^{j-1-l}P_s^+(z)\Ec_l^+(z)Y_l(z) - \sum_{l=j}^{+\infty}M^+(z)^{j-1-l}P_u^+(z)\Ec_l^+(z)Y_l(z)\\
					=& M^+(z)^{j-k} P_s^+(z)\left(M^+(z)^{k -J}Y_J(z)+\sum_{l=J}^{k-1}M^+(z)^{k-1-l}\Ec_l^+(z)Y_l(z)\right) \\ 
					&+ \sum_{l=k}^{j-1}M^+(z)^{j-1-l}P_s^+(z)\Ec_l^+(z)Y_l(z) - \sum_{l=j}^{+\infty}M^+(z)^{j-1-l}P_u^+(z)\Ec_l^+(z)Y_l(z)\\
					=&  M^+(z)^{j-k}P_s^+(z)Y_k(z)+\sum_{l=k}^{j-1}M^+(z)^{j-1-l}P_s^+(z)\Ec_l^+(z)Y_l(z) - \sum_{l=j}^{+\infty}M^+(z)^{j-1-l}P_u^+(z)\Ec_l^+(z)Y_l(z)
				\end{align*}
				which corresponds to the statement of Lemma \ref{lemYuZu}.
				\item We now turn to the equation of $Z_j(z)$. Since we have that
				$$\forall j \geq J, \quad Z_{j+1}(z)=M_j(z)Z_j(z) = (M^+(z)+\Ec_j^+(z))Z_j(z),$$
				using the Duhamel formula, we find that for $k\geq j \geq J$
				\begin{align}
					Z_j(z) &= M^+(z)^{j-J}Z_J(z) +\displaystyle\sum_{l=J}^{j-1}M^+(z)^{j-1-l}\Ec_l^+(z)Z_l(z) \label{ZUj} \\
					Z_k(z) &= M^+(z)^{k-j}Z_j(z) +\displaystyle\sum_{l=j}^{k-1}M^+(z)^{k-1-l}\Ec_l^+(z)Z_l(z).\label{ZUk}
				\end{align}
				Using \eqref{ZUj} and knowing that $P_s^+(z)Z_J(z)=P_s^+(z)(Id-Y_J(z))=0$, we have that
				$$P_s^+(z)Z_j(z) = \sum_{l=J}^{j-1}M^+(z)^{j-1-l}P_s^+(z)\Ec_l^+(z)Z_l(z).$$
				Furthermore, \eqref{ZUk} implies that
				$$ P_u^+(z)Z_j(z)=M^+(z)^{j-k}P_u^+(z)Z_k(z) - \sum_{l=j}^{k-1}M^+(z)^{j-1-l}P_u^+(z)\Ec_l^+(z)Z_l(z).$$
				We end the proof of Lemma \ref{lemYuZu} by observing that $Z_j(z)=P_s^+(z)Z_j(z)+P_u^+(z)Z_j(z)$.
			\end{itemize}
		\end{proof}
		
		We introduce the constant
		\begin{equation}\label{def:ThetaTilde}
			\Theta:=\theta \frac{1-e^{-c_H}}{1+e^{-c_H}}=C_HC_\Ec e^{-\alpha J}>0
		\end{equation}
		where the constant $\theta$ is defined by \eqref{def:theta}. Using Lemma \ref{lemYuZu} and \eqref{in:expo}, we obtain that for any vector $\xi\in \C^{(p+q)d}$:
		\begin{align*}
			\forall j\geq k \geq J, \quad \left|Y_j(z)\xi\right| \leq& C_H e^{-c_H(j-k)}\left|Y_k(z)\xi\right| + \sum_{l=k}^{+\infty} C_HC_\Ec e^{-c_H|j-1-l|} e^{-\alpha l}|Y_l(z)\xi| \\
			 \leq& C_H e^{-c_H(j-k)}\left|Y_k(z)\xi\right|+ \Theta\sum_{l=k}^{+\infty}e^{-c_H|j-1-l|}\left|Y_l(z)\xi\right|,
		\end{align*}
		and similarly:
		\begin{equation*}
			\forall k\geq j \geq J, \quad \left|Z_j(z)\xi\right|\leq C_H e^{-c_H(k-j)}\left|Z_k(z)\xi\right|+ \Theta\sum_{l=J}^{k-1}e^{-c_H|j-1-l|}\left|Z_l(z)\xi\right|.
		\end{equation*}
		The following lemma corresponding to \cite[Lemma 1.5.1, Section III.1.5]{Godillon_these} will allow us to obtain clearer bounds on $|Y_j(z)\xi|$ and $|Z_j(z)\xi|$.
		
		\begin{lemma}\label{lemDecExp}
			Let us consider positive constants $C_H$, $c_H$ and $\Theta$ such that 
			\begin{equation}\label{def:theta2}
				\theta:= \Theta \frac{1+e^{-c_H}}{1-e^{-c_H}}=\Theta\frac{\cosh\left(\frac{c_H}{2}\right)}{\sinh\left(\frac{c_H}{2}\right)} \in]0,1[.
			\end{equation}
			For any real valued sequence $y\in \ell^\infty(\N)$ with non negative coefficients that satisfies :
			\begin{equation}\label{inY}
				\forall j\in \N, \quad y_j\leq C_He^{-c_H j}+\Theta\sum_{k=0}^{+\infty}e^{-c_H|j-1-k|}y_k,
			\end{equation}
			we have that:
			$$\forall j\in \N, \quad y_j\leq \rho r^j,$$
			where 
			\begin{equation}\label{def:rRho}
				r:=\cosh(c_H)-2\sinh\left(\frac{c_H}{2}\right)\sqrt{\cosh^2\left(\frac{c_H}{2}\right)-\theta}\in]e^{-c_H},1[ \quad \text{ and }\quad \rho:=\frac{C_H}{\Theta}(r-e^{-c_H})>0.
			\end{equation}
		\end{lemma}
		
		The proof can be found in the Appendix (Section \ref{sec:Appendix}). We will now use Lemma \ref{lemDecExp} to prove that
		\begin{subequations}
			\begin{equation}\label{inYU}
				\forall j\geq k\geq J, \quad |Y_j(z)\xi|\leq \rho r^{j-k}|Y_k(z)\xi|,
			\end{equation}
			\begin{equation}\label{inZU}
				\forall k\geq j\geq J, \quad |Z_j(z)\xi|\leq \rho r^{k-j}|Z_k(z)\xi|,
			\end{equation}
		\end{subequations}
		where $r$ and $\rho$ are defined by \eqref{def:rRho}.
		
		We consider $k\geq J$. If $Y_k(z)\xi\neq 0$, then by applying Lemma \ref{lemDecExp} to the bounded sequence $y:=\left(\frac{\left|Y_{k+j}(z)\xi\right|}{\left|Y_k(z)\xi\right|}\right)_{j\in\N}$, we obtain \eqref{inYU} with $r$ and $\rho$ defined by \eqref{def:rRho}. Else, if $Y_k(z)\xi=0$, then for $j\geq k$, we have:
		$$Y_j(z)\xi = X_j(z)X_k(z)^{-1}Y_k(z)\xi=0.$$
		Thus, \eqref{inYU} is also verified in this case.
		
		The proof of \eqref{inZU} is similar. If $Z_k(z)\xi\neq 0$, then we apply Lemma \ref{lemDecExp} to the sequence $y$ defined by
		$$\forall j\in \N, \quad y_j:=\lc\begin{array}{cc}\frac{\left|Z_{k-j}(z)\xi\right|}{\left|Z_k(z)\xi\right|} & \text{ if } j\in \lc0,\ppp,k\rc, \\ 0 & \text{ else.}\end{array}\right.$$
		This proves \eqref{inZU} in this case. If $Z_k(z)\xi= 0$, then since 
		$$\forall j\in\lc J,\ppp,k\rc, \quad Z_j(z)\xi = X_j(z)X_k(z)^{-1}Z_k(z)\xi=0,$$
		\eqref{inZU} is also trivially verified in this case.
		
		Using \eqref{def:YZ}, \eqref{inYU} and \eqref{inZU}, we proved that:
		\begin{subequations}
			\begin{align}
				\forall j\geq k\geq J, &\quad |X_j(z)Q_U^+(z)X_k(z)^{-1}|\leq \rho r^{j-k}|X_k(z)Q_U^+(z)X_k(z)^{-1}|,\label{inQ1}\\
				\forall k\geq j\geq J, &\quad |X_j(z)(Id-Q_U^+(z))X_k(z)^{-1}|\leq \rho r^{k-j}|X_k(z)(Id-Q_U^+(z))X_k(z)^{-1}|.\label{inQ2}
			\end{align}
		\end{subequations}
		If we prove that the families $(X_k(z)Q_U^+(z)X_k(z)^{-1})_{k\geq J}$ and $(X_k(z)(Id-Q_U^+(z))X_k(z)^{-1})_{k\geq J}$ are uniformly bounded for $z\in U$, we will have proved \eqref{in_geo_dich_+_1} and \eqref{in_geo_dich_+_2} respectively for $j\geq k\geq J$ and $k\geq j\geq J$.
		
		Using Lemma \ref{lemYuZu}, we prove that for $j\geq J$:
		\begin{align*}
			P_u^+(z)(Y(z))_j &= -\sum_{l=j}^{+\infty}M^+(z)^{j-1-l}P_u^+(z)\Ec_l^+(z)(Y(z))_l,\\
			P_s^+(z)(Z(z))_j &= \sum_{l=J}^{j-1}M^+(z)^{j-1-l}P_s^+(z)\Ec_l^+(z)(Z(z))_l.
		\end{align*}
		Thus, we have using \eqref{def:YZ} that:
		\begin{align*}
			P_u^+(z)X_j(z)Q_U^+(z)X_j(z)^{-1}&= -\sum_{l=j}^{+\infty}M^+(z)^{j-1-l}P_u^+(z)\Ec_l^+(z)X_l(z)Q_U^+(z)X_j(z)^{-1},\\
			P_s^+(z)X_j(z)(Id-Q_U^+(z))X_j(z)^{-1}&= \sum_{l=J}^{j-1}M^+(z)^{j-1-l}P_s^+(z)\Ec_l^+(z)X_l(z)(Id-Q_U^+(z))X_j(z)^{-1}.
		\end{align*}
		Using \eqref{in:expo}, the definitions \eqref{def:theta}, \eqref{def:ThetaTilde} and \eqref{def:rRho} of the constants $\theta$, $\Theta$ and $\rho$, as well as \eqref{inQ1}, we have
		\begin{align*}
			|P_u^+(z)X_j(z)Q_U^+(z)X_j(z)^{-1}|&\leq \Theta\sum_{l=j}^{+\infty} e^{-c_H(l-(j-1))}|X_l(z)Q_U^+(z)X_j(z)^{-1}|\\
			& \leq \Theta e^{-c_H}\rho \sum_{l=j}^{+\infty}(re^{-c_H})^{l-j} |X_j(z)Q_U^+(z)X_j(z)^{-1}|\\
			& = \Theta \frac{e^{-c_H}}{1-re^{-c_H}}\rho |X_j(z)Q_U^+(z)X_j(z)^{-1}|\\
			& = C_H \frac{r-e^{-c_H}}{e^{c_H}-r} |X_j(z)Q_U^+(z)X_j(z)^{-1}|.
		\end{align*}
		Similarly, using \eqref{inQ2}, we have
		\begin{align*}
			|P_s^+(z)X_j(z)(Id-Q_U^+(z))X_j(z)^{-1}|&\leq \Theta\sum_{l=J}^{j-1} e^{-c_H(j-1-l)}|X_l(z)(Id-Q_U^+(z))X_j(z)^{-1}|\\
			& \leq \Theta e^{c_H}\rho \sum_{l=J}^{j-1}(re^{-c_H})^{j-l} |X_j(z)(Id-Q_U^+(z))X_j(z)^{-1}|\\
			&  \leq \Theta \rho r\frac{1}{1-re^{-c_H}} |X_j(z)(Id-Q_U^+(z))X_j(z)^{-1}|\\
			& = C_H re^{c_H}\frac{r-e^{-c_H}}{e^{c_H}-r} |X_j(z)(Id-Q_U^+(z))X_j(z)^{-1}|.
		\end{align*}
		Therefore, if we define $\eta:= C_H\frac{e^{c_H}}{e^{c_H}-1}(r-e^{-c_H})$, we have for all $j\geq J$
		\begin{align}\label{in:lemGeo}
			\begin{split}
				|P_u^+(z)X_j(z)Q_U^+(z)X_j(z)^{-1}|&\leq \eta |X_j(z)Q_U^+(z)X_j(z)^{-1}|,\\
				|P_s^+(z)X_j(z)(Id-Q_U^+(z))X_j(z)^{-1}| & \leq \eta|X_j(z)(Id-Q_U^+(z))X_j(z)^{-1}|.
			\end{split}
		\end{align}
		Using the definition \eqref{def:rRho} of $r$, we observe that
		\begin{equation}\label{def:eta}
			\eta = C_H\frac{e^{c_H}}{e^{c_H}-1} 2\sinh\left(\frac{c_H}{2}\right)\left(\cosh\left(\frac{c_H}{2}\right) - \sqrt{\cosh\left(\frac{c_H}{2}\right)^2-\theta}\right).
		\end{equation}
		We already supposed that $J$ was taken large enough so that the number $\theta$ in \eqref{def:theta} satisfies $\theta<1$. We will now also suppose that we took $J$ large enough so that $\theta$ is close enough to $0$ so that the above number $\eta$ in \eqref{def:eta} satisfies $\eta<\frac{1}{2}$. To conclude this step of the proof, we observe that
		$$X_j(z)Q_U^+(z)X_j(z)^{-1}-P_s^+(z)= P_u^+(z)X_j(z)Q_U^+(z)X_j(z)^{-1}-P_s^+(z)X_j(z)(Id-Q_U^+(z))X_j(z)^{-1}$$
		and
		$$X_j(z)(Id-Q_U^+(z))X_j(z)^{-1}-P_u^+(z)= P_s^+(z)X_j(z)(Id-Q_U^+(z))X_j(z)^{-1}-P_u^+(z)X_j(z)Q_U^+(z)X_j(z)^{-1}.$$
		Thus, using \eqref{in:expo} to bound $P^+_s$ and $P^+_u$ and \eqref{in:lemGeo}, we have
		$$|X_j(z)Q_U^+(z)X_j(z)^{-1}|\leq C_H + \eta \left(|X_j(z)Q_U^+(z)X_j(z)^{-1}| + |X_j(z)(Id-Q_U^+(z))X_j(z)^{-1}|\right)$$
		and 
		$$|X_j(z)(Id-Q_U^+(z))X_j(z)^{-1}|\leq C_H + \eta \left(|X_j(z)Q_U^+(z)X_j(z)^{-1}| + |X_j(z)(Id-Q_U^+(z))X_j(z)^{-1}|\right).$$
		This implies that :
		$$\forall j \geq J, \quad \begin{array}{c}|X_j(z)Q_U^+(z)X_j(z)^{-1}|\leq \displaystyle\frac{2C_H}{1-2\eta},\\ |X_j(z)(Id-Q_U^+(z))X_j(z)^{-1}|\leq \displaystyle\frac{2C_H}{1-2\eta}.\end{array}$$
		
		Therefore, we have proved that for all $z\in U$, we have:
		\begin{subequations}
			\begin{align}
				\forall j\geq k \geq J, \quad & |X_j(z)Q_U^+(z)X_k(z)^{-1}|\leq \rho \frac{2C_H}{1-2\eta} r^{j-k},\label{inQ1b}\\
				\forall k\geq j \geq J, \quad & |X_j(z)(Id-Q_U^+(z))X_k(z)^{-1}|\leq \rho \frac{2C_H}{1-2\eta} r^{k-j}.\label{inQ2b}
			\end{align} 
		\end{subequations}
		
		\textbf{\underline{Step 4:}}  $Q_U^+$ satisfies the inequalities \eqref{in_geo_dich_+_1} and \eqref{in_geo_dich_+_2} respectively for all $j\geq k \geq 0$ and $k\geq j \geq 0$
		
		We will only finish the proof of \eqref{in_geo_dich_+_1} since the proof for \eqref{in_geo_dich_+_2} is similar. We have proved \eqref{in_geo_dich_+_1} for $j\geq k \geq J$. We consider a constant $C>0$ such that
		$$\forall z \in U, \quad \begin{array}{c} C>r^{-J}\max_{j\in\lc0,\ppp,J-1\rc}|X_j(z)Q_U^+(z)X_{J}(z)^{-1}| \\ C>r^{-J}\max_{j\in\lc0,\ppp,J-1\rc}|X_{J}(z)Q_U^+(z)X_j(z)^{-1}|.\end{array}$$
		This can be done since the projector $Q_U^+$ defined by \eqref{def:QU} is bounded on $U$.
		\begin{itemize}
			\item If $j\geq J >k\geq 0$, we have 
			\begin{align*}
				|X_j(z)Q_U^+(z)X_k(z)^{-1}|&\leq|X_j(z)Q_U^+(z)X_{J}(z)^{-1}||X_{J}(z)Q_U^+(z)X_k(z)^{-1}| \\
				&\leq \rho \frac{2C_H}{1-2\eta} r^{j-J} C r^{J} \\
				&\leq C\rho \frac{2C_H}{1-2\eta} r^{j-k}.
			\end{align*}
			\item If $J> j\geq k\geq 0$, we have 
			\begin{align*}
				|X_j(z)Q_U^+(z)X_k(z)^{-1}|&\leq|X_j(z)Q_U^+(z)X_{J}(z)^{-1}||X_{J}(z)Q_U^+(z)X_k(z)^{-1}| \\
				&\leq  C^2 r^{2J} \\
				&\leq C^2 r^{j-k}.
			\end{align*}
		\end{itemize}
		Therefore, there exist two constants $C,c>0$ such that for all $z\in U$, \eqref{in_geo_dich_+_1} is verified.
	\end{proof}
	
	\subsection{Spectrum of \texorpdfstring{$\Lcc$}{L} and extended geometric dichotomy}
	
	Now that we have proved the geometric dichotomy, Lemma \ref{lem_geo_dich} let us go back on studying the vector spaces $E_0^\pm(z)$ which characterize the solutions of the dynamical system \eqref{syst_dyn} converging towards $0$ as $j$ tends towards $\pm\infty$. The previous section about the geometric dichotomy allows us to prove the following lemma.
	
	\begin{lemma}\label{lem_id_E0}
		For any open bounded set $U$ such that $\overline{U}\subset \Oc$, we have
		$$\forall z\in U,\quad E^+_0(z)=\Im(Q^+_U(z)) \quad \text{ and }\quad E^-_0(z)=\Im(Q^-_U(z)) .$$
		Therefore, for all $z\in \Oc$, $\dim E_0^+(z)=dp$ and $\dim E_0^-(z)=dq$. Also, for $W_0\in\C^{d(p+q)}$, we have that
		$$W_0\in E^+_0(z)\cap E^-_0(z) \Leftrightarrow(X_j(z)W_0)_{j\in\Z}\in \ell^2(\Z, \C^{(p+q)d})$$
		where $X_j(z)$ is defined by \eqref{mat_fond}.
	\end{lemma}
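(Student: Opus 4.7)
The plan is to prove both identities by double inclusion, exploiting the fact that the dichotomy inequalities \eqref{in_geo_dich_+_1}--\eqref{in_geo_dich_-_2} (applied with $X_0(z)=Id$ from \eqref{mat_fond}) immediately give exponential decay estimates on the trajectories $X_j(z)W_0$.

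For the inclusion $\Im(Q^+_U(z))\subset E^+_0(z)$, I would take $W_0=Q^+_U(z)W_0$ and write, for $j\geq 0$,
$$X_j(z)W_0 = X_j(z)Q^+_U(z)X_0(z)^{-1}W_0,$$
so that \eqref{in_geo_dich_+_1} with $k=0$ yields $|X_j(z)W_0|\leq Ce^{-cj}|W_0|$. In particular the trajectory decays exponentially, hence $W_0\in E^+_0(z)$. For the reverse inclusion, I would take $W_0\in E^+_0(z)$, so that $(X_k(z)W_0)_{k\geq 0}$ is bounded by some constant $M$, and use the identity
$$(Id-Q^+_U(z))W_0 = X_0(z)(Id-Q^+_U(z))X_k(z)^{-1}\,X_k(z)W_0.$$
Applying \eqref{in_geo_dich_+_2} with $j=0$ gives $|(Id-Q^+_U(z))W_0|\leq CMe^{-ck}$ for all $k\geq 0$, forcing $(Id-Q^+_U(z))W_0=0$, i.e.\ $W_0\in \Im Q^+_U(z)$. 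The same argument with \eqref{in_geo_dich_-_1}--\eqref{in_geo_dich_-_2} (and letting $k\to-\infty$) gives $E^-_0(z)=\Im Q^-_U(z)$.

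The dimension statements then follow directly from the rank information on $Q^\pm_U(z)$ provided by Lemma \ref{lem_geo_dich}, namely $\dim E^+_0(z)=\dim\Im Q^+_U(z)=dp$ and $\dim E^-_0(z)=\dim\Im Q^-_U(z)=dq$; this is independent of the particular choice of $U$ (as remarked after Lemma \ref{lem_geo_dich}, although the projectors may change with $U$, their images are intrinsic to the dynamical system, which matches what the two sets $E^\pm_0(z)$ depend on).

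Finally, for $W_0\in E^+_0(z)\cap E^-_0(z)$, the two characterizations give $W_0=Q^+_U(z)W_0=Q^-_U(z)W_0$, so applying \eqref{in_geo_dich_+_1} with $k=0$ for $j\geq 0$ and \eqref{in_geo_dich_-_1} with $k=0$ for $j\leq 0$ produces
$$\forall j\in\Z,\quad |X_j(z)W_0|\leq Ce^{-c|j|}|W_0|,$$
which is in $\ell^2(\Z,\C^{(p+q)d})$ (in fact in every $\ell^r$). There is no real technical obstacle once Lemma \ref{lem_geo_dich} is in hand; the only subtlety worth highlighting is that the bound on $(Id-Q^+_U(z))W_0$ above requires boundedness of the forward trajectory, not just its convergence to $0$, which is of course automatic from convergence.
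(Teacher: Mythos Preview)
Your proof is correct and follows essentially the same approach as the paper's own proof: both directions of the inclusion $E_0^+(z)=\Im Q_U^+(z)$ are obtained by writing $X_j(z)W_0$ or $(Id-Q_U^+(z))W_0$ as a product involving $X_j(z)Q_U^+(z)X_k(z)^{-1}$ or $X_j(z)(Id-Q_U^+(z))X_k(z)^{-1}$ and invoking \eqref{in_geo_dich_+_1}--\eqref{in_geo_dich_+_2}, and the $\ell^2$ statement is deduced in the same way from the exponential decay on both sides. Your closing remark slightly overstates the subtlety: since \eqref{in_geo_dich_+_2} already gives a uniform bound on $X_0(z)(Id-Q_U^+(z))X_k(z)^{-1}$, convergence $X_k(z)W_0\to 0$ alone would suffice, exactly as the paper writes it.
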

	
	\begin{proof}
		We prove the first set equality on $E_0^+(z)$. The second one on $E_0^-(z)$ would be proved similarly. 
		\begin{itemize}
			\item For $W_0\in \Im(Q^+_U(z))$, we have for $j\in \N$ using \eqref{in_geo_dich_+_1}
			$$X_j(z) W_0 = X_j(z)Q^+_U(z)X_0(z)^{-1} W_0 \underset{j\rightarrow +\infty}\rightarrow 0.$$
			Thus, we have that $W_0\in E_0^+(z)$.
			\item For $W_0\in E^+_0(z)$, we have for $j\in \N$ using \eqref{in_geo_dich_+_2}
			$$(Id-Q^+_U(z))W_0 = X_0(z)(Id-Q^+_U(z))X_j(z)^{-1}X_j(z)W_0 \underset{j\rightarrow +\infty}\rightarrow 0.$$
			Thus, $W_0$ belongs to the kernel of $Id-Q^+_U(z)$, i.e. $W_0\in \Im(Q^+_U(z))$.
		\end{itemize}
		Therefore, we have proved that 
		$$E_0^\pm(z)=\Im(Q^\pm_U(z)).$$
		
		For $W_0\in\C^{d(p+q)}$, we immediately have that if the family $(X_j(z)W_0)_{j\in\Z}$ belongs to $\ell^2(\Z,\C^{d(p+q)})$, then $W_0$ belongs to $E_0^+(z)\cap E_0^-(z)$. We now consider $W_0\in E^+_0(z)\cap E^-_0(z) = \Im(Q^+_U(z))\cap \Im(Q^-_U(z))$. Since we have
		$$\forall j\in\Z, \quad X_j(z)W_0 = X_j(z) Q^+_U(z)X_0(z)^{-1}W_0 = X_j(z) Q^-_U(z)X_0(z)^{-1}W_0,$$
		the inequalities \eqref{in_geo_dich_+_1} and \eqref{in_geo_dich_-_1} imply that $(X_j(z)W_0)_{j\in\Z}\in \ell^2(\Z, \C^{d(p+q)})$.
	\end{proof}
	
	Let us now come back to the heart of the matter: the study of the spectrum of the operator $\Lcc$. We introduced the dynamical system \eqref{syst_dyn} to study the solutions of the eigenvalue problem 
	$$(zId_{\ell^2}-\Lcc)u=0.$$
	The following lemma, for which the main part is proved in \cite[Theorem 4.1]{Serre}, is deduced by using the geometric dichotomy.
	
	\begin{lemma}\label{lem_spec_ess}
		For $z\in \Oc$, we have that
		\begin{equation}\label{dimEigSpaceLcc}
			\dim \ker(zId_{\ell^2}-\Lcc)=\dim E_0^+(z)\cap E_0^-(z).
		\end{equation}
		Furthermore, $zId_{\ell^2}-\Lcc$ is a Fredholm operator of index $0$, i.e.
		$$\sigma_{ess}(\Lcc)\cap\Oc=\emptyset.$$		
	\end{lemma}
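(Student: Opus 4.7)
The plan is to translate the operator equation $(zId_{\ell^2}-\Lcc)u=f$ into an inhomogeneous $(p+q)d$-dimensional linear recursion and then use the geometric dichotomy of Lemma \ref{lem_geo_dich} to reduce both the kernel and the cokernel of $zId_{\ell^2}-\Lcc$ to finite-dimensional objects on $\C^{d(p+q)}$.

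First I would establish the identification \eqref{dimEigSpaceLcc}. For $u\in \ker(zId_{\ell^2}-\Lcc)$, setting $W_j:=(u_{j+q-1},\ppp,u_{j-p})^T$ produces a solution of \eqref{syst_dyn}, and $u\in \ell^2$ forces $W_j\to 0$ at $\pm\infty$, so $W_0\in E_0^+(z)\cap E_0^-(z)$. Conversely, given $W_0\in E_0^+(z)\cap E_0^-(z)$, the sequence $W_j=X_j(z)W_0$ lies in $\ell^2(\Z,\C^{(p+q)d})$ by Lemma \ref{lem_id_E0}; extracting, say, the bottom block yields $u\in \ell^2(\Z,\C^d)$, and the dynamical system relation is equivalent to $(zId_{\ell^2}-\Lcc)u=0$ (using that $\A_{j,q}(z)=-A_{j,q}$ is invertible by Hypothesis \ref{H:inv}). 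These two constructions are mutually inverse, which gives the claimed linear isomorphism.

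Next, for the Fredholm statement, I would fix a bounded neighborhood $U$ of $z$ with $\overline{U}\subset\Oc$, and rewrite $(zId_{\ell^2}-\Lcc)u=f$ as $W_{j+1}=M_j(z)W_j+F_j$ with $F_j:=(\A_{j,q}(z)^{-1}f_j,0,\ppp,0)^T\in\C^{d(p+q)}$. Using the projectors $Q_U^\pm(z)$, variation of constants produces two candidate particular solutions on the two half-lines:
\[
W_j^+:=\sum_{k=0}^{j-1}X_j(z)Q_U^+(z)X_{k+1}(z)^{-1}F_k-\sum_{k=j}^{+\infty}X_j(z)(Id-Q_U^+(z))X_{k+1}(z)^{-1}F_k,\quad j\geq 0,
\]
and an analogous $W_j^-$ on $-\N$ built from $Q_U^-(z)$. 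The exponential bounds \eqref{in_geo_dich_+_1}-\eqref{in_geo_dich_-_2}, combined with a Young-type convolution estimate, show that $W^\pm$ are $\ell^2$ on their respective half-lines and tend to $0$ at $\pm\infty$. Then $u\in \ell^2(\Z,\C^d)$ solving $(zId_{\ell^2}-\Lcc)u=f$ exists precisely when the jump $W_0^+-W_0^-\in \C^{d(p+q)}$ can be absorbed by a homogeneous $\ell^2$ solution, i.e. when $W_0^+-W_0^-\in \Im Q_U^+(z)+\Im Q_U^-(z)=E_0^+(z)+E_0^-(z)$, thanks to Lemma \ref{lem_id_E0}.

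To conclude, the linear map $\Phi:f\mapsto [W_0^+-W_0^-]\in \C^{d(p+q)}/(E_0^+(z)+E_0^-(z))$ is continuous, and it is surjective because forcings $f$ supported at a single index already realize every element of $\C^{d(p+q)}$. Hence $\mathrm{Im}(zId_{\ell^2}-\Lcc)=\ker\Phi$ is closed with finite codimension equal to $d(p+q)-\dim(E_0^+(z)+E_0^-(z))$, which by the Grassmann formula equals $\dim(E_0^+(z)\cap E_0^-(z))=\dim\ker(zId_{\ell^2}-\Lcc)$. Therefore $zId_{\ell^2}-\Lcc$ is Fredholm of index $0$, which yields $\sigma_{ess}(\Lcc)\cap\Oc=\emptyset$. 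The main technical point, and the step I expect to be the real obstacle, is the precise verification that $W^\pm$ belong to $\ell^2$ and that $\mathrm{Im}\,\Phi$ is the whole quotient; both rely crucially on the uniform exponential estimates of Lemma \ref{lem_geo_dich}.
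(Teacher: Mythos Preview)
Your identification of $\ker(zId_{\ell^2}-\Lcc)$ with $E_0^+(z)\cap E_0^-(z)$ and your dichotomy-based construction of the particular half-line solutions $W^\pm$, together with the characterization $\mathrm{Im}(zId_{\ell^2}-\Lcc)=\{f:\ W_0^+-W_0^-\in E_0^+(z)+E_0^-(z)\}$, coincide with what the paper does.

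The divergence is in how index $0$ is obtained. You aim to compute the codimension of the image exactly by showing that $\Phi$ is surjective onto the quotient, whence $\mathrm{codim}\,\mathrm{Im}=\dim\bigl(E_0^+(z)\cap E_0^-(z)\bigr)=\dim\ker$ by the Grassmann formula. The paper instead only bounds the codimension by $d(p+q)$ (via a linear-independence argument on the map $\nu:f\mapsto W_0^+-W_0^-$), concludes that $zId_{\ell^2}-\Lcc$ is Fredholm, and then invokes the connectedness of $\Oc$ and the invertibility of $zId_{\ell^2}-\Lcc$ for $|z|$ large to obtain index $0$ by continuity of the Fredholm index.

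Your surjectivity step is where the gap lies. A forcing supported at a single site $j_0$ contributes an $F_{j_0}$ in the $d$-dimensional first-block subspace of $\C^{d(p+q)}$, so one index certainly cannot realize all of $\C^{d(p+q)}$ when $p+q>1$. Even varying $j_0$, one finds for $j_0\geq 1$ that $W_0^-=0$ and $\nu(\delta_{j_0}\vec{e})=-(Id-Q_U^+(z))X_{j_0+1}(z)^{-1}F_{j_0}\in \ker Q_U^+(z)$, and symmetrically for $j_0\leq -1$; it then remains to argue that these ranges, together with $E_0^+(z)+E_0^-(z)$, fill out $\C^{d(p+q)}$. That is essentially the Palmer-type duality statement for discrete dichotomies and, while true, is not the one-line observation you suggest. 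The paper's continuity-of-index argument bypasses this point entirely and is the more economical route here.
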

	
	Before proving Lemma \ref{lem_spec_ess}, let us thus introduce the linear map which extracts the center values of a vector of size $d(p+q)$
	\begin{equation}\label{def:Pi}
		\begin{array}{cccc}
			\Pi : &\C^{d(p+q)} &\rightarrow  & \C^d \\ & (x_j)_{j\in\lc1, \ppp,d(p+q)\rc} & \mapsto &(x_j)_{j\in\lc d(q-1)+ 1, \ppp,dq\rc}
		\end{array}.
	\end{equation}
	We now give the proof of Lemma \ref{lem_spec_ess}. Let us point out that the proof of the fact that the essential spectrum of $\Lcc$ does not intersect $\Oc$ is exactly the same proof as in \cite[Theorem 4.1]{Serre}.
	
	\begin{proof}\textbf{of Lemma \ref{lem_spec_ess}}
		
		We consider $z\in \Oc$ and start by proving the relation \eqref{dimEigSpaceLcc}.
		
		$\bullet$ For $w\in \ker(zId_{\ell^2}-\Lcc)$, if we introduce 
		$$W_0:= \begin{pmatrix}
			w_{q-1}\\ \vdots \\ w_{-p}
		\end{pmatrix}\in\C^{d(p+q)},$$
		then we have that
		$$\forall j\in \Z, \quad X_j(z)W_0 = \begin{pmatrix}
			w_{j+q-1}\\ \vdots \\ w_{j-p}
		\end{pmatrix}.$$
		Since $w$ belongs to $\ell^2(\Z,\C^d)$, we have that $W_0\in E_0^+(z)\cap E_0^-(z)$. This implies that the linear map:
		$$\begin{array}{cccc}\varphi: &\ker(zId_{\ell^2}-\Lcc)  & \rightarrow & E_0^+(z)\cap E_0^-(z)\\ & w & \mapsto & \begin{pmatrix}
				w_{q-1}\\ \vdots \\ w_{-p}
		\end{pmatrix}\end{array}$$
		is well-defined.
		
		$\bullet$ We consider $W_0\in E_0^+(z)\cap E_0^-(z)$ and define for $j\in \Z$
		$$w_j:= \Pi (X_j(z)W_0)\in\C^d$$
		where the operator $\Pi$ is defined by \eqref{def:Pi}. Lemma \ref{lem_id_E0} implies that the sequence $w:=(w_j)_{j\in \Z}$ belongs to $\ell^2(\Z,\C^d)$. Furthermore, since $(X_j(z)W_0)_{j\in\Z}$ is a solution of \eqref{syst_dyn}, we have that
		$$(zId_{\ell^2}-\Lcc)w=0.$$
		Therefore, the linear map:
		$$\begin{array}{cccc}\psi: &E_0^+(z)\cap E_0^-(z)& \rightarrow & \ker(zId_{\ell^2}-\Lcc)  \\ & W_0 & \mapsto & (\Pi (X_j(z)W_0))_{j\in\Z}\end{array}$$
		is also well-defined and we can easily verify the equalities:
		$$\varphi\circ \psi = Id_{E_0^+(z)\cap E_0^-(z)}\quad \text{ and } \quad \psi\circ \varphi = Id_{\ker(zId_{\ell^2}-\Lcc)}.$$
		This concludes the proof of \eqref{dimEigSpaceLcc}.
		
		We now focus on the second part of Lemma \ref{lem_spec_ess} which consists in proving that for any $z\in \Oc$ we have that $zId_{\ell^2}-\Lcc$ is a Fredholm operator of index $0$. This part of the proof is the same as \cite[Theorem 4.1]{Serre}. Our first goal is to prove that $zId_{\ell^2}-\Lcc$ is a Fredholm operator. We have already that
		$$\dim \ker(zId_{\ell^2}-\Lcc)= \dim E_0^+(z)\cap E_0^-(z)<+\infty.$$
		There remains to prove that $\Im(zId_{\ell^2}-\Lcc)$ is closed and that
		$$\mathrm{codim}\, \Im(zId_{\ell^2}-\Lcc)<+\infty.$$
		We now fix a bounded open neighborhood $U$ of $z\in \Oc$ such that $\overline{U}\subset \Oc$. We consider $h\in \ell^2(\Z,\C^d)$. The sequence $h$ belongs to the range of $zId_{\ell^2}-\Lcc$ if and only if there exists $v\in \ell^2(Z,\C^d)$ such that, if we define the vectors:
		$$\forall j \in \Z, \quad W_j=\begin{pmatrix}
			v_{j+q-1}\\ \vdots \\ v_{j-p}
		\end{pmatrix}\in\C^{d(p+q)},\quad H_j=\begin{pmatrix}
			\A_{j,q}(z)^{-1}h_j\\ 0 \\ \vdots \\ 0
		\end{pmatrix}\in\C^{d(p+q)},$$
		then there holds the recurrence relations
		\begin{equation}\label{syst_dyn_inh}
			\forall j \in \Z, \quad W_{j+1} = M_j(z)W_j + H_j.
		\end{equation}
		For $j\geq 0$, we define
		$$Z^+_j:= \sum_{k=0}^jX_j(z)Q_U^+(z)X_k(z)^{-1}H_{k-1} - \sum_{k=j+1}^{+\infty}X_j(z)(Id-Q_U^+(z))X_k(z)^{-1}H_{k-1},$$
		where the matrix $Q_U^+(z)$ is defined in Lemma \ref{lem_geo_dich}. Those vectors $Z_j^+$ are well-defined and verify that
		$$\forall j \geq 0, \quad Z^+_{j+1} = M_j(z)Z^+_j+H_j.$$
		Furthermore, using inequalities \eqref{in_geo_dich_+_1} and \eqref{in_geo_dich_+_2}, there exist two positive constants $C,c$ such that
		$$\forall j\in\N,\quad |Z_j^+|\leq C\sum_{k=0}^{+\infty}e^{-c|j-k|}|H_{k-1}|.$$
		Using Young's convolution inequality, the sequence $(Z_j^+)_{j\in\N}$ belongs to $\ell^2(\N)$ and satisfies the estimates
		$$\left(\sum_{j\geq 0} |Z^+_j|^2\right)^\frac{1}{2}\leq C \left(\sum_{j\in \Z} |H_j|^2\right)^\frac{1}{2}$$
		where the positive $C$ is independent of $h$. At this stage, we have found a particular solution of \eqref{syst_dyn_inh} on $\N$ and any sequence $(\tilde{Z}_j)_{j\geq 0}$ solution of \eqref{syst_dyn_inh} on $\N$ can be written as:
		$$\forall j\geq 0, \quad \tilde{Z}_j = Z^+_j +X_j(z)V^+$$
		where $V^+\in E_0^+(z)$. We prove in a similar way that the sequences $(\tilde{Z}_j)_{j\leq 0}\in\ell^2(-\N)$ solution of \eqref{syst_dyn_inh} on $-\N$ are the sequences defined by
		$$\forall j\leq 0, \quad \tilde{Z}_j = Z^-_j +X_j(z)V^-$$
		where $V^-$ is a vector in the finite dimension space $E_0^-(z)$ and the vectors $Z_j^-$ are defined by:
		$$\forall j\leq 0, \quad Z^-_j:= \sum_{k=-\infty}^jX_j(z)(Id-Q^-_U(z))X_k(z)^{-1}H_{k-1}- \sum_{k=j+1}^1X_j(z)Q_U^-(z)X_k(z)^{-1}H_{k-1}.$$
		We also have that 
		$$\left(\sum_{j\leq 0} |Z^-_j|^2\right)^\frac{1}{2}\leq C \left(\sum_{j\in \Z} |H_j|^2\right)^\frac{1}{2}$$
		where the positive constant $C$ is independent from $h$. Using all those information, we conclude that a sequence $h$ belongs to the range of $zId_{\ell^2}-\Lcc$ if and only if there exists a couple of vectors $(V^+,V^-)\in E_0^+(z)\times E_0^-(z)$ such that
		$$Z_0^+-Z_0^-=V^--V^+.$$
		If we now define the bounded operator 
		\begin{equation}\label{def:nu}
			\nu:h\in \ell^2(\Z,\C^d)\mapsto Z_0^+-Z_0^-\in\C^{d(p+q)}
		\end{equation}
		and 
		\begin{equation}\label{def:phi}
			\varphi : (V^+,V^-)\in E_0^+(z)\times E_0^-(z)\mapsto V^--V^+\in \C^{d(p+q)}
		\end{equation}
		which is an operator from a finite dimension vector space to another finite dimension vector space, then we have proved that
		$$\Im(zId_{\ell^2}-\Lcc)=\nu^{-1}(\Im \varphi).$$
		Therefore, the range $\Im(zId_{\ell^2}-\Lcc)$ is closed.
		
		We now want to prove that $\mathrm{codim}\, \Im(zId_{\ell^2}-\Lcc)<+\infty.$ We consider $N\geq 1$ such that there exists $(h_1,\ppp,h_N)$ a linearly independent family of $\ell^2(\Z,\C^d)$ such that 
		$$\Im(zId_{\ell^2}-\Lcc) \cap Span(h_1,\ppp,h_N)=\lc 0\rc.$$
		We are going to prove that the family $(\nu(h_1),\ppp,\nu(h_N))$ is linearly independent in $\C^{d(p+q)}$ and therefore that $N\leq d(p+q)$. Here and below, the operator $\nu$ is the one defined in \eqref{def:nu}. We consider scalars $\lambda_1, \ppp,\lambda_N\in \C$ such that
		$$0=\sum_{i=1}^N\lambda_i\nu(h_i)=\nu\left(\sum_{i=1}^N\lambda_ih_i\right).$$
		We therefore have that 
		$$\nu\left(\sum_{i=1}^N\lambda_ih_i\right)\in \Im\varphi$$
		with $\varphi$ defined in \eqref{def:phi} and thus
		$$\sum_{i=1}^N\lambda_ih_i\in \Im(zId_{\ell^2}-\Lcc) \cap Span(h_1,\ppp,h_N).$$
		This implies that 
		$$\sum_{i=1}^N\lambda_ih_i=0$$
		and the linear independency of $(h_1,\ppp,h_N)$ allows us to conclude that $\lambda_1=\ppp=\lambda_N=0$. We have thus proved that $zId_{\ell^2}-\Lcc$ is a Fredholm operator for all $z\in \Oc$. We also know that, since $\Oc$ is unbounded, there exists $z\in \Oc$ such that $zId_{\ell^2}-\Lcc$ is an isomorphism. The set $\Oc$ being connected and by continuity of the Fredholm index, the statement of the lemma is true.
	\end{proof}
	
	We now introduce the sets 
	\begin{equation}\label{def:Oc_rho_sigma}
		\Oc_\rho := \Oc\cap\rho(\Lcc) \quad \text{ and }\quad \Oc_\sigma := \Oc\cap\sigma(\Lcc).
	\end{equation}
	Lemma \ref{lem_spec_ess} implies that the set $\Oc_\sigma$ only contains eigenvalues of $\Lcc$. Then, because of Hypothesis \ref{H:spec}, we have that 
	$$\overline{\U}\backslash\lc 1\rc\subset \Oc_\rho. $$
	We also observe that Lemma \ref{lem_id_E0} gives us the dimension of the subspaces $E_0^\pm(z)$. Then \eqref{dimEigSpaceLcc} implies that
	\begin{equation}\label{decomp_E0}
		\forall z\in\Oc_\rho,\quad E_0^+(z)\oplus E_0^-(z)=\C^{(p+q)d}.
	\end{equation}
	Thus, for $z\in\Oc_\rho$, we can define the unique projector $Q(z)$ from $\C^{d(p+q)}$ to $E_0^+(z)$ such that
	$$\Im Q(z) =E_0^+(z) \quad \text{ and } \quad \ker Q(z) =E_0^-(z).$$
	
		The function $z\in \Oc_\rho\mapsto Q(z)$ is holomorphic (see \cite[Chapter 2 Section 4.2]{Kato} to construct bases of $E_0^\pm(z)$ that are locally holomorphic). We will now prove that the function $Q$ is fundamental to the study of \eqref{syst_dyn} by extending the geometric dichotomy. The following lemma is once again very much inspired by \cite[Section III.1.5]{Godillon_these} and \cite[Chapter 2]{Coppel}.

	\begin{lemma}[Extended geometric dichotomy]\label{lemGeoDichStronger}
		For any bounded open set $U$ such that $\overline{U}\subset \Oc_\rho$, there exist two positive constants $C,c>0$ such that for all $z\in U$, the projector $Q(z)$ associated with the decomposition \eqref{decomp_E0} satisfies:
		\begin{subequations}
			\begin{align}
				\forall  j \geq k, &\quad \left|X_j(z)Q(z)X_k(z)^{-1}\right|\leq C e^{-c|j-k|},\label{in_geo_dich_1}\\
				\forall k \geq j, &\quad \left|X_j(z)(Id-Q(z))X_k(z)^{-1}\right|\leq C e^{-c|j-k|}.\label{in_geo_dich_2}
			\end{align}
		\end{subequations}
	\end{lemma}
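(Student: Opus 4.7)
The plan is to bootstrap from the local geometric dichotomy (Lemma \ref{lem_geo_dich}), exploiting that for $z\in\Oc_\rho$ the subspaces $E_0^+(z)$ and $E_0^-(z)$ are complementary in $\C^{(p+q)d}$. Since $\overline{U}\subset\Oc_\rho\subset\Oc$, Lemma \ref{lem_geo_dich} provides holomorphic projectors $Q_U^\pm(z)$ satisfying the local estimates \eqref{in_geo_dich_+_1}--\eqref{in_geo_dich_-_2}, while Lemma \ref{lem_id_E0} identifies $\Im Q_U^+=E_0^+=\Im Q$ and $\Im Q_U^-=E_0^-=\ker Q$. Two projectors $P_1,P_2$ with the same image satisfy $P_2P_1=P_1$, so these identifications yield the algebraic identities
\[Q_U^+Q=Q,\qquad QQ_U^-=0,\qquad Q_U^-(Id-Q)=Id-Q,\qquad (Id-Q)Q_U^+=0,\]
together with the factorization $Q=Q_U^+Q(Id-Q_U^-)$ and its dual $Id-Q=Q_U^-(Id-Q)(Id-Q_U^+)$.

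Setting $\Pi_k(z):=X_k(z)Q(z)X_k(z)^{-1}$, I would then prove \eqref{in_geo_dich_1} by a case analysis on the signs of $j,k\in\Z$ with $j\geq k$, inserting $X_\ell(z)^{-1}X_\ell(z)=Id$ at strategic locations to produce
\[X_j Q X_k^{-1}=\begin{cases} (X_j Q_U^+ X_k^{-1})\,\Pi_k(z) & \text{if }j\geq k\geq 0,\\ (X_j Q_U^+ X_0^{-1})\,Q(z)\,(X_0(Id-Q_U^-)X_k^{-1}) & \text{if }j\geq 0\geq k,\\ \Pi_j(z)\,(X_j(Id-Q_U^-)X_k^{-1}) & \text{if }0\geq j\geq k. \end{cases}\]
In each case the factors involving $Q_U^\pm$ are bounded by $Ce^{-c(j-k)}$ via the appropriate inequality from Lemma \ref{lem_geo_dich} (taking $k=0$ or $j=0$ where needed), while $Q(z)$ is uniformly bounded on the compact set $\overline{U}$ by holomorphy. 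A symmetric case analysis based on the dual factorization of $Id-Q$ yields \eqref{in_geo_dich_2}. The entire argument therefore reduces to the auxiliary estimate $\sup_{z\in\overline{U},\,k\in\Z}|\Pi_k(z)|<+\infty$.

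The main obstacle is proving this uniform bound on $\Pi_k(z)$. For $|k|\leq K_0$ it follows from continuity of $\Pi_k(z)$ in $z$ and compactness of $\overline{U}$. For $k\to+\infty$, I would show $\Pi_k(z)$ converges in operator norm to the spectral projector $P_s^+(z)$ onto $E^s(M^+(z))$ along $E^u(M^+(z))$: the range of $\Pi_k(z)$ is $X_k(z)E_0^+(z)$, which coincides with the range of $\alpha_k(z):=X_k(z)Q_U^+(z)X_k(z)^{-1}$ and converges to $E^s(M^+(z))$ by the same kind of arguments as in Step~3 of the proof of Lemma \ref{lem_geo_dich} ($P_u^+(z)\alpha_k(z)$ decays exponentially thanks to \eqref{CV_expo_mat}); the kernel $X_k(z)E_0^-(z)$ converges to $E^u(M^+(z))$ thanks to the transversality $E_0^-(z)\cap E^s(M^+(z))=\{0\}$, which is precisely a consequence of $z\in\Oc_\rho$, since any $v$ in the intersection would yield a trajectory decaying at both $\pm\infty$ and thus lie in $E_0^+\cap E_0^-=\{0\}$. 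A symmetric argument at $k\to-\infty$ gives convergence of $\Pi_k(z)$ to the spectral projector of $M^-(z)$ onto $E^u(M^-(z))$ along $E^s(M^-(z))$. Continuity of these limiting spectral projectors on $\overline{U}\subset\Oc_\rho$ then delivers the required uniform bound.
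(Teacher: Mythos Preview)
Your overall strategy—factor $X_jQX_k^{-1}$ through $Q_U^\pm$ via the identities $Q_U^+Q=Q$ and $Q(Id-Q_U^-)=Q$, then reduce to a uniform bound on $\Pi_k(z)=X_k(z)Q(z)X_k(z)^{-1}$—is correct and close in spirit to the paper's argument. The case analysis and factorizations you write down are valid.

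The paper, however, takes a shorter route that bypasses the analysis of $\Pi_k$ entirely. Since $\Im Q=\Im Q_V^+$ one has both $Q_V^+Q=Q$ and $QQ_V^+=Q_V^+$, hence the \emph{difference} factorization
\[
Q_V^+-Q \;=\; Q_V^+\,(Q_V^+-Q)\,(Id-Q_V^+).
\]
Sandwiching between $X_j$ and $X_k^{-1}$ and inserting $X_0^{-1}X_0$ yields
\[
\bigl|X_j(z)(Q_V^+(z)-Q(z))X_k(z)^{-1}\bigr|\;\le\;C^2e^{-c(j+k)}\,\bigl|Q_V^+(z)-Q(z)\bigr|
\]
for $j,k\ge0$, which combined with \eqref{in_geo_dich_+_1}--\eqref{in_geo_dich_+_2} immediately gives the half-line estimates for $X_jQX_k^{-1}$. (The enlarged set $V\supset\overline U$ is introduced so that $|Q_V^+-Q|$ is uniformly bounded on $U$ by a compactness argument.) This is more economical than your route: it yields the uniform bound on $\Pi_k$ as a by-product rather than as a prerequisite.

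More importantly, your proposed argument for the uniform bound on $\Pi_k$ has a gap. The transversality $E_0^-(z)\cap E^s(M^+(z))=\{0\}$ is \emph{not} a consequence of $z\in\Oc_\rho$ in the way you claim: membership in $E^s(M^+(z))$ concerns the constant-coefficient dynamics $W_{j+1}=M^+(z)W_j$, not the perturbed system \eqref{syst_dyn}, so $v\in E^s(M^+(z))$ does not imply $X_j(z)v\to0$ as $j\to+\infty$, and you cannot conclude $v\in E_0^+(z)$. Thus your justification of the kernel convergence $X_kE_0^-\to E^u(M^+)$ fails. The quickest fix is to abandon the spectral-projector limit altogether and instead use the paper's difference factorization to write $\Pi_k-X_kQ_U^+X_k^{-1}=-X_kQ_U^+X_0^{-1}(Q_U^+-Q)X_0(Id-Q_U^+)X_k^{-1}$, which is $O(e^{-2ck})$ by \eqref{in_geo_dich_+_1}--\eqref{in_geo_dich_+_2}; since $|X_kQ_U^+X_k^{-1}|\le C$ by Lemma~\ref{lem_geo_dich}, the desired uniform bound on $\Pi_k$ follows.
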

	
	\begin{proof}
		We begin by assuming that we proved the existence of two constants $C,c>0$ such that for all $z \in U$
		\begin{subequations}
			\begin{align}
				\forall  j\geq k\geq 0, &\quad \left|X_j(z)Q(z)X_k(z)^{-1}\right|\leq C e^{-c|j-k|},\label{in_geo_dich_1_a}\\
				\forall k\geq j\geq 0, &\quad \left|X_j(z)(Id-Q(z))X_k(z)^{-1}\right|\leq C e^{-c|j-k|},\label{in_geo_dich_2_a}\\
				\forall   k \leq j \leq 0, &\quad \left|X_j(z)Q(z)X_k(z)^{-1}\right|\leq C e^{-c|j-k|},\label{in_geo_dich_1_b}\\
				\forall  j \leq k \leq 0, &\quad \left|X_j(z)(Id-Q(z))X_k(z)^{-1}\right|\leq C e^{-c|j-k|}.\label{in_geo_dich_2_b}
			\end{align}
		\end{subequations}
		Then, we observe that to prove the assertion \eqref{in_geo_dich_1}, there would only remain to prove \eqref{in_geo_dich_1} in the case where $j\geq 0\geq k$. Using \eqref{in_geo_dich_1_a} and \eqref{in_geo_dich_1_b}, we have
		$$\left|X_j(z)Q(z)X_k(z)^{-1}\right|\leq\left|X_j(z)Q(z)X_0(z)^{-1}\right|\left|X_0(z)Q(z)X_k(z)^{-1}\right|\leq C^2e^{-c(j-k)}.$$
		Hence, the assertion \eqref{in_geo_dich_1} follows from \eqref{in_geo_dich_1_a} and \eqref{in_geo_dich_1_b}. Similarly, \eqref{in_geo_dich_2} follows from \eqref{in_geo_dich_2_a} and \eqref{in_geo_dich_2_b}.
		
		Therefore, there only remains to prove the existence of $C,c>0$ such that \eqref{in_geo_dich_1_a}-\eqref{in_geo_dich_2_b} are true for all $z\in U$. We will prove \eqref{in_geo_dich_1_a} and \eqref{in_geo_dich_1_b}. The proof for \eqref{in_geo_dich_2_a} and \eqref{in_geo_dich_2_b} can be dealt with similarly. First, we need to consider a bounded open set $V$ such that $\overline{V}\subset \Oc_\rho$ and $\overline{U}\subset V$ and the projector $Q_V^+$ provided by Lemma \ref{lem_geo_dich}. This will be useful later on to bound the difference $Q^+_V-Q$. For $z\in U$, Lemma \ref{lem_id_E0} implies that $E_0^+(z) =\Im Q^+_V(z) = \Im Q(z)$, i.e.
		$$Q^+_V(z)Q(z)=Q(z) \quad \text{ and }\quad Q(z)Q^+_V(z)=Q^+_V(z).$$
		This allows us to prove that 
		$$Q^+_V(z)-Q(z) =Q^+_V(z)(Q^+_V(z) -Q(z))(Id-Q^+_V(z)).$$
		Therefore, for $j,k\in \N$, we have
		$$X_j(z) (Q^+_V(z)-Q(z))X_k(z)^{-1} = X_j(z) Q^+_V(z)X_0(z)^{-1}(Q^+_V(z)-Q(z))X_0(z) (Id-Q^+_V(z))X_k(z)^{-1}.$$
		Thus, because of the inequalities \eqref{in_geo_dich_+_1} and \eqref{in_geo_dich_+_2}, we have the estimate
		\begin{equation}\label{ine_proj}
			\left|X_j(z) (Q^+_V(z)-Q(z))X_k(z)^{-1}\right| \leq C^2e^{-c(j+k)}|Q^+_V(z)-Q(z)|.
		\end{equation}
		Using the inequalities \eqref{in_geo_dich_+_1}, \eqref{in_geo_dich_+_2} and \eqref{ine_proj}, we can thus prove that
		$$\forall j\geq k\geq 0,\quad |X_j(z)Q(z)X_k(z)^{-1}|\leq Ce^{-c(j-k)}+ C^2e^{-c(j+k)}|Q^+_V(z)-Q(z)|$$
		and
		$$\forall k\geq j\geq 0,\quad |X_j(z)(Id-Q(z))X_k(z)^{-1}|\leq Ce^{-c(k-j)}+ C^2e^{-c(j+k)}|Q^+_V(z)-Q(z)|.$$
		Since $z\in V\mapsto|Q^+_V(z)-Q(z)|$ is continuous and $\overline{U}\subset V$, we can uniformly bound $|Q^+_V(z)-Q(z)|$ for $z\in U$. We can then deduce the existence of two positive constants $C,c$ to verify the inequalities \eqref{in_geo_dich_1_a} and \eqref{in_geo_dich_1_b}.
	\end{proof}
	
	\subsection{Bounds on the spatial Green's function far from \texorpdfstring{$1$}{1}}\label{subsec:BoundsGsLoin}
	
	For $z\in \Oc_\rho$ and $j_0\in \Z$, since $z$ is in the resolvent set of $\Lcc$, the spatial Green's function $G(z,j_0,\cdot)$ defined by \eqref{defGreenSpatial} is well-defined. We observe that the function $z\in\Oc_\rho\mapsto G(z,j_0,\cdot)$ is holomorphic.
	
	We consider $\textbf{e}\in \C^d$. We then observe that the vector valued sequence $G(z,j_0,\cdot)\textbf{e}$ belongs to $\ell^2(\Z,\C^d)$ and satisfies:
	$$\forall z\in \Oc_\rho,\forall j_0\in\Z,\forall \textbf{e}\in\C^d,\quad zG(z,j_0,\cdot)\textbf{e}- \Lcc G(z,j_0,\cdot)\textbf{e}=\delta_{j_0}\textbf{e},$$
	i.e.
	$$\forall z\in \Oc_\rho,\forall j_0,j\in\Z,\forall \textbf{e}\in\C^d, \quad \sum_{k=-p}^q \A_{j,k}(z)G(z,j_0,j+k)\textbf{e}=\delta_{j_0,j}\textbf{e}.$$
	Thus, if we define the vectors:
	\begin{equation}\label{def:WGreenspatial}
		\forall z\in \Oc_\rho, \forall j_0,j\in\Z,\forall \textbf{e}\in\C^d,\quad W(z,j_0,j,\textbf{e}) := \begin{pmatrix}
			G(z,j_0,j+q-1)\textbf{e}\\\vdots\\G(z,j_0,j-p)\textbf{e}
		\end{pmatrix},
	\end{equation}
	then we have that:
	\begin{equation}\label{systDynWj}
		\forall z\in \Oc_\rho, \forall j_0,j\in\Z,\forall \textbf{e}\in\C^d, \quad W(z,j_0,j+1,\textbf{e})= M_j(z)W(z,j_0,j,\textbf{e}) -\begin{pmatrix}
			A_{j,q}^{-1}\delta_{j_0,j}\textbf{e}\\0\\\vdots\\0
		\end{pmatrix}
	\end{equation}
	We will now prove the following proposition using the extended geometric dichotomy (Lemma \ref{lemGeoDichStronger}).
	
	\begin{prop}[Bounds far from $1$]\label{GreenSpatialFar}
		For $U$ a bounded open set such that $\overline{U}\subset \Oc_\rho$, there exist two constants $C,c>0$ such that
		$$\forall z\in U, \forall \textbf{e}\in\C^d, \forall j,j_0\in \Z, \quad |W(z,j_0,j,\textbf{e})|\leq C|\textbf{e}|e^{-c|j-j_0|}.$$
	\end{prop}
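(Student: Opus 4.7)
The strategy is to write the unique $\ell^2$ solution of the inhomogeneous recurrence \eqref{systDynWj} explicitly via a variation-of-constants formula based on the dichotomy projector $Q(z)$ from Lemma \ref{lemGeoDichStronger}, and then read off exponential decay directly from the dichotomy bounds. The source term in \eqref{systDynWj} is supported only at the single index $k = j_0$, where it equals
\[
H_{j_0} \;=\; -\begin{pmatrix} A_{j_0,q}^{-1}\vec{e} \\ 0 \\ \vdots \\ 0 \end{pmatrix},
\]
so the formula will collapse to a single term in each of the two regions $j>j_0$ and $j\leq j_0$.

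First, I would fix an open bounded set $V$ with $\overline{U}\subset V$ and $\overline{V}\subset \Oc_\rho$, apply Lemma \ref{lemGeoDichStronger} on $V$ to obtain uniform constants $C,c>0$, and then propose the candidate
\[
\widetilde{W}_j \;:=\; \sum_{k<j} X_j(z)\,Q(z)\,X_{k+1}(z)^{-1} H_k \;-\; \sum_{k\geq j} X_j(z)\,(Id-Q(z))\,X_{k+1}(z)^{-1} H_k.
\]
A direct computation using $X_{j+1}(z) = M_j(z) X_j(z)$ and $Q + (Id-Q) = Id$ shows that $\widetilde{W}_{j+1} = M_j(z)\widetilde{W}_j + H_j$ for every $j\in\Z$, so $\widetilde{W}$ solves the same recurrence as $W(z,j_0,j,\vec{e})$.

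Second, since $H$ is supported at $k=j_0$, the expression simplifies: for $j>j_0$ one has $\widetilde{W}_j = X_j(z)Q(z)X_{j_0+1}(z)^{-1} H_{j_0}$, and for $j\leq j_0$ one has $\widetilde{W}_j = -X_j(z)(Id-Q(z))X_{j_0+1}(z)^{-1} H_{j_0}$. Applying \eqref{in_geo_dich_1} in the first case and \eqref{in_geo_dich_2} in the second yields $|\widetilde{W}_j| \leq C e^{-c|j-j_0-1|}|H_{j_0}|$ uniformly in $z\in U$. The size of $|H_{j_0}|$ is controlled by $|\vec{e}|$ times a uniform bound on $|A_{j_0,q}^{-1}|$, which follows from Hypothesis \ref{H:inv} together with the exponential convergence $A_{j_0,q}\to A_q^\pm$ provided by Hypothesis \ref{H:CVexpo} (so that $A_{j_0,q}^{-1}$ converges to the invertible limits $(A_q^\pm)^{-1}$ and is bounded uniformly in $j_0$).

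Third, I would identify $\widetilde{W}$ with $W(z,j_0,\cdot,\vec{e})$. Both sequences are $\ell^2$ (indeed $\widetilde{W}$ decays exponentially, and $W$ comes from the Green's function which lies in $\ell^2$), and both satisfy \eqref{systDynWj} with the same right-hand side. Their difference is an $\ell^2$ solution of the homogeneous dynamical system, that is, an element of $E_0^+(z)\cap E_0^-(z)$; by \eqref{dimEigSpaceLcc} in Lemma \ref{lem_spec_ess} and the fact that $z\in\Oc_\rho\subset\rho(\Lcc)$, this intersection is $\{0\}$. Thus $W = \widetilde{W}$ and the exponential bound proved for $\widetilde{W}$ transfers to $W$, completing the proof. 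There is no genuine obstacle here: once the extended geometric dichotomy is in hand, the argument is essentially a one-line variation-of-constants computation, and the only subtlety worth stating is the uniformity of $|A_{j_0,q}^{-1}|$ in $j_0$.
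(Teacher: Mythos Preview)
Your proposal is correct and follows essentially the same approach as the paper. The paper argues in the reverse direction—starting from the known $\ell^2$ solution $W$, deducing that its restrictions to $j\geq j_0+1$ and $j\leq j_0$ lie in $E_0^+(z)$ and $E_0^-(z)$ respectively, then using the jump condition and the projector $Q(z)$ to obtain the very same formulas you derive via variation-of-constants—but the resulting expressions \eqref{Wr}, \eqref{Wl} and the application of the dichotomy bounds \eqref{in_geo_dich_1}, \eqref{in_geo_dich_2} are identical.
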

	
	In particular, the result of Proposition \ref{GreenSpatialFar} holds true in a neighborhood of any point $z\in\overline{\U}\backslash\lc1\rc$.
	A direct consequence of Proposition \ref{GreenSpatialFar} on the spatial Green's function \eqref{defGreenSpatial} is that for any $U$ a bounded open set such that $\overline{U}\subset \Oc_\rho$, there exist two constants $C,c>0$ such that
	$$\forall z\in U, \forall j,j_0\in \Z, \quad |G(z,j_0,j)|\leq Ce^{-c|j-j_0|}.$$

	\begin{proof}\textbf{of Proposition \ref{GreenSpatialFar}}
		
		We consider $z\in U$, $j,j_0\in \Z$ and $\textbf{e}\in \C^d$ such that $|\textbf{e}|\leq 1$. The equality \eqref{systDynWj} implies the following results:
		\begin{itemize}
			\item We have
			$$\forall j \geq j_0+1, \quad W(z,j_0,j+1,\textbf{e})= M_j(z)W(z,j_0,j,\textbf{e}),$$
			i.e.
			\begin{equation}\label{WjR}
				\forall j \geq j_0+1, \quad W(z,j_0,j,\textbf{e})= X_j(z)X_{j_0+1}(z)^{-1}W(z,j_0,j_0+1,\textbf{e}).
			\end{equation}
			Also, since $G(z,j_0,\cdot)\textbf{e}\in \ell^2(\Z,\C^d)$, we have that $X_{j_0+1}(z)^{-1}W(z,j_0,j_0+1,\textbf{e})\in E_0^+(z)$.
			
			\item We have
			$$\forall j \leq j_0-1, \quad W(z,j_0,j+1,\textbf{e})= M_j(z)W(z,j_0,j,\textbf{e}),$$
			i.e.
			\begin{equation}\label{WjL}
				\forall j  \leq j_0, \quad W(z,j_0,j,\textbf{e})= X_j(z)X_{j_0}(z)^{-1}W(z,j_0,j_0,\textbf{e}).
			\end{equation}
			Also, since $G(z,j_0,\cdot)\textbf{e}\in \ell^2(\Z,\C^d)$, we have that $X_{j_0}(z)^{-1}W(z,j_0,j_0,\textbf{e})\in E_0^-(z)$.
			
			\item We have
			$$W(z,j_0,j_0+1,\textbf{e})= M_{j_0}(z)W(z,j_0,j_0,\textbf{e}) -\begin{pmatrix}
				A_{j_0,q}^{-1}\textbf{e}\\0\\\vdots\\0
			\end{pmatrix}$$
			i.e.
			$$X_{j_0+1}(z)^{-1}W(z,j_0,j_0+1,\textbf{e})- X_{j_0}(z)^{-1}W(z,j_0,j_0,\textbf{e})= -X_{j_0+1}(z)^{-1}\begin{pmatrix}
				A_{j_0,q}^{-1}\textbf{e}\\0\\\vdots\\0
			\end{pmatrix}.$$
		\end{itemize}
		
		Since $Q(z)$ is the projection on $E_0^+(z)$ with respect to $E_0^-(z)$, we have that
		$$\begin{array}{c}
			X_{j_0+1}(z)^{-1}W(z,j_0,j_0+1,\textbf{e})=-Q(z)X_{j_0+1}(z)^{-1}\begin{pmatrix}
				A_{j_0,q}^{-1}\textbf{e}\\0\\\vdots\\0
			\end{pmatrix},\\
			X_{j_0}(z)^{-1}W(z,j_0,j_0,\textbf{e})=(Id-Q(z))X_{j_0+1}(z)^{-1}\begin{pmatrix}
				A_{j_0,q}^{-1}\textbf{e}\\0\\\vdots\\0
			\end{pmatrix}.
		\end{array}$$
		Using \eqref{WjR} and \eqref{WjL}, we thus have the formulas:
		\begin{subequations}
			\begin{align}
				\forall j \geq j_0+1, \quad& W(z,j_0,j,\textbf{e})= -X_j(z)Q(z)X_{j_0+1}(z)^{-1}\begin{pmatrix}
					A_{j_0,q}^{-1}\textbf{e}\\0\\\vdots\\0
				\end{pmatrix}, \label{Wr}\\
				\forall j  \leq j_0, \quad& W(z,j_0,j,\textbf{e})= X_j(z)(Id-Q(z))X_{j_0+1}(z)^{-1}\begin{pmatrix}
					A_{j_0,q}^{-1}\textbf{e}\\0\\\vdots\\0
				\end{pmatrix}.\label{Wl}
			\end{align}
		\end{subequations}
		We now apply the inequalities \eqref{in_geo_dich_1} and \eqref{in_geo_dich_2} and obtain:
		$$\forall z \in U, \forall j,j_0\in\Z,\quad |W(z,j_0,j,\textbf{e})|\leq Ce^{-c|j-(j_0+1)|}\left|\begin{pmatrix}
			A_{j_0,q}^{-1}\textbf{e}\\0\\\vdots\\0
		\end{pmatrix}\right|.$$
		The result of Proposition \ref{GreenSpatialFar} follows.
	\end{proof}
	
	\section{Extension of the spatial Green's function near \texorpdfstring{$1$}{1}}\label{sec:GSnear1}

		The analysis of the spatial Green's function done in the previous section does not hold in a neighborhood of $1$. Indeed, the spectrum of the limit operators $\Lcc^\pm$ in \eqref{spec_Li} should belong to the essential spectrum of the operator $\Lcc$. This corresponds to the matrices $M^\pm(z)$ having central eigenvalues for some $z$ near $1$. Thus, the geometric dichotomy presented in the previous section cannot be applied immediately. To circumvent these issues, the strategy will be to refine the analysis of \eqref{syst_dyn} near $1$ by finding a particular basis of $E_0^\pm(z)$ and using this basis to express the spatial Green's function. In some sense, it amounts at using the projections on a specific basis of solutions of \eqref{syst_dyn} rather than the projection associated with the geometric dichotomy. This corresponds to adapt in a fully discrete setting the same strategy as in \cite{ZH,MasciaZumbrun,BenzoniHuotRousset,BeckHupkesSandstedeZumbrun} which tackle continuous or semi-discrete problems.

	\subsection{Right and left eigenvectors of \texorpdfstring{$M^\pm(z)$}{M+-(z)} for \texorpdfstring{$z$}{z} near \texorpdfstring{$1$}{1}}\label{subsec:EtudeSpectreMPrèsde1}
	
	To study the spatial Green's function for $z$ near $1$, we will need to study the solutions of the dynamical system \eqref{syst_dyn} with more accuracy. The first step is to have a better understanding of the eigenvalues and eigenvectors of $M^\pm(z)$ when $z$ is close to $1$.
	
	First, let us make some observations on the eigenvalues of $M_l^\pm(1)$ defined by \eqref{def:Mlpm} for $l\in\lc1,\ppp,d\rc$. Using Lemma \ref{lem:SpecSpl}, we know that the eigenvalues $\kappa\in\C\backslash\lc0\rc$ of $M_l^\pm(1)$ are the solutions of 
	$$\Fc_l^\pm(\kappa)=1.$$
	Hypothesis \ref{H:Mpm1} allows us to conclude that the matrix $M_l^\pm(1)$ only has simple eigenvalues. Furthermore, Lemma \ref{lem:SpecSpl} implies that $1$ is a simple eigenvalue of $M_l^\pm(1)$ and that the rest of the eigenvalues are in $\D\backslash\lc0\rc$ or $\U$ and we know the number of eigenvalues in each set depending on the sign of $\alpha_l^\pm$. Thus, we can define a family $(\uzet^\pm_m)_{m\in\lc1,\ppp,d(p+q)\rc}\in\C^{d(p+q)}$ such that 
	$$\forall l \in\lc1,\ppp,d\rc, \quad \sigma(M_l^\pm(1))=\lc\uzet_l^\pm,\uzet_{l+d}^\pm,\ppp,\uzet_{l+(p+q-1)d}^\pm\rc.$$
	Furthermore, using Hypothesis \ref{H:Lax} to determine the sign of $\alpha_l^\pm$ defined by \eqref{eg:FcFin} and Lemma \ref{lem:SpecSpl}, we can index them in order to have the following fact.
	\begin{itemize}
		\item For all $l\in\lc1, \ppp, I\rc$, since $\alpha_l^+<0$, we choose
		$$\uzet_l^+, \ppp, \uzet_{l+d(p-1)}^+ \in \D, \quad \uzet_{l+dp}^+=1, \quad \uzet_{l+d(p+1)}^+,\ppp,\uzet_{l+d(p+q-1)}^+\in\U.$$
		\item For all $l\in\lc I+1, \ppp,d\rc$, since $\alpha_l^+>0$, we choose
		$$\uzet_l^+, \ppp, \uzet_{l+d(p-2)}^+ \in \D, \quad \uzet_{l+d(p-1)}^+=1, \quad \uzet_{l+dp}^+,\ppp,\uzet_{l+d(p+q-1)}^+\in\U.$$
		\item For all $l\in\lc1, \ppp, I-1\rc$, since $\alpha_l^-<0$, we choose
		$$\uzet_l^-, \ppp, \uzet_{l+d(p-1)}^- \in \D, \quad \uzet_{l+dp}^-=1, \quad \uzet_{l+d(p+1)}^-,\ppp,\uzet_{l+d(p+q-1)}^-\in\U.$$
		\item For all $l\in\lc I, \ppp, d\rc$, since $\alpha_l^->0$, we choose
		$$\uzet_l^-, \ppp, \uzet_{l+d(p-2)}^- \in \D, \quad \uzet_{l+d(p-1)}^-=1, \quad \uzet_{l+dp}^-,\ppp,\uzet_{l+d(p+q-1)}^-\in\U.$$
	\end{itemize}
	
	We indexed the eigenvalues to separate the stable, central and unstable eigenvalues of the matrices $M^\pm_l(1)$. More precisely, we observe that if we introduce the sets
	\begin{equation}\label{def:Iss,cs,cu,su^pm}
		\begin{array}{ccc}
			I_{ss}^+:=\lc1,\ppp,d(p-1)+I\rc,& \quad & I_{ss}^-:=\lc1,\ppp,d(p-1)+I-1\rc,\\
			I_{cs}^+:=\lc d(p-1)+I+1,\ppp,dp\rc,& \quad & I_{cs}^-:=\lc d(p-1)+I,\ppp,dp\rc,\\
			I_{cu}^+:=\lc dp+1,\ppp, dp+I\rc,& \quad & I_{cu}^-:=\lc dp+1,\ppp , dp+I-1\rc,\\
			I_{su}^+:=\lc dp+I+1,\ppp, d(p+q)\rc, & \quad & I_{su}^-:=\lc dp+I,\ppp , d(p+q)\rc,
		\end{array}
	\end{equation}
	then we have that
	\begin{align}
		\begin{split}
			\forall m\in I_{ss}^\pm, &\quad \uzet_m^\pm\in \D,\\
			\forall m\in I_{cs}^\pm\cup I_{cu}^\pm, &\quad  \uzet_m^\pm=1,\\
			\forall m\in I_{su}^\pm, & \quad  \uzet_m^\pm\in \U.
		\end{split}\label{prop:uzet}
	\end{align}
	
	Since those are simple eigenvalues of $M_l^\pm(1)$, we are able to extend them holomorphically in a neighborhood of $1$. We consider $\delta_0>0$ a radius such that for each $m=l+(k-1)d\in\lc1,\ppp,d(p+q)\rc$ with $k\in\lc1,\ppp,p+q\rc$ and $l\in\lc1,\ppp,d\rc$, there exists a holomorphic function $\zeta_m^\pm:B(1,\delta_0)\rightarrow \C$ such that $\zeta_{m}^\pm(1)=\uzet_m^\pm$ and for all $z\in B(1,\delta_0)$, $\zeta_{m}^\pm(z)$ is a simple eigenvalue of $M_l^\pm(z)$. We will also separate the different types of eigenvalues by assuming that we chose $\delta_0$ small enough so that there exists a constant $c_*>0$ such that for all $z\in B(1,\delta_0)$
	\begin{subequations}\label{inZeta}
		\begin{align}
			\forall m\in I_{ss}^\pm, &\quad |\zeta_m^\pm(z)|\leq \exp(-2c_*), \label{inZetaSs} \\
			\forall m\in I_{cs}^\pm\cup I_{cu}^\pm, &\quad \exp(-c_*)\leq |\zeta_m^\pm(z)|\leq \exp(c_*) \label{inZetaC} \\
			\forall m\in I_{su}^\pm, & \quad  \exp(2c_*)\leq |\zeta_m^\pm(z)|. \label{inZetaSu}
		\end{align} 	
	\end{subequations}
	When we will study the temporal Green's function $\Gcc(n,j_0,j)$ later on in Section \ref{sec:GT}, we will have to bound terms of the form
	$$|\zeta_m^\pm(z)|^j|\zeta_{m^\prime}^\pm(z)|^{-j_0}.$$
	The inequalities \eqref{inZetaSs}-\eqref{inZetaSu} will allow us in a lot of cases to obtain exponential bounds for some of those terms. 
	
	Using Lemma \ref{decomp_mat}, we thus have a complete description of the eigenvalues of $M^\pm(z)$ for $z$ in a neighborhood of $1$. The following lemma also allows us to introduce a basis of eigenvectors for the matrices $M^\pm(z)$.
	
	\begin{lemma}\label{lemRm}
		For $m=l+(k-1)d\in\lc1,\ppp,d(p+q)\rc$ with $k\in\lc1,\ppp,p+q\rc$ and $l\in\lc1,\ppp,d\rc$, the vector
		\begin{equation}\label{def:Rm}
			R_m^\pm(z):=\begin{pmatrix}
				\zeta_m^\pm(z)^{q-1}\rg_l^\pm \\ \vdots \\ \zeta_m^\pm(z)^{-p}\rg_l^\pm
			\end{pmatrix}\in\C^{d(p+q)}
		\end{equation}
		is an eigenvector of $M^\pm(z)$ associated with the eigenvalue $\zeta_m^\pm(z)$. Furthermore, for all $z\in B(1,\delta_0)$, the family $(R_m^\pm(z))_{m\in\lc1,\ppp,d(p+q)\rc}$ is a basis of $\C^{d(p+q)}$. 
	\end{lemma}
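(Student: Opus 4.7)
The plan is to first verify the eigenvector claim by a direct computation exploiting the companion-matrix block structure of $M^\pm(z)$, and then deduce linear independence from a Vandermonde argument combined with the fact that $(\rg_l^\pm)_{l=1}^d$ is a basis of $\C^d$.

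For the first assertion, I would compute $M^\pm(z) R_m^\pm(z)$ block by block. Writing $R_m^\pm(z) = (R_1, \ppp, R_{p+q})^T$ with $R_n = \zeta_m^\pm(z)^{q-n} \rg_l^\pm$, the lower block rows of $M^\pm(z)$ are simple shifts: for $n \geq 2$ the $n$-th block of $M^\pm(z) R_m^\pm(z)$ equals $R_{n-1} = \zeta_m^\pm(z)^{q-n+1}\rg_l^\pm = \zeta_m^\pm(z) R_n$, as required for an eigenvector. For the top block, the condition reduces to
$$\sum_{k=-p}^{q} \A_k^\pm(z) \zeta_m^\pm(z)^{k}\, \rg_l^\pm = 0.$$
By Hypothesis \ref{H:VPAk}, $\rg_l^\pm$ is an eigenvector of each $A_k^\pm$ with eigenvalue $\lambda_{l,k}^\pm$, so $\A_k^\pm(z)\rg_l^\pm = \Lambda_{l,k}^\pm(z)\rg_l^\pm$ and the identity collapses to the scalar equation $\sum_{k=-p}^q \Lambda_{l,k}^\pm(z)\zeta_m^\pm(z)^k = 0$, i.e. $\Fc_l^\pm(\zeta_m^\pm(z)) = z$. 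This holds by the very definition of $\zeta_m^\pm(z)$ as a continuation of an eigenvalue of $M_l^\pm(z)$, using the first bullet of Lemma \ref{lem:SpecSpl}.

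For the basis claim, I would re-index $m = l+(k-1)d$ with $l\in\lc1,\ppp,d\rc$ and $k\in\lc1,\ppp,p+q\rc$, and suppose $\sum_m c_m R_m^\pm(z) = 0$. Reading off the $n$-th block for $n\in\lc1,\ppp,p+q\rc$ gives
$$\sum_{l=1}^d \left( \sum_{k=1}^{p+q} c_{l+(k-1)d}\, \zeta_{l+(k-1)d}^\pm(z)^{q-n} \right) \rg_l^\pm \;=\; 0.$$
Since the $\rg_l^\pm$ form a basis of $\C^d$, for each fixed $l$ the inner sums vanish for all $n$. The resulting linear system in $(c_{l+(k-1)d})_{k=1}^{p+q}$ has coefficient matrix $\left(\zeta_{l+(k-1)d}^\pm(z)^{q-n}\right)_{n,k}$, which (after factoring the nonzero $\zeta_{l+(k-1)d}^\pm(z)^{-p}$ from each column) is a Vandermonde matrix with nodes $\zeta_{l+(k-1)d}^\pm(z)$, $k=1,\ppp,p+q$.

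The main (minor) obstacle is therefore just ensuring that these nodes remain pairwise distinct on the whole disk $B(1,\delta_0)$. Hypothesis \ref{H:Mpm1} guarantees the $p+q$ nodes are distinct at $z=1$, and since each $\zeta_{l+(k-1)d}^\pm$ is holomorphic, distinctness persists on a sufficiently small neighborhood of $1$; by shrinking $\delta_0$ if necessary (in addition to the shrinking already needed for \eqref{inZetaSs}--\eqref{inZetaSu}), the Vandermonde determinant is nonzero throughout $B(1,\delta_0)$, forcing all $c_m = 0$. Combined with the count of $d(p+q)$ vectors in $\C^{d(p+q)}$, this concludes that $(R_m^\pm(z))_m$ is a basis for every $z\in B(1,\delta_0)$.
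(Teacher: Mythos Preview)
Your proof is correct and follows essentially the same approach as the paper: a direct block-by-block verification for the eigenvector claim (reducing the top block to $\Fc_l^\pm(\zeta_m^\pm(z))=z$ via Hypothesis~\ref{H:VPAk} and Lemma~\ref{lem:SpecSpl}), followed by a Vandermonde argument after using that $(\rg_l^\pm)_l$ is a basis of $\C^d$. The only minor inefficiency is your suggestion to shrink $\delta_0$ to ensure distinctness of the nodes: this is unnecessary, since by the very definition of $\delta_0$ in the paper each $\zeta_{l+(k-1)d}^\pm(z)$ is a \emph{simple} eigenvalue of $M_l^\pm(z)$ for all $z\in B(1,\delta_0)$, so the $p+q$ values $(\zeta_{l+(k-1)d}^\pm(z))_{k=1}^{p+q}$ are automatically pairwise distinct throughout the ball.
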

	
	\begin{proof}
		Let us start by proving that the vector $R_m^\pm(z)$ defined by \eqref{def:Rm} is an eigenvector of $M^\pm(z)$ associated with the eigenvalue $\zeta_m^\pm(z)$. We have that $\zeta_m^\pm(z)$ is an eigenvalue of $M^\pm_l(z)$ so Lemma \ref{lem:SpecSpl} implies that 
		$$\Fc_l^\pm(\zeta_m^\pm(z))=z.$$
		We use the definition \eqref{def:Lambda} of the functions $\Lambda_{l,k}^\pm$ and the definition \eqref{def:Fcl} of the function $\Fc_l^\pm$ to prove that
		\begin{align*}
			-\sum_{k=-p}^{q-1}\A_q^\pm(z)^{-1}\A_k^\pm(z)\zeta_m^\pm(z)^{k}\rg_l^\pm &= -\sum_{k=-p}^{q-1}\Lambda_{l,q}^\pm(z)^{-1}\Lambda_{l,k}^\pm(z)\zeta_m^\pm(z)^{k}\rg_l^\pm\\
			&=\left(\zeta_m^\pm(z)^{q}+\Lambda_{l,q}^\pm(z)^{-1}\left(z-\Fc_l^\pm(\zeta_m^\pm(z))\right)\right) \rg_l^\pm \\
			& =\zeta_m^\pm(z)^{q} \rg_l^\pm.
		\end{align*}
		This allows us to conclude that the vector $R_m^\pm(z)$ is an eigenvector of $M^\pm(z)$ associated with the eigenvalue $\zeta_m^\pm(z)$.
		
		We now consider $z\in B(1,\delta_0)$ and a family of complex numbers $(\lambda_m)_{m\in\lc1,\ppp,d(p+q)\rc}$ such that:
		$$0=\sum_{m=1}^{d(p+q)}\lambda_mR_m^\pm(z).$$
		Separating the blocks of coefficients of size $d$ in the previous equality and observing that the family $(\rg_l^\pm)_{l\in\lc1,\ppp,d\rc}$ is linearly independent, we have for all $l\in \lc1,\ppp,d\rc$:
		$$\forall j\in\lc-p,\ppp,q-1\rc, \quad 0=\sum_{k=1}^{p+q}\lambda_{l+(k-1)d}\zeta_{l+(k-1)d}^\pm(z)^j.$$
		We have that, for each integer $k\in\lc1,\ppp,p+q\rc$, $\zeta_{l+(k-1)d}^\pm(z)$ is a simple eigenvalue of $M_l^\pm(z)$ for all $z\in B(1,\delta_0 )$. Therefore, the complex values $(\zeta_{l+(k-1)d}^\pm(z))_{k\in\lc1,\ppp,p+q\rc}$ are distinct and thus
		$$\forall k\in \lc1,\ppp,p+q\rc,\quad \lambda_{l+d(k-1)}=0.$$
		Since this is true for all $l\in \lc1,\ppp,d\rc$, we proved that the family $(R_m^\pm(z))_{m\in\lc1,\ppp,d(p+q)\rc}$ is linearly independent and is thus a basis of $\C^{d(p+q)}$.
	\end{proof}
	
	Thus, we have a characterization of the eigenvalues and eigenvectors of $M^\pm(z)$ for $z\in B(1,\delta_0)$. Lemma \ref{lem:SpecSpl} implies that, for all $z\in \Oc\cap B(1,\delta_0)$, we have that $|\zeta_m^\pm(z)|<1$ for $m\in \lc1,\ppp,dp\rc$ and $|\zeta_m^\pm(z)|>1$ for $m\in \lc dp+1,\ppp,d(p+q)\rc$. Thus, for $z\in \Oc\cap B(1,\delta_0)$
	$$E^s(M^\pm(z))=\mathrm{Span}\lc R^\pm_m(z), \quad m\in\lc1,\ppp,dp\rc\rc$$
	and
	$$E^u(M^\pm(z))=\mathrm{Span}\lc R^\pm_m(z), \quad m\in\lc dp+1,\ppp,d(p+q)\rc\rc.$$
	This equality implies that we can extend holomorphically the definitions of $E^s(M^\pm(z))$ and $E^u(M^\pm(z))$ for $z\in B(1,\delta_0)$.
	
	We now conclude this section by studying the dual basis associated with the basis $(R_m^\pm(z))_{m\in\lc1,\ppp,d(p+q)\rc}$. We introduce the invertible matrix
	\begin{equation}\label{def:Ninf}
		\forall z\in B(1,\delta_0),\quad N^{\pm,\infty}(z):= \begin{pmatrix}
			R_1^\pm(z) &|\ppp| & R_{d(p+q)}^\pm(z)
		\end{pmatrix}\in\Mc_{d(p+q)}(\C)
	\end{equation}
	and the vectors $L_1^\pm(z),\ppp,L_{d(p+q)}^\pm(z)\in\C^{d(p+q)}$ defined by
	\begin{equation}\label{def:Lm}
		\forall z\in B(1,\delta_0),\quad \begin{pmatrix}
			L_1^\pm(z) &|\ppp| & L_{d(p+q)}^\pm(z)
		\end{pmatrix}^T:= N^{\pm,\infty}(z)^{-1}.
	\end{equation}
	We observe that
	\begin{equation}\label{eg:LmRm}
		\forall z\in B(1,\delta_0),\forall m,\widetilde{m}\in\lc1,\ppp,d(p+q)\rc, \quad L_m^\pm(z)^TR_{\widetilde{m}}^\pm(z)=\delta_{m,\widetilde{m}}
	\end{equation}
	and
	\begin{equation}\label{eg:LmVp}
		\forall z\in B(1,\delta_0),\forall m\in\lc1,\ppp,d(p+q)\rc, \quad M^\pm(z)^TL_m^\pm(z)=\zeta_m^\pm(z)L_m^\pm(z).
	\end{equation}
	
	We will now prove the following lemma which gives a more precise description of the vectors $L_m^\pm(z)$ of the dual basis.
	
	\begin{lemma}\label{lem:dualBasis}
		We consider $m=l+(k-1)d\in\lc1,\ppp,d(p+q)\rc$ with $k\in\lc1,\ppp,p+q\rc$ and $l\in\lc1,\ppp,d\rc$. For all $z\in\C$, there exist coefficients $x_1^\pm(z),\ppp,x_{p+q}^\pm(z)\in\C$ such that
		\begin{equation}\label{exp:Lm}
			L_m^\pm(z):=\begin{pmatrix}
				x_1^\pm(z)\lg_l^\pm \\ \vdots \\ x_{p+q}^\pm(z)\lg_l^\pm
			\end{pmatrix}.
		\end{equation}
		Furthermore, we have
		\begin{equation}\label{eg:x1}
			\forall z\in B(1,\delta_0),\quad x_1^\pm(z) =\lambda_{l,q}^\pm \frac{d \zeta_m^\pm}{dz}(z)
		\end{equation}
		where $\lambda_{l,q}^\pm$ is defined by \eqref{def:lambda_k}.
	\end{lemma}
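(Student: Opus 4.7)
My plan is to first establish the block-tensor form \eqref{exp:Lm}. The proof of Lemma \ref{decomp_mat} shows that $M^\pm(z)$ is obtained from the block-diagonal matrix $\mathrm{blkdiag}(M_1^\pm(z),\ldots,M_d^\pm(z))$ by conjugation with a matrix built from $\mathrm{blkdiag}(\Pg^\pm,\ldots,\Pg^\pm)$ composed with a reshuffling permutation. Transposing yields a parallel factorization of $M^\pm(z)^T$ in which, by \eqref{def:l}, the rows of $(\Pg^\pm)^{-1}$ are precisely the $\lg_l^\pm$. Reading off left eigenvectors in this decomposition, one sees that a left eigenvector of $M^\pm(z)$ associated with the eigenvalue $\zeta_m^\pm(z)$ of the $l$-th diagonal block $M_l^\pm(z)$ must have the form $(x_1\lg_l^\pm,\ldots,x_{p+q}\lg_l^\pm)^T$, with $(x_1,\ldots,x_{p+q})^T$ a left eigenvector of $M_l^\pm(z)$. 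Hypothesis \ref{H:Mpm1} together with holomorphic continuation guarantee that $\zeta_m^\pm(z)$ remains a simple eigenvalue of $M_l^\pm(z)$ throughout $B(1,\delta_0)$, so this ansatz furnishes a one-dimensional family of candidates for $L_m^\pm(z)$, whose scale is fixed by a single normalization condition.

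Next I would solve the left eigenvalue equation for the companion matrix $M_l^\pm(z)$ at $\zeta:=\zeta_m^\pm(z)$. Reading off the column equations of $(x_1,\ldots,x_{p+q})M_l^\pm(z)=\zeta(x_1,\ldots,x_{p+q})$ gives the recursion $x_{i+1}=\zeta x_i+\Lambda_{l,q}^\pm(z)^{-1}\Lambda_{l,q-i}^\pm(z)\,x_1$ for $i=1,\ldots,p+q-1$, solved in closed form as
$$x_i = x_1\,\Lambda_{l,q}^\pm(z)^{-1}\sum_{j=0}^{i-1}\Lambda_{l,q-j}^\pm(z)\,\zeta^{i-1-j}, \qquad i=1,\ldots,p+q,$$
the last column equation reducing, after factoring $\zeta^p$, to the characteristic relation $\Fc_l^\pm(\zeta)=z$ and being therefore automatic. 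The free scalar $x_1$ is then determined by the dual-basis condition \eqref{eg:LmRm}: the block form of $R_{m'}^\pm(z)$ combined with the biorthogonality ${\lg_l^\pm}^T\rg_{l'}^\pm=\delta_{l,l'}$ of \eqref{def:l} makes all off-diagonal pairings $L_m^\pm(z)^T R_{m'}^\pm(z)$ vanish automatically (either because $l\neq l'$ or because they reduce to biorthogonality of distinct eigenvectors of $M_l^\pm(z)$), so only the diagonal condition $\sum_{i=1}^{p+q} x_i\,\zeta^{q-i}=1$ remains.

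The heart of the proof is the evaluation of this single normalization. Substituting the closed form for $x_i$ and swapping the order of summation, the left-hand side becomes $x_1\,\Lambda_{l,q}^\pm(z)^{-1}\sum_{j=0}^{p+q-1}(p+q-j)\Lambda_{l,q-j}^\pm(z)\zeta^{q-1-j}$; reindexing $s=q-j$, and trivially extending to $s=-p$ whose contribution vanishes, this rewrites as $x_1\,\Lambda_{l,q}^\pm(z)^{-1}\sum_{s=-p}^{q}(p+s)\Lambda_{l,s}^\pm(z)\zeta^{s-1}$. Splitting $(p+s)=p+s$, the $p$-part equals $p\zeta^{-1}(z-\Fc_l^\pm(\zeta))$ and vanishes at $\zeta=\zeta_m^\pm(z)$, whereas the $s$-part is precisely the $\zeta$-derivative $\frac{d}{d\zeta}(z-\Fc_l^\pm(\zeta))=-{\Fc_l^\pm}^\prime(\zeta)$. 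Hence $x_1^\pm(z)=-\Lambda_{l,q}^\pm(z)/{\Fc_l^\pm}^\prime(\zeta_m^\pm(z))=\lambda_{l,q}^\pm/{\Fc_l^\pm}^\prime(\zeta_m^\pm(z))$ using $\Lambda_{l,q}^\pm(z)=-\lambda_{l,q}^\pm$ (valid since $q\geq 1$). Differentiating $\Fc_l^\pm(\zeta_m^\pm(z))=z$ in $z$ yields ${\Fc_l^\pm}^\prime(\zeta_m^\pm(z))\,{\zeta_m^\pm}^\prime(z)=1$ (the derivative ${\Fc_l^\pm}^\prime(\zeta_m^\pm(z))$ being nonzero because $\zeta_m^\pm(z)$ is a simple root of $\Fc_l^\pm(\cdot)-z$), and \eqref{eg:x1} follows. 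The only delicate step is the double-sum reindexing that produces the crucial combination $(p+s)\Lambda_{l,s}^\pm(z)\zeta^{s-1}$; once in this form the use of the characteristic equation and its $\zeta$-derivative makes the identification of the sum with $-{\Fc_l^\pm}^\prime$ immediate.
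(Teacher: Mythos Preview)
Your proof is correct and follows essentially the same approach as the paper: both solve the left eigenvector recursion for the companion block $M_l^\pm(z)$, substitute into the normalization $L_m^\pm(z)^T R_m^\pm(z)=1$, swap the double sum, and identify the result with $-{\Fc_l^\pm}^\prime(\zeta_m^\pm(z))$ via the characteristic relation $\Fc_l^\pm(\zeta_m^\pm(z))=z$. The only cosmetic differences are that you derive the block structure \eqref{exp:Lm} from the conjugation in Lemma~\ref{decomp_mat} (whereas the paper argues directly from the orthogonality relations \eqref{eg:LmRm}), and that your closed form for $x_i$ keeps the ``upper'' partial sum $\sum_{k=q-i+1}^{q}$, producing the factor $(p+s)$ after reindexing, while the paper keeps the complementary ``lower'' partial sum $\sum_{k=-p}^{q-i}$, producing the factor $(q-k)$; the two are equivalent modulo the characteristic equation.
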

	
	In the proof of Lemma \ref{lem:dualBasis}, we also find the expressions of the coefficients $x_2^\pm(z),\ppp,x_{p+q}^\pm(z)$ but, contrarily to $x_1^\pm(z)$, they will not be used later on in the paper.
	
	\begin{proof}
		The proof of Lemma \ref{lem:dualBasis} uses calculations similar to those done at the end of \cite[Lemma 2.4]{CoeuLLT}. We consider $z\in B(1,\delta_0)$. We begin by introducing the vectors $\xg_1^\pm(z),\ppp,\xg_{p+q}^\pm(z)\in\C^d$ defined by
		$$\begin{pmatrix}
			\xg_1^\pm(z) \\ \vdots \\ \xg_{p+q}^\pm(z)
		\end{pmatrix}:=L_m^\pm(z).$$
		We consider $\widetilde{l}\in\lc1,\ppp,d\rc\backslash\lc l\rc$. Using the definition \eqref{def:Rm} and the linear independence of the vectors $R_{\widetilde{m}}^\pm(z)$, we have that
		$$\mathrm{Span}\lc R_{\widetilde{l}+(\widetilde{k}-1)d}^\pm(z),\quad \widetilde{k}\in\lc1,\ppp,p+q\rc\rc= \lc\begin{pmatrix}
		y_1\rg_{\widetilde{l}}^\pm\\ \vdots \\ y_{p+q}\rg_{\widetilde{l}}^\pm
		\end{pmatrix},\quad y_1,\ppp,y_{p+q}\in\C\rc.$$
		Using \eqref{eg:LmRm}, we can then prove that:
		$$\forall y_1,\ppp,y_{p+q}\in\C,\quad \sum_{j=1}^{p+q} y_j\xg_j^\pm(z)^T\rg_{\tilde{l}}^\pm=0$$
		and thus:
		$$\forall j\in\lc1,\ppp,p+q\rc,\quad  \xg_j^\pm(z)^T\rg_{\widetilde{l}}^\pm=0.$$
		Since this is true for all $\widetilde{l}\in\lc1,\ppp,d\rc\backslash\lc l\rc$, we have that $\xg_j^\pm(z)\in \mathrm{Span}\; \lg_l^\pm$ for all $j\in\lc1,\ppp,p+q\rc$. 
		
		Now that we know that we can express the vector $L_m^\pm(z)$ as \eqref{exp:Lm}, let us prove \eqref{eg:x1}. Using \eqref{def:lambda_k} and the definitions \eqref{def:l} and \eqref{def:Lambda} respectively of the vectors $\lg_l^\pm$ and of the functions $\Lambda_{l,k}^\pm$, we have looking at the $j$-th block of size $d$ of \eqref{eg:LmVp} that:
		$$\forall j\in\lc1,\ppp,p+q-1\rc,\quad \zeta_m^\pm(z) x_j^\pm(z) = x_{j+1}^\pm(z)-\Lambda^\pm_{l,q}(z)^{-1}\Lambda_{l,q-j}^\pm(z)x_1^\pm(z),$$
		and:
		$$\zeta_m^\pm(z) x_{p+q}^\pm(z) = -\Lambda^\pm_{l,q}(z)^{-1}\Lambda_{l,-p}^\pm(z)x_1^\pm(z).$$
		Thus, we have:
		$$\forall j\in\lc1,\ppp,p+q\rc,\quad x_j^\pm(z) = -\left(\sum_{k=-p}^{q-j} \frac{\Lambda_{l,k}^\pm(z)}{ \zeta_m^\pm(z)^{q-j-k+1}}\right) \Lambda^\pm_{l,q}(z)^{-1}x_1^\pm(z).$$
		
		We now have an expression of each $x_j^\pm(z)$ depending on $x_1^\pm(z)$. We also recall that 
		$${\lg_l^\pm}^T\rg_l^\pm=1.$$
		Using the expressions \eqref{def:Rm} and \eqref{exp:Lm} respectively of the vectors $R_m^\pm(z)$ and $L_m^\pm(z)$ as well as \eqref{eg:LmRm}, we have
		$$1 = L_m^\pm(z)^TR_m^\pm(z) = \sum_{j=1}^{p+q}x_j^\pm(z)\zeta_m^\pm(z)^{q-j} = -\left(\sum_{j=1}^{p+q}\sum_{k=-p}^{q-j} \Lambda_{l,k}^\pm(z)\zeta_m^\pm(z)^{k-1} \right)\Lambda^\pm_{l,q}(z)^{-1}x_1^\pm(z).$$
		Using the definitions \eqref{def:Lambda} and \eqref{def:Fcl} of the functions $\Lambda_{l,k}^\pm$ and $\Fc_l^\pm$, we have
		\begin{align*}
			1 & = - \left(\sum_{k=-p}^{q} (q-k)\Lambda_{l,k}^\pm(z)\zeta_m^\pm(z)^{k-1} \right)\Lambda^\pm_{l,q}(z)^{-1}x_1^\pm(z) \\
			& = - \left(q\zeta_m^\pm(z)^{-1}z - \sum_{k=-p}^{q} (q-k)\lambda_{l,k}^\pm\zeta_m^\pm(z)^{k-1} \right)\Lambda^\pm_{l,q}(z)^{-1}x_1^\pm(z)\\
			& = -\left( q\zeta_m^\pm(z)^{-1}(z-\Fc_l^\pm(\zeta_m^\pm(z)))+ \frac{d\Fc_l^\pm}{d\kappa}(\zeta_m^\pm(z)) \right)\Lambda^\pm_{l,q}(z)^{-1}x_1^\pm(z).
		\end{align*}
		We observe that since $\zeta_m^\pm(z)$ is an eigenvalue $M_l^\pm(z)$, Lemma \ref{lem:SpecSpl} allows us to prove that
		$$\Fc_l^\pm(\zeta_m^\pm(z))=z \quad \text{ and } \quad  \frac{d\zeta_m^\pm}{dz}(z)\frac{d\Fc_l^\pm}{d\kappa}(\zeta_m^\pm(z))=1. $$
		Thus, since $\Lambda_{l,q}^\pm(z) = -\lambda_{l,q}^\pm$, we have that 
		$$ 1= \left(\frac{d\zeta_m^\pm}{dz}(z){\lambda_{l,q}^\pm}\right)^{-1} x_1^\pm(z)$$ 
		and we deduce \eqref{eg:x1}.
	\end{proof}
	
	\subsection{Choice of a precise basis of \texorpdfstring{$E_0^\pm(z)$}{E0+-(z)} for \texorpdfstring{$z$}{z} near \texorpdfstring{$1$}{1}}
	
	Now that we have a better understanding of the spectrum of $M^\pm(z)$, we are going to prove a lemma that is quite similar to the geometric dichotomy. This lemma corresponds to \cite[Lemma 3.1]{Godillon}, itself inspired by \cite[Proposition 3.1]{ZH}.
	
	\begin{lemma}\label{lem_choice_base}
		There exist a radius $\delta_1\in]0,\delta_0]$ and two constants $C,c>0$ such that for all $m\in\lc 1,\ppp, d(p+q) \rc$ and $z\in B(1,\delta_1)$, there exists a sequence $(V_m^\pm(z,j))_{j\in\Z}\in{\C^{(p+q)d}}^\Z$ such that :
		\begin{itemize}
			\item For all $j\in\Z$, the function $V^\pm_m(\cdot,j)$ is holomorphic on $B(1,\delta_1)$.
			\item The functions $z\in B(1,\delta_1)\mapsto (V_m^+(z,j))_{j\in\N}\in \ell^\infty(\N,\C^{d(p+q)})$ and $z\in B(1,\delta_1)\mapsto (V_m^-(z,j))_{j\in-\N}\in \ell^\infty(-\N,\C^{d(p+q)})$ are holomorphic. Furthermore, up to considering a smaller radius $\delta_1$, those functions and their derivatives are bounded on $B(1,\delta_1)$.
			\item For $z\in B(1,\delta_1)$, if we define $W_m^\pm(z,j):= \zeta_m^\pm(z)^jV_m^\pm(z,j)$ for all $j\in \Z$, then $W_m^\pm(z,\cdot)$ is a solution of \eqref{syst_dyn}, i.e.
			$$\forall j\in\Z,\quad  W_m^\pm(z,j+1)=M_j(z)W_m^\pm(z,j).$$
			\item We have 
			$$\forall z\in B(1,\delta_1),  \quad \begin{array}{ccc}
				\forall j\in \N, &\left| V_m^+(z, j) - R_m^+(z)\right| &\leq Ce^{-c|j|},\\
				\forall j\in -\N, &\left| V_m^-(z,j) - R_m^-(z)\right| &\leq C e^{-c|j|}.
			\end{array}$$
		\end{itemize}
	\end{lemma}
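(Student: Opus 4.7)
The construction of $V_m^+$ proceeds by a fixed-point argument in a weighted $\ell^\infty$ space indexed by $j \geq J$ for $J$ large; the case of $V_m^-$ is symmetric. Substituting $W_m^+(z,j) := \zeta_m^+(z)^j V_m^+(z,j)$ into \eqref{syst_dyn} shows that $V_m^+$ must satisfy $V_m^+(z,j+1) = \zeta_m^+(z)^{-1} M_j(z) V_m^+(z,j)$. Writing $M_j(z) = M^+(z) + \Ec_j^+(z)$, setting $\widetilde{M}_m^+(z) := \zeta_m^+(z)^{-1} M^+(z)$, and letting $v_j := V_m^+(z,j) - R_m^+(z)$, one finds that $v$ solves
\begin{equation*}
v_{j+1} = \widetilde{M}_m^+(z) v_j + g_j(z,v_j), \qquad g_j(z,v) := \zeta_m^+(z)^{-1} \Ec_j^+(z)\left(R_m^+(z) + v\right),
\end{equation*}
and the estimate \eqref{CV_expo_mat} yields $|g_j(z,v)| \leq C e^{-\alpha j}(1+|v|)$ uniformly in $z$ near $1$. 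I would look for $v$ in the weighted space $X_J^{(c)} := \lc (v_j)_{j \geq J} : \sup_{j \geq J} e^{cj}|v_j| < \infty\rc$ for a small constant $c > 0$ to be fixed below.

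To invert the linear part I use the eigenbasis $(R_k^+(z))_k$ of $M^+(z)$ from Lemma \ref{lemRm} together with its dual basis $(L_k^+(z))_k$, which together produce the holomorphic rank-one spectral projectors $\Pi_k^+(z) := R_k^+(z) L_k^+(z)^T$ at the eigenvalues $\mu_k(z) := \zeta_k^+(z)/\zeta_m^+(z)$ of $\widetilde{M}_m^+(z)$. Partition $\lc 1,\ppp,d(p+q)\rc$ into $I_< := \lc k : |\mu_k(1)| < 1\rc$ and $I_\geq := \lc k : |\mu_k(1)| \geq 1\rc$. For $\delta_1$ small enough, there exist constants $c_0 > 0$ and $\delta > 0$ (with $\delta$ arbitrarily small) such that $|\mu_k(z)| \leq e^{-c_0}$ on $I_<$ and $|\mu_k(z)| \geq e^{-\delta}$ on $I_\geq$ for every $z \in B(1,\delta_1)$. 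Define the operator $\Tc(v) = w$ by decomposing $w = \sum_k \Pi_k^+(z) w$ and setting each component as the solution of $w^{(k)}_{j+1} = \mu_k(z) w^{(k)}_j + \Pi_k^+(z) g_j(z,v_j)$ given by forward summation from $J$ with zero initial datum when $k \in I_<$ and by backward summation from $+\infty$ when $k \in I_\geq$. Picking $c \in (0, c_0)$ with $c + \alpha > \delta$ makes both sums absolutely convergent, sends $X_J^{(c)}$ into itself, and a term-by-term estimate using $|g_j(z,v_j) - g_j(z,\widetilde v_j)| \leq C e^{-(\alpha+c)j}\|v - \widetilde v\|_{X_J^{(c)}}$ gives $\|\Tc(v) - \Tc(\widetilde v)\|_{X_J^{(c)}} \leq C' e^{-\alpha J}\|v - \widetilde v\|_{X_J^{(c)}}$ uniformly in $z \in B(1, \delta_1)$, so taking $J$ large produces a strict contraction.

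The unique fixed point $v(z) \in X_J^{(c)}$ has norm bounded uniformly in $z$, and depends holomorphically on $z$ since each iterate $\Tc^n(0)$ is a finite composition of holomorphic operations and the iterates converge uniformly in $\|\cdot\|_{X_J^{(c)}}$. Setting $V_m^+(z,j) := R_m^+(z) + v_j(z)$ for $j \geq J$ and extending to all $j \in \Z$ by iterating the invertible recursion $V_m^+(z,j) = \zeta_m^+(z) M_j(z)^{-1} V_m^+(z,j+1)$, whose invertibility is guaranteed by Hypothesis \ref{H:inv}, yields the required sequence. The exponential bound is built into the weighted norm for $j \geq J$ and is trivial on the finite index range $\lc 0,\ppp,J-1\rc$ thanks to the uniform boundedness of $V_m^+(z,J)$ in $z$. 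Holomorphy extends to these finite indices through the recursion, and Cauchy estimates on a slightly smaller ball produce the required bounds on the derivatives.

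The main obstacle is the treatment of the modes $k \in I_\geq$ for which $|\mu_k(1)| = 1$. Besides the case $k = m$, when $m$ is a central index these include every other central index of $M^+(1)$, since all $d$ central eigenvalues of $M^+(1)$ equal $1$. A naive stable-manifold argument based on the geometric dichotomy (Lemma \ref{lem_geo_dich}) breaks down for these resonant modes because $\widetilde{M}_m^+(z)$ does not contract transverse to $\mathrm{Span}(R_m^+(z))$. The backward summation from $+\infty$ nevertheless yields an exponentially decaying component because the inhomogeneity $g_j(z,v_j)$ decays at rate $\alpha$ (inherited from the exponential convergence of $\dsp_j$ to $u^+$ in Hypothesis \ref{H:CVexpo}), while the drift $\big|\log|\mu_k(z)|\big| = O(|z-1|)$ of the "unit-circle" eigenvalues is dominated by $\alpha$ once $\delta_1$ is sufficiently small. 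This is precisely the place where the exponential convergence assumption on the SDSP is essential.
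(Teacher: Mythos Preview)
Your proposal is correct and follows essentially the same route as the paper: rescale by $\zeta_m^\pm(z)^{-j}$, split the spectrum of $\zeta_m^\pm(z)^{-1}M^\pm(z)$ into a ``contracting'' and a ``non-contracting'' part, build the solution by forward summation on the former and backward summation from $+\infty$ on the latter, and obtain a contraction with constant $O(e^{-\alpha J})$ from the decay of $\Ec_j^\pm$. The differences are only in bookkeeping: the paper works in unweighted $\ell^\infty$ and recovers the exponential estimate a posteriori from the explicit bounds \eqref{in:lemJost1}--\eqref{in:lemJost2}, whereas you encode it directly in the weighted norm; and the paper places the spectral cut at $|\mu_k(1)|=e^{-\Delta}$ with $\Delta=\alpha/4$ rather than at $|\mu_k(1)|=1$, which automatically lumps the resonant central modes you single out into the backward-summed block without further comment.
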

	
	The proof of this lemma is quite similar to the construction of $Q^\pm_U(z)$ in Lemma \ref{lem_geo_dich} and is fairly based on the proof of \cite[Proposition 3.1]{ZH}.
	
	\begin{proof}
		We will focus on the construction of $(V_m^+(z,j))_{j\in\Z}$ for an integer $m\in\lc 1,\ppp, d(p+q) \rc$. Because of \eqref{CV_expo_mat}, we have a constant $C>0$ such that 
		$$\forall j\in \N, \forall z\in B(1,\delta_0), \quad |\Ec_j^+(z)|\leq Ce^{-\alpha j}.$$
		We fix $\Delta:=\frac{\alpha}{4}$ and define the sets
		$$\begin{array}{c}
			I^s_m=\lc \nu\in \lc1,\ppp,d(p+q)\rc, \quad |\zeta_{\nu}^+(1)|<|\zeta_m^+(1)|e^{-\Delta}\rc,\\
			I^u_m=\lc \nu\in \lc1,\ppp,d(p+q)\rc, \quad |\zeta_{\nu}^+(1)|\geq|\zeta_m^+(1)|e^{-\Delta}\rc.
		\end{array}$$
		Because the functions $\zeta_{\nu}^+$ depend holomorphicaly on $z$ in $B(1,\delta_0)$, there exists $\delta_1\in]0,\delta_0[$ such that
		\begin{equation}\label{borne_vp}
			\forall z\in B(1,\delta_1), \quad \begin{array}{cc}\forall \nu\in I^s_m,& |\zeta_{\nu}^+(z)|<|\zeta_m^+(z)|e^{-\Delta}, \\ 
				\forall \nu\in I^u_m,& |\zeta_{\nu}^+(z)|>|\zeta_m^+(z)|e^{-\frac{3}{2}\Delta}. \end{array}
		\end{equation}
		
		We define for $z\in B(1,\delta_1)$
		$$\begin{array}{c}
			E^s_m(z) := Span\left(R_{\nu}^+(z), \quad \nu\in I^s_m\right),\\
			E^u_m(z) := Span\left(R_{\nu}^+(z), \quad \nu\in I^u_m\right).
		\end{array}$$
		We have that
		$$\C^{(p+q)d}=E^s_m(z)\oplus E^u_m(z).$$
		We define $P^s_m(z)$ and $P^u_m(z)$ the projectors defined by this decomposition of $\C^{(p+q)d}$. They depend holomorphically on $z$ and commute with $M^+(z)$. Because of \eqref{borne_vp}, there exists a constant $C>0$ such that 
		\begin{equation}\label{borne_expo}
			\forall z \in B(1,\delta_1), \forall j \in \N, \quad  \begin{array}{c}
				\left|\left(\zeta_m^+(z)^{-1}M^+(z)\right)^jP^s_m(z)\right|\leq C\exp\left(-\frac{\Delta}{2}j\right),\\
				\left|\left(\zeta_m^+(z)^{-1}M^+(z)\right)^{-j}P^u_m(z)\right|\leq C\exp\left(2\Delta j\right).
		\end{array}\end{equation}
		
		We consider $J\in\N$ and we will make a more precise choice later. For $z\in B(1,\delta_1)$, we define the linear map $\varphi(z) \in \Lc\left(\ell^\infty\left(\lc j\in\Z,  j\geq J\rc,\C^{(p+q)d}\right)\right)$ such that for $Y\in \ell^\infty\left(\lc j\in\Z,  j\geq J\rc,\C^{(p+q)d}\right)$ and $j\geq J$, we have:
		\begin{align}\label{eg:varphi}
			\begin{split}
				\left(\varphi(z)Y\right)_j:=& \sum_{k=J}^{j-1} \left(\zeta_m^+(z)^{-1}M^+(z)\right)^{j-1-k}P^s_m(z)\zeta_m^+(z)^{-1}\Ec_k^+(z)Y_k \\ &-\sum_{k=j}^{+\infty} \left(\zeta_m^+(z)^{-1}M^+(z)\right)^{j-1-k}P^u_m(z)\zeta_m^+(z)^{-1}\Ec_k^+(z)Y_k .
			\end{split}
		\end{align}
		
		Using the inequalities \eqref{borne_expo}, we have that:
		\begin{align}\label{in:lemJost1}
			\begin{split}
				\sum_{k=J}^{j-1} \left|\left(\zeta_m^+(z)^{-1}M^+(z)\right)^{j-1-k}P^s_m(z)\zeta_m^+(z)^{-1}\Ec_k^+(z)Y_k \right|
				&\lesssim  \left\|Y\right\|_\infty \sum_{k=J}^{j-1}e^{-\frac{\Delta}{2}(j-k)}e^{-\alpha k}\\
				&\lesssim  \left\|Y\right\|_\infty e^{-\frac{\Delta}{2}j} \sum_{k=J}^{+\infty}e^{\left(\frac{\Delta}{2}-\alpha\right) k}\\
				&\lesssim  \left\|Y\right\|_\infty e^{-\frac{\Delta}{2}(j-J)} e^{-\alpha J}
			\end{split}
		\end{align}
		and :
		\begin{align}\label{in:lemJost2}
			\begin{split}
				\sum_{k=j}^{+\infty} \left|\left(\zeta_m^+(z)^{-1}M^+(z)\right)^{j-1-k}P^u_m(z)\zeta_m^+(z)^{-1}\Ec_k^+(z)Y_k \right|
				&\lesssim  \left\|Y\right\|_\infty e^{-\alpha j} \sum_{k=j}^{+\infty}e^{(2\Delta-\alpha)(k-j)}\\
				&\lesssim  \left\|Y\right\|_\infty e^{-\alpha j}.
			\end{split}
		\end{align}
		We have thus proved that the linear map $\varphi(z)$ is well-defined and that there exists a constant $C>0$ independent from $J$ such that
		$$\forall z\in B(1,\delta_1),\quad \left\| \varphi(z)\right\|_{\Lc\left(\ell^\infty\right)}\leq Ce^{-\alpha J}.$$		
		We can then choose $J$ large enough so that there exists a constant $C\in]0,1[$ such that
		$$\forall z\in B(1,\delta_1),\quad \left\|\varphi(z)\right\|_{\Lc\left(\ell^\infty\right)}\leq C<1.$$
		Furthermore, $\varphi$ depends holomorphically on $z$. We can thus define for $z\in B(1,\delta_1)$
		$$V(z):= (Id-\varphi(z))^{-1}\left(R_m^+(z)\right)_{j\geq J}\in \ell^\infty\left(\lc j\in\Z,  j\geq J\rc,\C^{(p+q)d}\right)$$
		which depends holomorphically on $z$. We have that: 
		$$\forall z\in B(1,\delta_1), \quad V(z) = \left(R_m^+(z)\right)_{j\geq J} +\varphi(z) V(z).$$
		Thus, for $ z\in B(1,\delta_1)$ and $j\geq J$, we find that:
		\begin{align*}
			V_{j+1}(z) &= R_m^+(z) +\sum_{k=J}^j  \left(\zeta_m^+(z)^{-1}M^+(z)\right)^{j-k}P^s_m(z)\zeta_m^+(z)^{-1}\Ec_k^+(z)V_k(z) \\&- \sum_{k=j+1}^{+\infty} \left(\zeta_m^+(z)^{-1}M^+(z)\right)^{j-k}P^u_m(z)\zeta_m^+(z)^{-1}\Ec_k^+(z)V_k(z)\\
			&= \zeta_m^+(z)^{-1}M^+(z)\left(R_m^+(z) +\sum_{k=J}^j  \left(\zeta_m^+(z)^{-1}M^+(z)\right)^{j-1-k}P^s_m(z)\zeta_m^+(z)^{-1}\Ec_k^+(z)V_k(z) \right.\\&\left.- \sum_{k=j+1}^{+\infty} \left(\zeta_m^+(z)^{-1}M^+(z)\right)^{j-1-k}P^u_m(z)\zeta_m^+(z)^{-1}\Ec_k^+(z)V_k(z)\right)\\
			&= \zeta_m^+(z)^{-1}M^+(z)\left( R_m^+(z) +(\varphi(z)V(z))_j +\left(\zeta_m^+(z)^{-1}M^+(z)\right)^{-1}\zeta_m^+(z)^{-1} \Ec_j^+(z) V_j(z) \right)\\
			&=  \zeta_m^+(z)^{-1}M^+(z)V_j(z)+ \zeta_m^+(z)^{-1}\Ec_j^+(z)V_j(z) \\
			& = \zeta_m^+(z)^{-1}M_j(z)V_j(z).
		\end{align*}
		Thus, for $j\geq J$, we have
		$$V_j(z)=\zeta_m^+(z)^{J-j}X_j(z)X_{J}(z)^{-1}V_J(z).$$
	We define for $z\in B(1,\delta_1)$ and $j\in \Z$
		$$V_m^+(z,j):= \zeta_m^+(z)^{J-j}X_j(z)X_{J}(z)^{-1}V_J(z) $$
		and
		$$W_m^+(z,j):= \zeta_m^+(z)^jV_m^+(z,j).$$
		The two first points of the statement of Lemma \ref{lem_choice_base} are easily proved from the previous observations. There remains to prove the inequalities in the third point of Lemma \ref{lem_choice_base}. For $z\in B(1,\delta_1)$ and $j\geq J$, we have using \eqref{eg:varphi}-\eqref{in:lemJost2}
		$$\left|V_m^+(z,j)-R_m^+(z)\right| = |(\varphi(z)V(z))_j|\lesssim e^{-\frac{\Delta}{2}j}+e^{-\alpha j}.$$
	\end{proof}
	
	We recall that for $z\in \Oc_\rho\cap B(1,\delta_1)$, we have for $m\in\lc1,\ppp,dp\rc$ that 
	$$|\zeta_m^+(z)|<1.$$
	Therefore, $(W_m^+(z,0))_{m\in \lc1,\ppp,dp\rc}$ is a family of elements of $E_0^+(z)=\Im Q(z)$ for $z\in \Oc_\rho\cap B(1,\delta_1)$. In the same way, we prove that for all $z\in \Oc_\rho\cap B(1,\delta_1)$, $(W_m^-(z,0))_{m\in \lc dp+1,\ppp,d(p+q)\rc}$ is a family of elements of $E_0^-(z)=\ker Q(z)$. We are going to prove the following lemma.
	
	\begin{lemma}\label{lemBasis}
		For all $z\in B(1,\delta_1)$ and $j\in\Z$, $(W_m^+(z,j))_{m\in\lc1\ppp,d(p+q)\rc}$ and $(W_m^-(z,j))_{m\in\lc1\ppp,d(p+q)\rc}$ are bases of $\C^{d(p+q)}$. The same is then also true for the families $(V_m^+(z,j))_{m\in\lc1\ppp,d(p+q)\rc}$ and $(V_m^-(z,j))_{m\in\lc1\ppp,d(p+q)\rc}$.
	\end{lemma}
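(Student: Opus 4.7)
The plan is to exploit two facts in tandem: that each $W_m^\pm(z,\cdot)$ is a solution of the linear recurrence \eqref{syst_dyn} whose transition matrices $M_j(z)$ are invertible by Hypothesis \ref{H:inv}, and that $V_m^\pm(z,j)$ is asymptotic to the known basis vectors $R_m^\pm(z)$ given by Lemma \ref{lemRm}. Because $W_m^\pm(z,j) = X_j(z) X_{j_0}(z)^{-1} W_m^\pm(z,j_0)$ and $X_j(z)X_{j_0}(z)^{-1}$ is invertible, the family $(W_m^\pm(z,j))_{m}$ is linearly independent at some index $j_0$ if and only if it is linearly independent at every index $j\in\Z$. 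Equivalently, the determinant
$$D^\pm(z,j):=\det\bigl(W_1^\pm(z,j)\,|\,\ldots\,|\,W_{d(p+q)}^\pm(z,j)\bigr)$$
satisfies $D^\pm(z,j+1)=\det(M_j(z))\,D^\pm(z,j)$, so $D^\pm(z,j)$ vanishes at one $j$ if and only if it vanishes at all $j$. Hence it is enough to produce a single index where $D^\pm(z,\cdot)$ does not vanish.

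For the $+$ family, I would factor out the eigenvalue powers and study
$$\det\bigl(V_1^+(z,j)\,|\,\ldots\,|\,V_{d(p+q)}^+(z,j)\bigr) = \Bigl(\prod_{m=1}^{d(p+q)}\zeta_m^+(z)^{-j}\Bigr) D^+(z,j).$$
The third item of Lemma \ref{lem_choice_base} gives $V_m^+(z,j)\to R_m^+(z)$ exponentially fast as $j\to+\infty$, uniformly in $z\in B(1,\delta_1)$, so by multilinearity and continuity of the determinant,
$$\det\bigl(V_1^+(z,j)\,|\,\ldots\,|\,V_{d(p+q)}^+(z,j)\bigr) \underset{j\to+\infty}{\longrightarrow} \det N^{+,\infty}(z),$$
with $N^{+,\infty}(z)$ defined in \eqref{def:Ninf}. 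Lemma \ref{lemRm} asserts that $(R_m^+(z))_m$ is a basis of $\C^{d(p+q)}$ for every $z\in B(1,\delta_0)\supset B(1,\delta_1)$, so $\det N^{+,\infty}(z)\neq 0$. Consequently, for each fixed $z\in B(1,\delta_1)$ there exists $J(z)$ large enough so that this determinant, hence $D^+(z,J(z))$, is nonzero. By the propagation argument above, $D^+(z,j)\neq 0$ for every $j\in\Z$, which proves the basis property for $(W_m^+(z,j))_{m}$; and since $W_m^+(z,j)$ and $V_m^+(z,j)$ differ only by the nonzero scalar $\zeta_m^+(z)^j$, the family $(V_m^+(z,j))_m$ is also a basis.

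The $-$ case is entirely symmetric: the third item of Lemma \ref{lem_choice_base} now provides $V_m^-(z,j)\to R_m^-(z)$ as $j\to-\infty$, and the same determinant identity combined with the nonvanishing of $\det N^{-,\infty}(z)$ yields a single index (in $-\N$) at which $D^-(z,\cdot)$ does not vanish. There is essentially no obstacle here beyond bookkeeping: everything is reduced to the two ingredients already proved, namely the invertibility of the transition matrices and the explicit asymptotic behavior of $V_m^\pm(z,j)$. The only point requiring any care is the harmless observation that the scalar factors $\zeta_m^\pm(z)^j$ are nonzero (which follows from Hypothesis \ref{H:inv}, since $M_l^\pm(z)$ is invertible and thus $0\notin\sigma(M_l^\pm(z))$), so that the basis property transfers between the $W_m^\pm$ and $V_m^\pm$ families.
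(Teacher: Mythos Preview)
Your proof is correct and takes a genuinely different route from the paper. The paper argues by contradiction: assuming a nontrivial linear relation $\sum_m c_m W_m^+(z,j)=0$ (which then holds for every $j$), it isolates the indices $m$ whose eigenvalues $\zeta_m^+(z)$ have maximal modulus $R$, divides through by $R^j$, and lets $j\to+\infty$; the asymptotics $V_m^+(z,j)\to R_m^+(z)$ then force the surviving combination $\sum_{m\in I_R} c_m(\zeta_m^+(z)/R)^j R_m^+(z)$ to tend to zero, and a projection onto a single $R_{m_0}^+$ yields $c_{m_0}=0$, a contradiction.

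Your determinant argument bypasses this peeling-off step entirely: you use the propagation identity $D^\pm(z,j+1)=\det(M_j(z))\,D^\pm(z,j)$ together with the convergence of $\det\bigl(V_1^+(z,j)\,|\,\ldots\,|\,V_{d(p+q)}^+(z,j)\bigr)$ to the nonzero $\det N^{+,\infty}(z)$. This is shorter and more robust---in particular it does not require separating eigenvalues by modulus or projecting onto individual $R_m^+$, and it makes the role of Lemma~\ref{lemRm} completely transparent. The paper's approach, on the other hand, is reused later (e.g.\ in Step~3 of Lemma~\ref{lemCombiWm}) to show that elements of $\ker(Id_{\ell^2}-\Lcc)$ lie in $\mathrm{Span}(W_m^+(1,0),\,m\in I_{ss})$, where one genuinely needs to argue that coefficients of eigenvalues with $|\uzet_m^+|\geq 1$ vanish; so the modulus-filtering technique is not wasted, just not needed for the present lemma.
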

	
	\begin{proof}
		We will write the proof for the family of vectors $(W_m^+(z,j))_{m\in\lc1\ppp,d(p+q)\rc}$. We consider $z\in B(1,\delta_1)$ and $j\in\Z$ such that the family of vectors $(W_m^+(z,j))_{m\in\lc1,\ppp,d(p+q)\rc}$ is not linearly independent. We can then introduce a family $(c_m)_{m\in\lc1,\ppp,d(p+q)\rc}\in\C^{d(p+q)}\backslash\lc0\rc$ such that
		\begin{equation}\label{lemBasis:eg}
			0=\sum_{m=1}^{d(p+q)} c_mW_m^+(z,j).
		\end{equation}
		Since the sequences $(W_m^+(z,j))_{j\in\Z}$ are solutions of \eqref{syst_dyn}, \eqref{lemBasis:eg} is verified for all $j\in\Z$. We define 
		\begin{align*}
			I_n&:=\lc m\in\lc1,\ppp,d(p+q)\rc, \quad c_m\neq 0\rc\neq \emptyset,\\
			R&:= \max_{m\in I_n} |\zeta_m^+(z)|>0,\\
			I_R&:= \text{argmax}_{m\in I_n} |\zeta_m^+(z)|\neq \emptyset.
		\end{align*} 
		Using \eqref{lemBasis:eg}, we obtain
		\begin{align*}
			0&= \sum_{m\in I_n}c_m \frac{W_m^+(z,j)}{R^j}\\
			&= \sum_{m\in I_n}c_m \left(\frac{\zeta_m^+(z)}{R}\right)^jR_m^+(z)+\sum_{m\in I_n}c_m \frac{W_m^+(z,j)-\zeta_m^+(z)^jR_m^+(z)}{R^j}.
		\end{align*}
		\begin{itemize}
			\item Using Lemma \ref{lem_choice_base}, there exist two positive constants $C,c$ such that we have for $m\in I_n$ and $j\in\N$
			$$\left|\frac{W_m^+(z,j)-\zeta_m^+(z)^jR_m^+(z)}{R^j}\right|\leq Ce^{-cj}\left(\frac{|\zeta_m^+(z)|}{R}\right)^j\leq Ce^{-cj}.$$
			Thus,
			$$\frac{W_m^+(z,j)-\zeta_m^+(z)^jR_m^+(z)}{R^j}\underset{j\rightarrow +\infty}\rightarrow 0.$$
			\item For $m\in I_n\backslash I_R$, we have 
			$$\left(\frac{\zeta_m^+(z)}{R}\right)^jR_m^+(z)\underset{j\rightarrow +\infty}\rightarrow 0.$$
		\end{itemize}
		Thus, we have that
		$$\sum_{m\in I_R}c_m\left(\frac{\zeta_m^+(z)}{R}\right)^jR_m^+(z)\underset{j\rightarrow+\infty}\rightarrow0.$$
		Since $I_R\neq \emptyset$, we fix $m_0\in I_R$. Because of Lemma \ref{lemRm}, the projection of the previous expression on $\mathrm{Span}(R_{m_0}^+(z))$ along $\mathrm{Span}(R_m^+(z), m\neq m_0)$ implies that
		$$c_{m_0}\left(\frac{\zeta_{m_0}^+(z)}{R}\right)^j\underset{j\rightarrow+\infty}\rightarrow0.$$
		But, $m_0$ belongs to $I_R$ so $|\zeta_{m_0}^+(z)|=R$. This implies that $c_{m_0}=0$. However, $m_0\in I_R\subset I_n$ so $c_{m_0}\neq 0$. This is a contradiction.
	\end{proof}
	
	For $z\in \Oc_\rho\cap B(1,\delta_1)$, we recall that 
	$$\dim E_0^+(z) =dp \quad \text{ and } \quad \dim E_0^-(z) =dq.$$
	Thus, Lemma \ref{lemBasis} implies that the family $\left(W_m^+(z,0)\right)_{m\in\lc1,\ppp,dp\rc}$ (resp. $\left(W_m^-(z,0)\right)_{m\in\lc dp+ 1,\ppp,d(p+q)\rc}$) is a basis of $E^+_0(z)$ (resp. $E_0^-(z)$). We can then extend holomorphically the subspaces $E_0^+(z)$ and $E_0^-(z)$ on the whole ball $B(1,\delta_1)$ as
	\begin{equation}\label{eg:E_0^+E_0^-}
		\forall z\in B(1,\delta_1), \quad E_0^+(z):=\mathrm{Span}\left(W_m^+(z,0)\right)_{m\in\lc1,\ppp,dp\rc} \quad \text{ and }\quad E_0^-(z):=\mathrm{Span}\left(W_m^-(z,0)\right)_{m\in\lc dp+1,\ppp,d(p+q)\rc}.
	\end{equation}
	We will also define
	\begin{equation}\label{def:Iss,cs,cu,su}
		\begin{array}{ccc}
			I_{ss}:=I_{ss}^+, & \quad  &I_{cs}:=I_{cs}^+,\\
			I_{cu}:=I_{cu}^-, & \quad  &I_{su}:=I_{su}^-.
		\end{array}
	\end{equation}
	
	\subsection{Definition of the Evans function}
	
	In this section, we are going to define an Evans function and prove that the eigenspace of the operator $\Lcc$ associated with $1$ is of dimension $1$ when it acts on $\ell^2(\Z,\C^d)$. 
	
	An important part of the study of the spatial Green's function far from $1$ was dedicated to introduce the projection $Q$ of the geometric dichotomy. The main ingredient of the introduction of $Q(z)$ has been to understand when $E_0^+(z)$ and $E_0^-(z)$ are supplementary, as it allowed via Lemma \ref{lem_spec_ess} to conclude on which elements of $\Oc$ where eigenvalues of the operator $\Lcc$ and which elements of $\Oc$ are in the resolvent set of the operator $\Lcc$. For $z$ near $1$, because of \eqref{eg:E_0^+E_0^-}, studying whether the vector subspace $E_0^+(z)$ and $E_0^-(z)$ are supplementary comes down to knowing when $\left(W^+_1(z,0),\ppp,W_{dp}^+(z,0),W_{dp+1}^-(z,0),\ppp,W_{d(p+q)}^-(z,0)\right)$ is a basis of $\C^{d(p+q)}$. We define the Evans function as
	\begin{equation}\label{def:Ev}
		\forall z\in B(1,\delta_1),\quad \mathrm{Ev}(z) := \det(W_1^+(z,0),\ppp,W_{dp}^+(z,0),W_{dp+1}^-(z,0),\ppp,W_{d(p+q)}^-(z,0)).
	\end{equation}
	The function $\mathrm{Ev}$ is holomorphic on $B(1,\delta_1)$. Furthermore, for $z\in\Oc\cap B(1,\delta_1)$, \eqref{dimEigSpaceLcc} and the set equality \eqref{eg:E_0^+E_0^-} imply that the function $\mathrm{Ev}$ vanishes when $z$ is an eigenvalue of the operator $\Lcc$. Thus, Hypothesis \ref{H:spec} implies that the function $\mathrm{Ev}$ is not uniformly equal to $0$. We will now prove the following lemma which links the behavior at $z=1$ of the Evans function $\mathrm{Ev}$, the eigenspace associated with the eigenvalue $1$ for the operator $\Lcc$ and the vector subspace
	$$\mathrm{Span}\left(W_m^+(1,0),m\in I_{ss}\right)\cap\mathrm{Span}\left(W_m^-(1,0),m\in I_{su}\right).$$

		\begin{lemma}\label{lemCombiWm}
			\begin{subequations}
				We have that $\mathrm{Ev}(1)=0$. Furthermore, if we assume that $1$ is a simple zero of the Evans function (see Hypothesis \ref{H:Evans}), then we have that:
				\begin{align}
					\dim \mathrm{Span}\left(W_m^+(1,0),m\in I_{ss}\right)\cap\mathrm{Span}\left(W_m^-(1,0),m\in I_{su}\right)&=1,\label{lemCombiWm:egdim}\\
					\dim \ker(Id_{\ell^2}-\Lcc)&=1.\label{lemCombiWm:egdim2}
				\end{align}
				Moreover, if we consider a vector $V_0\in \mathrm{Span}\left(W_m^+(1,0),m\in I_{ss}\right)\cap\mathrm{Span}\left(W_m^-(1,0),m\in I_{su}\right)\backslash\lc0\rc$, then:
				\begin{equation}\label{lemCombiWm:eg}
					\ker(Id_{\ell^2}-\Lcc) =\mathrm{Span}(\Pi(X_j(1)V_0))_{j\in\Z}
				\end{equation}
				where the operator $\Pi$ defined by \eqref{def:Pi} is the linear map which extracts the center values of a vector of size $d(p+q)$.
			\end{subequations}
		\end{lemma}

		\begin{proof}
			The proof is separated in a few steps.
			
			\underline{\textbf{Step 1:}} Let us start by proving that:
			$$\dim \mathrm{Span}\left(W_m^+(1,0),m\in I_{ss}\right)\cap\mathrm{Span}\left(W_m^-(1,0),m\in I_{su}\right)\geq 1.$$
			
			For $m\in\lbrace1,\hdots,d(p+q)\rbrace$, $z\in B(1,\delta_1)$ and $j \in\Z$, we define the vectors ${W_m^\pm}^{(-p)}(z,j),\hdots,{W_m^\pm}^{(q-1)}(z,j)\in\C^d$ such that:
			$$W^\pm_m(z,j)=:\begin{pmatrix}
				{W^\pm_m}^{(q-1)}(z,j)\\ \vdots \\ {W^\pm_m}^{(-p)}(z,j)
			\end{pmatrix}.$$
			We also define:
			\begin{equation}\label{def:wmpm}
				w_m^\pm(z,j):=\Pi(W_m^\pm(z,j))={W_m^\pm}^{(0)}(z,j)\in\C^d.
			\end{equation}
			We recall that the sequence $W_m^\pm(z,\cdot)$ is a solution of the dynamical system \eqref{syst_dyn} and thus:
			$$\forall j \in\Z,\quad W_m^\pm(z,j+1)= M_j(z)W_m^\pm(z,j).$$
			Using the definition \eqref{def:Mj} of the matrix $M_j(z)$, we have:
			\begin{subequations}
				\begin{align}
					\forall j \in\Z,\forall k \in\lbrace-p,\hdots,q-2\rbrace,\quad & {W_m^\pm}^{(k)}(z,j+1)={W_m^\pm}^{(k+1)}(z,j),\label{eg:lemCombiWm1}\\ 
					\forall j\in\Z,\quad & {W_m^\pm}^{(q-1)}(z,j+1) = -\A_{j,q}(z)^{-1}\left(\sum_{k=-p}^{q-1}\A_{j,k}(z){W_m^\pm}^{(k)}(z,j)\right).\label{eg:lemCombiWm2}
				\end{align}
			\end{subequations}
			Using \eqref{eg:lemCombiWm1}, we thus have that:
			\begin{equation}\label{eg:lemCombiWm3}
				\forall j\in\Z,\forall k\in\lbrace-p,\hdots,q-1\rbrace,\quad {W_m^\pm}^{(k)}(z,j)= w_m^\pm(z,j+k).
			\end{equation}
			This implies that:
			\begin{equation}\label{eg:W}
				\forall j \in\Z,\quad W_m^\pm(z,j)= \begin{pmatrix}
					w_m^\pm(z,j+q-1)\\ \vdots \\ w_m^\pm(z,j-p)
				\end{pmatrix}.
			\end{equation}
			Furthermore, combining \eqref{eg:lemCombiWm2} and \eqref{eg:lemCombiWm3}, we obtain:
			$$\forall j\in\Z,\quad \sum_{k=-p}^q\A_{j,k}(z)w_m^\pm(z,j+k)=0 $$
			and thus using the definitions \eqref{def:AAjk} and \eqref{def:Ajk} of the matrices $\A_{j,k}(z)$ and $A_{j,k}$:
			\begin{equation}\label{eg:lemCombiWm4}
				\forall j\in\Z,\quad (z-1)w_m^\pm(z,j) = \sum_{k=-p}^{q-1}B_{j,k}w_m^\pm(z,j+k)-\sum_{k=-p}^{q-1}B_{j+1,k}w_m^\pm(z,j+1+k).
			\end{equation}
			Evaluating \eqref{eg:lemCombiWm4} at $z=1$, we thus obtain that the sequence $\left(\sum_{k=-p}^{q-1}B_{j,k}w_m^\pm(1,j+k)\right)_{j\in\Z}$ is constant.
			
			\begin{subequations}\label{eg:wm}
				\begin{itemize}
					\item If $m\in I_{ss}=I_{ss}^+$, since $\zeta_m^+(1)=\uzet_m^+\in\D$, Lemma \ref{lem_choice_base} implies that:
					$$W_m^+(1,j)={\uzet_m^+}^jV_m^+(1,j)\underset{j\rightarrow +\infty}\rightarrow 0 \quad \text{ and thus } \quad w_m^+(1,j)\underset{j\rightarrow +\infty}\rightarrow 0.$$
					As a consequence, since the sequence $\left(\sum_{k=-p}^{q-1}B_{j,k}w_m^\pm(1,j+k)\right)_{j\in\Z}$ is constant and the family of matrices $(B_{j,k})_{(j,k)\in\Z\times\lbrace-p,\hdots,q-1\rbrace}$ is uniformly bounded, we have that:
					\begin{equation}\label{eg:wmIss}
						\forall j\in\Z,\quad \sum_{k=-p}^{q-1}B_{j,k}w_m^+(1,j+k) = 0.
					\end{equation}
					
					\item If $m\in I_{su}=I_{su}^-$, using a similar proof as for \eqref{eg:wmIss}, we obtain that:
					\begin{equation}\label{eg:wmIsu}
						\forall j\in\Z,\quad \sum_{k=-p}^{q-1}B_{j,k}w_m^-(1,j+k) = 0.
					\end{equation}
					
					\item If $m\in I_{cs}=I_{cs}^+$, we have that $\zeta_m^+(1)=\uzet_m^+=1$. Furthermore, using Lemma \ref{lem_choice_base}, we have that:
					$$W_m^+(1,j) =V_m^+(1,j)\underset{j\rightarrow +\infty}\rightarrow R_m^+(1) = \begin{pmatrix}
					\rg_l^\pm \\ \vdots \\ \rg_l^\pm
					\end{pmatrix} $$
					where $m=l+(k-1)d\in\lc1,\ppp,d(p+q)\rc$ with $k\in\lc1,\ppp,p+q\rc$ and $l\in\lc1,\ppp,d\rc$. Therefore, we have that:
					$$w_m^+(1,j) \underset{j\rightarrow +\infty}\rightarrow\rg_l^+.$$
					As a consequence, since the sequence $\left(\sum_{k=-p}^{q-1}B_{j,k}w_m^\pm(1,j+k)\right)_{j\in\Z}$ is constant, we have that:
					$$\forall j\in\Z,\quad \sum_{k=-p}^{q-1}B_{j,k}w_m^-(1,j+k) = \sum_{k=-p}^{q-1}B_k^+\rg_l^+. $$
					Using the definition \eqref{def:Bk} of the matrices $B_k^+$, the consistency condition \eqref{cond:consistency}, the definition \eqref{eg:FcFin} of $\alpha_l^+$ and the fact that $\rg_l^+$ is a eigenvector of $df(u^+)$ associated with $\lambg_l^+$, we have:
					$$\sum_{k=-p}^{q-1}B_k^+\rg_l^+ = \nug df(u^+) \rg_l^+ =\nug \lambg_l^+\rg_l^+=\alpha_l^+\rg_l^+. $$
					We thus obtain that:					
					\begin{equation}\label{eg:wmIcs}
						\forall j\in\Z,\quad \sum_{k=-p}^{q-1}B_{j,k}w_m^-(1,j+k) = \alpha_l^+\rg_l^+.
					\end{equation}
					
					\item If $m\in I_{cu}=I_{cu}^-$, using a similar proof as for \eqref{eg:wmIcs}, we obtain that:
					\begin{equation}\label{eg:wmIcu}
						\forall j\in\Z,\quad \sum_{k=-p}^{q-1}B_{j,k}w_m^-(1,j+k) = \alpha_l^-\rg_l^-.
					\end{equation}
				\end{itemize}
			\end{subequations}
			The equalities \eqref{eg:wmIcs} and \eqref{eg:wmIcu} will be used later on in the proof of Lemma \ref{lem:LiuMajda}. Using Hypothesis \ref{H:inv} to invert the matrix $B_{j,-p}$ and combining \eqref{eg:W} with \eqref{eg:wmIss} and \eqref{eg:wmIsu}, we have:
			$$\left\{\begin{array}{cc}
				\forall m \in I_{ss}, \quad & W_m^+(1,0)\in \left\{\begin{pmatrix}
					V\\DV
				\end{pmatrix},\quad  V\in \C^{d(p+q-1)}\right\} ,\\
				\forall m \in I_{su}, \quad & W_m^-(1,0)\in \left\{ \begin{pmatrix}
					V\\DV
				\end{pmatrix},\quad  V\in \C^{d(p+q-1)}\right\},
			\end{array}\right. $$
			where
			$$D:=\left(-B_{0,-p}^{-1}B_{0,q-1}\quad \ppp\quad -B_{0,-p}^{-1}B_{0,-p+1}\right)\in \Mc_{d,d(p+q-1)}(\C).$$
			Also, 
			$$\dim \lc \begin{pmatrix}	V\\DV	\end{pmatrix},\quad  V\in \C^{d(p+q-1)}\rc=d(p+q-1),$$
			and Hypothesis \ref{H:Lax} implies that
			$$\#I_{ss}\cup I_{su}=d(p+q-1)+1.$$
			Therefore, since Lemma \ref{lemBasis} implies that the families $\left(W_m^+(1,0)\right)_{m\in I_{ss}}$ and $\left(W_m^-(1,0)\right)_{m\in I_{su}}$ are both linearly independent, we can conclude that
			\begin{equation}\label{lemCombiWm:inegdim}
				\dim \mathrm{Span}\left(W_m^+(1,0),m\in I_{ss}\right)\cap\mathrm{Span}\left(W_m^-(1,0),m\in I_{su}\right)\geq 1.
			\end{equation}
			
			\textbf{\underline{Step 2:}} We observe that the definition \eqref{def:Ev} of the Evans function and the inequality \eqref{lemCombiWm:inegdim} immediately imply that $\mathrm{Ev}(1)=0$. Let us now assume that Hypothesis \ref{H:Evans} is verified, i.e. we assume that:
			$$\frac{\partial \mathrm{Ev}}{\partial z}(1)\neq0.$$
			Therefore, using \eqref{lemCombiWm:inegdim} and differentiating the expression \eqref{def:Ev} of the Evans function $\mathrm{Ev}$, we also easily deduce \eqref{lemCombiWm:egdim}.
			
			\textbf{\underline{Step 3:}} We will now prove \eqref{lemCombiWm:eg} and thus obtain \eqref{lemCombiWm:egdim2}. Let us start by considering a vector $V_0$ such that:
			$$V_0\in\mathrm{Span}\left(W_m^+(1,0),m\in I_{ss}\right)\cap\mathrm{Span}\left(W_m^-(1,0),m\in I_{su}\right)\backslash\lc0\rc.$$
			We then obviously have that:
			$$ \forall j\in\Z,\quad  X_j(1)V_0\in\mathrm{Span}\left(W_m^+(1,j),m\in I_{ss}\right)\cap\mathrm{Span}\left(W_m^-(1,j),m\in I_{su}\right).$$
			Lemma \ref{lem_choice_base} thus implies that $\left(\Pi\left(X_j(1)V_0\right)\right)_{j\in\Z}$ has exponential decay as $j$ tends towards $+\infty$ and $-\infty$ and thus belongs to $\ell^2(\Z,\C^d)$. Furthermore, since $(X_j(1)V_0)_{j\in\Z}$ is a solution of \eqref{syst_dyn} for $z=1$, we have that $\left(\Pi\left(X_j(1)V_0\right)\right)_{j\in\Z}$ belongs to $\ker(Id_{\ell^2}-\Lcc)$. Thus,
			$$\mathrm{Span} \left(\Pi\left(X_j(1)V_0\right)\right)_{j\in\Z} \subset \ker(Id_{\ell^2}-\Lcc).$$		
			
			We now consider $w\in \ker\left(Id_{\ell^2}-\Lcc\right)$ and define 
			$$W_0:=\begin{pmatrix}
				w_{q-1}\\ \vdots \\ w_{-p}
			\end{pmatrix}\in\C^{d(p+q)}.$$
			We then have that
			\begin{equation}\label{lemCombiWm:egWj}
				\forall j\in \Z, \quad W_j:=X_j(1)W_0 = \begin{pmatrix}
				w_{j+q-1}\\ \vdots \\ w_{j-p}
			\end{pmatrix}.
			\end{equation}
			Since $(W_m^+(1,0))_{m\in\lbrace1,\hdots,d(p+q)\rbrace}$ is a basis of $\C^{d(p+q)}$, we introduce a family of complex scalars $(c_m)_{m\in\lc1,\ppp,d(p+q)\rc}$ such that:
			\begin{equation}\label{lemCombiWm:ProofDecomp}
				W_0=\sum_{m=1}^{d(p+q)}c_mW_m^+(1,0).
			\end{equation}
			Then, since $W_m^+$ are solutions of \eqref{syst_dyn}, we have that: 
			$$\forall j\in \Z, \quad W_j=\sum_{m=1}^{d(p+q)}c_mW_m^+(1,j).$$
			We will now use a strategy similar to the proof of Lemma \ref{lemBasis} to prove that: 
			\begin{equation}\label{lemCombiWm:Proof}
				\forall m\in\lc1,\ppp,d(p+q)\rc\backslash I_{ss},\quad  c_m=0 .
			\end{equation}
			We introduce:
			\begin{align*}
				I_n&:=\lc m\in\lc1,\ppp,d(p+q)\rc, \quad c_m\neq 0\rc\neq \emptyset,\\
				R&:= \max_{m\in I_n} |\uzet_m^+|>0,\\
				I_R&:= \text{argmax}_{m\in I_n} |\uzet_m^+|\neq \emptyset.
			\end{align*} 
			Let us assume that the assertion \eqref{lemCombiWm:Proof} is false. Then, $I_R$ is not an empty set and the scalar $R$ is larger or equal to $1$. We have that for all $j\in\Z$:
			\begin{equation}\label{lemCombiWm:Proof2}
				\frac{W_j}{R^j} = \sum_{m\in I_R}c_m \left(\frac{\uzet_m^+}{R}\right)^jR_m^+(1)+\sum_{m\in I_n\backslash I_R}c_m \left(\frac{\uzet_m^+}{R}\right)^jR_m^+(1)+\sum_{m\in I_n}c_m \frac{W_m^+(1,j)-{\uzet_m^+}^jR_m^+(1)}{R^j}.
			\end{equation}
			We observe that \eqref{lemCombiWm:egWj} implies that $W_j$ converges towards $0$ as $j$ tends towards $+\infty$ because the sequence $w$ belongs to $\ell^2(\Z,\C^d)$. Since $R$ is larger or equal than $1$, we have that:
			$$\frac{W_j}{R^j} \underset{j\rightarrow +\infty}{\rightarrow}0.$$
			Furthermore, using similar ideas as in the proof of Lemma \ref{lemBasis}, we have that:
			$$\sum_{m\in I_n\backslash I_R}c_m \left(\frac{\uzet_m^+}{R}\right)^jR_m^+(1)+\sum_{m\in I_n}c_m \frac{W_m^+(1,j)-{\uzet_m^+}^jR_m^+(1)}{R^j}\underset{j\rightarrow +\infty}{\rightarrow}0.$$
			Thus, \eqref{lemCombiWm:Proof2} implies that:
			$$\sum_{m\in I_R}c_m \left(\frac{\uzet_m^+}{R}\right)^jR_m^+(1)\underset{j\rightarrow +\infty}{\rightarrow}0.$$
			Since $I_R$ is not empty, projecting the equality above on $\mathrm{Span}(R_{m_0}^+(1))$ along $\mathrm{Span}(R_m^+(1), m\neq m_0)$ for some $m_0\in I_R$ implies that:
			$$c_{m_0}\left(\frac{\uzet_{m_0}^+}{R}\right)^j\underset{j\rightarrow+\infty}\rightarrow0.$$
			But, $m_0$ belongs to $I_R$ so $|\uzet_{m_0}^+|=R$. This implies that $c_{m_0}=0$. However, $m_0\in I_R\subset I_n$ so $c_{m_0}\neq 0$. This is a contradiction which implies that \eqref{lemCombiWm:Proof} has to be verified. Then, using \eqref{lemCombiWm:ProofDecomp}, we have that:
			$$W_0\in \mathrm{Span}(W_m^+(1,0),m\in I_{ss}).$$	
			Furthermore, using a similar proof with the family $(W_m^-(1,0))_{m\in\lc1,\ppp,d(p+q)\rc}$, we have that $W_0$ belongs to $\mathrm{Span}(W_m^+(1,0),m\in I_{ss})\cap\mathrm{Span}(W_m^-(1,0),m\in I_{su})= \mathrm{Span} V_0$. Therefore, since for all $j\in\Z$ we have: 
			$$w_j=\Pi(W_j)=\Pi(X_j(1)W_0),$$
			we conclude that the sequence $w$ belongs to $\mathrm{Span} \left(\Pi\left(X_j(1)V_0\right)\right)_{j\in\Z}$. We can thus finally verify \eqref{lemCombiWm:eg} and \eqref{lemCombiWm:egdim2}.
		\end{proof}

	First, as a consequence of Lemma \ref{lemCombiWm}, since the Evans function $\mathrm{Ev}$ is holomorphic on $B(1,\delta_1)$ and not uniformly equal to $0$, the equality $\mathrm{Ev}(1)=0$ implies that we can consider $\delta_1$ small enough so that the Evans function $\mathrm{Ev}$ only vanishes at $z=1$ on the ball $B(1,\delta_1)$. 
	
	Our new goal for the rest of this section will be to use Lemma \ref{lemCombiWm} to introduce below two new bases $\left(\Phi_m(z,0)\right)_{m\in\lc1,\ppp,dp\rc}$ and $\left(\Phi_m(z,0)\right)_{m\in\lc dp+1,\ppp,d(p+q)\rc}$ in \eqref{defPhi} for the vector spaces $E_0^+(z)$ and $E_0^-(z)$ more suitable for the study of the spatial Green's function when $z$ is close to $1$. We will also define in \eqref{def:Gc-} below a new Evans function $D^\Phi$ associated with this new choice of bases which will share the same properties as $\mathrm{Ev}$.
	
	The equality \eqref{lemCombiWm:egdim} of Lemma \ref{lemCombiWm} implies that there exist two non zero families of complex numbers $(\theta_{s,m})_{m\in I_{ss}}$ and $(\theta_{u,m})_{m\in I_{su}}$  such that:
	\begin{equation}\label{egWssWsu}
		\sum_{m\in I_{ss}}\theta_{s,m} W_m^+(1,0)=\sum_{m\in I_{su}}\theta_{u,m} W_m^-(1,0).
	\end{equation}
	In the rest of the paper, we fix the choice of families of coefficients $\theta_{s,m}$ and $\theta_{u,m}$. Even if we have to reindex the eigenvalues $\uzet_m^\pm$, we will assume that $\theta_{s,1},\theta_{u,d(p+q)}\neq 0$. Furthermore, we also define: 
	\begin{equation}\label{def:theta_su}
		\theta_{s,m}:= 0 \text{ for } m\in \lc 1, \ppp,d(p+q)\rc\backslash I_{ss}\quad \text{ and }\quad  \theta_{u,m}:= 0 \text{ for } m\in \lc 1, \ppp,d(p+q)\rc\backslash I_{su}.
	\end{equation}
	We define for $m\in\lc1,\ppp,d(p+q)\rc$:
	\begin{equation}\label{defPhi}
		\forall z\in B(1,\delta_1), \forall j\in \Z, \quad \Phi_m(z,j):=\lc\begin{array}{cc}
			\sum_{m\in I_{ss}}\theta_{s,m} W_m^+(z,j), & \text{ if }m=1,\\
			W_m^+(z,j), & \text{ if }m\in\lc2, \ppp, dp\rc,\\
			W_m^-(z,j), & \text{ if }m\in\lc dp+1, \ppp, d(p+q)-1\rc,\\
			\sum_{m\in I_{su}}\theta_{u,m} W_m^-(z,j), & \text{ if }m=d(p+q).\\
		\end{array}\right.
	\end{equation}
	Since $\theta_{s,1},\theta_{u,d(p+q)}\neq 0$, we have that $\left(\Phi_m(z,0)\right)_{m\in\lc1,\ppp,dp\rc}$ and $\left(\Phi_m(z,0)\right)_{m\in\lc dp+ 1,\ppp,d(p+q)\rc}$ are respectively bases of $E^+_0(z)$ and $E_0^-(z)$. Equality \eqref{egWssWsu} implies that $\Phi_1(1,0)=\Phi_{d(p+q)}(1,0)$ and since $\Phi_1(1,\cdot)$ and $\Phi_{d(p+q)}(1,\cdot)$ are solutions of \eqref{syst_dyn} for $z=1$, we have:
	\begin{equation}\label{egPhi}
		\forall j\in \Z, \quad\Phi_1(1,j)=\Phi_{d(p+q)}(1,j).
	\end{equation}
	Furthermore, using the expression of $\Phi_1(z,j)$ and $\Phi_{d(p+q)}(z,j)$ as well as Lemma \ref{lem_choice_base} and inequalities \eqref{inZetaSs} and \eqref{inZetaSu}, we prove that there exists a positive constant $C$ such that:
	\begin{align}\label{in:decroissance_expo_Phi_1_et_d(p+q)}
		\begin{split}
			\forall z\in B(1,\delta_1), \forall j\in \N, \quad & \left|\Phi_1(z,j)\right|\leq Ce^{-2c_*|j|},\\
			\forall z\in B(1,\delta_1), \forall j\in -\N, \quad & \left|\Phi_{d(p+q)}(z,j)\right|\leq Ce^{-2c_*|j|}.
		\end{split}
	\end{align}
	If we define: 
	\begin{equation}\label{def:V}
		\forall j\in\Z, \quad V(j)=\Pi(\Phi_1(1,j))=\Pi(\Phi_{d(p+q)}(1,j)),
	\end{equation}
	then using \eqref{lemCombiWm:eg} because: 
	$$\Phi_1(1,0)=\Phi_{d(p+q)}(1,0)\in\mathrm{Span}\left(W_m^+(1,0),m\in I_{ss}\right)\cap\mathrm{Span}\left(W_m^-(1,0),m\in I_{su}\right)\backslash\lc0\rc,$$
	we have that $V$ is a sequence in $\ell^2(\Z,\C^d)\backslash\lc0\rc$ such that \eqref{egV} and \eqref{decExpoV} are verified. It will correspond to the sequence $V$ in Theorem \ref{th:Green}.
	
	If we summarize, for $z\in B(1,\delta_1)$, we have five families to describe the solutions of the dynamical system \eqref{syst_dyn}:
	\begin{itemize}
		\item The bases $\left(\Phi_1(z,j),W_2^+(z,j),\ppp,W_{d(p+q)}^+(z,j)\right)$ and $\left(W^+_1(z,j),W_2^+(z,j),\ppp,W_{d(p+q)}^+(z,j)\right)$ for which we know the asymptotic behavior when $j$ tends to $+\infty$ thanks to Lemma \ref{lem_choice_base}.
		\item The bases $\left(W_1^-(z,j),\ppp,W_{d(p+q)-1}^-(z,j),\Phi_{d(p+q)}(z,j)\right)$ and $\left(W_1^-(z,j),\ppp,W_{d(p+q)-1}^-(z,j),W^-_{d(p+q)}(z,j)\right)$ for which we know the asymptotic behavior when $j$ tends to $-\infty$ thanks to Lemma \ref{lem_choice_base}.
		\item The family $\left(\Phi_m(z,j)\right)_{m\in\lc1,\ppp,d(p+q)\rc}$ which is linked to the solutions of the dynamical system \eqref{syst_dyn} which tend towards $0$ as $j$ tends towards $+\infty$ or $-\infty$, at least when $z\in \Oc\cap B(1,\delta_1)$. It is a basis of $\C^{d(p+q)}$ if and only if $E_0^+(z)\oplus E_0^-(z) =\C^{d(p+q)}$, i.e. for $z$ different from $1$.
	\end{itemize}
	
	We introduce a few more notations. For $z\in B(1,\delta_1)$ and $j\in \Z$, we define:
	\begin{subequations}\label{def:Mat_et_det}
		\begin{align}
			\Gc^\Phi(z,j)&:=\left(\Phi_1(z,j)|\ppp|\Phi_{d(p+q)}(z,j)\right), & D^\Phi(z) &: = \det(\Gc^\Phi(z,0)),\label{def:GcPhi}\\
			\Gc^+(z,j) &:=\left(\Phi_1(z,j)|W^+_2(z,j)|\ppp|W^+_{d(p+q)}(z,j)\right),  & D^+(z) &: = \det(\Gc^+(z,0)),\label{def:Gc+}\\
			\Gc^-(z,j) &:=\left(W^-_1(z,j)|\ppp|W^-_{d(p+q)-1}(z,j)|\Phi_{d(p+q)}(z,j)\right),  & D^-(z) &: = \det(\Gc^-(z,0)),\label{def:Gc-}\\
			\widetilde{\Gc}^+(z,j) &:=\left(W^+_1(z,j)|W^+_2(z,j)|\ppp|W^+_{d(p+q)}(z,j)\right), &&\label{def:GcTilde+}\\
			\widetilde{\Gc}^-(z,j) &:=\left(W^-_1(z,j)|W^-_2(z,j)|\ppp|W^-_{d(p+q)}(z,j)\right). &&\label{def:GcTilde-}
		\end{align}
	\end{subequations}
	Those functions are holomorphic on $B(1,\delta_1)$. We now make a few observations:
	
	$\bullet$ We observe using the definition \eqref{defPhi} of $\Phi_1$ and $\Phi_{d(p+q)}$ that for $z\in B(1,\delta_1)$ and $j\in\Z$ 
	\begin{equation}\label{eg:GcGcTilde}
		\Gc^+(z,j)= \widetilde{\Gc}^+(z,j)\begin{pmatrix}
			\theta_{s,1}&    & (0)&\\
			\theta_{s,2} & 1 & &\\
			\vdots 			 &    &  \ddots & \\
			\theta _{s, d(p+q)}&(0) & & 1
		\end{pmatrix} \quad \text{ and } \quad \Gc^-(z,j)= \widetilde{\Gc}^-(z,j)\begin{pmatrix}
			1 &    & (0)& \theta_{u,1} \\
			& \ddots & & \vdots \\
			&    &  1&\theta_{u,d(p+q)-1}  \\
			&(0) & & \theta_{u,d(p+q)} 
		\end{pmatrix}.
	\end{equation}
	
	$\bullet$ Using \eqref{defPhi} and \eqref{def:theta_su}, we have for $z\in B(1,\delta_1)$:
	$$\Gc^\Phi(z,0) = \left(W_1^+(z,0)|\ppp|W_{dp}^+(z,0)|W_{dp+1}^-(z,0)|\ppp|W_{d(p+q)}^-(z,0)\right)\begin{pmatrix}
		\theta_{s,1}&&&& \theta_{u,1} \\
		\theta_{s,2}& 1 &&(0)&\theta_{u,2}\\
		\vdots & & \ddots&&\vdots\\
		\theta_{s,d(p+q)-1} & (0)& & 1 &\theta_{u,d(p+q)-1}\\
		\theta_{s,d(p+q)} & & & & \theta_{u,d(p+q)}
	\end{pmatrix}$$
	and thus since $\theta_{s,d(p+q)},\theta_{u,1}=0$, we have that:
	\begin{equation}\label{eg:Ev}
		D^\Phi(z) = \theta_{s,1} \theta_{u,d(p+q)}\mathrm{Ev}(z).
	\end{equation}
	The function $D^\Phi$ thus shares the same properties as the Evans function $\mathrm{Ev}$, i.e. the function $D^\Phi$ is holomorphic on $B(1,\delta_1)$, vanishes only at $z=1$ and $1$ is a simple zero of $D^\Phi$. We will thus also call $D^\Phi$ Evans function.
	
	$\bullet$ For $z\in B(1,\delta_1)\backslash\lc1\rc$, the function $D^\Phi$ does not vanish at $z$ and thus $\left(\Phi_m(z,0)\right)_{m\in\lc 1,\ppp,d(p+q)\rc}$ is a basis. We can define for $m\in \lc 1,\ppp,d(p+q)\rc$ the projector $\Pi_m(z)$ on $\mathrm{Span}(\Phi_m(z,0))$ along $\mathrm{Span}(\Phi_{\nu}(z,0))_{\nu\in\lc1,\ppp,d(p+q)\rc\backslash\lc m\rc}$. We observe that
	\begin{equation}\label{def:Pim}
		\Pi_m(z)=\Gc^\Phi(z,0)P_m\Gc^\Phi(z,0)^{-1}
	\end{equation}
	where $P_m=\left(\delta_{i,m}\delta_{j,m}\right)_{i,j\in \lc1,\ppp,d(p+q)\rc}\in \Mc_{d(p+q)}(\C)$. The function $\Pi_m$ is holomorphic on $B(1,\delta_1)\backslash\lc1\rc$.

		We conclude this section with an additional lemma.
		
		\begin{lemma}\label{lem:LiuMajda}
			We have that:
			$$\det\left(\rg_1^-,\hdots,\rg_{I-1}^-,\sum_{j\in\Z}V(j),\rg_{I+1}^+,\hdots,\rg_d^+\right)\neq0$$
			where the sequence $V$ is defined by \eqref{def:V}.
		\end{lemma}
		
		Lemma \ref{lem:LiuMajda} corresponds to a generalization of the so-called \textit{Liu-Majda condition} (see \cite[(1.24)]{ZH} and \cite[(3.59)]{BenzoniHuotRousset}) which is a usual necessary condition linked with the spectral stability assumption. It will play a central role in the conclusion of the proof of Theorem \ref{th:Green} (see Section \ref{subsec:ConcluTh1}). The proof of Lemma \ref{lem:LiuMajda} is essentially an adaptation in our context of the proofs of \cite[Proposition 2.1]{Godillon} and \cite[(5.8)]{Serre} which were written for three-point schemes.
		
		\begin{proof}
			Let us start by observing that, using \eqref{egPhi} and the definition of the Evans function $D^\Phi$, we have:
			\begin{equation}\label{eg:derEvans}
				\frac{dD^\Phi}{dz}(1) = \det\left(\frac{\partial \Phi_1}{\partial z}(1,0)-\frac{\partial \Phi_{d(p+q)}}{\partial z}(1,0),\Phi_2(1,0),\hdots,\Phi_{d(p+q)}(1,0)\right).
			\end{equation}
			We will need some information on the vectors appearing in the determinant on the right-side of \eqref{eg:derEvans}.
			
			For $m\in\lbrace1,\hdots,d(p+q)\rbrace$, $j\in\Z$ and $z\in B(1,\delta_1)$, we define:
			$$\phi_m(z,j):=\Pi(\Phi_m(z,j))$$
			where the operator $\Pi$ is defined by \eqref{def:Pi}. The functions $\phi_m(\cdot,j)$ are holomorphic on $B(1,\delta_1)$. Furthermore, using the definition \eqref{defPhi} of the function $\Phi_m$, the equations \eqref{eg:W} and \eqref{eg:wm} imply that for $m\in\lbrace1,\hdots,d(p+q)\rbrace$ and $j\in\Z$:
			\begin{equation}\label{lem:LiuMajda:PR1}
				\forall z\in B(1,\delta_1),\quad \Phi_m(z,j) = \begin{pmatrix}
					\phi_m(z,j+q-1)\\
					\vdots\\
					\phi_m(z,j-p)
				\end{pmatrix}\quad \text{ and thus } \quad \frac{\partial\Phi_m}{\partial z}(z,j) = \begin{pmatrix}
				\frac{\partial\phi_m}{\partial z}(z,j+q-1)\\
				\vdots\\
				\frac{\partial\phi_m}{\partial z}(z,j-p)
				\end{pmatrix}
			\end{equation}
			and if we write $m=l+(k-1)d\in\lc1,\ppp,d(p+q)\rc$ with $k\in\lc1,\ppp,p+q\rc$ and $l\in\lc1,\ppp,d\rc$:
			\begin{equation}\label{lem:LiuMajda:PR2}
				\forall j\in\Z,\quad \left\{\begin{array}{cc}
					\displaystyle\sum_{k=-p}^{q-1}B_{j,k}\phi_m(1,j+k) =0 & \quad \text{ if } m\in I_{ss}\cup I_{su},\\
					\displaystyle\sum_{k=-p}^{q-1}B_{j,k}\phi_m(1,j+k) =\alpha_l^+\rg_l^+ & \quad \text{ if } m\in I_{cs},\\
					\displaystyle\sum_{k=-p}^{q-1}B_{j,k}\phi_m(1,j+k) =\alpha_l^-\rg_l^- & \quad \text{ if } m\in I_{cu}.\\
				\end{array}\right.
			\end{equation}
			
			We are now going to prove that:
			\begin{equation}\label{lem:LiuMajda:PR3}
				\sum_{k=-p}^{q-1} B_{0,k}\left(\frac{\partial\phi_1}{\partial z}(1,k) - \frac{\partial\phi_{d(p+q)}}{\partial z}(1,k) \right) = \sum_{j\in\Z} V(j)
			\end{equation}
			where the sequence $V$ is defined by \eqref{def:V}. First, we notice that:
			\begin{equation}\label{lem:LiuMajda:PR4}
				\forall j\in\Z,\quad V(j) = \phi_1(1,j)=\phi_{d(p+q)}(1,j).
			\end{equation}
			For $m\in\lbrace1,\hdots,d(p+q)\rbrace$, if we differentiate \eqref{eg:lemCombiWm4} and evaluate it at $z=1$, we have:
			\begin{equation}\label{lem:LiuMajda:PR5}
				\forall j \in\Z,\quad w_m^\pm(1,j)=\sum_{k=-p}^{q-1}B_{j,k}\frac{\partial w_m^\pm}{\partial z}(1,j+k)-\sum_{k=-p}^{q-1}B_{j+1,k}\frac{\partial w_m^\pm}{\partial z}(1,j+1+k).
			\end{equation}
			\begin{subequations}\label{lem:LiuMajda:PR6}
				\begin{itemize}
					\item Lemma \ref{lem_choice_base} implies that the sequences $V_m^+(1,\cdot)$ and $\frac{\partial V_m^+}{\partial z}(1,\cdot)$ belong to $\ell^\infty(\N,\C^{d(p+q)})$. For $m\in I_{ss}$, we have that $\zeta_m^+(1)=\uzet_m^+\in\D$ and thus for $j\in\Z$:
					$$\frac{\partial W_m^+}{\partial z}(1,j) = {\uzet_m^+}^{j-1}\left(j\frac{d\zeta_m^+}{dz}(1)V_m^+(1,j)+ \uzet_m^+\frac{\partial V_m^+}{\partial z}(1,j)\right)\underset{j\rightarrow+\infty}{\rightarrow}0.$$
					Since the family of matrices $(B_{j,k})_{(j,k)\in\Z\times\lbrace-p,\hdots,q-1\rbrace}$ is uniformly bounded, we have:
					$$\sum_{k=-p}^{q-1}B_{j,k}\frac{\partial w_m^+}{\partial z}(1,j+k)\underset{j\rightarrow+\infty}\rightarrow0.$$
					Thus, by summing \eqref{lem:LiuMajda:PR5} for $j\geq0$, we have for $m\in I_{ss}$:
					\begin{equation}
						\sum_{k=-p}^{q-1}B_{0,k}\frac{\partial w_m^+}{\partial z}(1,k) = \sum_{j=0}^{+\infty}w_m^+(1,j).
					\end{equation}
					\item Using a similar proof, by summing \eqref{lem:LiuMajda:PR5} for $j\leq-1$, we have for $m\in I_{su}$:
					\begin{equation}
						-\sum_{k=-p}^{q-1}B_{0,k}\frac{\partial w_m^-}{\partial z}(1,k) = \sum_{j=-\infty}^{-1}w_m^-(1,j).
					\end{equation}
				\end{itemize}
			\end{subequations}
			Thus, using \eqref{lem:LiuMajda:PR4}, \eqref{lem:LiuMajda:PR6} and the definition \eqref{defPhi} of $\Phi_1$ and $\Phi_{d(p+q)}$, we obtain \eqref{lem:LiuMajda:PR3}.
			
			Combining \eqref{eg:derEvans}, \eqref{lem:LiuMajda:PR2} and \eqref{lem:LiuMajda:PR3}, we have that:
			\begin{align}
				\begin{split}
					&\det(B_{0,-p})\frac{dD^\Phi}{dz}(1)\\
					=& \det\left(\begin{pmatrix}
						I_d & & & &\\
						&I_d  & & &\\
						(0)& & \ddots & &(0)\\
						& & & I_d & \\
						B_{0,q-1} & \hdots & \hdots & B_{0,-p+1} & B_{0,-p}
					\end{pmatrix}\left(\begin{array}{c|c|c|c}
					\frac{\partial \Phi_1}{\partial z}(1,0)-\frac{\partial \Phi_{d(p+q)}}{\partial z}(1,0) &\Phi_2(1,0) &\hdots &\Phi_{d(p+q)}(1,0)
					\end{array}\right)\right)\\
					=& \det\left(\begin{array}{c|c}
						\begin{array}{c}
							\displaystyle\frac{\partial \phi_1}{\partial z}(1,q-1)-\frac{\partial \phi_{d(p+q)}}{\partial z}(1,q-1)\\
							\hline \vdots\\
							\hline \displaystyle\frac{\partial \phi_1}{\partial z}(1,-p+1)-\frac{\partial \phi_{d(p+q)}}{\partial z}(1,-p+1)
						\end{array} & {\hspace{1cm} A\hspace{1cm}} \\
						\hline
						\displaystyle\sum_{j\in\Z}V(j) & \hspace{1cm}B\hspace{1cm}
					\end{array}\right)
				\end{split}\label{lem:LiuMajda:PR7}
			\end{align}
			where the matrices $A$ and $B$ are defined by:
			\begin{align*}
				A&= \left(\begin{array}{c|c|c}
					\phi_2(1,q-1)& \hdots & \phi_{d(p+q)}(1,q-1)\\
					\hline \vdots & \vdots & \vdots \\
					\hline \phi_2(1,-p+1)& \hdots & \phi_{d(p+q)}(1,-p+1)
				\end{array}\right)\in \Mc_{d(p+q-1),d(p+q)-1}(\C),\\
				B & = \biggl(\, \underset{d(p-1)+I-1 \text{ zeroes}}{\underbrace{0\,\bigg|\,\hdots\,\bigg|\,0}}\,\bigg|\, \alpha_{I+1}^+\rg_{I+1}^+\,\bigg|\, \hdots \,\bigg|\, \alpha_d^+\rg_d^+ \,\bigg|\, \alpha_1^-\rg_1^- \,\bigg|\, \hdots \,\bigg|\, \alpha_{I-1}^-\rg_{I-1}^- \,\bigg|\, \underset{dq-I+1 \text{ zeroes}}{\underbrace{0 \,\bigg|\, \hdots \,\bigg|\,0}}\,\biggr) \in \Mc_{d,d(p+q)-1}(\C).
			\end{align*}
			After permutations of the columns of the matrix on the right-hand side of \eqref{lem:LiuMajda:PR7} to rearrange it as a block triangular matrix, we can obtain:
			\begin{equation}\label{lem:LiuMajda:PR8}
				\left|\det(B_{0,-p})\frac{dD^\Phi}{dz}(1)\right| = \left|\det(C)\det\left(\alpha_{I+1}^+\rg_{I+1}^+,\hdots,\alpha_d^+\rg_d^+,\alpha_1^-\rg_1^-,\hdots,\alpha_{I-1}^-\rg_{I-1}^-,\sum_{j\in\Z}V(j)\right)\right|
			\end{equation}
			where the matrix $C$ is defined by:
			\begin{multline*}
				C= \\ \left(\begin{array}{c|c|c|c|c|c}
				\phi_2(1,q-1) & \hdots & \phi_{d(p-1)+I}(1,q-1) &\phi_{dp+I}(1,q-1) &\hdots& \phi_{d(p+q)}(1,q-1)\\\hline
				\vdots & \vdots & \vdots & \vdots &\vdots& \vdots\\\hline
				\phi_2(1,-p+1) & \hdots & \phi_{d(p-1)+I}(1,-p+1) &\phi_{dp+I}(1,-p+1) &\hdots& \phi_{d(p+q)}(1,-p+1)
			\end{array}\right)\in\Mc_{d(p+q-1)}(\C)
			\end{multline*}
			Finally, Hypotheses \ref{H:Evans} and \ref{H:inv} imply that the left-hand side of \eqref{lem:LiuMajda:PR8} is different from $0$. We then easily obtain that:
			$$\det\left(\rg_1^-,\hdots,\rg_{I-1}^-,\sum_{j\in\Z}V(j),\rg_{I+1}^+,\hdots,\rg_{d}^+\right)\neq0.$$
		\end{proof}

	\subsection{Behavior of the spatial Green's function near \texorpdfstring{$1$}{1}} \label{secGreenSpClose}
	
	Now that we have introduced all the tools necessary, our goal is to prove the following proposition which implies that the spatial Green's function can be meromorphically extended near $1$ and which gives expressions of the spatial Green's function using the solutions $W_m^\pm$ and $\Phi_m$ of the dynamical system \eqref{syst_dyn} defined previously respectively in Lemma \ref{lem_choice_base} and \eqref{defPhi}.
	
	\begin{prop}\label{prop:GS_near_1}
		We consider $j_0,j\in\Z$ and $\textbf{e}\in\C^d$. The function $W(\cdot,j_0,j,\textbf{e})$ can be meromorphically extended on the whole ball $B(1,\delta_1)$ with a pole of order $1$ at $1$. Furthermore, using the functions $\widetilde{\Ccc}_{m^\prime}^+$ and $\widetilde{g}_{m^\prime,m}^+$ defined below respectively by \eqref{defCtilde} and \eqref{def:gtilde}, we have that for $j_0\in\N$, $j\in\Z$, $\textbf{e}\in \C^d$ and $z\in B(1,\delta_1)\backslash\lc1\rc$:
		\begin{subequations}\label{ExpGs}
			\begin{itemize}
				\item For $j\geq j_0+1$:
				\begin{multline}\label{ExpGs: l geq 0 : j geq l+1}
					W(z,j_0,j,\textbf{e})= -\sum_{m=1}^{dp}  \widetilde{\Ccc}^+_{m}(z,j_0,\textbf{e})W^+_m(z,j) -\sum_{m=2}^{dp} \sum_{m^\prime=dp+1}^{d(p+q)}  \widetilde{g}_{m^\prime,m}^+(z)\widetilde{\Ccc}^+_{m^\prime}(z,j_0,\textbf{e})W^+_m(z,j)\\- \sum_{m^\prime=dp+1}^{d(p+q)}  \widetilde{g}_{m^\prime,1}^+(z)\widetilde{\Ccc}^+_{m^\prime}(z,j_0,\textbf{e})\Phi_1(z,j).
				\end{multline}
				\item For $j\in\lc0,\ppp,j_0\rc$:
				\begin{multline}\label{ExpGs: l geq 0 : 0 leq j leq l}
					W(z,j_0,j,\textbf{e})= \sum_{m=dp+1}^{d(p+q)}\widetilde{\Ccc}_{m}^+(z,j_0,\textbf{e}) W_{m}^+(z,j) -\sum_{m=2}^{dp} \sum_{m^\prime=dp+1}^{d(p+q)}  \widetilde{g}_{m^\prime,m}^+(z)\widetilde{\Ccc}^+_{m^\prime}(z,j_0,\textbf{e})W^+_m(z,j)\\- \sum_{m^\prime=dp+1}^{d(p+q)}  \widetilde{g}_{m^\prime,1}^+(z)\widetilde{\Ccc}^+_{m^\prime}(z,j_0,\textbf{e})\Phi_1(z,j).
				\end{multline}
				
				\item For $j<0$:
				\begin{multline}\label{ExpGs: l geq 0 : j < 0}
					W(z,j_0,j,\textbf{e})= \sum_{m=dp+1}^{d(p+q)-1} \sum_{m^\prime=dp+1}^{d(p+q)}  \widetilde{g}_{m^\prime,m}^+(z)\widetilde{\Ccc}^+_{m^\prime}(z,j_0,\textbf{e})W^-_m(z,j)\\+\sum_{m^\prime=dp+1}^{d(p+q)}  \widetilde{g}_{m^\prime,d(p+q)}^+(z)\widetilde{\Ccc}^+_{m^\prime}(z,j_0,\textbf{e})\Phi_{d(p+q)}(z,j).
				\end{multline}
			\end{itemize}
			Similar expressions can be deduced when $j_0\in-\N\backslash\lc0\rc$ using the functions $\widetilde{\Ccc}_{m^\prime}^-$ and $\widetilde{g}_{m^\prime,m}^-$.
		\end{subequations}
	\end{prop}

	To enter into more details, in Section \ref{subsec:Near1:PremièreExpression}, we will use the families of solutions of the dynamical system \eqref{syst_dyn} previously introduced to decompose the expressions \eqref{Wr} and \eqref{Wl} proved on the spatial Green's function in Section \ref{sec:GSloin} and obtain the expressions \eqref{expW3} below of the spatial Green's function on $B(1,\delta_1)\cap\Oc_\rho$. Since the sequence $W_m^\pm$ are holomorphic on $B(1,\delta_1)$, this will allow us to extend the spatial Green's function meromorphically on the whole ball $B(1,\delta_1)$ with a pole of order $1$ at $1$. The computations performed in this section will be fairly inspired by \cite[Section 3]{Godillon} and the expression \eqref{expW3} corresponds to the result of \cite[Proposition 3.1]{Godillon}. However, our goal is to improve the expression \eqref{expW3} to obtain the result \eqref{ExpGs}. Those calculations will be performed in Sections \ref{subsubsec:Dm} and \ref{subsubsec:ReexpGreenSpatial}.
	
	\subsubsection{Meromorphic extension of the spatial Green's function near \texorpdfstring{$1$}{1}}\label{subsec:Near1:PremièreExpression}

	We observe that for $z\in \Oc_\rho\cap B(1,\delta_1)$, we have that $Q(z)$ is the projector on $E_0^+(z)$ along $E_0^-(z)$ and thus
	$$Q(z)=\sum_{m=1}^{dp}\Pi_m(z) \quad \text{ and }\quad Id-Q(z)=\sum_{m=dp+1}^{(p+q)d}\Pi_m(z)$$
	where the projectors $\Pi_m(z)$ are defined by \eqref{def:Pim}. Since the equalities \eqref{Wr} and \eqref{Wl} are still verified for $z\in\Oc_\rho\cap B(1,\delta_1)$, if we define for $m\in \lc 1, \ppp,d(p+q)\rc$ the functions $\nu_m$:
	$$\forall z\in B(1,\delta_1)\backslash\lc1\rc, \forall j_0,j\in \Z, \forall \textbf{e}\in \C^d, \quad \nu_m(z,j_0,j,\textbf{e}):= X_j(z)\Pi_m(z)X_{j_0+1}(z)^{-1}\begin{pmatrix}
		A_{j_0,q}^{-1}\textbf{e}\\0\\\vdots\\0
	\end{pmatrix}$$
	then, we have that for $z\in \Oc_\rho\cap B(1,\delta_1)$
	\begin{subequations}\label{expW2}
		\begin{align}
			\forall j \geq j_0+1, \quad& W(z,j_0,j,\textbf{e})= -\sum_{m=1}^{dp}\nu_m(z,j_0,j,\textbf{e}), \label{Wr2}\\
			\forall j  \leq j_0, \quad& W(z,j_0,j,\textbf{e})=\sum_{m=dp+1}^{(p+q)d} \nu_m(z,j_0,j,\textbf{e}).\label{Wl2}
		\end{align}
	\end{subequations}
	Since the right hand terms of \eqref{expW2} are holomorphic on $B(1,\delta_1)\backslash\lc 1\rc$, we can extend holomorphically the function $W(\cdot,j_0,j,\textbf{e})$ on $B(1,\delta_1)\backslash\lc 1\rc$.
	
	We observe that \eqref{def:Pim} implies for $z\in B(1,\delta_1)\backslash\lc1\rc$ and $m\in \lc1, \ppp,(p+q)d\rc$,
	$$\nu_m(z,j_0,j,\textbf{e})= \frac{\Gc^\Phi(z,j)}{D^\Phi(z)}P_m \text{com}(\Gc^\Phi(z,0))^TX_{j_0+1}(z)^{-1}\begin{pmatrix} A_{j_0,q}^{-1}\textbf{e}\\0\\\vdots\\0 \end{pmatrix}.$$
	If we define
	\begin{equation}\label{defDChap}
		\forall z\in B(1,\delta_1), \forall j_0\in \Z, \forall \textbf{e}\in \C^d, \quad\begin{pmatrix} 
			\widehat{D}_1(z,j_0,\textbf{e}) \\
			\vdots \\
			\widehat{D}_{d(p+q)}(z,j_0,\textbf{e})  \end{pmatrix}:=\text{com}(\Gc^\Phi(z,0))^TX_{j_0+1}(z)^{-1}\begin{pmatrix} A_{j_0,q}^{-1}\textbf{e}\\0\\\vdots\\0 \end{pmatrix},
	\end{equation}
	then the functions $\widehat{D}_m(\cdot,j_0,\textbf{e})$ is holomorphic on $B(1,\delta_1)$ and we have:
	$$\forall z\in B(1,\delta_1)\backslash\lc1\rc, \forall j_0,j\in \Z, \forall \textbf{e}\in \C^d, \quad\nu_m(z,j_0,j,\textbf{e}) = \frac{\widehat{D}_m(z,j_0,\textbf{e})}{D^\Phi(z)} \Phi_m(z,j).$$
	Therefore, \eqref{Wr2} and \eqref{Wl2} can be rewritten for $z\in B(1,\delta_1)\backslash\lc 1\rc$ as
	\begin{subequations}\label{expW3}
		\begin{align}
			\forall j \geq j_0+1, \quad& W(z,j_0,j,\textbf{e})= -\sum_{m=1}^{dp}\frac{\widehat{D}_m(z,j_0,\textbf{e})}{D^\Phi(z)} \Phi_m(z,j), \label{Wr3}\\
			\forall j  \leq j_0, \quad& W(z,j_0,j,\textbf{e})=\sum_{m=dp+1}^{(p+q)d} \frac{\widehat{D}_m(z,j_0,\textbf{e})}{D^\Phi(z)} \Phi_m(z,j).\label{Wl3}
		\end{align}
	\end{subequations}
	
	Thus, recalling that $1$ is a simple zero of the Evans function $D^\Phi$, the expressions \eqref{Wr3} and \eqref{Wl3} allow us to conclude that the spatial Green's function has been meromorphically extended on $B(1,\delta_1)\backslash\lc1\rc$ with a pole of order $1$ at $1$. 
	
	\subsubsection{Decomposing the function \texorpdfstring{$\widehat{D}_m$}{Dm}}\label{subsubsec:Dm}
	
	Now that we have found the meromorphic extension of the spatial Green's function near $1$ using the family $\left(\Phi_m(z,j)\right)_{m\in\lc1,\ppp,d(p+q)\rc}$, we will use the other families $\left(W_1^\pm(z,j),\ppp,W_{d(p+q)}^\pm(z,j)\right)$ for which we know precisely the behavior as $j$ tends towards $\pm\infty$ to find the new improved expressions \eqref{ExpGs} of the spatial Green's function of Proposition \ref{prop:GS_near_1}. This will rely on finding precise expressions for $\widehat{D}_m$ defined by \eqref{defDChap}.
	
	We begin this section by introducing the vectors
	\begin{subequations}\label{def:C_Ctilde}
		\begin{align}
			\forall z\in B(1,\delta_1),\forall j_0\in\Z, \forall \textbf{e}\in \C^d, \quad	&\Ccc^\pm(z,j_0,\textbf{e})=\begin{pmatrix}\Ccc^\pm_1(z,j_0,\textbf{e}) \\ \vdots \\ \Ccc^\pm_{d(p+q)}(z,j_0,\textbf{e})\end{pmatrix} := \Gc^\pm(z,j_0+1)^{-1}\begin{pmatrix}	A_{j_0,q}^{-1}\textbf{e} \\0 \\ \vdots \\ 0 \end{pmatrix}, \label{defC}\\
			\forall z\in B(1,\delta_1),\forall j_0\in\Z, \forall \textbf{e}\in \C^d, \quad	&\widetilde{\Ccc}^\pm(z,j_0,\textbf{e})=\begin{pmatrix}\widetilde{\Ccc}^\pm_1(z,j_0,\textbf{e}) \\ \vdots \\ \widetilde{\Ccc}^\pm_{d(p+q)}(z,j_0,\textbf{e})\end{pmatrix} := \widetilde{\Gc}^\pm(z,j_0+1)^{-1}\begin{pmatrix}	A_{j_0,q}^{-1}\textbf{e} \\0 \\ \vdots \\ 0 \end{pmatrix}.\label{defCtilde}
		\end{align}
	\end{subequations}
	
	Using \eqref{eg:GcGcTilde} and \eqref{def:theta_su}, we obtain that for $z\in B(1,\delta_1)$, $j_0\in\Z$ and $ \textbf{e}\in \C^d$
	\begin{subequations}\label{eg:CccCccTilde}
		\begin{align}
			\widetilde{\Ccc}^+_1(z,j_0,\textbf{e})	&=\theta_{s,1}\Ccc^+_1(z,j_0,\textbf{e}), \label{eg:CccCccTilde1}\\
			\forall m \in I_{ss}\backslash\lc 1\rc, \quad \widetilde{\Ccc}^+_m(z,j_0,\textbf{e})	&=\Ccc^+_m(z,j_0,\textbf{e})	+\theta_{s,m}\Ccc^+_1(z,j_0,\textbf{e}), \label{eg:CccCccTilde2}\\
			\forall m \in \lc 1,\ppp,d(p+q)\rc\backslash I_{ss}, \quad \widetilde{\Ccc}^+_m(z,j_0,\textbf{e})	&=\Ccc^+_m(z,j_0,\textbf{e})	, \label{eg:CccCccTilde3}\\
			\forall m \in \lc 1,\ppp,d(p+q)\rc\backslash I_{su}, \quad \widetilde{\Ccc}^-_m(z,j_0,\textbf{e})	&=\Ccc^-_m(z,j_0,\textbf{e})	, \label{eg:CccCccTilde4}\\
			\forall m \in I_{su}\backslash\lc d(p+q)\rc, \quad \widetilde{\Ccc}^-_m(z,j_0,\textbf{e})	&=\Ccc^-_m(z,j_0,\textbf{e})	+\theta_{u,m}\Ccc^-_{d(p+q)}(z,j_0,\textbf{e}),\label{eg:CccCccTilde5} \\
			\widetilde{\Ccc}^-_{d(p+q)}(z,j_0,\textbf{e})	&=\theta_{u,d(p+q)}\Ccc^-_{d(p+q)}(z,j_0,\textbf{e}).\label{eg:CccCccTilde6}
		\end{align}
	\end{subequations}
	
	In Section \ref{subsubsec:Borne} and more precisely in Lemma \ref{lem:Delta} below, we will prove estimates to bound $\widetilde{\Ccc}^\pm_m$. However, the functions $\widetilde{\Ccc}^\pm_m$ will be put on the side for now and will naturally reappear later on in Section \ref{subsubsec:ReexpGreenSpatial} using \eqref{eg:CccCccTilde}. For now, we will mainly focus on properties linked to the functions $\Ccc^\pm_m$. 
	
	We introduce the matrices
	\begin{equation}\label{defM}
		\forall z\in B(1,\delta_1),\quad \Mc^\pm(z):= \Gc^\pm(z,0)^{-1}\Gc^\Phi(z,0) \quad \text{ and } \quad \left(g_{m^\prime,m}^\pm(z)\right)_{(m^\prime,m)\in \lc1,\ppp,d(p+q)\rc}:=\text{com}\left(\Mc^\pm(z)\right).
	\end{equation}
	Using the definition \eqref{defDChap} of $\widehat{D}_m$, we have that:
	$$\forall z\in B(1,\delta_1),\forall j_0\in\Z,\forall \textbf{e}\in\C^d, \quad \begin{pmatrix}
	\widehat{D}_1(z,j_0,\textbf{e})\\ \vdots \\ \widehat{D}_{d(p+q)}(z,j_0,\textbf{e})
	\end{pmatrix} = D^\pm(z) \mathrm{com}(\Mc^\pm(z))^T\Ccc^\pm(z,j_0,\textbf{e})$$
	and thus:
	\begin{equation}\label{eg:Dm}
		\forall m\in \lc 1, \ppp, d(p+q)\rc,\forall z\in B(1, \delta_1), \forall j_0\in \Z, \forall \textbf{e}\in \C^d, \quad \widehat{D}_m(z,j_0,\textbf{e})= D^\pm(z)\sum_{m^\prime=1}^{d(p+q)}g_{m^\prime,m}^\pm(z)\Ccc^\pm_{m^\prime}(z,j_0,\textbf{e}).
	\end{equation}
	Looking at \eqref{expW3}, we are interested in studying the quotient of $\widehat{D}_m(\cdot, j_0, \textbf{e})$ and $D^\Phi$. In order to have lighter expressions later on, we also introduce the functions $\widetilde{g}_{m^\prime,m}^\pm$ defined by:
	\begin{equation}\label{def:gtilde}
		\forall m,m^\prime\in\lc1, \ppp,d(p+q)\rc, \forall z\in B(1, \delta_1)\backslash\lc1\rc, \quad \widetilde{g}_{m^\prime,m}^\pm(z):=D^\pm(z)\frac{g_{m^\prime,m}^\pm(z)}{D^\Phi(z)}.
	\end{equation}
	The function $\widetilde{g}_{m^\prime,m}^\pm$ is meromorphic on $B(1,\delta_1)\backslash\lc1\rc$ with a pole of order at most $1$ at $1$ since $1$ is a simple zero of the Evans function $D^\Phi$. If $g_{m^\prime,m}^\pm(1)=0$, it can thus be extended holomorphically on the whole ball $B(1,\delta_1)$.
	
	This decomposition of the functions $\widehat{D}_m$ will be used in Section \ref{subsubsec:ReexpGreenSpatial} with \eqref{expW3} to obtain a better expression of the spatial Green's function. We end this section by proving the following lemma using \eqref{egPhi}. 
	
	\begin{lemma}\label{lemCofac}
		\begin{enumerate}[label=(\roman*)]
			\item For $m^\prime\in \lc 1, \ppp, dp\rc$ and $m\in\lc1,\ppp,d(p+q)\rc$, we have
			$$\forall z\in B(1,\delta_1),\quad g_{m^\prime,m}^+(z)=\delta_{m^\prime,m}\frac{D^\Phi(z)}{D^+(z)}.$$
			
			\item For $m\notin\lc 1, d(p+q)\rc$ and $m^\prime\in\lc1,\ppp,d(p+q)\rc$, we have
			$$g_{m^\prime,m}^+(1)=0.$$
			
			\item For $m^\prime\in \lc dp+1, \ppp, d(p+q)\rc$ and $m\in\lc1,\ppp,d(p+q)\rc$, we have
			$$\forall z\in B(1,\delta_1),\quad g_{m^\prime,m}^-(z)=\delta_{m^\prime,m}\frac{D^\Phi(z)}{D^-(z)}.$$
			
			\item For $m\notin\lc 1, d(p+q)\rc$ and $m^\prime\in\lc1,\ppp,d(p+q)\rc$, we have
			$$g_{m^\prime,m}^-(1)=0.$$
			
			\item We have
			$$g^+_{1,1}(1)=g^+_{1,d(p+q)}(1)=0\quad \text{ and } \quad g^-_{d(p+q),d(p+q)}(1)=g^-_{d(p+q),1}(1)=0.$$
			
			\item For $m^\prime \in\lc1, \ppp,d(p+q)\rc$, we have
			\begin{equation}\label{eg:Cofac}
				g_{m^\prime,1}^+(1) = -  g_{m^\prime,d(p+q)}^+(1 )\quad \text{ and } \quad  g_{m^\prime,1}^-(1) = -  g_{m^\prime,d(p+q)}^-(1 ).
			\end{equation}
		\end{enumerate}
	\end{lemma}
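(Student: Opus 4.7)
The plan is to exploit the very explicit block structure of the matrices $\Mc^\pm(z) = \Gc^\pm(z,0)^{-1}\Gc^\Phi(z,0)$. The $m$-th column of $\Mc^+(z)$ is the vector of coordinates of $\Phi_m(z,0)$ in the basis $(\Phi_1(z,0),W_2^+(z,0),\ppp,W_{d(p+q)}^+(z,0))$ of $\C^{d(p+q)}$, so by the definition \eqref{defPhi} the first $dp$ columns are precisely the standard basis vectors $e_1,\ppp,e_{dp}$. Hence $\Mc^+(z)$ has the block form
$$\Mc^+(z) = \begin{pmatrix} Id_{dp} & B(z) \\ 0 & C(z) \end{pmatrix},$$
so that $\det \Mc^+(z) = \det C(z) = D^\Phi(z)/D^+(z)$. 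Symmetrically, columns $dp+1,\ppp,d(p+q)$ of $\Mc^-(z)$ are the standard basis vectors $e_{dp+1},\ppp,e_{d(p+q)}$, yielding an $Id_{dq}$ in the lower-right block.

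For Parts 1 and 3, I will read off the cofactors directly from this block structure. If $m'\leq dp$, removing row $m'$ turns the column $e_{m'}$ of the upper-left identity block into a zero column, which forces the cofactor $g_{m',m}^+(z)$ to vanish unless $m=m'$; in the diagonal case, the remaining submatrix decomposes into $Id_{dp-1}$ and $C(z)$, giving $g_{m,m}^+(z)=D^\Phi(z)/D^+(z)$. Parts 2 and 4 use the crucial equality \eqref{egPhi}: since $\Phi_1(1,0)=\Phi_{d(p+q)}(1,0)$, columns $1$ and $d(p+q)$ of $\Mc^+(1)$ coincide, both being equal to $e_1$, and similarly for $\Mc^-(1)$. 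For $m\in\lc 2,\ppp,d(p+q)-1\rc$, removing column $m$ and any row $m'$ preserves this coincidence (the two columns remain equal, possibly both zero), so the resulting submatrix has two equal columns and its determinant vanishes. Part 5 then follows immediately: $g_{1,1}^+(1)=D^\Phi(1)/D^+(1)=0$ since $D^\Phi(1)=0$ while $D^+(1)=\theta_{s,1}\det(W_1^+(1,0),\ppp,W_{d(p+q)}^+(1,0))\neq 0$ by multilinearity of the determinant (using $\theta_{s,1}\neq 0$ together with Lemma \ref{lemBasis}), and $g_{1,d(p+q)}^+(1)=0$ is the off-diagonal case of Part 1.

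The only substantive step is Part 6, which I expect to be the main obstacle since it requires a careful bookkeeping of signs. The identity $g_{m',1}^+(1)=-g_{m',d(p+q)}^+(1)$ rests on the observation that at $z=1$ the two submatrices obtained from $\Mc^+(1)$ by deleting row $m'$ together with column $1$ or column $d(p+q)$ respectively contain exactly the same list of column vectors, merely in a different order. Indeed, letting $c_j$ denote column $j$ of $\Mc^+(1)$ with row $m'$ removed, we have $c_1=c_{d(p+q)}$, and the two submatrices carry the columns $(c_2,\ppp,c_{d(p+q)-1},c_{d(p+q)})$ and $(c_1,c_2,\ppp,c_{d(p+q)-1})$ respectively. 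The cyclic permutation relating them has signature $(-1)^{d(p+q)-2}$, which combined with the cofactor sign factors $(-1)^{m'+1}$ and $(-1)^{m'+d(p+q)}$ yields a net factor of $-1$. The same argument applied to $\Mc^-(1)$, using that its columns $1$ and $d(p+q)$ both equal $e_{d(p+q)}$, delivers the companion identity for $g^-$.
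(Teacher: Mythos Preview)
Your proof is correct and follows essentially the same strategy as the paper: both arguments hinge on the block structure of $\Mc^+(z)$ (with $Id_{dp}$ in the upper-left block) and on the equality $\Phi_1(1,0)=\Phi_{d(p+q)}(1,0)$, which forces columns $1$ and $d(p+q)$ of $\Mc^+(1)$ to coincide with $e_1$.

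The one noteworthy difference is in Part~6. You argue directly at the level of minors: after deleting row $m'$, the submatrices obtained by deleting column $1$ or column $d(p+q)$ carry the same list of column vectors up to a cyclic shift, and your sign bookkeeping $(-1)^{m'+1}\cdot(-1)^{d(p+q)-2}=-(-1)^{m'+d(p+q)}$ is correct. The paper instead invokes the adjugate identity $\Mc^+(1)\,\mathrm{com}(\Mc^+(1))^T=\det\Mc^+(1)\,Id=0$: reading off the $(1,m')$ entry gives
\[
g_{m',1}^+(1)+\sum_{m=dp+1}^{d(p+q)-1}(\Mc^+(1))_{1,m}\,g_{m',m}^+(1)+g_{m',d(p+q)}^+(1)=0,
\]
and the middle sum vanishes by Part~2. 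The paper's route avoids any explicit sign computation by leaning on Part~2, while yours is entirely self-contained and makes transparent why the relation is purely a consequence of two equal columns. Both are short and clean; your version has the slight advantage that it does not rely on Part~2 as an intermediate step.
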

	
	Lemma \ref{lemCofac} determines the couple of indexes $m,m^\prime\in\lc1,\ppp,d(p+q)\rc$ such that $g_{m^\prime,m}^\pm$ is equal to 0. This allows us to remove many terms in \eqref{eg:Dm}. It also determines the couple of indexes $m,m^\prime\in\lc1,\ppp,d(p+q)\rc$ such that $g_{m^\prime,m}^\pm$ vanishes at $z=1$ which implies that the function $\widetilde{g}_{m^\prime,m}^\pm$ defined by \eqref{def:gtilde} can be holomorphically extended on the whole ball $B(1,\delta_1)$.
	
	\begin{proof}
		We will focus on proving the statements involving $g^+_{m^\prime,m}$ since every statement involving $g^-_{m^\prime,m}$ will have similar proofs. We observe that the definition \eqref{defPhi} of $\Phi_m(z,0)$ implies that if we define for $z\in B(1,\delta_1)$ and $m\in\lc1,\ppp,d(p+q)\rc$
		$$C_m^+(z):=\Gc^+(z,0)^{-1}\Phi_m(z,0)$$
		then, for $m\in \lc 1,\ppp,dp\rc$, we have
		$$C_m^+(z)=e_m$$
		where $(e_j)_{j\in\lc1,\ppp,d(p+q)\rc}$ is the canonical basis of $\C^{d(p+q)}$. Thus,
		\begin{equation}\label{calcMc}
			\Mc^+(z)=\left(\begin{array}{c|c|c|c}
				\begin{array}{c}
					I_{dp}\\\hline
					0
				\end{array} & C_{dp+1}^+(z) & \ppp & C_{d(p+q)}^+(z) 
			\end{array}\right).
		\end{equation}
		For $m,m^\prime\in \lc 1, \ppp, d(p+q)\rc$, we recall that $g_{m^\prime,m}^+(z)$ is the $(m^\prime,m)$-cofactor of the matrix above. 
		
		We observe that
		\begin{equation}\label{lemCofac:eg1}
			\mathrm{com}(\Mc^+(z))^T\Mc^+(z)=\frac{D^\Phi(z)}{D^+(z)}I_{d(p+q)}.
		\end{equation}
		Looking at the first $dp$ columns of $\Mc^+(z)$ in \eqref{calcMc} and using \eqref{lemCofac:eg1}, we then conclude that the first $dp$ lines of $\mathrm{com}(\Mc^+(z))$ are equal to
		$$\left(\begin{array}{c|c}
			\frac{D^\Phi(z)}{D^+(z)}I_{dp} & 0
		\end{array}\right)$$
		which implies Point (i).
		
		The equality \eqref{egPhi} also implies that:
		\begin{equation*}
			C_{d(p+q)}^+(1) = e_1 =C_1^+(1).
		\end{equation*}
		and thus:
		$$\Mc^+(1)=\left(\begin{array}{c|c|c|c|c}
			\begin{array}{c}
				I_{dp}\\\hline
				0
			\end{array} & C_{dp+1}^+(1) & \ppp & C_{d(p+q)-1}^+(1) &  \begin{array}{c}
				1\\
				0\\
				\vdots\\
				0
			\end{array}
		\end{array}\right).$$
		Points (ii) and (v) are then easily deduced by equality between the first and last columns of the matrix above. 
		
		There just remains to prove Point (vi). We observe that 
		$$\Mc^+(1) \mathrm{com}(\Mc^+(1))^T=\frac{D^\Phi(1)}{D^+(1)}I_{d(p+q)}=0.$$
		Looking at the coefficient at the first line and $m^\prime$-th column, we have that
		$$g_{m^\prime,1}^+(1) + \sum_{m=dp+1}^{d(p+q)-1} (C_m^+(1))_1 g_{m^\prime,m}^+(1) + g_{m^\prime,d(p+q)}^+(1)=0. $$
		Using Point (ii), we easily conclude the proof of Point (vi).
	\end{proof}

	\subsubsection{Final expression of the spatial Green's function}\label{subsubsec:ReexpGreenSpatial}
	
	We will now prove the expressions \eqref{ExpGs} on the spatial Green's function when $j_0$ is larger than $0$ (i.e. the location of the initial perturbation $\delta_{j_0}$ is on the right of the shock). The case where $j_0<0$ would be handled similarly and give another expression of the spatial Green's function on $B(1,\delta_1)$ that would be necessary to prove a decomposition of the temporal Green's function similar to \eqref{decompoGreen} when $j_0<0$.
	
	\textbf{\underline{Case where $j_0\geq 0$ and $j\geq j_0+1$:}}
	
	Using \eqref{Wr3} and \eqref{eg:Dm}, we have 
	$$ W(z,j_0,j,\textbf{e})= -\sum_{m=1}^{dp} \sum_{m^\prime=1}^{d(p+q)}  \widetilde{g}_{m^\prime,m}^+(z)\Ccc^+_{m^\prime}(z,j_0,\textbf{e})\Phi_m(z,j).$$
	Point (i) of Lemma \ref{lemCofac} then implies that 
	$$W(z,j_0,j,\textbf{e})= -\sum_{m=1}^{dp}\Ccc^+_{m}(z,j_0,\textbf{e})\Phi_m(z,j) -\sum_{m=1}^{dp} \sum_{m^\prime=dp+1}^{d(p+q)}  \widetilde{g}_{m^\prime,m}^+(z) \Ccc^+_{m^\prime}(z,j_0,\textbf{e})\Phi_m(z,j).$$
	Using the definition \eqref{defPhi} of $\Phi_1$, we then have that
	\begin{multline*}
		W(z,j_0,j,\textbf{e})= -\theta_{s,1} \Ccc^+_{1}(z,j_0,\textbf{e})W^+_1(z,j) -\sum_{m\in I_{ss}\backslash\lc1\rc} (\Ccc^+_{m}(z,j_0,\textbf{e})+\theta_{s,m}\Ccc^+_{1}(z,j_0,\textbf{e}))W^+_m(z,j)\\ -\sum_{m\in I_{cs}} \Ccc^+_{m}(z,j_0,\textbf{e})W^+_m(z,j)-\sum_{m=1}^{dp} \sum_{m^\prime=dp+1}^{d(p+q)}  \widetilde{g}_{m^\prime,m}^+(z)\Ccc^+_{m^\prime}(z,j_0,\textbf{e})\Phi_m(z,j).
	\end{multline*}
	Using \eqref{eg:CccCccTilde1}-\eqref{eg:CccCccTilde3} which link the functions $\Ccc^\pm_m$ and $\widetilde{\Ccc}^\pm_m$ and the definition \eqref{defPhi} of $\Phi_m$ for $m\in\lc2,\ppp,dp\rc$, we obtain that \eqref{ExpGs: l geq 0 : j geq l+1} is verified, i.e. :
	\begin{multline*}
		W(z,j_0,j,\textbf{e})= -\sum_{m=1}^{dp}  \widetilde{\Ccc}^+_{m}(z,j_0,\textbf{e})W^+_m(z,j) -\sum_{m=2}^{dp} \sum_{m^\prime=dp+1}^{d(p+q)}  \widetilde{g}_{m^\prime,m}^+(z)\widetilde{\Ccc}^+_{m^\prime}(z,j_0,\textbf{e})W^+_m(z,j)\\- \sum_{m^\prime=dp+1}^{d(p+q)}  \widetilde{g}_{m^\prime,1}^+(z)\widetilde{\Ccc}^+_{m^\prime}(z,j_0,\textbf{e})\Phi_1(z,j).
	\end{multline*}
	
	\begin{remark}
		If $m\in\lc2,\ppp,dp\rc$ and $m^\prime\in \lc dp+1,\ppp,d(p+q)\rc$, we have that $g^+_{m^\prime,m}(1)=0$ because of Lemma \ref{lemCofac}.  Thus, the function 
		$$z\in B(1,\delta_1)\backslash\lc1\rc\mapsto  \widetilde{g}_{m^\prime,m}^+(z)\widetilde{\Ccc}^+_{m^\prime}(z,j_0,\textbf{e})\Phi_m(z,j)$$ can be holomorphically extended on the whole ball $B(1,\delta_1)$.
	\end{remark}
	
	\textbf{\underline{Case where $j_0\geq 0$ and $j\in \lc0,\ppp,j_0\rc$:}}
	
	Let us consider $m\in\lc dp+1,\ppp,d(p+q)\rc$. Using the respective definitions \eqref{def:GcPhi}, \eqref{def:Gc+} and \eqref{defM} of the matrices $\Gc^\Phi$, $\Gc^+$ and $\Mc^+$, we have the following expression of the vector $\Phi_m(z,j)$ depending on the family $(W_k^+(z,j))_{k\in\lc1,\ppp,d(p+q)\rc}$:
	$$ \Phi_m(z,j) = \Gc^+(z,j)(\Mc^+(z)_{k,m})_{k\in\lc1,\ppp,d(p+q)\rc} =\Mc^+(z)_{1,m}\Phi_1(z,j)+\sum_{k=2}^{d(p+q)} \Mc^+(z)_{k,m}W_k^+(z,j).$$
	Thus, since $m\in\lc dp+1,\ppp,d(p+q)\rc$, using \eqref{eg:Dm} and the fact that Lemma \ref{lemCofac} implies $g^+_{m^\prime,m}=0$ for $m^\prime\in\lc1,\ppp,dp\rc$  we have
	\begin{multline*}
		\frac{\widehat{D}_m(z,j_0,\textbf{e})}{D^\Phi(z)} \Phi_m(z,j) \\ = \frac{D^+(z)}{D^\Phi(z)}\sum_{m^\prime=dp+1}^{d(p+q)}g^+_{m^\prime,m}(z)\Ccc^+_{m^\prime}(z,j_0,\textbf{e}) \left(\Mc^+(z)_{1,m}\Phi_1(z,j)+\sum_{k=2}^{d(p+q)} \Mc^+(z)_{k,m}W_k^+(z,j)\right).
	\end{multline*}
	Using \eqref{Wl3}, we then have that:
	\begin{multline}\label{eg:temp1}
		W(z,j_0,j,\textbf{e})\\= \sum_{m^\prime=dp+1}^{d(p+q)} \left[\left(\sum_{m=dp+1}^{d(p+q)}\Mc^+(z)_{1,m}g^+_{m^\prime,m}(z)\right)\Phi_1(z,j)+\sum_{k=2}^{d(p+q)}\left(\sum_{m=dp+1}^{d(p+q)}\Mc^+(z)_{k,m}g^+_{m^\prime,m}(z)\right)W^+_k(z,j)\right]\\ \frac{D^+(z)}{D^\Phi(z)}\Ccc^+_{m^\prime}(z,j_0,\textbf{e}) .
	\end{multline}	
	
	Let us find expressions for the sums $\sum_{m=dp+1}^{d(p+q)}\Mc^+(z)_{k,m}g^+_{m^\prime,m}(z)$ when $m^\prime\in\lc dp+1,\ppp,d(p+q)\rc$ and $k\in\lc1,\ppp,d(p+q)\rc$. We recall that $g_{m^\prime,m}^+(z)$ is the $(m^\prime,m)$-cofactor of the matrix $\Mc^+(z)$. Furthermore, by definition \eqref{defM} of the matrix $\Mc^+$, we have that
	\begin{equation}\label{egM+}
		\Mc^+(z)\mathrm{com}(\Mc^+(z))^T=\frac{D^\Phi(z)}{D^+(z)}Id.
	\end{equation}
	Thus, by observing \eqref{calcMc} implies that for $k\in\lc dp+1,\ppp,d(p+q)\rc$ and $m\in\lc1,\ppp,dp\rc$ we have $\Mc^+(z)_{k,m}=0$, we conclude looking at the $k$-th line and $m^\prime$-th column of \eqref{egM+} that
	\begin{equation}\label{eg:temp2}
		\forall k\in\lc dp+1,\ppp,d(p+q)\rc,\forall m^\prime\in\lc dp+1,\ppp,d(p+q)\rc,\quad \sum_{m=dp+1}^{d(p+q)}\Mc^+(z)_{k,m}g^+_{m^\prime,m}(z)=\frac{D^\Phi(z)}{D^+(z)}\delta_{k,m^\prime}.
	\end{equation}
	Furthermore, \eqref{calcMc} implies that
	$$\forall k\in\lc1,\ppp,dp\rc,\forall m \in\lc1,\ppp,dp\rc,\quad \Mc^+(z)_{k,m}=\delta_{k,m}.$$
	Thus, looking once again at the $k$-th line and $m^\prime$-th column of \eqref{egM+}, we have
	\begin{equation}\label{eg:temp3}
		\forall k\in\lc 1,\ppp,dp\rc,\forall m^\prime\in\lc dp+1,\ppp,d(p+q)\rc,\quad \sum_{m=dp+1}^{d(p+q)}\Mc^+(z)_{k,m}g^+_{m^\prime,m}(z)=-g_{m^\prime,k}^+(z).
	\end{equation}
	We finally conclude using \eqref{eg:CccCccTilde1}-\eqref{eg:CccCccTilde3} which links the functions $\Ccc^\pm_m$ and $\widetilde{\Ccc}^\pm_m$ and combining \eqref{eg:temp1}, \eqref{eg:temp2} and \eqref{eg:temp3} that \eqref{ExpGs: l geq 0 : 0 leq j leq l} is verified, i.e.:
	\begin{multline*}
		W(z,j_0,j,\textbf{e})= \sum_{m=dp+1}^{d(p+q)}\widetilde{\Ccc}_{m}^+(z,j_0,\textbf{e}) W_{m}^+(z,j) -\sum_{m=2}^{dp} \sum_{m^\prime=dp+1}^{d(p+q)}  \widetilde{g}_{m^\prime,m}^+(z)\widetilde{\Ccc}^+_{m^\prime}(z,j_0,\textbf{e})W^+_m(z,j)\\- \sum_{m^\prime=dp+1}^{d(p+q)}  \widetilde{g}_{m^\prime,1}^+(z)\widetilde{\Ccc}^+_{m^\prime}(z,j_0,\textbf{e})\Phi_1(z,j).
	\end{multline*}
	
	\begin{remark}
		We observe that some terms in \eqref{ExpGs: l geq 0 : 0 leq j leq l} are equal to terms of \eqref{ExpGs: l geq 0 : j geq l+1}. We will see later on that they contribute to the reflected waves in the decomposition of Theorem \ref{th:Green}.
	\end{remark}
	
	\textbf{\underline{Case where $j_0\geq 0$ and $j<0$:}}
	
	Using \eqref{Wl3} and \eqref{eg:Dm}, 
	$$W(z,j_0,j,\textbf{e})= \sum_{m=dp+1}^{d(p+q)} \sum_{m^\prime=1}^{d(p+q)}  \widetilde{g}_{m^\prime,m}^+(z)\Ccc^+_{m^\prime}(z,j_0,\textbf{e})\Phi_m(z,j).$$
	Lemma \ref{lemCofac} implies that
	$$\forall m\in\lc dp+1,\ppp,d(p+q)\rc,\forall m^\prime\in\lc1,\ppp,dp\rc,\quad g^+_{m^\prime,m}=0.$$
	Thus, using \eqref{eg:CccCccTilde1}-\eqref{eg:CccCccTilde3} which links the functions $\Ccc^\pm_m$ and $\widetilde{\Ccc}^\pm_m$, we have that \eqref{ExpGs: l geq 0 : j < 0} is verified, i.e.:
	\begin{multline*}
		W(z,j_0,j,\textbf{e})= \sum_{m=dp+1}^{d(p+q)-1} \sum_{m^\prime=dp+1}^{d(p+q)}  \widetilde{g}_{m^\prime,m}^+(z)\widetilde{\Ccc}^+_{m^\prime}(z,j_0,\textbf{e})W^-_m(z,j)\\+\sum_{m^\prime=dp+1}^{d(p+q)}  \widetilde{g}_{m^\prime,d(p+q)}^+(z)\widetilde{\Ccc}^+_{m^\prime}(z,j_0,\textbf{e})\Phi_{d(p+q)}(z,j).
	\end{multline*}
	
	\begin{remark}
		If $m\in\lc dp+1,\ppp,d(p+q)-1\rc$ and $m^\prime\in \lc dp+1,\ppp,d(p+q)\rc$, we have that $g^+_{m^\prime,m}(1)=0$.  Thus, the function 
		$$z\in B(1,\delta_1)\backslash\lc1\rc\mapsto  \widetilde{g}_{m^\prime,m}^+(z)\widetilde{\Ccc}^+_{m^\prime}(z,j_0,\textbf{e})\Phi_m(z,j)$$ can be holomorphically extended on the whole ball $B(1,\delta_1)$.
	\end{remark}
	
	\subsubsection{Useful estimates}\label{subsubsec:Borne}
	
	In this section, we will introduce the necessary observations to properly bound the terms appearing in the decomposition of the spatial Green's function of Section \ref{subsubsec:ReexpGreenSpatial}. We will in particular introduce a new expression of the functions $\widetilde{\Ccc}^\pm_m(z,j_0,\textbf{e})$, prove that they roughly act like $\zeta_m^\pm(z)^{-j_0}$ and determine their behavior as $j_0$ tends towards $\pm\infty$.
	
	For $z\in B(1,\delta_1)$ and $j\in \Z$, we recall that Lemma \ref{lemBasis} implies that $(V^\pm_m(z,j))_{m\in\lc1, \ppp,d(p+q)\rc}$ is a basis of $\C^{d(p+q)}$. Thus, we can define for $z\in B(1,\delta_1)$, $j_0\in\Z$ and $\textbf{e}\in\C^d$
	\begin{equation}
		N^\pm(z,j_0):=\left(\begin{array}{c|c|c}
			V_1^\pm(z,j_0+1) & \hdots  &V_{d(p+q)}^\pm(z,j_0+1) 
		\end{array}\right)\quad \text{and} \quad \begin{pmatrix}
			\Delta_1^\pm(z,j_0,\textbf{e})\\ \vdots \\ \Delta_{d(p+q)}^\pm(z,j_0,\textbf{e})
		\end{pmatrix}:=  N^\pm(z,j_0)^{-1}\begin{pmatrix}	A_{j_0,q}^{-1}\textbf{e} \\0 \\ \vdots \\ 0 \end{pmatrix}.\label{def:NDelta}
	\end{equation}
	
	We observe that \eqref{defCtilde} implies that for all $z\in B(1,\delta_1)$, $j_0\in \Z$ and $\textbf{e}\in\C^d$, we have
	$$\sum_{m=1}^{d(p+q)} \zeta_m^\pm(z)^{j_0+1} \widetilde{\Ccc}^\pm_m(z,j_0,\textbf{e}) V_m^\pm(z,j_0+1)= \begin{pmatrix}	A_{j_0,q}^{-1}\textbf{e} \\0 \\ \vdots \\ 0 \end{pmatrix}.$$
	Thus, we have that for $m\in\lc1, \ppp,d(p+q)\rc$, $z\in B(1,\delta_1)$, $j_0\in\Z$ and $\textbf{e}\in\C^d$
	\begin{equation}\label{eg:CccTilde}
		\widetilde{\Ccc}^\pm_m(z,j_0,\textbf{e}) =\zeta_m^\pm(z)^{-j_0-1} \Delta_m^\pm(z,j_0,\textbf{e}).
	\end{equation}
	We now prove the following lemma which gives us the asymptotic behavior of $\Delta_m^\pm(z,j_0,\textbf{e})$.
	
	\begin{lemma}\label{lem:Delta}
		There exist a radius $\delta_2\in]0,\delta_1]$ and two constants $C,c>0$ such that for all $z\in B(1,\delta_2)$, $m=l+(k-1)d\in\lc1,\ppp,d(p+q)\rc$ with $k\in\lc1,\ppp,p+q\rc$ and $l\in\lc1,\ppp,d\rc$ and $\textbf{e}\in\C^d$, we have
		\begin{subequations}
			\begin{align}
				\forall j\in \N,  \quad &  \left|V^+_m(z,j)-V^+_m(1,j)\right|\leq C|z-1|,\label{lem:BornesVDelta:inV+}\\
				\forall j\in -\N,  \quad & \left|V^-_m(z,j)-V^-_m(1,j)\right|\leq C|z-1|,\label{lem:BornesVDelta:inV-} \\
				\forall j\in \N, \quad & \left|\Phi_1(z,j)-\Phi_1(1,j)\right|\leq C|z-1|e^{-\frac{3c_*}{2}|j|},\label{lem:BornesVDelta:inPhi1}\\
				\forall j\in -\N, \quad & \left|\Phi_{d(p+q)}(z,j)-\Phi_{d(p+q)}(1,j)\right|\leq C|z-1|e^{-\frac{3c_*}{2}|j|},\label{lem:BornesVDelta:inPhid(p+q)}\\
				\forall j_0\in\N,\quad & \left|\Delta_m^+(z,j_0,\textbf{e}) - \frac{d\zeta_{m}^+}{dz}(z) {\lg_l^+}^T\textbf{e} \right|\leq C|\textbf{e}|e^{-c|j_0|},\label{in:DeltaInf+}\\
				\forall j_0\in-\N,\quad &\left|\Delta_m^-(z,j_0,\textbf{e}) - \frac{d\zeta_{m}^-}{dz}(z) {\lg_l^-}^T\textbf{e} \right|\leq C|\textbf{e}|e^{-c|j_0|},\label{in:DeltaInf-}\\
				\forall j_0\in\N,\quad & \left|\Delta_m^+(z,j_0,\textbf{e}) \right|\leq C|\textbf{e}|,\label{in:Delta+}\\
				\forall j_0\in-\N,\quad &\left|\Delta_m^-(z,j_0,\textbf{e}) \right|\leq C|\textbf{e}|.\label{in:Delta-}\\
				\forall j_0\in \N,  \quad & \left|\Delta^+_m(z,j_0,\textbf{e})-\Delta^+_m(1,j_0,\textbf{e})\right|\leq C|z-1||\textbf{e}|, \label{lem:BornesVDelta:inDelta+}\\
				\forall j_0\in -\N,  \quad & \left|\Delta^-_m(z,j_0,\textbf{e})-\Delta^-_m(1,j_0,\textbf{e})\right|\leq C|z-1||\textbf{e}|,\label{lem:BornesVDelta:inDelta-}
			\end{align}
		\end{subequations}
		where $c_*$ is the positive constant in \eqref{inZeta}.
	\end{lemma}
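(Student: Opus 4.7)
I focus on the \emph{plus} versions; the minus statements follow by identical arguments with $j$ and $j_0$ in $-\N$ instead of $\N$.

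My first step would be to derive \eqref{lem:BornesVDelta:inV+}, since it is essentially free: Lemma \ref{lem_choice_base} states that $z\mapsto (V_m^+(z,j))_{j\in\N}\in\ell^\infty(\N,\C^{d(p+q)})$ is holomorphic on $B(1,\delta_1)$ and that the function together with its derivative is uniformly bounded. A mean value estimate then gives \eqref{lem:BornesVDelta:inV+} with a constant independent of $j\in\N$. From this, I would tackle \eqref{lem:BornesVDelta:inPhi1}. Using the definition \eqref{defPhi} of $\Phi_1$ and Lemma \ref{lem_choice_base}, we write $\Phi_1(z,j)=\sum_{m\in I_{ss}}\theta_{s,m}\zeta_m^+(z)^j V_m^+(z,j)$. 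Differentiating in $z$ gives a sum of terms $j\zeta_m^+(z)^{j-1}{\zeta_m^+}'(z)V_m^+(z,j)+\zeta_m^+(z)^j\partial_z V_m^+(z,j)$. The second type of term is bounded by $Ce^{-2c_*j}$ thanks to \eqref{inZetaSs} and the uniform bounds on $\partial_z V_m^+$; the first type of term is bounded by $Cje^{-2c_*(j-1)}\leq C'e^{-\frac{3c_*}{2}j}$ after absorbing the polynomial factor into the exponential (possibly after shrinking $\delta_1$ slightly to ensure \eqref{inZetaSs} holds with a small margin). Integrating $\partial_z\Phi_1$ along the segment from $1$ to $z$ then yields \eqref{lem:BornesVDelta:inPhi1}.

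The heart of the lemma is \eqref{in:DeltaInf+}. Since $V_m^+(z,j_0)\to R_m^+(z)$ exponentially as $j_0\to+\infty$ (third point of Lemma \ref{lem_choice_base}), the matrix $N^+(z,j_0)$ defined in \eqref{def:NDelta} converges at exponential rate to $N^{+,\infty}(z)$ defined in \eqref{def:Ninf}. Hence, for $\delta_2$ possibly smaller than $\delta_1$, $N^+(z,j_0)$ is uniformly invertible in $(z,j_0)\in B(1,\delta_2)\times\N$ with $N^+(z,j_0)^{-1}\to N^{+,\infty}(z)^{-1}$ exponentially as $j_0\to+\infty$. Combined with the exponential convergence $A_{j_0,q}\to A_q^+=-B_{q-1}^+$ from Hypothesis \ref{H:CVexpo} and Hypothesis \ref{H:VPAk}, this gives
\[
\Delta_m^+(z,j_0,\vec e)\;\underset{j_0\to+\infty}{=}\;L_m^+(z)^T\begin{pmatrix}(A_q^+)^{-1}\vec e\\0\\\vdots\\0\end{pmatrix}+O(e^{-cj_0}|\vec e|),
\]
uniformly in $z\in B(1,\delta_2)$. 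Next I plug in the explicit form of the dual basis from Lemma \ref{lem:dualBasis}: writing $L_m^+(z)=(x_1^+(z)\lg_l^+,\ldots,x_{p+q}^+(z)\lg_l^+)^T$, only the first block survives, so the leading term is $x_1^+(z){\lg_l^+}^T(A_q^+)^{-1}\vec e$. Using ${\lg_l^+}^T A_q^+=\lambda_{l,q}^+{\lg_l^+}^T$ (Hypothesis \ref{H:VPAk}, \eqref{def:lambda_k}) and the formula $x_1^+(z)=\lambda_{l,q}^+{\zeta_m^+}'(z)$ from \eqref{eg:x1}, this simplifies to ${\zeta_m^+}'(z){\lg_l^+}^T\vec e$, establishing \eqref{in:DeltaInf+}.

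The uniform bound \eqref{in:Delta+} is then immediate: the limit function ${\zeta_m^+}'(z){\lg_l^+}^T\vec e$ is uniformly bounded on $B(1,\delta_2)$ by $C|\vec e|$, and the remainder in \eqref{in:DeltaInf+} is exponentially small, so the sum is bounded by $C|\vec e|$ uniformly in $j_0\in\N$ and $z\in B(1,\delta_2)$. Finally, for \eqref{lem:BornesVDelta:inDelta+}, I would exploit the fact that $z\mapsto\Delta_m^+(z,j_0,\vec e)$ is holomorphic on $B(1,\delta_1)$ as a composition of holomorphic maps (inversion of $N^+(z,j_0)$, which stays uniformly invertible on $B(1,\delta_2)$ with $\delta_2$ small enough). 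Applying Cauchy's integral formula on a small circle of fixed radius inside $B(1,\delta_1)$ and using the bound \eqref{in:Delta+} yields $|\partial_z\Delta_m^+(z,j_0,\vec e)|\leq C|\vec e|$ uniformly, and integration along $[1,z]$ produces \eqref{lem:BornesVDelta:inDelta+}. The main subtlety throughout is keeping all constants uniform simultaneously in $z$ and $j_0$; this forces the one-time shrinkage from $\delta_1$ to $\delta_2$ to guarantee uniform invertibility of $N^+(z,j_0)$, after which everything follows from holomorphy plus the exponential convergence built into Lemma \ref{lem_choice_base}.
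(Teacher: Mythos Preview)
Your proposal is correct and follows essentially the same route as the paper: mean-value estimates from the holomorphy in Lemma \ref{lem_choice_base} for \eqref{lem:BornesVDelta:inV+}--\eqref{lem:BornesVDelta:inPhid(p+q)}, the exponential convergence $N^+(z,j_0)\to N^{+,\infty}(z)$ combined with Lemma \ref{lem:dualBasis} for \eqref{in:DeltaInf+}, and then \eqref{in:Delta+} as an immediate corollary. The only noteworthy difference is in \eqref{lem:BornesVDelta:inDelta+}: the paper writes out the resolvent-type identity $N^+(z,j_0)^{-1}-N^+(1,j_0)^{-1}=N^+(z,j_0)^{-1}\bigl(N^+(1,j_0)-N^+(z,j_0)\bigr)N^+(1,j_0)^{-1}$ and bounds it directly, whereas you bound $\partial_z\Delta_m^+$ via Cauchy's formula and integrate; both are standard and equivalent, though for your Cauchy argument you should first establish \eqref{in:Delta+} on a ball slightly larger than $B(1,\delta_2)$ so that the integration circles stay where the bound is available.
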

	
	\begin{proof}
		The first two inequalities are direct consequences from the fact that $z\in B(1,\delta_1)\mapsto \frac{\partial V_m^\pm}{\partial z}(z,\cdot)\in \ell^\infty(\pm\N, \C^{d(p+q)})$ is bounded (see Lemma \ref{lem_choice_base}).
		
		We will prove \eqref{lem:BornesVDelta:inPhi1}. The proof of \eqref{lem:BornesVDelta:inPhid(p+q)} would be similar. Using the definition \eqref{defPhi} of $\Phi_1$, we conclude that we only have to prove that for all $m\in I_{ss}$ that there exists a constant $C>0$ and a radius $\delta_2\in]0,\delta_1]$ such that
		$$\forall z\in B(1,\delta_2), \forall j\in \N, \quad \left|W_m^+(z,j)-W_m^+(1,j)\right|\leq C|z-1|e^{-\frac{3c_*}{2}|j|}.$$
		We observe that for $z\in B(1,\delta_1)$ and $j\in \N$, we have 
		$$\left|W_m^+(z,j)-W_m^+(1,j)\right| \leq \left|\zeta_m^+(z)\right|^j \left|V_m^+(z,j)-V_m^+(1,j)\right|+\left|V_m^+(1,j)\right| \left|\zeta_m^+(z)^j-\zeta_m^+(1)^j\right|.$$
		Using \eqref{lem:BornesVDelta:inV+}, \eqref{inZetaSs} and the fact that $\frac{d\zeta_m^+}{dz}$ is bounded on $B(1,\delta_2)$ for $\delta_2\in]0,\delta_2[$, we easily conclude the proof of \eqref{lem:BornesVDelta:inPhi1}.
		
		We will focus on \eqref{in:DeltaInf+} as \eqref{in:DeltaInf-} would be proved in a similar way. We observe that Lemma \ref{lem:dualBasis}, \eqref{def:lambda_k} and \eqref{def:l} imply that:
		$$L_m^+(z)^T\begin{pmatrix}	{A^+_q}^{-1}\textbf{e} \\0 \\ \vdots \\ 0 \end{pmatrix} = \lambda_{l,q}^+ \frac{d\zeta_{m}^+}{dz}(z) {\lg_l^+}^T{A_q^+}^{-1}\textbf{e}=\frac{d\zeta_{m}^+}{dz}(z) {\lg_l^+}^T\textbf{e}.$$
		Thus, $\Delta_m^+(z,j_0,\textbf{e})-\frac{d\zeta_{m}^+}{dz}(z) {\lg_l^+}^T\textbf{e}$ is the $m$-th coefficient of the vector 
		$$ N^+(z,j_0)^{-1}\begin{pmatrix}	A_{j_0,q}^{-1}\textbf{e} \\0 \\ \vdots \\ 0 \end{pmatrix}-N^{+,\infty}(z)^{-1}\begin{pmatrix}	{A_{q}^+}^{-1}\textbf{e} \\0 \\ \vdots \\ 0 \end{pmatrix}$$
		where the matrix $N^{+,\infty}$ is defined by \eqref{def:Ninf}. We then just have to find bounds for this difference of vectors. We have
		\begin{align}\label{lem:Delta:eg1}
			\begin{split}
				&N^+(z,j_0)^{-1}\begin{pmatrix}	A_{j_0,q}^{-1}\textbf{e} \\0 \\ \vdots \\ 0 \end{pmatrix}-N^{+,\infty}(z)^{-1}\begin{pmatrix}	{A_{q}^\pm}^{-1}\textbf{e} \\0 \\ \vdots \\ 0 \end{pmatrix} \\
				=&  N^+(z,j_0)^{-1}\left(N^{+,\infty}(z)-N^+(z,j_0)\right)N^{+,\infty}(z)^{-1}\begin{pmatrix}	A_{j_0,q}^{-1}\textbf{e} \\0 \\ \vdots \\ 0 \end{pmatrix} +N^{+,\infty}(z)^{-1}\begin{pmatrix}	(A_{j_0,q}^{-1}-{A^+_q}^{-1})\textbf{e} \\0 \\ \vdots \\ 0 \end{pmatrix}.
			\end{split}
		\end{align}
		
		We wish to bound each term in the right-hand side of the equality \eqref{lem:Delta:eg1}. Let us start by looking at the first term. Lemma \ref{lem_choice_base} implies that the functions $N^+(\cdot,j_0)^{-1}$ are bounded on $B(1,\delta_1)$ and that the bound can considered to be uniform for $j_0\in\N$. The function $N^{+,\infty}(\cdot)^{-1}$ is also bounded on $B(1,\delta_1)$. Since $A_{j_0,q}^{-1}$ converges towards ${A_q^+}^{-1}$ as $j_0$ converges towards $+\infty$, we also have that the family of matrices $(A_{j_0,q}^{-1})_{j_0\in\N}$ is bounded. Finally, using Lemma \ref{lem_choice_base}, we have that there exist two constants $C,c>0$ such that
		$$\forall z\in B(1,\delta_1),\forall j_0\in\N, \quad \left|N^{+,\infty}(z) - N^+(z,j_0)\right|\leq Ce^{-cj_0}.$$
		Thus, there exists another constant $C>0$ such that 
		$$\forall z\in B(1,\delta_1),\forall \textbf{e}\in\C^d,\forall j_0\in\N, \quad \left|N^+(z,j_0)^{-1}\left(N^{+,\infty}(z)-N^+(z,j_0)\right)N^{+,\infty}(z)^{-1}\begin{pmatrix}	A_{j_0,q}^{-1}\textbf{e} \\0 \\ \vdots \\ 0 \end{pmatrix}\right|\leq C|\textbf{e}|e^{-cj_0}.$$
		
		We now focus on the second term. The function $N^{\pm,\infty}(\cdot)^{-1}$ is bounded on $B(1,\delta_1)$. Furthermore, Hypothesis \ref{H:CVexpo} allows us to determine that there exist two constants $C,c>0$ such that
		$$\forall j_0\in \N, \quad \left|A_{j_0,q}^{-1}-{A_q^+ }^{-1}\right|\leq Ce^{-cj_0}. $$
		Therefore, there exists a new constant $C>0$ such that 
		$$\forall z\in B(1,\delta_1),\forall \textbf{e}\in\C^d,\forall j_0\in\N,\quad \left| N^{+,\infty}(z)^{-1}\begin{pmatrix}	(A_{j_0,q}^{-1}-{A^+_q}^{-1})\textbf{e} \\0 \\ \vdots \\ 0 \end{pmatrix}\right|\leq C|\textbf{e}|e^{-cj_0}.$$
		We can then conclude the proof of \eqref{in:DeltaInf+}.
		
		We observe that \eqref{in:Delta+} and \eqref{in:Delta-} are direct consequences of \eqref{in:DeltaInf+} and \eqref{in:DeltaInf-}. 
		
		There remains to prove \eqref{lem:BornesVDelta:inDelta+} as \eqref{lem:BornesVDelta:inDelta-} would be proved similarly. We observe that for $z\in B(1,\delta_2)$ and $j_0\in\N$, we have
		\begin{equation*}
			N^+(z,j_0)^{-1}-N^+(1,j_0)^{-1}=N^+(z,j_0)^{-1}\left(N^+(1,j_0)-N^+(z,j_0)\right)N^+(1,j_0)^{-1}.
		\end{equation*}
		Using \eqref{lem:BornesVDelta:inV+} and the observations above which claimed that $N^+(z,j_0)^{-1}$ is bounded uniformly for $z\in B(1,\delta_2)$ and $j_0\in\N$, we have that there exists a positive constant $C$ such that
		$$\forall z\in B(1,\delta_2),\forall j_0\in\N, \quad |N^+(z,j_0)^{-1}-N^+(1,j_0)^{-1}|\leq C|z-1|.$$
		The definition \eqref{def:NDelta} and the fact that the family of matrices $(A_{j_0,q}^{-1})_{j_0\in\N}$ is bounded imply that \eqref{lem:BornesVDelta:inDelta+} is verified for some constant $C>0$.
	\end{proof}
	
	\section{Temporal Green's function and proof of Theorem \ref{th:Green}}\label{sec:GT}
	
	The previous Sections \ref{sec:GSloin} and \ref{sec:GSnear1} served respectively to describe the spatial Green's function far from $1$ and near $1$. Our objective is now to focus on the core of the article: the study of temporal Green's function and the proof of Theorem \ref{th:Green}. In the present section, we will express the temporal Green's function with the spatial Green's function using functional analysis. We will then use the different results of the previous sections (mainly Propositions \ref{GreenSpatialFar} and \ref{prop:GS_near_1}) to obtain the result of Theorem \ref{th:Green}. Just as when we proved Proposition \ref{prop:GS_near_1} on the spatial Green's function near $1$, the proof of Theorem \ref{th:Green} will be done whilst assuming that $j_0$ is larger than $0$ to obtain \eqref{decompoGreen} and \eqref{decompoDerGreen}. The case where $j_0<0$ would be handled similarly and would necessitate to prove expressions of the spatial Green's function on $B(1,\delta_1)$ similar to \eqref{ExpGs: l geq 0 : j geq l+1}-\eqref{ExpGs: l geq 0 : j < 0} when $j_0<0$.
	
	\subsection{Link between the spatial and temporal Green's function}
	
	First, we recall that in Sections \ref{sec:GSloin} and \ref{sec:GSnear1}, we studied the vectors $W(z,j_0,j,\textbf{e})$ defined in Section \ref{subsec:BoundsGsLoin} which are composed of several components of the spatial Green's function. The inverse Laplace transform implies that if we introduce a path $\widetilde{\Gamma}$ that surrounds the spectrum $\sigma(\Lcc)$, for instance $\widetilde{\Gamma}_r:= r\S^1$ where $r>1$, then we have
	$$\forall n\in\N,\forall j_0,j\in\Z,\forall \textbf{e}\in\C^d,\quad \Gcc(n,j_0,j)\textbf{e} = \frac{1}{2i\pi} \int_{\widetilde{\Gamma}_r} z^nG(z,j_0,j)\textbf{e}dz= \frac{1}{2i\pi} \int_{\widetilde{\Gamma}_r} z^n\Pi(W(z,j_0,j,\textbf{e}))dz$$
	where $\Pi$ is the linear application defined by \eqref{def:Pi} which extracts the center values of a large vector. We consider the change of variables $z=\exp(\tau)$. If we define $\Gamma_r=\lc r+it, t\in [-\pi,\pi]\rc$, then we have
	\begin{equation}\label{egGSpaTemp1}
		\forall n\in\N,\forall j_0,j\in\Z,\forall \textbf{e}\in\C^d,\quad \Gcc(n,j_0,j)\textbf{e} = \frac{1}{2i\pi} \int_{\Gamma_r} e^{n\tau} e^\tau \Pi(W(e^\tau,j_0,j,\textbf{e}))d\tau.
	\end{equation} 
	The goal will now be to use Cauchy's formula and/or the residue theorem to modify our choice of path $\Gamma$ and to use at best the properties we proved on the spatial Green's function in Propositions \ref{GreenSpatialFar} and \ref{prop:GS_near_1}. 
	
	In Proposition \ref{prop:GS_near_1} of Section \ref{sec:GSnear1}, we proved that we can meromorphically extend the spatial Green's function on a ball $B(1,\delta_1)$ with a pole of order $1$ at $1$ and have found the decompositions \eqref{ExpGs} of the spatial Green's function. We also introduced an even smaller ball $B(1,\delta_2)$ on which we have more precise bounds (Lemma \ref{lem:Delta}) that will help us later on in the proof. We consider a radius $\varepsilon_0^\star\in]0,\pi[$ such that
	\begin{equation}\label{def:varepsilon_0^*}
		\forall \tau \in B(0,\varepsilon_0^\star),\quad e^\tau\in B(1,\delta_2).
	\end{equation}
	
	\begin{lemma}\label{lem:defOmegaEta}
		For all radii $\varepsilon\in ]0,\varepsilon_0^\star[$, there exists a width $\eta_\varepsilon>0$ such that if we define 
		$$\Omega_\varepsilon:= \lc\tau\in \C, \quad \Re \tau \in]-\eta_\varepsilon,\pi], \Im\tau\in[-\pi,\pi]\rc\cup B(0,\varepsilon),$$
		then for all $j,j_0\in \Z$ and $\textbf{e}\in\C^d$, the function $\tau\mapsto W(e^\tau,j_0,j,\textbf{e})$ is meromorphically defined on $\Omega_\varepsilon\backslash\lc0\rc$ with a pole of order $1$ at $0$ and there exist two positive constants $C_\varepsilon,c_\varepsilon$ such that
		\begin{equation}\label{in:GSfar}
			\forall \tau\in\Omega_\varepsilon\backslash B(0,\varepsilon),\forall j,j_0\in\Z, \forall \textbf{e}\in\C^d, \quad |W(e^\tau,j_0,j,\textbf{e})|\leq C_\varepsilon|\textbf{e}|e^{-c_\varepsilon|j-j_0|}.
		\end{equation}
	\end{lemma}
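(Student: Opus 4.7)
The plan is to combine the meromorphic extension of the spatial Green's function near $z = 1$ obtained in Section \ref{sec:GSnear1} with the local exponential bounds of Proposition \ref{GreenSpatialFar}, patched together via a compactness argument. The key observation is that $\Omega_\varepsilon$ is designed so that its image $e^{\Omega_\varepsilon}$ consists of a neighborhood of $z = 1$ (the image of $B(0,\varepsilon)$) together with a thin annular collar around $\overline{\U}$ extending slightly inside the unit disk away from $1$.

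For the meromorphic extension, since $\varepsilon < \varepsilon_0^\star$ the map $\tau \mapsto e^\tau$ sends $B(0,\varepsilon)$ into $B(1,\delta_2)\subset B(1,\delta_1)$. The decompositions \eqref{ExpGs: l geq 0 : j geq l+1}-\eqref{ExpGs: l geq 0 : j < 0} and their analogues for $j_0 < 0$ express every entry of $W(z,j_0,j,\vec{e})$ as a sum of functions holomorphic on $B(1,\delta_1)$ plus terms of the form $\widetilde{g}^+_{m^\prime,m}(z)\widetilde{\Ccc}^+_{m^\prime}(z,j_0,\vec{e})\Phi_1(z,j)$ (or analogous with $\Phi_{d(p+q)}$), whose only potential singularity on $B(1,\delta_1)$ comes from the factor $D^\Phi(z)$ in the denominator of $\widetilde{g}^\pm_{m^\prime,m}$ (see \eqref{def:gtilde}). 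Since $D^\Phi$ has a simple zero at $z=1$ by Hypothesis \ref{H:Evans} and $\tau \mapsto e^\tau$ is a local biholomorphism at $\tau = 0$ with $e^0 = 1$, the composition $\tau \mapsto W(e^\tau, j_0, j, \vec{e})$ is meromorphic on $B(0,\varepsilon)$ with at most a simple pole at $\tau = 0$.

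Next, I would choose the width $\eta_\varepsilon > 0$ so that the compact set
$$K_\varepsilon := \lc\tau \in \C \,:\, \Re\tau \in [-\eta_\varepsilon, \pi],\, \Im\tau \in [-\pi, \pi]\rc \backslash B(0,\varepsilon)$$
has image $e^{K_\varepsilon}$ contained in $\Oc_\rho$. The diffusivity condition \eqref{F} ensures that each curve $\Fc_l^\pm(\S^1)$ is tangent to $\S^1$ only at $z = 1$ and enters $\D$ at rate $O(\xi^{2\mu})$, so $\sigma(\Lcc^+) \cup \sigma(\Lcc^-)$ lies at positive distance from any compact subset of $\S^1 \backslash B(1,\delta^\prime)$ for any $\delta^\prime > 0$. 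Combining this with Lemma \ref{lem_spec_ess} (the spectrum of $\Lcc$ inside $\Oc$ consists only of isolated eigenvalues, which can accumulate only on $\partial \Oc \subset \sigma(\Lcc^+) \cup \sigma(\Lcc^-)$) and Hypothesis \ref{H:spec} (no eigenvalue of $\Lcc$ in $\overline{\U} \backslash \lc1\rc$), only finitely many eigenvalues of $\Lcc$ can lie in any fixed compact subset of $\Oc$ bounded away from $1$; taking $\eta_\varepsilon$ small enough thus ensures $e^{K_\varepsilon} \subset \Oc_\rho$.

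Finally, I would cover $e^{K_\varepsilon}$ by finitely many bounded open sets $U_i$ with $\overline{U_i} \subset \Oc_\rho$ and apply Proposition \ref{GreenSpatialFar} on each, obtaining uniform exponential bounds; taking the maximum of the constants then yields \eqref{in:GSfar} after substituting $z = e^\tau$ and observing that $|j-j_0|$ is a discrete quantity unaffected by this change of variables. The main obstacle is the control of eigenvalue accumulation near $\S^1 \backslash \lc1\rc$, which is circumvented by the observation that such accumulation can only occur on the curves $\sigma(\Lcc^\pm)$, which meet $\S^1$ only tangentially at $z = 1$; the diffusivity condition is what quantifies this tangency and makes the choice of $\eta_\varepsilon$ possible.
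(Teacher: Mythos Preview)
Your proposal is correct and follows essentially the same approach as the paper: meromorphic extension on $B(0,\varepsilon)$ via the results of Section~\ref{sec:GSnear1}, then a compactness argument using Proposition~\ref{GreenSpatialFar} to produce the width $\eta_\varepsilon$ and the uniform exponential bound. The only minor difference is in the ordering: the paper first covers the compact set $U_\varepsilon:=\lc\tau:\Re\tau\in[0,\pi],\Im\tau\in[-\pi,\pi]\rc\backslash B(0,\varepsilon)$, whose image under $\tau\mapsto e^\tau$ already lies in $\overline{\U}\backslash\lc1\rc\subset\Oc_\rho$, and lets the open cover itself determine $\eta_\varepsilon$; your version instead first argues that $\eta_\varepsilon$ can be chosen so that $e^{K_\varepsilon}\subset\Oc_\rho$ and then covers. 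Your additional discussion of eigenvalue accumulation and the role of the diffusivity condition is not needed for the argument (the inclusion $\overline{\U}\backslash\lc1\rc\subset\Oc_\rho$ and openness of $\Oc_\rho$ suffice), but it is not incorrect.
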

	
	Defining this width $\eta_\varepsilon$ is important for the following calculations since we have defined a set $\Omega_\varepsilon$ on which we can change the path of integration of \eqref{egGSpaTemp1} using the residue theorem. Furthermore, for $\tau\in \Omega_\varepsilon$, either $\tau\in B(0,\varepsilon)$ which implies that $e^\tau\in B(1,\delta_2)$ and that we can thus use the decomposition \eqref{ExpGs} of the spatial Green's function we obtained in Proposition \ref{prop:GS_near_1}, or $\tau\notin B(0,\varepsilon)$ and we can use \eqref{in:GSfar} to obtain exponential bounds on the spatial Green's function.
	
	\begin{proof}
		The proof is identical as \cite[Lemma 5.2]{CoeuIBVP} and will thus not be detailled. It just relies on observing that for any radius $\varepsilon\in]0,\varepsilon_0^\star[$, the results of Section \ref{sec:GSnear1} imply that for all $j_0,j\in\Z$ and $\textbf{e}\in\C^d$, the function $\tau\mapsto W(e^\tau,j_0,j,\textbf{e})$ is meromorphically extended on $B(0,\varepsilon)\backslash\lc0\rc$ with a pole of order $1$ at $0$. We then use Proposition \ref{GreenSpatialFar} on a neighborhood of each point of the set
		$$U_\varepsilon:= \lc\tau\in \C, \quad \Re \tau\in [0,\pi], \Im\tau\in[-\pi,\pi]\rc\backslash B(0,\varepsilon)$$ 
		and conclude via a compactness argument on the existence of a width $\eta_\varepsilon\in]0,\varepsilon[$ and of two positive constants $C_\varepsilon,c_\varepsilon$ such that \eqref{in:GSfar} is verified.
	\end{proof}
	
	Let us observe that for all $m\in I_{cs}^\pm\cup I_{cu}^\pm$, we have that the function $\zeta_m^\pm$ (which we recall are defined in Section \ref{subsec:EtudeSpectreMPrèsde1} and are eigenvalues of the matrices $M^\pm(z)$) is holomorphic and  $\zeta_m^\pm(1)=1$. Therefore, there exists a radius $\varepsilon_1^\star\in]0,\varepsilon_0^\star[$ so that for all $m\in I_{cs}^\pm\cup I_{cu}^\pm$ that we write as $m=l+(k-1)d$ with $k\in\lc1,\ppp,p+q\rc$ and $l\in\lc1,\ppp,d\rc$, there exists an holomorphic function $\varpi_l^\pm: B(0,\varepsilon_1^\star) \rightarrow \C$ such that  $\varpi_l^\pm(0)=0$ and
	\begin{equation}\label{def:varpi}
		\forall \tau \in B(0,\varepsilon_1^\star),\quad \zeta_m^\pm(e^\tau) = \exp(\varpi_l^\pm(\tau)).
	\end{equation}
	Since $\zeta_m^\pm(e^\tau)$ is an eigenvalue of $M_l^\pm(e^\tau)$, Lemma \ref{lem:SpecSpl} implies that 
	$$\forall \tau\in B(0,\varepsilon_1^\star), \quad \Fc_l^\pm(e^{\varpi_l^\pm(\tau)}) = e^\tau.$$
	If we define the holomorphic function
	\begin{equation}\label{def:varphi}
		\begin{array}{cccc}
			\varphi_l^\pm : & \C & \rightarrow & \C \\ & \tau & \mapsto & -\frac{\tau}{\alpha_l^\pm} +(-1)^{\mu+1} \frac{\beta_l^\pm}{{\alpha_l^\pm}^{2\mu+1}}\tau^{2\mu}
		\end{array},
	\end{equation}
	then, up to considering $\varepsilon_1^\star$ to be slightly smaller, the asymptotic expansion \eqref{F} implies that there exists a bounded holomorphic function $\xi_l^\pm:B(0,\varepsilon_1^\star)\rightarrow \C$ such that
	\begin{equation}\label{eg:LienVarpiVarphi}
		\forall \tau\in B(0,\varepsilon_1^\star),\quad \varpi_l^\pm(\tau) = \varphi_l^\pm(\tau) + \tau^{2\mu+1} \xi_l^\pm(\tau).
	\end{equation}
	
	The following lemma introduces central bounds for the calculations performed in the rest of the section.
	
	\begin{lemma}\label{lem:BorneVarphiVarpi}
		There exists a radius $\varepsilon_2^\star\in]0,\varepsilon_1^\star[$ and two positive constants $A_R,A_I$ such that for all $l\in \lc 1, \ppp,d\rc$
		\begin{align}
			\forall \tau\in \C, \quad&& \alpha_l^\pm\Re(\varphi_l^\pm(\tau)) & \leq -\Re(\tau) +A_R\Re(\tau)^{2\mu}-A_I\Im(\tau)^{2\mu},\label{in:varphi}\\
			\forall \tau\in B(0,\varepsilon_2^\star), \quad&& \alpha_l^\pm\Re(\varpi_l^\pm(\tau)) +|\alpha_l^\pm\xi_l^\pm(\tau)\tau^{2\mu+1}| & \leq -\Re(\tau) +A_R\Re(\tau)^{2\mu}-A_I\Im(\tau)^{2\mu}.\label{in:varpi}
		\end{align}
	\end{lemma}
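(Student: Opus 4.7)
The plan is to prove the two inequalities by direct computation, starting from the explicit expression of $\varphi_l^\pm$ and leveraging the boundedness of $\xi_l^\pm$ for the second. Throughout, write $\tau = a+ib$ with $a,b \in \R$, and set $\gamma_l^\pm := \beta_l^\pm/(\alpha_l^\pm)^{2\mu}$. Since $(\alpha_l^\pm)^{2\mu}>0$ and $\Re(\beta_l^\pm)>0$ by Hypothesis \ref{H:F}, we have $\Re(\gamma_l^\pm)>0$. Multiplying the definition \eqref{def:varphi} by $\alpha_l^\pm$ yields
\begin{equation*}
	\alpha_l^\pm \varphi_l^\pm(\tau) = -\tau + (-1)^{\mu+1}\gamma_l^\pm \tau^{2\mu},
\end{equation*}
so $\alpha_l^\pm\Re(\varphi_l^\pm(\tau)) = -a + (-1)^{\mu+1}\bigl(\Re(\gamma_l^\pm)\Re(\tau^{2\mu}) - \Im(\gamma_l^\pm)\Im(\tau^{2\mu})\bigr)$.

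The core of the proof of \eqref{in:varphi} is to expand $\tau^{2\mu}=(a+ib)^{2\mu}$ binomially. The top-degree term in $b$ of $\Re(\tau^{2\mu})$ comes from the index $k=2\mu$ and equals $(-1)^\mu b^{2\mu}$, so $(-1)^{\mu+1}\Re(\gamma_l^\pm)\Re(\tau^{2\mu})$ contributes the desired $-\Re(\gamma_l^\pm)b^{2\mu}$ as its leading part. Every remaining term in $\Re(\tau^{2\mu})$ or $\Im(\tau^{2\mu})$ is of the form $a^{2\mu-k}b^{k}$ with $1\le k\le 2\mu-1$; applying Young's inequality $|a|^{2\mu-k}|b|^{k} \leq \epsilon\, b^{2\mu}+C_\epsilon\, a^{2\mu}$ (valid for any $\epsilon>0$), we may choose $\epsilon$ small enough that the total coefficient in front of $b^{2\mu}$ remains strictly negative, while the coefficient in front of $a^{2\mu}$ is a finite positive constant. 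This yields \eqref{in:varphi} for a single $(l,\pm)$ with constants $A_R',A_I'>0$; taking the maximum and minimum respectively over the finite set $\{1,\ldots,d\}\times\{+,-\}$ gives uniform constants.

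To deduce \eqref{in:varpi}, observe that $\alpha_l^\pm\Re(\varpi_l^\pm(\tau)) + |\alpha_l^\pm\xi_l^\pm(\tau)\tau^{2\mu+1}| \leq \alpha_l^\pm\Re(\varphi_l^\pm(\tau)) + 2|\alpha_l^\pm|\,|\xi_l^\pm(\tau)|\,|\tau|^{2\mu+1}$ by \eqref{eg:LienVarpiVarphi}. The function $\xi_l^\pm$ is holomorphic and hence bounded on any compact subset of $B(0,\varepsilon_1^\star)$, say by a constant $M$. Using $|\tau|^{2\mu+1}\leq \varepsilon|\tau|^{2\mu}$ for $\tau\in B(0,\varepsilon)$ and the elementary bound $(a^2+b^2)^\mu \leq 2^{\mu-1}(a^{2\mu}+b^{2\mu})$, the additional term is controlled by $C\varepsilon(a^{2\mu}+b^{2\mu})$ for some constant $C$ depending only on $\mu$, $M$, and $\max_{l,\pm}|\alpha_l^\pm|$. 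Choosing $\varepsilon_2^\star$ small enough that this quantity is bounded by $\tfrac{A_I'}{2}(a^{2\mu}+b^{2\mu})$, we obtain \eqref{in:varpi} with enlarged constant $A_R := A_R' + A_I'/2$ and reduced constant $A_I := A_I'/2$, which of course also preserves \eqref{in:varphi} since that inequality is only relaxed by such a modification.

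The proof is essentially a bookkeeping exercise rather than a conceptually difficult step; the only nontrivial ingredient is recognizing that the sign $(-1)^{\mu+1}$ in the definition of $\varphi_l^\pm$ is precisely what is needed to make the leading $b^{2\mu}$ coefficient negative, producing the diffusivity term $-A_I \Im(\tau)^{2\mu}$. The main mild obstacle is ensuring uniformity across the $2d$ cases, which is addressed by the finiteness of the index set.
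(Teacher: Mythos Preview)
Your proof is correct and follows the natural direct computation; the paper itself does not give a proof, deferring instead to \cite[Lemma~5.3]{CoeuIBVP}, so no detailed comparison is possible from the present text. Two minor remarks: the pure $a^{2\mu}$ term in $\Re(\tau^{2\mu})$ corresponds to $k=0$, not $1\le k\le 2\mu-1$ as you wrote, but it is absorbed directly into $A_R a^{2\mu}$ without needing Young's inequality, so the conclusion is unaffected; and the paper already asserts that $\xi_l^\pm$ is bounded on all of $B(0,\varepsilon_1^\star)$, so you do not need to restrict to compact subsets.
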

	
	The proof is identical as \cite[Lemma 5.3]{CoeuIBVP} and will thus not be detailed here.
	
	\vspace{0.2cm} 	
	\textbf{Choice of the radius $\varepsilon$ and of the width $\eta$}
	\vspace{0.2cm} 	
	
	We will now fix choices for a radius $\varepsilon>0$ and a width $\eta>0$ which will satisfy a list of conditions. Those conditions will be centralized here in order to fix the notations and are especially important to prove some technical lemmas in Section \ref{subsec:LemmesGauss}. We will try to indicate at best where those conditions are used.
	
	First, we fix a choice of radius $\varepsilon\in\left]0,\min\left(\varepsilon_2^\star,\left(\frac{1}{2\mu A_R}\right)^\frac{1}{2\mu-1}\right)\right[$ where the radius $\varepsilon_2^\star$ is defined in Lemma \ref{lem:BorneVarphiVarpi}. This choice for $\varepsilon$ will allow us to use the results of Lemmas \ref{lem:defOmegaEta} and \ref{lem:BorneVarphiVarpi}. Furthermore, if we introduce the function: 
	\begin{equation}\label{defPsi}
		\begin{array}{cccc}
			\Psi: & \R & \rightarrow & \R \\ & \tau_p & \mapsto & \tau_p-A_R {\tau_p}^{2\mu}
		\end{array}
	\end{equation}
	which we will use to define a family of parameterized curve later on in Lemma \ref{lem:BornesGaussiennes}, then the function $\Psi$ is continuous and strictly increasing on $\left]-\infty,\varepsilon\right]$. This conclusion on the function $\Psi$ will be essential in the proof of Lemma \ref{lem:BornesGaussiennes} to construct the path $\Gamma$ appearing in \eqref{lem:BornesGaussiennesRes3}.
	
	We now introduce the function:
	\begin{equation}\label{defreps}
		\begin{array}{cccc}
			r_\varepsilon: & ]0,\varepsilon[& \rightarrow & \R \\
			& \eta & \mapsto & \sqrt{\varepsilon^2-\eta^2}
		\end{array}
	\end{equation}
	which serves to define the imaginary parts of the extremities of the curve $-\eta+i\R\cap B(0,\varepsilon)$. We recall that we defined a width $\eta_\varepsilon$ in Lemma \ref{lem:defOmegaEta}. We claim that there exists a width $\eta\in]0,\min(\varepsilon,\eta_\varepsilon)[$ that we fix for the rest of the paper such that the three following properties are verified.
	\begin{itemize}
		\item The following inequality is satisfied:
		\begin{equation}\label{condEta}
			\frac{\eta}{2}>A_R\eta^{2\mu}.
		\end{equation}
		It is used for instance in the proof of Lemma \ref{lem:BornesGaussiennes}.
		\item We have:
		\begin{equation}\label{condEta2}
			\eta+A_R\eta^{2\mu}- \frac{A_I}{2}r_\varepsilon(\eta)^{2\mu}<0.
		\end{equation}
		It is quite clear that we can choose $\eta$ small enough to satisfy this condition since, when $\eta$ tends towards $0$, the first two terms on the left hand side converge towards $0$  and the third converges towards $-\frac{A_I}{2}\varepsilon$. The condition \eqref{condEta2} is used in Lemma \ref{lem:BornesGaussiennes} to prove \eqref{lem:BornesGaussiennesRes2}. A consequence of \eqref{condEta2} is that 
		\begin{equation}\label{condEta2Reformulee}
			\forall n\in\N, \forall x\in \left[\frac{n}{2},2n\right], \forall t\in\left[-\eta,\eta\right], \quad (n-x)t+xA_Rt^{2\mu}-x\frac{A_Ir_\varepsilon(\eta)^{2\mu}}{2}\leq 0.
		\end{equation}
		Indeed, using the convexity with regards to $t$ of the left hand side of \eqref{condEta2Reformulee}, we have that
		$$(n-x)t+xA_Rt^{2\mu}-x\frac{A_Ir_\varepsilon(\eta)^{2\mu}}{2}\leq |n-x|\eta +xA_R\eta^{2\mu} -x\frac{A_Ir_\varepsilon(\eta)^{2\mu}}{2}.$$
		We observe that $n\in\left[\frac{x}{2},2x\right]$ and thus, using \eqref{condEta2}, we have:
		\begin{equation*}
			(n-x)t+xA_Rt^{2\mu}-x\frac{A_Ir_\varepsilon(\eta)^{2\mu}}{2}\leq x\left(\eta+A_R\eta^{2\mu}-\frac{A_Ir_\varepsilon(\eta)^{2\mu}}{2}\right)\leq 0.
		\end{equation*}
		The consequence \eqref{condEta2Reformulee} of \eqref{condEta2} will be used in the proof of Lemma \ref{lem:ComportementPrincGaussienne}.
		
		\item There exists a radius $\varepsilon_\#\in]0,\varepsilon[$ such that if we define 
		\begin{equation}\label{defIextr}
			l_{extr}:= \left(\frac{\Psi(\varepsilon_\#)-\Psi(-\eta)}{A_I}\right)^\frac{1}{2\mu},
		\end{equation}
		then $-\eta+il_{extr}\in B(0,\varepsilon)$. It is used in the proof of Lemma \ref{lem:BornesGaussiennes}.
	\end{itemize}
	
	We introduce the paths $\Gamma_{out}(\eta)$, $\Gamma_{in}^\pm(\eta)$, $\Gamma_{in}^0(\eta)$, $\Gamma_{in}(\eta)$, $\Gamma(\eta)$, $\Gamma_{d}(\eta)$ represented on Figure \ref{FigChem} and defined as:
	\begin{align}\label{def:Paths}
		\begin{split}
			\Gamma_{out}(\eta)& := [-\eta-i\pi,-\eta-ir_\varepsilon(\eta)]\cup [-\eta+ir_\varepsilon(\eta),-\eta+i\pi], \\
			\Gamma_{in}^\pm(\eta)& := \left[-\eta\pm ir_\varepsilon(\eta),\eta\pm ir_\varepsilon(\eta)\right],\\
			\Gamma_{in}^0(\eta)& := \left[\eta- ir_\varepsilon(\eta),\eta+ ir_\varepsilon(\eta)\right],\\
			\Gamma_{in}(\eta)& := \Gamma_{in}^-(\eta)\cup\Gamma_{in}^0(\eta) \cup\Gamma_{in}^+(\eta)\\
			\Gamma(\eta)& := \Gamma_{in}(\eta)\cup \Gamma_{out}(\eta),\\
			\Gamma_{d}(\eta)& := \left[-\eta-ir_\varepsilon(\eta),-\eta+ir_\varepsilon(\eta)\right].
		\end{split}
	\end{align}
	
	\begin{figure}
		\begin{center}
			\begin{minipage}{0.3\textwidth}
				\hfill
				\begin{tikzpicture}
					\draw (0,3) node[left] {$\Gamma_{out}(\eta)$:};
					\draw[red,thick] (0,3) -- (1,3) node[midway,scale=1.5] {$\circ$};
					
					\draw (0,2) node[left] {$\Gamma_{in}^\pm(\eta)$:};
					\draw[blue,thick] (0,2) -- (1,2) node[midway,scale=1.5] {$\vartriangleright$};
					
					\draw (0,1) node[left] {$\Gamma_{in}^0(\eta)$:};
					\draw[purple,thick] (0,1) -- (1,1) node[midway,scale=1.5] {$\star$};
					
					\draw (0,0) node[left] {$\Gamma_{d}(\eta)$:};
					\draw[dartmouthgreen,thick] (0,0) -- (1,0) node[midway,scale=1.5] {$\parallel$};
				\end{tikzpicture}
			\end{minipage}
			\hfill
			\begin{minipage}{0.6\textwidth}
				\begin{tikzpicture}[scale=1]
					\draw[->] (-2,0) -- (3,0) node[right] {$\Re(\tau)$};
					\draw[->] (0,-3.4) -- (0,3.4) node[above] {$\Im(\tau)$};
					\draw[dashed] (-2,pi) -- (3,pi);
					\draw[dashed] (-2,-pi) -- (3,-pi);
					
					\draw (0,0) node {$\times$} circle (1.5);
					\draw (1.8,0.7)  node {$B_\varepsilon(0)$};
					
					\draw (0,pi) node {$\bullet$} node[below right] {$i\pi$};
					\draw (0,-pi) node {$\bullet$} node[above right] {$-i\pi$};
					\draw (-1,0) node {$\bullet$} node[below right] {$-\eta$};	
					\draw (1,0) node {$\bullet$} node[below right] {$\eta$};					
					
					\draw[red,thick] (-1,-pi) -- (-1,{-sqrt(1.5^2-1^2)}) node[midway] {$\circ$};
					\draw[red,thick] (-1,{sqrt(1.5^2-1^2)}) -- (-1,pi) node[midway, left] {$\Gamma_{out}(\eta)$} node[midway] {$\circ$};
					
					\draw[dartmouthgreen,thick] (-1,{-sqrt(1.5^2-1^2)}) -- (-1,{sqrt(1.5^2-1^2)}) node[near start, sloped] {$\parallel$} node[near end, sloped] {$\parallel$} ;
					\draw[dartmouthgreen] (-1.5,0.5) node[left] {$\Gamma_d(\eta)$};
					
					\draw[thick,blue] (-1,{-sqrt(1.5^2-1^2)}) -- (1,{-sqrt(1.5^2-1^2)}) node[midway,sloped] {$\vartriangleright$};
					\draw[blue] (0.6,-1.7) node {$\Gamma_{in}^-(\eta)$};
					
					\draw[thick,blue] (1,{sqrt(1.5^2-1^2)})  -- (-1,{sqrt(1.5^2-1^2)})  node[midway,sloped] {$\vartriangleleft$};
					\draw[blue] (0.6,1.7) node {$\Gamma_{in}^+(\eta)$};
					
					\draw[purple,thick] (1,{-sqrt(1.5^2-1^2)}) -- (1,{sqrt(1.5^2-1^2)}) node[near start,scale=1.3] {$\star$} node[near end, scale=1.3] {$\star$} ;
					\draw[purple] (2,-0.5) node {$\Gamma_{in}^0(\eta)$};
				\end{tikzpicture}
			\end{minipage}
			\caption{A representation of the path described in \eqref{def:Paths}: $\Gamma_{out}(\eta)$, $\Gamma_{in}^\pm(\eta)$, $\Gamma_{in}^0(\eta)$ and $\Gamma_{d}(\eta)$}
			\label{FigChem}
		\end{center}
	\end{figure}
	
	We observe that those paths lie in $\Omega_\varepsilon$. Using the Cauchy formula and acknowledging the "$2i\pi$"-periodicity of $\tau\mapsto W(e^\tau,j_0,j,\textbf{e})$, we can prove via the equality \eqref{egGSpaTemp1} that:
	\begin{align}\label{egGSpaTemp2}
		\begin{split}
			\Gcc(n,j_0,j)\textbf{e} &= \frac{1}{2i\pi}\int_{\Gamma(\eta)} e^{n\tau}e^\tau \Pi(W(e^\tau,j_0,j,\textbf{e}))d\tau\\
			& = \frac{1}{2i\pi}\int_{\Gamma_{out}(\eta)} e^{n\tau}e^\tau \Pi(W(e^\tau,j_0,j,\textbf{e}))d\tau+ \frac{1}{2i\pi}\int_{\Gamma_{in}(\eta)} e^{n\tau}e^\tau \Pi(W(e^\tau,j_0,j,\textbf{e}))d\tau .
		\end{split}
	\end{align}

	\begin{lemma}\label{estGreenExt}
		There exist two positive constants $C,c$ such that for all $n\in \N$, $j_0,j\in \Z$ and $\textbf{e}\in\C^d$ we have that
		$$\left|\frac{1}{2i\pi}\int_{\Gamma_{out}(\eta)} e^{n\tau}e^\tau \Pi(W(e^\tau,j_0,j,\textbf{e}))d\tau\right| \leq C|\textbf{e}|e^{-n\eta}.$$
	\end{lemma}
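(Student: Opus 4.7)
The plan is to exploit two facts about the outer contour. First, on $\Gamma_{out}(\eta)$ we have $\Re(\tau)=-\eta$ identically, so the factor $e^{n\tau}$ contributes exactly $e^{-n\eta}$ in modulus, which is the exponential decay we want in the statement. Second, $\Gamma_{out}(\eta)$ lies in $\Omega_\varepsilon\setminus B(0,\varepsilon)$: indeed, on this contour $\tau=-\eta+it$ with $|t|\in[r_\varepsilon(\eta),\pi]$, so $|\tau|^2=\eta^2+t^2\geq \eta^2+r_\varepsilon(\eta)^2=\varepsilon^2$ by the very definition \eqref{defreps} of $r_\varepsilon$. Therefore the uniform exponential bound \eqref{in:GSfar} of Lemma \ref{lem:defOmegaEta} applies pointwise on $\Gamma_{out}(\eta)$, giving $|W(e^\tau,j_0,j,\vec{e})|\leq C_\varepsilon|\vec{e}|e^{-c_\varepsilon|j-j_0|}\leq C_\varepsilon|\vec{e}|$.

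From there the estimate is a routine majoration: I would apply the triangle inequality for integrals, bound $|e^\tau|$ by $e^{-\eta}$, bound the operator norm of $\Pi$ by some universal constant, and observe that the arclength of $\Gamma_{out}(\eta)$ is $2(\pi-r_\varepsilon(\eta))\leq 2\pi$, so
\[
\left|\frac{1}{2i\pi}\int_{\Gamma_{out}(\eta)} e^{n\tau}e^\tau \Pi(W(e^\tau,j_0,j,\vec{e}))\,d\tau\right|\leq \frac{1}{2\pi}\cdot 2\pi\cdot e^{-n\eta}\cdot e^{-\eta}\cdot \|\Pi\|\cdot C_\varepsilon|\vec{e}|,
\]
which is exactly the desired bound with $C:=e^{-\eta}\|\Pi\|C_\varepsilon$ (the constant $c$ in the statement plays no role here since the bound is already pure exponential decay in $n$, and no $j,j_0$ dependence survives once we discard the favourable factor $e^{-c_\varepsilon|j-j_0|}$). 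There is no real obstacle; the whole point of having constructed the domain $\Omega_\varepsilon$ and the width $\eta$ in Lemma \ref{lem:defOmegaEta} was precisely to make this step mechanical, and the interesting work is concentrated in the integral over $\Gamma_{in}(\eta)$, which presumably comes next and will require a genuinely different analysis based on the meromorphic extension and the decompositions \eqref{ExpGs: l geq 0 : j geq l+1}--\eqref{ExpGs: l geq 0 : j < 0}.
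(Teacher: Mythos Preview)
Your argument is correct and is essentially identical to the paper's proof, which simply invokes \eqref{in:GSfar} together with the observation that $|e^{n\tau}|=e^{-n\eta}$ on $\Gamma_{out}(\eta)$. You have merely filled in the verification that $\Gamma_{out}(\eta)\subset\Omega_\varepsilon\setminus B(0,\varepsilon)$ and carried out the routine majoration explicitly, which the paper leaves implicit.
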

	
	\begin{proof}
		The conclusion of the lemma directly follows from \eqref{in:GSfar} and the definition of $\Gamma_{out}(\eta)$ which implies that
		$$\forall \tau\in \Gamma_{out}(\eta), \quad |e^{n\tau}|= e^{-n\eta}.$$
	\end{proof}
	
	The equality \eqref{egGSpaTemp2} and the sharp exponential bounds on the term:
	$$\frac{1}{2i\pi}\int_{\Gamma_{out}(\eta)} e^{n\tau}e^\tau \Pi(W(e^\tau,j_0,j,\textbf{e}))d\tau,$$
	that we just proved imply that there just remains to handle the term 
	\begin{equation}\label{terme}
		\frac{1}{2i\pi}\int_{\Gamma_{in}(\eta)} e^{n\tau}e^\tau \Pi(W(e^\tau,j_0,j,\textbf{e}))d\tau
	\end{equation}
	to have the description \eqref{decompoGreen} of the temporal Green's function expected in Theorem \ref{th:Green}. We recall that $\Gamma_{in}(\eta)$ is a path that lies inside the set $B(0,\varepsilon)$ by construction and that we chose the radius $\varepsilon\in]0,\varepsilon_2^*[$ to be small enough so that 
	$$\forall \tau\in B(0,\varepsilon),\quad e^\tau\in B(1,\delta_2).$$
	Thus, recalling that we consider $j_0\geq 0$, we can use the expressions \eqref{ExpGs: l geq 0 : j geq l+1}, \eqref{ExpGs: l geq 0 : 0 leq j leq l} and \eqref{ExpGs: l geq 0 : j < 0} to decompose the integral \eqref{terme} into different terms depending on the position of $j$ with respect to $0$ and $j_0$. Our new goal can now be separated in three parts:
	
		\begin{itemize}
			\item In Section \ref{subsec:Decompo}, we will prove Lemmas \ref{lem:OndesPropag}-\ref{lem:OndesExcitedCentral} that will allow to bound the several terms that can appear when decomposing the integral \eqref{terme} using \eqref{ExpGs}.
			\item In Section \ref{subsec:ConcluTh1}, we conclude the proof of the decomposition \eqref{decompoGreen} of the temporal Green's function using the previously proved Lemma \ref{estGreenExt} and the following Lemmas  \ref{lem:OndesPropag}-\ref{lem:OndesExcitedCentral}.
			\item In Section \ref{subsec:decompoDer},  we explain how the analysis performed in the two previous steps can be adapted to also deduce the decomposition \eqref{decompoDerGreen} of the discrete derivative of the temporal Green's function.
		\end{itemize}

	\subsection{Decomposition of the integral within \texorpdfstring{$B(0,\varepsilon)$}{B(0,epsilon)}}\label{subsec:Decompo}
	
	This section will be mainly devoted to the proof of Lemmas \ref{lem:OndesPropag}-\ref{lem:OndesExcitedCentral} that will allow to study each term that can appear in the decomposition of the integral \eqref{terme} using the expressions \eqref{ExpGs: l geq 0 : j geq l+1}, \eqref{ExpGs: l geq 0 : 0 leq j leq l} and \eqref{ExpGs: l geq 0 : j < 0}. However, we are first going to need to introduce a few more technical lemmas that will be used relentlessly throughout the rest of the paper. 
	
	\subsubsection{Gaussian estimates}\label{subsec:LemmesGauss}
	
	First and foremost, we define the set $X$ of the paths going from $-\eta-ir_\varepsilon(\eta)$ to $-\eta +ir_\varepsilon(\eta)$ whilst remaining in $B(0,\varepsilon)$. We observe in particular that $\Gamma_d(\eta),\Gamma_{in}(\eta)\in X$.
	
	\begin{lemma}\label{lem:BornesGaussiennes}
		We consider an integer $k\in \N$. 
		
		\begin{subequations}
			\begin{itemize}
				\item There exist two positive constants $C,c$ such that for all $n\in \N\backslash\lc 0\rc$ and $x\in \left[0,\frac{n}{2}\right]$
				\begin{equation}\label{lem:BornesGaussiennesRes1}
					\int_{\Gamma_{d}(\eta)}|\tau|^k\exp\left(n\Re(\tau)+x\left(-\Re(\tau)+A_R\Re(\tau)^{2\mu}-A_I\Im(\tau)^{2\mu}\right)\right)|d\tau| \leq Ce^{-cn}.
				\end{equation}
				
				\item There exist two positive constants $C,c$ such that for all $n\in \N\backslash\lc 0\rc$ and $x\in [2n,+\infty[$
				\begin{equation}\label{lem:BornesGaussiennesRes2}
					\int_{\Gamma_{in}(\eta)}|\tau|^k\exp\left(n\Re(\tau)+x\left(-\Re(\tau)+A_R\Re(\tau)^{2\mu}-A_I\Im(\tau)^{2\mu}\right)\right)|d\tau| \leq Ce^{-cn}.
				\end{equation}
				
				\item There exist two positive constants $C,c$ such that for all $n\in \N\backslash\lc 0\rc$ and $x\in \left[\frac{n}{2},2n\right]$, there exists a path $\Gamma\in X$ such that
				\begin{multline}\label{lem:BornesGaussiennesRes3}
					\int_\Gamma |\tau|^k\exp\left(n\Re(\tau)+x\left(-\Re(\tau)+A_R\Re(\tau)^{2\mu}-A_I\Im(\tau)^{2\mu}\right)\right)|d\tau| \\ \leq \displaystyle\frac{C}{n^\frac{k+1}{2\mu}}\exp\left(-c\left(\frac{|n-x|}{n^\frac{1}{2\mu}}\right)^\frac{2\mu}{2\mu-1}\right).
				\end{multline}
			\end{itemize}
		\end{subequations}
	\end{lemma}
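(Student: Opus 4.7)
The three inequalities \eqref{lem:BornesGaussiennesRes1}, \eqref{lem:BornesGaussiennesRes2}, \eqref{lem:BornesGaussiennesRes3} correspond to the regimes where $x$ is small, large, or comparable to $n$, respectively. The first two are direct pointwise estimates of the exponent on the prescribed contours, while the third requires an $x$-dependent contour $\Gamma\in X$ constructed by saddle-point analysis. I would handle \eqref{lem:BornesGaussiennesRes1} and \eqref{lem:BornesGaussiennesRes2} by direct majorization, and \eqref{lem:BornesGaussiennesRes3} by building $\Gamma$ as a concatenation of vertical and horizontal segments passing through the unique critical point of $\tau_p\mapsto n\tau_p-x\Psi(\tau_p)$.

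\textbf{Inequalities \eqref{lem:BornesGaussiennesRes1} and \eqref{lem:BornesGaussiennesRes2}.} For \eqref{lem:BornesGaussiennesRes1}, on $\Gamma_d(\eta)$ one has $\Re(\tau)=-\eta$ and $-xA_I\Im(\tau)^{2\mu}\leq 0$, so the exponent is at most $-n\eta+x(\eta+A_R\eta^{2\mu})\leq-(n/2)(\eta-A_R\eta^{2\mu})\leq-cn$ via $x\leq n/2$ and condition \eqref{condEta}. For \eqref{lem:BornesGaussiennesRes2}, I split $\Gamma_{in}(\eta)$ into its three sub-paths. On $\Gamma_{in}^\pm(\eta)$, where $\Im(\tau)=\pm r_\varepsilon(\eta)$, the majorization $(n-x)\Re(\tau)\leq x\eta$ (valid since $n-x\leq 0$ and $|\Re(\tau)|\leq\eta$) yields an exponent $\leq x(\eta+A_R\eta^{2\mu}-A_I r_\varepsilon(\eta)^{2\mu})\leq-xA_I r_\varepsilon(\eta)^{2\mu}/2\leq-cn$ by condition \eqref{condEta2} and $x\geq 2n$. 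On $\Gamma_{in}^0(\eta)$, where $\Re(\tau)=\eta$, the exponent is at most $n\eta-x(\eta-A_R\eta^{2\mu})-xA_I\Im(\tau)^{2\mu}\leq-n(\eta-2A_R\eta^{2\mu})\leq-cn$, again using \eqref{condEta}. Integration over the bounded paths then gives the claimed $Ce^{-cn}$.

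\textbf{Saddle-point setup for \eqref{lem:BornesGaussiennesRes3}.} Writing $\tau=\tau_p+is$, the exponent rewrites cleanly as $n\tau_p-x\Psi(\tau_p)-xA_Is^{2\mu}$ with $\Psi$ defined in \eqref{defPsi}. Since $\Psi$ is strictly concave on $(-\infty,\varepsilon]$, the map $\tau_p\mapsto n\tau_p-x\Psi(\tau_p)$ is strictly convex and attains a unique global minimum at the saddle point $\tau_p^\star(x)$ solving $\Psi'(\tau_p^\star)=n/x$. For $x\in[n/2,2n]$, $n/x\in[1/2,2]$, and $\tau_p^\star(x)$ lives in a compact interval independent of $n$; by choosing $\eta$ and $\varepsilon_\#$ appropriately (truncating $\tau_p^\star$ to $[-\eta,\varepsilon_\#]$ if necessary, at the cost of a constant factor in the bound), I may assume $\tau_p^\star(x)\in[-\eta,\varepsilon_\#]$ uniformly in $x$ and $n$. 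I then define $\Gamma$ as the concatenation of five oriented segments: two short vertical pieces at $\Re=-\eta$ joining $-\eta\pm ir_\varepsilon(\eta)$ to $-\eta\pm il_{extr}$; two horizontal pieces at $\Im=\pm l_{extr}$ from $-\eta\pm il_{extr}$ to $\tau_p^\star(x)\pm il_{extr}$; and a central vertical piece at $\Re=\tau_p^\star(x)$ from $\tau_p^\star(x)-il_{extr}$ to $\tau_p^\star(x)+il_{extr}$. Because $-\eta+il_{extr}\in B(0,\varepsilon)$ and $\tau_p^\star(x)\leq\varepsilon_\#<\varepsilon$, the path $\Gamma$ lies in $B(0,\varepsilon)$ and thus belongs to $X$.

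\textbf{Estimates and main obstacle.} On the central vertical segment the substitution $s=u/x^{1/(2\mu)}$ converts the contribution into $x^{-1/(2\mu)}e^{n\tau_p^\star(x)-x\Psi(\tau_p^\star(x))}$ times the convergent Gaussian integral $\int_\R(|\tau_p^\star(x)|^k+|u|^k x^{-k/(2\mu)})e^{-A_Iu^{2\mu}}du$; the saddle-point identity $\Psi'(\tau_p^\star)=n/x$ gives the explicit value $n\tau_p^\star(x)-x\Psi(\tau_p^\star(x))=-\frac{2\mu-1}{2\mu}(2\mu A_R)^{-1/(2\mu-1)}|n-x|^{2\mu/(2\mu-1)}x^{-1/(2\mu-1)}$, which for $x\in[n/2,2n]$ coincides with $-c(|n-x|/n^{1/(2\mu)})^{2\mu/(2\mu-1)}$ up to a universal positive constant. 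The $|\tau_p^\star(x)|^k$ contribution is absorbed into the exponential via $|\tau_p^\star(x)|\lesssim(|n-x|/n)^{1/(2\mu-1)}$ and the elementary bound $y^{k/(2\mu-1)}e^{-cy^{2\mu/(2\mu-1)}}\leq Ce^{-c'y^{2\mu/(2\mu-1)}}$ with $y=|n-x|/n^{1/(2\mu)}$, producing the claimed prefactor $n^{-(k+1)/(2\mu)}$. The main obstacle is then showing that the four remaining segments contribute only $Ce^{-cn}$, which is negligible compared to the central term; this is exactly where the definition \eqref{defIextr} of $l_{extr}$ is crucial. On each horizontal segment the exponent $nt-x\Psi(t)-xA_I l_{extr}^{2\mu}$ is convex in $t$ and thus maximized at one of the endpoints $t=-\eta$ or $t=\tau_p^\star(x)$; the algebraic identity $A_Il_{extr}^{2\mu}=\Psi(\varepsilon_\#)-\Psi(-\eta)$ simplifies the value at $t=-\eta$ to $-n\eta-x\Psi(\varepsilon_\#)\leq-cn$, while at $t=\tau_p^\star(x)$ we directly have $-xA_I l_{extr}^{2\mu}\leq-cn$ using $x\geq n/2$. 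The two short vertical pieces at $\Re=-\eta$ are handled verbatim as for \eqref{lem:BornesGaussiennesRes1}. Each of these four contributions is $Ce^{-cn}$, much smaller than the right-hand side of \eqref{lem:BornesGaussiennesRes3}, concluding the proof.
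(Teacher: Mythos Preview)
Your treatment of \eqref{lem:BornesGaussiennesRes1} and \eqref{lem:BornesGaussiennesRes2} is essentially identical to the paper's. For \eqref{lem:BornesGaussiennesRes3} the overall strategy also matches: both you and the paper pick the real parameter $\tau_p$ solving $\Psi'(\tau_p)=n/x$ (truncated to a fixed interval, which is exactly the paper's Cases \textbf{A}/\textbf{B}/\textbf{C}) and use it to center the contour. The difference is the geometry of the contour itself. The paper takes the level curve $\Gamma_p=\{\Psi(\Re\tau)+A_I(\Im\tau)^{2\mu}=\Psi(\tau_p),\ \Re\tau\in[-\eta,\tau_p]\}$, on which the $x$-part of the exponent collapses to the constant $-x\Psi(\tau_p)$ and the remaining $n\Re\tau$ is controlled by the geometric inequality $\Re\tau-\tau_p\le -c_p(\Im\tau)^{2\mu}$. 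Your rectangular path (vertical segment at $\Re\tau=\tau_p^\star$, horizontal caps at $\Im\tau=\pm l_{extr}$) is more elementary and yields the same saddle value and the same Gaussian integral after the rescaling $s=u/x^{1/(2\mu)}$; the curved level set buys a slightly cleaner one-line bound on the main piece but nothing essential.

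There are, however, two genuine gaps in your argument. First, your justification that $\Gamma\in X$ is a non sequitur: knowing $-\eta+il_{extr}\in B(0,\varepsilon)$ and $\tau_p^\star\le\varepsilon_\#<\varepsilon$ says nothing about the corners $\tau_p^\star\pm il_{extr}$. Since $-\eta+il_{extr}\in B(0,\varepsilon)$ only gives $l_{extr}^2<\varepsilon^2-\eta^2$, if $\varepsilon_\#>\eta$ (nothing in the setup forbids this) one may well have $\varepsilon_\#^2+l_{extr}^2>\varepsilon^2$, and then the horizontal caps and the top of the central segment leave $B(0,\varepsilon)$. This is fixable (lower the caps, or further shrink $\varepsilon_\#$), but as written the path need not belong to $X$. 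Second, the two short vertical pieces at $\Re\tau=-\eta$ are \emph{not} ``handled verbatim as for \eqref{lem:BornesGaussiennesRes1}'': that estimate relied on $x\le n/2$, while here $x$ ranges up to $2n$ and the term $(n-x)(-\eta)$ can be as large as $+n\eta$. What actually saves you is the additional constraint $|\Im\tau|\ge l_{extr}$ on those pieces combined with the identity $A_I l_{extr}^{2\mu}=\Psi(\varepsilon_\#)-\Psi(-\eta)$, which gives exponent $\le -n\eta - x\Psi(\varepsilon_\#)\le -n\eta$; this is the same computation you already did for the endpoint $t=-\eta$ of the horizontal caps, not the argument from \eqref{lem:BornesGaussiennesRes1}.
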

	
	Lemma \ref{lem:BornesGaussiennes} will allow us to obtain generalized Gaussian bounds for several terms throughout the proof of Theorem \ref{th:Green}. The inequalities \eqref{lem:BornesGaussiennesRes1}-\eqref{lem:BornesGaussiennesRes3} separate different cases depending on $x$. An important point to observe is that the path $\Gamma$ appearing in \eqref{lem:BornesGaussiennesRes3} depends on $n$ and $x$ whereas the constants $C,c$ are uniform.
	
	 The way Lemma \ref{lem:BornesGaussiennes} will be used is to first observe that the integral of some holomorphic function over some path of $X$ is equal by Cauchy's formula to the integral of the same function over any path of $X$. We then prove that the integrand can be well bounded and use the result of Lemma \ref{lem:BornesGaussiennes}. The proof of Lemma \ref{lem:BornesGaussiennes} can be adapted from \cite{C-F,C-FIBVP,CoeuLLT,CoeuIBVP} and will be done in the Appendix (Section \ref{sec:Appendix}). 	
	
	\begin{lemma}\label{lem:PassageVarpi-Varphi}
		There exists a constant $C>0$ such that for all $l\in \lc 1, \ppp,d\rc$, $n\in\N\backslash\lc0\rc$ and $x\in\left[0,2n\right]$
		$$\left|e^{x\alpha_l^\pm\varpi_l^\pm(\tau)}-e^{x\alpha_l^\pm\varphi_l^\pm(\tau)}\right| \leq C n|\tau|^{2\mu+1} \exp\left(x\left(-\Re(\tau)+A_R\Re(\tau)^{2\mu}-A_I\Im(\tau)^{2\mu}\right)\right).$$
	\end{lemma}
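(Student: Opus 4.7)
The plan is to express the difference of the two exponentials as a single line integral via the standard identity
\[
e^a - e^b \;=\; (a-b)\int_0^1 e^{(1-t)b + ta}\,dt,
\]
applied with $a := x\alpha_l^\pm \varpi_l^\pm(\tau)$ and $b := x\alpha_l^\pm \varphi_l^\pm(\tau)$. Thanks to the decomposition \eqref{eg:LienVarpiVarphi}, namely $\varpi_l^\pm(\tau) - \varphi_l^\pm(\tau) = \tau^{2\mu+1}\xi_l^\pm(\tau)$ with $\xi_l^\pm$ bounded on $B(0,\varepsilon_1^\star)$, the prefactor $a-b$ becomes $x\alpha_l^\pm\tau^{2\mu+1}\xi_l^\pm(\tau)$, which already carries the desired algebraic factor $|\tau|^{2\mu+1}$ (and a factor $x\leq 2n$ that will become the $n$ in the conclusion, up to absorbing $|\alpha_l^\pm|$ and $\|\xi_l^\pm\|_\infty$ into the constant $C$).

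The main work is then to bound the integrand uniformly in $t\in[0,1]$. Taking the modulus gives an exponential whose real exponent is the convex combination
\[
(1-t)\,x\alpha_l^\pm\,\Re\bigl(\varphi_l^\pm(\tau)\bigr) \;+\; t\,x\alpha_l^\pm\,\Re\bigl(\varpi_l^\pm(\tau)\bigr).
\]
I would now apply Lemma \ref{lem:BorneVarphiVarpi}: inequality \eqref{in:varphi} controls the first summand and inequality \eqref{in:varpi} (dropping the non-negative term $|\alpha_l^\pm\xi_l^\pm(\tau)\tau^{2\mu+1}|$) controls the second, by the \emph{same} right-hand side $-\Re(\tau) + A_R\Re(\tau)^{2\mu} - A_I\Im(\tau)^{2\mu}$. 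Convexity then yields
\[
(1-t)\,\alpha_l^\pm\Re\bigl(\varphi_l^\pm(\tau)\bigr) + t\,\alpha_l^\pm\Re\bigl(\varpi_l^\pm(\tau)\bigr) \;\leq\; -\Re(\tau) + A_R\Re(\tau)^{2\mu} - A_I\Im(\tau)^{2\mu},
\]
and multiplying by $x$ gives a $t$-independent bound on the integrand.

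Putting these pieces together, the integral over $t\in[0,1]$ is bounded by $\exp\bigl(x(-\Re(\tau)+A_R\Re(\tau)^{2\mu}-A_I\Im(\tau)^{2\mu})\bigr)$, and using $x\leq 2n$ on the prefactor yields the desired inequality with $C := 2\max_l|\alpha_l^\pm|\,\|\xi_l^\pm\|_\infty$. There is no real obstacle here beyond choosing the right way to rewrite $a-b$ so that the interpolation between $\varphi_l^\pm$ and $\varpi_l^\pm$ in the exponent becomes a convex combination; the implicit hypothesis is $\tau\in B(0,\varepsilon_2^\star)$ so that \eqref{in:varpi} is available, which is the natural regime in which the lemma will be invoked later in the analysis of the integral \eqref{terme}.
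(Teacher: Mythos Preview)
Your proof is correct and follows the standard approach: the paper itself does not detail the proof but refers to \cite[Lemma 16]{CoeuLLT}, where the argument is essentially the same interpolation identity $e^a-e^b=(a-b)\int_0^1 e^{(1-t)b+ta}\,dt$ combined with the bounds \eqref{in:varphi} and \eqref{in:varpi} from Lemma~\ref{lem:BorneVarphiVarpi}. Your remark that the implicit hypothesis $\tau\in B(0,\varepsilon_2^\star)$ is needed for \eqref{in:varpi} is accurate, and your explicit constant (up to taking the maximum over both signs~$\pm$ as well as over~$l$) is correct.
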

	
	The proof of Lemma \ref{lem:PassageVarpi-Varphi} is similar to the proof of \cite[Lemma 16]{CoeuLLT} and will not be detailed here. We recall that the functions $\varpi_l^\pm$ and $\varphi_l^\pm$ respectively defined by \eqref{def:varpi} and \eqref{def:varphi} are linked by the equality \eqref{eg:LienVarpiVarphi}. Lemma \ref{lem:PassageVarpi-Varphi} will allow us to "extract" the principal part $\varphi_l^\pm$ of the function $\varpi_l^\pm$. This principal part will then appear in terms that can be studied using the following lemma.
	
	\begin{lemma}\label{lem:ComportementPrincGaussienne}
		We consider $l,l^\prime\in\lc1,\ppp,d\rc$ and $s,s^\prime\in \lc -,+\rc$. There exist two constants $C,c>0$ such that for all $n\in\N\backslash\lc0\rc$, we have:
		\begin{itemize}
			\item For $x,y\in [0,+\infty[$ such that $x+y\in\left[\frac{n}{2},2n\right]$ and $\Gamma\in X$
			\begin{subequations}
				\begin{multline}\label{lem:ComportementPrincGaussienneRes1}
					\left|\frac{1}{2i\pi}\int_\Gamma \exp\left(n\tau+x\alpha_l^s\varphi_l^s(\tau)+y\alpha_{l^\prime}^{s^\prime}\varphi_{l^\prime}^{s^\prime}(\tau)\right)d\tau-\frac{|\alpha_l^s|}{n^\frac{1}{2\mu}}H_{2\mu}\left(\frac{x}{n}\beta_l^s+\frac{y}{n}\beta_{l^\prime}^{s^\prime}\left(\frac{\alpha_l^s}{\alpha_{l^\prime}^{s^\prime}}\right)^{2\mu}; \frac{\alpha_l^s(n-(x+y))}{n^\frac{1}{2\mu}}\right)\right| \\ \leq Ce^{-cn}.
				\end{multline}
				\item For $x\in\left[\frac{n}{2},2n\right]$ and $\Gamma\in X$
				\begin{equation}\label{lem:ComportementPrincGaussienneRes2}
					\left|\frac{1}{2i\pi}\int_\Gamma \exp\left(n\tau+x\alpha_l^s\varphi_l^s(\tau)\right)d\tau-\frac{|\alpha_l^s|}{n^\frac{1}{2\mu}}H_{2\mu}\left(\beta_l^s; \frac{\alpha_l^s(n-x)}{n^\frac{1}{2\mu}}\right)\right|\leq \frac{C}{n^\frac{1}{\mu}}\exp\left(-c\left(\frac{|n-x|}{n^\frac{1}{2\mu}}\right)^\frac{2\mu}{2\mu-1}\right).
				\end{equation}
				\item For $x\in\left[\frac{n}{2},2n\right]$,
				\begin{equation}\label{lem:ComportementPrincGaussienneRes3}
					\left|\frac{1}{2i\pi}\int_{\Gamma_{in}(\eta)} \frac{\exp\left(n\tau+x\alpha_l^s\varphi_l^s(\tau)\right)}{\tau}d\tau-E_{2\mu}\left(\beta_l^s; \frac{-|\alpha_l^s|(n-x)}{n^\frac{1}{2\mu}}\right)\right|\leq \frac{C}{n^\frac{1}{2\mu}}\exp\left(-c\left(\frac{|n-x|}{n^\frac{1}{2\mu}}\right)^\frac{2\mu}{2\mu-1}\right).
				\end{equation}
			\end{subequations}
		\end{itemize}
	\end{lemma}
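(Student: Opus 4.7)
All three statements are of the same nature: an oscillatory contour integral whose phase is the truncated asymptotic expansion \eqref{def:varphi} is to be shown to agree at high order with the Fourier representation of a generalized Gaussian or of a generalized Gaussian error function. The plan is to substitute the explicit polynomial form $\varphi_l^?(\tau) = -\tau/\alpha_l^? + (-1)^{\mu+1}\beta_l^?/(\alpha_l^?)^{2\mu+1}\tau^{2\mu}$ into each integrand, deform the contour to a convenient vertical line (controlling the tails thanks to \eqref{in:varphi} and Lemma \ref{lem:BornesGaussiennes}), and identify the result with the target Fourier integrals by an appropriate rescaling.

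Concretely, one computes
\[ n\tau + x\alpha_l^?\varphi_l^?(\tau) + y\alpha_{l'}^{?'}\varphi_{l'}^{?'}(\tau) = (n-x-y)\tau + (-1)^{\mu+1}\Bigl[\tfrac{x\beta_l^?}{(\alpha_l^?)^{2\mu}} + \tfrac{y\beta_{l'}^{?'}}{(\alpha_{l'}^{?'})^{2\mu}}\Bigr]\tau^{2\mu}, \]
which on $\tau = i\xi$ becomes $i(n-x-y)\xi - [x\beta_l^?/(\alpha_l^?)^{2\mu} + y\beta_{l'}^{?'}/(\alpha_{l'}^{?'})^{2\mu}]\xi^{2\mu}$, a function whose real part is $\leq 0$ by Hypothesis \ref{H:F}. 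For \eqref{lem:ComportementPrincGaussienneRes1}, the rescaling $\xi = |\alpha_l^?| v / n^{1/(2\mu)}$ produces exactly the advertised Gaussian; replacing $\Gamma\in X$ by $i\R$ uses Cauchy's theorem (the integrand is entire) together with \eqref{in:varphi} and \eqref{lem:BornesGaussiennesRes3} applied with $x+y\in[n/2,2n]$ to control the tail contributions by $C e^{-cn}$. For \eqref{lem:ComportementPrincGaussienneRes2} (the case $y=0$), the same calculation produces $H_{2\mu}(x\beta_l^?/n;\,\cdot)$ rather than the target $H_{2\mu}(\beta_l^?;\,\cdot)$. The discrepancy is handled via
\[ H_{2\mu}(\beta_1;y) - H_{2\mu}(\beta_2;y) = (\beta_1-\beta_2)\int_0^1 \partial_\beta H_{2\mu}\bigl(\beta_2+s(\beta_1-\beta_2);y\bigr)\,ds, \]
where $\partial_\beta H_{2\mu}(\beta;y) = -\tfrac{1}{2\pi}\int u^{2\mu} e^{iyu} e^{-\beta u^{2\mu}} du$ enjoys the same generalized-Gaussian decay in $y$ as $H_{2\mu}$ itself (cf.\ \eqref{inH}). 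Since $|x\beta_l^?/n - \beta_l^?| = |\beta_l^?|\,|n-x|/n$, this produces a prefactor $|n-x|/n^{1+1/(2\mu)}$ times a Gaussian decay in $(n-x)/n^{1/(2\mu)}$, which can be absorbed into the final $O(n^{-1/\mu})$ estimate by the case distinction described below.

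For \eqref{lem:ComportementPrincGaussienneRes3}, the integrand carries an additional simple pole at $\tau=0$ of residue $1$. Closing $\Gamma_{in}(\eta)$ with the vertical segment traversed downwards from $-\eta+ir_\varepsilon(\eta)$ to $-\eta-ir_\varepsilon(\eta)$ produces a counterclockwise loop around the origin, and the residue theorem yields
\[ \frac{1}{2i\pi}\int_{\Gamma_{in}(\eta)}\frac{e^{n\tau+x\alpha_l^?\varphi_l^?(\tau)}}{\tau}d\tau = 1 + \frac{1}{2i\pi}\int_{\Gamma_d(\eta)}\frac{e^{n\tau+x\alpha_l^?\varphi_l^?(\tau)}}{\tau}d\tau. \]
The second integral is extended to $-\eta+i\R$ with exponentially small tail error; it is then compared to $-E_{2\mu}\bigl(\beta_l^?;\,|\alpha_l^?|(n-x)/n^{1/(2\mu)}\bigr)$, the ``$+1$'' coming from the residue being absorbed via the symmetry $E_{2\mu}(\beta;-y) = 1 - E_{2\mu}(\beta;y)$ to produce the target $E_{2\mu}\bigl(\beta_l^?;\,-|\alpha_l^?|(n-x)/n^{1/(2\mu)}\bigr)$. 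The comparison itself can be reduced to part \eqref{lem:ComportementPrincGaussienneRes2} by writing $1/\tau$ as an integral in an auxiliary parameter (e.g.\ $-\int_n^{+\infty} e^{(m-n)\tau}dm$ is valid on $\Gamma_d(\eta)$ since $\Re\tau=-\eta<0$), which reexpresses the integral as $-\int_n^{+\infty}$ of the Part-2-type integrals; the factor $1/\tau$ ultimately yields a loss of $n^{1/(2\mu)}$ in the error rate compared to Part 2, producing the claimed $n^{-1/(2\mu)}$ bound.

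The main technical obstacle lies in \eqref{lem:ComportementPrincGaussienneRes2}: the naive estimate on the $\beta$-derivative difference gives a prefactor $|n-x|/n^{1+1/(2\mu)}$, and to upgrade this to the uniform $n^{-1/\mu}$ rate one must split into the two regimes $|n-x|\lesssim n^{1-1/(2\mu)}$ (where the algebraic bound suffices since $|n-x|/n^{1+1/(2\mu)}\leq n^{-1/\mu}$) and $|n-x|\gg n^{1-1/(2\mu)}$ (where the algebraic factor of $|n-x|$ is absorbed into the generalized Gaussian decay using the standard inequality $t^k e^{-ct^q}\leq C_{c'}e^{-c't^q}$ valid for any $c'<c$ and any $k\geq 0$). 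The analogous loss in \eqref{lem:ComportementPrincGaussienneRes3} is of one extra power of $n^{1/(2\mu)}$, which comes from integrating the $1/\tau$ factor and must be tracked carefully together with the orientation and sign conventions for the residue.
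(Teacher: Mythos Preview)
Your proposal is correct. For \eqref{lem:ComportementPrincGaussienneRes1} it coincides with the paper's argument (deform to $i\R$, control the connecting segments via \eqref{in:varphi} and \eqref{condEta2Reformulee}, rescale). For \eqref{lem:ComportementPrincGaussienneRes2} and \eqref{lem:ComportementPrincGaussienneRes3} you take a different route from the paper. In \eqref{lem:ComportementPrincGaussienneRes2} the paper rescales by $x^{1/(2\mu)}$ rather than $n^{1/(2\mu)}$, landing on $\frac{|\alpha_l^?|}{x^{1/(2\mu)}}H_{2\mu}\bigl(\beta_l^?;\alpha_l^?(n-x)/x^{1/(2\mu)}\bigr)$ with the \emph{first} argument already correct; the remaining correction is in the spatial argument and the prefactor, handled by the mean value inequality on $\partial_y H_{2\mu}$ and on $t\mapsto t^{-1/(2\mu)}$. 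Your choice to rescale by $n^{1/(2\mu)}$ and correct the $\beta$-argument instead is equally valid; just note that the decay of $\partial_\beta H_{2\mu}$ you invoke is not literally \eqref{inH} but follows from it via $\partial_\beta H_{2\mu}=(-1)^{\mu+1}\partial_y^{2\mu}H_{2\mu}$, and that the interpolated parameters $s\beta_l^?$, $s\in[1/2,2]$, lie in a fixed compact of $\{\Re z>0\}$ so Lemma~\ref{lemHE} applies uniformly. In \eqref{lem:ComportementPrincGaussienneRes3} the paper avoids the residue entirely by keeping the contour to the \emph{right} of the pole: it extends $\Gamma_{in}^0(\eta)$ to the full line $\eta+i\R$ and invokes the identity $-\frac{1}{2i\pi}\int_\R\frac{\exp(iy(u+is)-\beta(u+is)^{2\mu})}{u+is}\,du=E_{2\mu}(\beta;y)$ for $s>0$, which produces the target directly after the $x^{1/(2\mu)}$ rescaling and a final $x\leftrightarrow n$ correction as in Part~2. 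Your residue-plus-Laplace route (pass to $-\eta+i\R$, write $1/\tau=-\int_0^\infty e^{s\tau}ds$, integrate Part-2 estimates in $s$) also works, but the reduction is not quite to \eqref{lem:ComportementPrincGaussienneRes2} as stated: for large $s$ the ratio $x/(n+s)$ drops below $1/2$ and you must switch to \eqref{lem:BornesGaussiennesRes1} (giving $O(e^{-c(n+s)})$) on that range. The paper's route is shorter here because the $E_{2\mu}$ identity already packages this integration.
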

	
	The proof of Lemma \ref{lem:ComportementPrincGaussienne} is a summary of calculations performed in \cite{CoeuLLT,CoeuIBVP} and will be done in the Appendix (Section \ref{sec:Appendix}). Let us observe that there is no condition on the paths of integration in \eqref{lem:ComportementPrincGaussienneRes1} and \eqref{lem:ComportementPrincGaussienneRes2}. However, since the integrand is only meromorphic in \eqref{lem:ComportementPrincGaussienneRes1}, we only consider the path $\Gamma_{in}(\eta)$.
	
	\subsubsection{Outgoing and incoming waves}
	
	We will start by looking at the outgoing and incoming waves by proving the following lemma. 
	
	\begin{lemma}\label{lem:OndesPropag}
		We consider $m\in\lc1,\ppp,d(p+q)\rc$ and write it as $m=l+(k-1)d$ with $k\in\lc1,\ppp,p+q\rc$ and $l\in\lc1,\ppp,d\rc$. There exists a constant $c>0$ such that for all $n\in \N\backslash\lc0\rc$, $j_0,j\in\N$ such that $j-j_0\in\lc-nq,\ppp,np\rc$ and $\textbf{e}\in\C^d$ we have:
		
		\begin{subequations}
			$\bullet$ If $m\in I_{cs}^+\cup I_{cu}^+$ and $\frac{j-j_0}{\alpha_l^+}\in \left[\frac{n}{2},2n\right]$, we have that:
			\begin{multline}\label{lem:OndesPropagRes1}
				-\frac{\mathrm{sgn}(\alpha_l^+)}{2i\pi}\int_{\Gamma_{in}(\eta)} e^{n\tau} e^\tau\widetilde{\Ccc}_m^+(e^\tau,j_0,\textbf{e})\Pi(W_m^+(e^\tau,j))d\tau -S^+_l(n,j_0,j)\textbf{e} \\=  \exp\left(-c\left(\frac{\left|n-\left(\frac{j-j_0}{\alpha_l^+}\right)\right|}{n^\frac{1}{2\mu}}\right)^\frac{2\mu}{2\mu-1}\right)\left(O\left(\frac{|\textbf{e}|e^{-c|j|}}{n^\frac{1}{2\mu}}\right)+O_\C\left(\frac{|\textbf{e}|e^{-c|j_0|}}{n^\frac{1}{2\mu}}\right)\rg_l^+ + O_\C\left(\frac{1}{n^\frac{1}{\mu}}\right){\lg_l^+}^T\textbf{e}\rg_l^+\right).
			\end{multline}
			
			$\bullet$ If $m\in I_{cs}^+\cup  I_{cu}^+$, $\frac{j-j_0}{\alpha_l^+}\notin \left[\frac{n}{2},2n\right]$ and $\frac{j-j_0}{\alpha_l^+}\geq 0$, we have that:
			\begin{equation}\label{lem:OndesPropagRes2}
				-\frac{\mathrm{sgn}(\alpha_l^+)}{2i\pi}\int_{\Gamma_{in}(\eta)} e^{n\tau} e^\tau\widetilde{\Ccc}_m^+(e^\tau,j_0,\textbf{e})\Pi(W_m^+(e^\tau,j))d\tau -S^+_l(n,j_0,j)\textbf{e} = O\left(|\textbf{e}|e^{-cn}\right).
			\end{equation}
			
			$\bullet$ If $m\in I_{ss}^+$ and $j\geq j_0+1$ or if $m\in I_{su}^+$ and $j\in\lc0,\ppp,j_0\rc$, we have that:
			\begin{equation}\label{lem:OndesPropagRes3}
				\frac{1}{2i\pi}\int_{\Gamma_{in}(\eta)} e^{n\tau} e^\tau\widetilde{\Ccc}_m^+(e^\tau,j_0,\textbf{e})\Pi(W_m^+(e^\tau,j))d\tau =O\left(|\textbf{e}|e^{-cn}\right).
			\end{equation}
		\end{subequations}
	\end{lemma}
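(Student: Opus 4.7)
The argument unfolds in three parallel cases. In all of them I begin by the same unpacking step: using $\zeta_m^+(e^\tau)=e^{\varpi_l^+(\tau)}$ from \eqref{def:varpi}, the identity \eqref{eg:CccTilde}, and $W_m^+(e^\tau,j)=\zeta_m^+(e^\tau)^j V_m^+(e^\tau,j)$, the integrand rewrites as
\begin{equation*}
e^{n\tau+(j-j_0-1)\varpi_l^+(\tau)+\tau}\,\Delta_m^+(e^\tau,j_0,\vec{e})\,\Pi(V_m^+(e^\tau,j)),
\end{equation*}
where the leading exponential concentrates all the oscillation, and the remaining factors converge, by Lemma \ref{lem:Delta}, respectively to ${\zeta_m^+}'(e^\tau)(\lg_l^+)^T\vec{e}$ and to $\Pi(R_m^+(e^\tau))=\rg_l^+$ as $j_0,j\to+\infty$.

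Case 3 is handled by contour deformation. For $m\in I_{ss}^+\cup I_{su}^+$, the bounds \eqref{inZetaSs}--\eqref{inZetaSu} force $|\zeta_m^+(e^\tau)|$ to stay bounded away from $1$ uniformly in $B(0,\varepsilon)$, so the integrand is holomorphic there and $\Gamma_{in}(\eta)$ may be replaced by $\Gamma_d(\eta)$. On the deformed contour $|e^{n\tau}|=e^{-n\eta}$, and the prescribed ranges of $j,j_0$ (namely $j-j_0-1\geq 0$ when $m\in I_{ss}^+$ and $j-j_0-1\leq -1$ when $m\in I_{su}^+$) force $(j-j_0-1)\Re\varpi_l^+(\tau)\leq 0$ uniformly in $\tau\in\Gamma_d(\eta)$. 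Combined with the uniform boundedness of $\Delta_m^+$ and $V_m^+$ from Lemma \ref{lem:Delta}, this directly yields \eqref{lem:OndesPropagRes3}.

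For the central eigenvalue cases I set $x:=(j-j_0)/\alpha_l^+$ and rewrite $(j-j_0-1)\varpi_l^+(\tau)=\alpha_l^+ x\,\varpi_l^+(\tau)-\varpi_l^+(\tau)$; the factor $e^{-\varpi_l^+(\tau)+\tau}$ is then holomorphic and equal to $1$ at $\tau=0$, and the principal oscillation reduces to $\exp(n\tau+\alpha_l^+ x\,\varphi_l^+(\tau))$ after invoking Lemma \ref{lem:PassageVarpi-Varphi} to replace $\varpi_l^+$ by $\varphi_l^+$ at a cost of $n|\tau|^{2\mu+1}$. In Case 2, $S_l^+$ vanishes by the indicator in \eqref{def:S+}, and either \eqref{lem:BornesGaussiennesRes1} (if $x\in[0,n/2]$) or \eqref{lem:BornesGaussiennesRes2} (if $x\geq 2n$) of Lemma \ref{lem:BornesGaussiennes} immediately gives \eqref{lem:OndesPropagRes2}. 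In Case 1, Lemma \ref{lem:Delta} is used to substitute $\Delta_m^+(e^\tau,j_0,\vec{e})$ by ${\zeta_m^+}'(e^\tau)(\lg_l^+)^T\vec{e}$ (at a cost $e^{-c|j_0|}|\vec{e}|$) and $\Pi(V_m^+(e^\tau,j))$ by $\rg_l^+$ (at a cost $e^{-c|j|}$). Evaluating the resulting holomorphic prefactor at $\tau=0$ and using that differentiating $\Fc_l^+(\zeta_m^+(z))=z$ at $z=1$ combined with \eqref{eg:FcFin} yields ${\zeta_m^+}'(1)=-1/\alpha_l^+$, the principal integral becomes $-\frac{1}{\alpha_l^+}\,\rg_l^+(\lg_l^+)^T\vec{e}\cdot\frac{1}{2i\pi}\int_\Gamma\exp(n\tau+\alpha_l^+ x\,\varphi_l^+(\tau))\,d\tau$. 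Equation \eqref{lem:ComportementPrincGaussienneRes2} of Lemma \ref{lem:ComportementPrincGaussienne} evaluates this as $|\alpha_l^+|/n^{1/2\mu}\,H_{2\mu}(\beta_l^+;\alpha_l^+(n-x)/n^{1/2\mu})$ up to a generalised-Gaussian remainder, and combining with the outer $-\mathrm{sgn}(\alpha_l^+)$ reproduces $S_l^+(n,j_0,j)\vec{e}$ exactly.

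The main technical obstacle is organising the error terms in Case 1 so that each one falls into the correct term in \eqref{lem:OndesPropagRes1}: the $e^{-c|j|}|\vec{e}|/n^{1/2\mu}$ piece arises from replacing $V_m^+$ by $R_m^+$, the $e^{-c|j_0|}|\vec{e}|/n^{1/2\mu}\rg_l^+$ piece from the asymptotic of $\Delta_m^+$, and the $O(1/n^{1/\mu})\,\rg_l^+(\lg_l^+)^T\vec{e}$ piece from the $n|\tau|^{2\mu+1}$ loss in Lemma \ref{lem:PassageVarpi-Varphi}. Each remainder, once multiplied by $\exp(n\tau+\alpha_l^+ x\,\varphi_l^+(\tau))$ and bounded via the real-part inequality \eqref{in:varphi} of Lemma \ref{lem:BorneVarphiVarpi}, yields an integral of the form $\int_\Gamma|\tau|^k\exp(n\Re\tau+x(-\Re\tau+A_R\Re\tau^{2\mu}-A_I\Im\tau^{2\mu}))|d\tau|$ for suitable $k$, which \eqref{lem:BornesGaussiennesRes3} of Lemma \ref{lem:BornesGaussiennes} majorises by $Cn^{-(k+1)/2\mu}\exp(-c(|n-x|/n^{1/2\mu})^{2\mu/(2\mu-1)})$, matching precisely the generalised-Gaussian envelope on the right-hand side of \eqref{lem:OndesPropagRes1}.
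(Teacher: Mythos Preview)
Your proof follows the same decomposition and estimates as the paper's: the same unpacking via \eqref{eg:CccTilde}, the same sequence of replacements ($V_m^+\to R_m^+$, $\Delta_m^+\to{\zeta_m^+}'(\lg_l^+)^T\vec{e}$, prefactor evaluated at $\tau=0$, $\varpi_l^+\to\varphi_l^+$), and the same appeals to Lemmas~\ref{lem:BornesGaussiennes}, \ref{lem:PassageVarpi-Varphi} and \ref{lem:ComportementPrincGaussienne}.

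Two small technical slips to clean up. In Case~3 the identity $\zeta_m^+(e^\tau)=e^{\varpi_l^+(\tau)}$ from \eqref{def:varpi} is only defined for \emph{central} eigenvalues, so for $m\in I_{ss}^+\cup I_{su}^+$ you should bound $|\zeta_m^+(e^\tau)|^{j-j_0-1}$ directly via \eqref{inZetaSs} or \eqref{inZetaSu} (as the paper does) rather than writing $\Re\varpi_l^+$. In Case~2, Lemma~\ref{lem:PassageVarpi-Varphi} requires $x\in[0,2n]$, so when $x\geq 2n$ you cannot first pass from $\varpi_l^+$ to $\varphi_l^+$; the paper instead bounds the integrand directly using \eqref{in:varpi} on $\varpi_l^+$ and then applies \eqref{lem:BornesGaussiennesRes2}. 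Both are easy fixes and do not affect the structure of your argument.
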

	
	\begin{proof}
		$\bullet$ We start by proving \eqref{lem:OndesPropagRes1}. The proofs of \eqref{lem:OndesPropagRes2} and \eqref{lem:OndesPropagRes3} will be done afterwards as they are fairly less complicated. We consider $m\in I_{cs}^+\cup I_{cu}^+$ such that $\frac{j-j_0}{\alpha_l^+}\in \left[\frac{n}{2},2n\right]$.
		
		Using the expressions of $W_m^+$ and $\widetilde{\Ccc}_m^+$ given respectively by Lemma \ref{lem_choice_base} and \eqref{eg:CccTilde}, we have using Cauchy's formula that for any $\Gamma \in X$
		\begin{equation}\label{lem:OndesPropag1}
			\int_{\Gamma_{in}(\eta)} e^{n\tau} e^\tau\widetilde{\Ccc}_m^+(e^\tau,j_0,\textbf{e})\Pi(W_m^+(e^\tau,j))d\tau=\int_{\Gamma} e^{n\tau} \zeta_m^+(e^\tau)^{j-j_0-1} e^\tau\Delta_m^+(e^\tau,j_0,\textbf{e})\Pi(V_m^+(e^\tau,j))d\tau.
		\end{equation}
		Using Cauchy's formula once again, \eqref{def:varpi} since the eigenvalue we consider is central and the definition \eqref{def:S+} of the function $S_l^+$, we then have that
		\begin{equation}\label{lem:OndesPropag2}
			-\frac{\mathrm{sgn}(\alpha_l^+)}{2i\pi}\int_{\Gamma_{in}(\eta)} e^{n\tau} e^\tau\widetilde{\Ccc}_m^+(e^\tau,j_0,\textbf{e})\Pi(W_m^+(e^\tau,j))d\tau -S^+_l(n,j_0,j)\textbf{e}\\ =  E_1 +E_2 \rg_l^+ + \left(E_3  +E_4+E_5  \right){\lg_l^+}^T\textbf{e} \rg_l^+
		\end{equation}
		where $E_1$ is a vector and $E_2,\ppp,E_5$ are complex scalars defined by
		\begin{align*}
			E_1 &= -\frac{\mathrm{sgn}(\alpha_l^+)}{2i\pi}\int_{\Gamma_1} e^{n\tau} e^{(j-j_0-1)\varpi_l^+(\tau)} e^\tau\Delta_m^+(e^\tau,j_0,\textbf{e})\Pi(V_m^+(e^\tau,j)-R_m^+(e^\tau))d\tau, \\
			E_2 &= -\frac{\mathrm{sgn}(\alpha_l^+)}{2i\pi}\int_{\Gamma_2} e^{n\tau} e^{(j-j_0-1)\varpi_l^+(\tau)} e^\tau\left(\Delta_m^+(e^\tau,j_0,\textbf{e})- \frac{d\zeta_m^+}{dz}(e^\tau) {\lg_l^+}^T \textbf{e}\right)d\tau, \\
			E_3 &= -\frac{\mathrm{sgn}(\alpha_l^+)}{2i\pi}\int_{\Gamma_3} e^{n\tau} e^{(j-j_0)\varpi_l^+(\tau)} \left(e^\tau\zeta_m^+(e^\tau)^{-1}\frac{d\zeta_m^+}{dz}(e^\tau) +\frac{1}{\alpha_l^+}\right)d\tau, \\
			E_4 &= \frac{1}{2i\pi|\alpha_l^+|}\int_{\Gamma_4} e^{n\tau} \left(e^{(j-j_0)\varpi_l^+(\tau)}- e^{(j-j_0)\varphi_l^+(\tau)}\right)d\tau,\\
			E_5 &=\frac{1}{2i\pi|\alpha_l^+|}\int_{\Gamma_5} e^{n\tau} e^{(j-j_0)\varphi_l^+(\tau)}d\tau - \frac{1}{n^\frac{1}{2\mu}} H_{2\mu}\left(\beta_l^+;\frac{n\alpha_l^++j_0-j}{n^\frac{1}{2\mu}}\right),
		\end{align*}
		and $\Gamma_1,\ppp,\Gamma_5$ are paths belonging to the set $X$ defined at the beginning of Section \ref{subsec:LemmesGauss}. We just have to prove correct bounds on the terms $E_1,\ppp,E_5$ appearing in \eqref{lem:OndesPropag2} to obtain \eqref{lem:OndesPropagRes1}. In particular, we will use good choices of paths $\Gamma_1,\ppp,\Gamma_5$ to optimize the bounds using Lemma \ref{lem:BornesGaussiennes}.
		
		$\blacktriangleright$ Using \eqref{in:Delta+}, \eqref{in:varpi} and Lemma \ref{lem_choice_base} which claims that the vectors $V_m^+(z,j)$ converge exponentially fast towards $R_m^+(e^\tau)$, we have that there exist two positive constants $C,c$ independent from $n$, $j_0$, $j$ and $\textbf{e}$ such that 
		$$|E_1| \leq Ce^{-c|j|}|\textbf{e}| \int_{\Gamma_1}\exp\left(n\Re(\tau)+\left(\frac{j-j_0}{\alpha_l^+}\right)\left(-\Re(\tau)+A_R\Re(\tau)^{2\mu}-A_I\Im(\tau)^{2\mu}\right)\right)|d\tau|.$$
		
		$\blacktriangleright$ Using \eqref{in:DeltaInf+} and \eqref{in:varpi}, we have that there exist two positive constants $C,c$ independent from $n$, $j_0$, $j$ and $\textbf{e}$ such that 
		$$|E_2| \leq Ce^{-c|j_0|}|\textbf{e}| \int_{\Gamma_2}\exp\left(n\Re(\tau)+\left(\frac{j-j_0}{\alpha_l^+}\right)\left(-\Re(\tau)+A_R\Re(\tau)^{2\mu}-A_I\Im(\tau)^{2\mu}\right)\right)|d\tau|.$$
		
		$\blacktriangleright$ We notice that $\zeta_m^+(1)=1$ and $\frac{d\zeta_m^+}{dz}(1)=-\frac{1}{\alpha_l^+}$. Using a Taylor expansion and \eqref{in:varpi}, we have that there exists a positive constant $C$ independent from $n$, $j_0$ and $j$ such that 
		$$|E_3| \leq C \int_{\Gamma_3}|\tau|\exp\left(n\Re(\tau)+\left(\frac{j-j_0}{\alpha_l^+}\right)\left(-\Re(\tau)+A_R\Re(\tau)^{2\mu}-A_I\Im(\tau)^{2\mu}\right)\right)|d\tau|.$$
		
		$\blacktriangleright$ Since $\frac{j-j_0}{\alpha_l^+}\in\left[\frac{n}{2},2n\right]$, we can use Lemma \ref{lem:PassageVarpi-Varphi} and prove that there exists a positive constant $C$ independent from $n$, $j_0$ and $j$ such that
		$$|E_4| \leq C n \int_{\Gamma_4}|\tau|^{2\mu+1}\exp\left(n\Re(\tau)+\left(\frac{j-j_0}{\alpha_l^+}\right)\left(-\Re(\tau)+A_R\Re(\tau)^{2\mu}-A_I\Im(\tau)^{2\mu}\right)\right)|d\tau|.$$
		
		Using Lemma \ref{lem:BornesGaussiennes} which gives a good choices of path $\Gamma_1,\ppp,\Gamma_4\in X$ depending on $n$, $j_0$ and $j$ to handle the integrals in the terms above as well as Lemma \ref{lem:ComportementPrincGaussienne} to take care of the term $E_5$, there exist new constants $C,c>0$ independent from $n$, $j_0$, $j$ and $\textbf{e}$ such that 
		\begin{align*}
			|E_1|& \leq \frac{Ce^{-c|j|}|\textbf{e}|}{n^\frac{1}{2\mu}} \exp\left(-c\left(\frac{\left|n-\left(\frac{j-j_0}{\alpha_l^+}\right)\right|}{n^\frac{1}{2\mu}}\right)^\frac{2\mu}{2\mu-1}\right), & |E_2|& \leq \frac{Ce^{-c|j_0|}|\textbf{e}|}{n^\frac{1}{2\mu}} \exp\left(-c\left(\frac{\left|n-\left(\frac{j-j_0}{\alpha_l^+}\right)\right|}{n^\frac{1}{2\mu}}\right)^\frac{2\mu}{2\mu-1}\right) ,\\
			|E_3| &\leq \frac{C}{n^\frac{1}{\mu}} \exp\left(-c\left(\frac{\left|n-\left(\frac{j-j_0}{\alpha_l^+}\right)\right|}{n^\frac{1}{2\mu}}\right)^\frac{2\mu}{2\mu-1}\right),& |E_4| &\leq \frac{C}{n^\frac{1}{\mu}} \exp\left(-c\left(\frac{\left|n-\left(\frac{j-j_0}{\alpha_l^+}\right)\right|}{n^\frac{1}{2\mu}}\right)^\frac{2\mu}{2\mu-1}\right), \\
			|E_5| &\leq \frac{C}{n^\frac{1}{\mu}} \exp\left(-c\left(\frac{\left|n-\left(\frac{j-j_0}{\alpha_l^+}\right)\right|}{n^\frac{1}{2\mu}}\right)^\frac{2\mu}{2\mu-1}\right). &&
		\end{align*}
		We have thus obtained \eqref{lem:OndesPropagRes1}.
		
		$\bullet$ We now focus on \eqref{lem:OndesPropagRes2}. We thus consider $m\in I_{cs}^+$ and $j\geq j_0+1$ or $m\in I_{cu}^+$ and $j\in\lc0,\ppp, j_0\rc$ such that $\frac{j-j_0}{\alpha_l^+}\notin \left[\frac{n}{2},2n\right]$. We observe that in particular, $S_l^+(n,j_0,j)=0$. Using \eqref{lem:OndesPropag1}, \eqref{def:varpi} since the eigenvalue we consider is central, Lemma \ref{lem_choice_base} which claims that the vectors $V_m^+(z,j)$ are uniformly bounded for $z\in B(1,\delta_1)$ and $j\in\N$, \eqref{in:Delta+} and \eqref{in:varpi}, there exists a positive constant $C$ independent from $n$, $j_0$, $j$ and $\textbf{e}$ such that for all $\Gamma\in X$
		\begin{multline*}
			\left|-\frac{\mathrm{sgn}(\alpha_l^+)}{2i\pi}\int_{\Gamma_{in}(\eta)} e^{n\tau} e^\tau\widetilde{\Ccc}_m^+(e^\tau,j_0,\textbf{e})\Pi(W_m^+(e^\tau,j))d\tau -S^+_l(n,j_0,j)\textbf{e}\right| \\ \leq C|\textbf{e}| \int_{\Gamma}\exp\left(n\Re(\tau)+\left(\frac{j-j_0}{\alpha_l^+}\right)\left(-\Re(\tau)+A_R\Re(\tau)^{2\mu}-A_I\Im(\tau)^{2\mu}\right)\right)|d\tau|.
		\end{multline*}
		We then use Lemma \ref{lem:BornesGaussiennes} to prove that there exist two new positive constants $C,c$ independent from $n$, $j_0$, $j$ and $\textbf{e}$ such that
		$$ \left|-\frac{\mathrm{sgn}(\alpha_l^+)}{2i\pi}\int_{\Gamma_{in}(\eta)} e^{n\tau} e^\tau\widetilde{\Ccc}_m^+(e^\tau,j_0,\textbf{e})\Pi(W_m^+(e^\tau,j))d\tau -S^+_l(n,j_0,j)\textbf{e}\right|\leq C|\textbf{e}|e^{-cn}.$$
		
		$\bullet$ We now focus on \eqref{lem:OndesPropagRes3}. We will consider the case where the integer $m$ belongs to $I_{ss}^+$ and $j\geq j_0+1$. The second case considered in \eqref{lem:OndesPropagRes3} would be handled similarly. Using \eqref{lem:OndesPropag1}, Lemma \ref{lem_choice_base} which claims that the vectors $V_m^+(z,j)$ are uniformly bounded for $z\in B(1,\delta_1)$ and $j\in\N$, \eqref{in:Delta+} and \eqref{inZetaSs} whilst noticing that $j\geq j_0+1$, there exists a positive constant $C$ independent from $n$, $j_0$, $j$ and $\textbf{e}$ such that
		\begin{align*}
			\left|\frac{1}{2i\pi}\int_{\Gamma_{in}(\eta)} e^{n\tau}e^\tau \widetilde{\Ccc}_m^+(e^\tau,j_0,\textbf{e}) \Pi(W_m^+(e^\tau,j))d\tau\right| &\leq C|\textbf{e}|e^{-2c_*|j-j_0|} \int_{\Gamma_d(\eta)}e^{n\Re(\tau)}|d\tau|\\
			& \leq 2r_\varepsilon(\eta)C|\textbf{e}|e^{-n\eta-2c_*|j-j_0|} .
		\end{align*}
	\end{proof}
	
	\subsubsection{Reflected waves}
	
	We now look at the reflected waves.
	
	\begin{lemma}\label{lem:OndesReflechies}
		We consider $m\in \lc2, \ppp,dp\rc$ and $m^\prime\in \lc dp+1,\ppp,d(p+q)\rc$ and write them as $m=l+(k-1)d$ and $m^\prime=l^\prime+(k^\prime-1)d$ with $k,k^\prime\in\lc1,\ppp,p+q\rc$ and $l,l^\prime\in\lc1,\ppp,d\rc$. There exists a positive constant $c$ such that for all $n\in \N\backslash\lc0\rc$, $j_0,j\in \N$ such that $j-j_0\in\lc-nq,\ppp,np\rc$ and $\textbf{e}\in\C^d$, we have:
		
		\begin{subequations}
			
			$\bullet$ If $m\in I_{cs}^+$, $m^\prime\in I_{cu}^+$ and $\frac{j}{\alpha_l^+}-\frac{j_0}{\alpha_{l^\prime}^+}\in\left[\frac{n}{2},2n\right]$, we have that:
			\begin{multline}\label{lem:OndesReflechies:CsCu1}
				-\frac{1}{2i\pi}\int_{\Gamma_{in}(\eta)} e^{n\tau} e^\tau\widetilde{g}_{m^\prime,m}^+(e^\tau)\widetilde{\Ccc}_m^+(e^\tau,j_0,\textbf{e})\Pi(W_m^+(e^\tau,j))d\tau  - \frac{\alpha_l^+}{\alpha_{l^\prime}^+}\widetilde{g}_{m^\prime,m}^+(1)R_{l^\prime,l}^+(n,j_0,j)\textbf{e}\\ =  \exp\left(-c\left(\frac{\left|n-\left(\frac{j}{\alpha_l^+}-\frac{j_0}{\alpha_{l^\prime}^+}\right)\right|}{n^\frac{1}{2\mu}}\right)^\frac{2\mu}{2\mu-1}\right)\left(O\left(\frac{|\textbf{e}|e^{-c|j|}}{n^\frac{1}{2\mu}}\right)+O_\C\left(\frac{|\textbf{e}|e^{-c|j_0|}}{n^\frac{1}{2\mu}}\right)\rg_l^+ + O_\C\left(\frac{1}{n^\frac{1}{\mu}}\right){\lg_{l^\prime}^+}^T\textbf{e}\rg_l^+\right).
			\end{multline}
			
			$\bullet$ If $m\in I_{cs}^+$, $m^\prime\in I_{cu}^+$ and $\frac{j}{\alpha_l^+}-\frac{j_0}{\alpha_{l^\prime}^+}\notin\left[\frac{n}{2},2n\right]$, we have that:
			\begin{equation}\label{lem:OndesReflechies:CsCu2}
				-\frac{1}{2i\pi}\int_{\Gamma_{in}(\eta)} e^{n\tau} e^\tau\widetilde{g}_{m^\prime,m}^+(e^\tau)\widetilde{\Ccc}_m^+(e^\tau,j_0,\textbf{e})\Pi(W_m^+(e^\tau,j))d\tau  - \frac{\alpha_l^+}{\alpha_{l^\prime}^+}\widetilde{g}_{m^\prime,m}^+(1)R_{l^\prime,l}^+(n,j_0,j)\textbf{e}=  O(|\textbf{e}|e^{-cn}).
			\end{equation}
			
			$\bullet$ If $m\in I_{ss}^+$, $m^\prime\in I_{cu}^+$ and $-\frac{j_0}{\alpha_{l^\prime}^+}\in\left[\frac{n}{2},2n\right]$, we have that:
			\begin{equation}\label{lem:OndesReflechies:SsCu1}
				-\frac{1}{2i\pi}\int_{\Gamma_{in}(\eta)} e^{n\tau} e^\tau\widetilde{g}^+_{m^\prime,m}(e^\tau)\widetilde{\Ccc}_m^+(e^\tau,j_0,\textbf{e})\Pi(W_m^+(e^\tau,j))d\tau =  O\left(\frac{|\textbf{e}|e^{-c|j|}}{n^\frac{1}{2\mu}}\exp\left(-c\left(\frac{\left|n+\frac{j_0}{\alpha_{l^\prime}^+}\right|}{n^\frac{1}{2\mu}}\right)^\frac{2\mu}{2\mu-1}\right)\right).
			\end{equation}
			
			$\bullet$ If $m\in I_{ss}^+$, $m^\prime\in I_{cu}^+$ and $-\frac{j_0}{\alpha_{l^\prime}^+}\notin\left[\frac{n}{2},2n\right]$, we have that:
			\begin{equation}\label{lem:OndesReflechies:SsCu2}
				-\frac{1}{2i\pi}\int_{\Gamma_{in}(\eta)} e^{n\tau} e^\tau\widetilde{g}^+_{m^\prime,m}(e^\tau)\widetilde{\Ccc}_m^+(e^\tau,j_0,\textbf{e})\Pi(W_m^+(e^\tau,j))d\tau =  O\left(|\textbf{e}|e^{-cn}\right).
			\end{equation}
			
			$\bullet$ If $m\in I_{cs}^+$, $m^\prime\in I_{su}^+$ and $\frac{j}{\alpha_l^+}\in\left[\frac{n}{2},2n\right]$, we have that:
			\begin{multline}\label{lem:OndesReflechies:CsSu1}
				-\frac{1}{2i\pi}\int_{\Gamma_{in}(\eta)} e^{n\tau} e^\tau\widetilde{g}^+_{m^\prime,m}(e^\tau)\widetilde{\Ccc}_m^+(e^\tau,j_0,\textbf{e})\Pi(W_m^+(e^\tau,j))d\tau \\ = O(|\textbf{e}|e^{-cn})+O_\C\left(\frac{|\textbf{e}|e^{-c|j_0|}}{n^\frac{1}{2\mu}}\exp\left(-c\left(\frac{\left|n-\frac{j}{\alpha_l^+}\right|}{n^\frac{1}{2\mu}}\right)^\frac{2\mu}{2\mu-1}\right)\right)\rg_l^+.
			\end{multline}
			
			$\bullet$ If $m\in I_{cs}^+$, $m^\prime\in I_{su}^+$ and $\frac{j}{\alpha_l^+}\notin\left[\frac{n}{2},2n\right]$, we have that:
			\begin{equation}\label{lem:OndesReflechies:CsSu2}
				-\frac{1}{2i\pi}\int_{\Gamma_{in}(\eta)} e^{n\tau} e^\tau\widetilde{g}^+_{m^\prime,m}(e^\tau)\widetilde{\Ccc}_m^+(e^\tau,j_0,\textbf{e})\Pi(W_m^+(e^\tau,j))d\tau =  O\left(|\textbf{e}|e^{-cn}\right).
			\end{equation}
			
			$\bullet$ If $m\in I_{ss}^+$, $m^\prime\in I_{su}^+$, we have that:
			\begin{equation}\label{lem:OndesReflechies:SsSu}
				-\frac{1}{2i\pi}\int_{\Gamma_{in}(\eta)} e^{n\tau} e^\tau\widetilde{g}^+_{m^\prime,m}(e^\tau)\widetilde{\Ccc}_m^+(e^\tau,j_0,\textbf{e})\Pi(W_m^+(e^\tau,j))d\tau =  O\left(|\textbf{e}|e^{-cn}\right).
			\end{equation}
			
		\end{subequations}
	\end{lemma}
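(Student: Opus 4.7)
\noindent\textbf{Proof plan for Lemma \ref{lem:OndesReflechies}.} The overall strategy mirrors the one used in Lemma \ref{lem:OndesPropag}. After substituting $\widetilde{\Ccc}^+_{m^\prime}(e^\tau,j_0,\vec{e})=\zeta_{m^\prime}^+(e^\tau)^{-j_0-1}\Delta_{m^\prime}^+(e^\tau,j_0,\vec{e})$ (equation \eqref{eg:CccTilde}) and $W^+_m(e^\tau,j)=\zeta_m^+(e^\tau)^j V^+_m(e^\tau,j)$ (Lemma \ref{lem_choice_base}), the integrand carries the weight $\zeta_m^+(e^\tau)^{j}\,\zeta_{m^\prime}^+(e^\tau)^{-j_0-1}$. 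The key conceptual point is that this weight plays, for the reflected waves, the role played by $\zeta_m^+(e^\tau)^{j-j_0-1}$ for the outgoing and entering waves in Lemma \ref{lem:OndesPropag}, so the natural "time" governing the phase is no longer $(j-j_0)/\alpha_l^+$ but $x=j/\alpha_l^+-j_0/\alpha_{l^\prime}^+$.

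The peripheral cases (strictly stable or strictly unstable indices, and the "bad range" cases where $x\notin[n/2,2n]$) reduce to purely exponential estimates. For $m\in I_{ss}^+$ or $m^\prime\in I_{su}^+$, the inequalities \eqref{inZetaSs} and \eqref{inZetaSu} yield $|\zeta_m^+(e^\tau)^j|\leq e^{-2c_*|j|}$ or $|\zeta_{m^\prime}^+(e^\tau)^{-j_0-1}|\leq Ce^{-2c_*|j_0|}$ uniformly on $\Gamma_{in}(\eta)$. Combined with the uniform boundedness of the other factors ($\widetilde{g}^+_{m^\prime,m}$ is in these cases holomorphic on $B(1,\delta_1)$ by Lemma \ref{lemCofac}, and $\Delta_{m^\prime}^+$ and $V^+_m$ are bounded by Lemmas \ref{lem:Delta} and \ref{lem_choice_base}), we can then deform $\Gamma_{in}(\eta)$ to $\Gamma_d(\eta)$ and use $|e^{n\tau}|=e^{-n\eta}$ to obtain \eqref{lem:OndesReflechies:SsSu}, \eqref{lem:OndesReflechies:SsCu2}, \eqref{lem:OndesReflechies:CsSu2}. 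For \eqref{lem:OndesReflechies:SsCu1} (resp.\ \eqref{lem:OndesReflechies:CsSu1}), one keeps the factor $e^{-c|j|}$ (resp.\ $e^{-c|j_0|}$) from the strictly stable/unstable eigenvalue, applies the procedure of Lemma \ref{lem:OndesPropag} to the remaining central eigenvalue, and invokes Lemma \ref{lem:BornesGaussiennes} with the one-dimensional time parameter $-j_0/\alpha_{l^\prime}^+$ (resp.\ $j/\alpha_l^+$) to produce the claimed generalized Gaussian decay.

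The main case, \eqref{lem:OndesReflechies:CsCu1}, is where the work concentrates. With $m\in I_{cs}^+$ and $m^\prime\in I_{cu}^+$, we write $\zeta_m^+(e^\tau)=e^{\varpi_l^+(\tau)}$ and $\zeta_{m^\prime}^+(e^\tau)=e^{\varpi_{l^\prime}^+(\tau)}$ (see \eqref{def:varpi}). Because $m\in\{2,\ppp,dp\}\setminus\{1\}$, Lemma \ref{lemCofac} ensures $g^+_{m^\prime,m}(1)=0$, so $\widetilde{g}^+_{m^\prime,m}$ extends holomorphically across $\tau=0$ and we may freely deform $\Gamma_{in}(\eta)$ to any $\Gamma\in X$ by Cauchy's formula. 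We then write the difference between the integral and $(\alpha_l^+/\alpha_{l^\prime}^+)\widetilde{g}^+_{m^\prime,m}(1)R^+_{l^\prime,l}(n,j_0,j)\vec{e}$ as a telescoping sum of error terms analogous to $E_1,\ppp,E_5$ in the proof of Lemma \ref{lem:OndesPropag}: replace $V^+_m(e^\tau,j)$ by $R^+_m(e^\tau)$ (exponentially small in $|j|$ by Lemma \ref{lem_choice_base}); replace $\Delta^+_{m^\prime}(e^\tau,j_0,\vec{e})$ by ${\zeta_{m^\prime}^+}^{\prime}(e^\tau)\lg_{l^\prime}^{+T}\vec{e}$ (exponentially small in $|j_0|$ by \eqref{in:DeltaInf+}); replace $\widetilde{g}^+_{m^\prime,m}(e^\tau)$ and the prefactors $e^\tau\zeta_m^+(e^\tau)^{-1}{\zeta_m^+}^\prime(e^\tau)$, $e^\tau\zeta_{m^\prime}^+(e^\tau)^{-1}{\zeta_{m^\prime}^+}^\prime(e^\tau)$ by their value at $\tau=0$ (giving an $O(|\tau|)$ error); and finally replace the exponentials $e^{j\varpi_l^+(\tau)}$, $e^{-j_0\varpi_{l^\prime}^+(\tau)}$ by $e^{j\varphi_l^+(\tau)}$, $e^{-j_0\varphi_{l^\prime}^+(\tau)}$ using Lemma \ref{lem:PassageVarpi-Varphi}. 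Each of the four error integrals is then estimated by Lemma \ref{lem:BornesGaussiennes} (part \eqref{lem:BornesGaussiennesRes3}) with the time parameter $x=j/\alpha_l^+-j_0/\alpha_{l^\prime}^+\in[n/2,2n]$.

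The hard part, and the step that identifies the leading term, is the remaining integral $\frac{1}{2i\pi}\int_\Gamma \exp(n\tau+j\varphi_l^+(\tau)-j_0\varphi_{l^\prime}^+(\tau))\,d\tau$. Applying Lemma \ref{lem:ComportementPrincGaussienne} \eqref{lem:ComportementPrincGaussienneRes1} with $(?,?^\prime)=(+,+)$, $x=j/\alpha_l^+$, $y=-j_0/\alpha_{l^\prime}^+$ (so $x+y$ lies in $[n/2,2n]$), one obtains to leading order $\frac{|\alpha_l^+|}{n^{1/(2\mu)}}H_{2\mu}\bigl(\tfrac{j}{n\alpha_l^+}\beta_l^++\bigl(-\tfrac{j_0}{n\alpha_{l^\prime}^+}\bigr)\beta_{l^\prime}^+(\alpha_l^+/\alpha_{l^\prime}^+)^{2\mu};\,\tfrac{n\alpha_l^++j_0\alpha_l^+/\alpha_{l^\prime}^+-j}{n^{1/(2\mu)}}\bigr)$, which is exactly the scalar part of $R^+_{l^\prime,l}(n,j_0,j)\vec{e}$ as defined in \eqref{def:R+}. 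The expected vectorial structure $\rg_l^+ \lg_{l^\prime}^{+T}\vec{e}$ emerges from $\Pi(R^+_m(1))=\rg_l^+$ (structure \eqref{def:Rm}) and from the leading coefficient ${\zeta_{m^\prime}^+}^\prime(1)\lg_{l^\prime}^{+T}\vec{e}=-\lg_{l^\prime}^{+T}\vec{e}/\alpha_{l^\prime}^+$ of $\Delta^+_{m^\prime}$; the overall constant $\alpha_l^+/\alpha_{l^\prime}^+$ follows from combining this with the $|\alpha_l^+|$ prefactor, after checking signs via $\mathrm{sgn}(\alpha_l^+)$ (here positive since $l\in\{I+1,\ppp,d\}$). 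The case \eqref{lem:OndesReflechies:CsCu2} where $x\notin[n/2,2n]$ is the easier "off the wave" regime: then $R^+_{l^\prime,l}(n,j_0,j)=0$ and Lemma \ref{lem:BornesGaussiennes} parts \eqref{lem:BornesGaussiennesRes1}--\eqref{lem:BornesGaussiennesRes2} give the exponentially small bound $O(|\vec{e}|e^{-cn})$ directly.
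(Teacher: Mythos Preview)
Your proof plan is correct and follows essentially the same approach as the paper: the same substitution via \eqref{eg:CccTilde} and Lemma \ref{lem_choice_base}, the same use of Lemma \ref{lemCofac} to ensure holomorphy of $\widetilde{g}^+_{m^\prime,m}$, the same telescoping into error terms handled by Lemmas \ref{lem:BornesGaussiennes} and \ref{lem:PassageVarpi-Varphi}, and the same identification of the leading reflected wave via Lemma \ref{lem:ComportementPrincGaussienne} \eqref{lem:ComportementPrincGaussienneRes1}. The only minor discrepancy is bookkeeping: the paper splits the $\varpi\to\varphi$ replacement into two separate steps $E_4,E_5$ (one for each eigenvalue), and in the $I_{cs}^+\times I_{su}^+$ case it explicitly splits off the $\Pi(V_m^+-R_m^+)$ piece to produce the separate $O(|\vec{e}|e^{-cn})$ term and isolate the $\rg_l^+$-aligned Gaussian remainder, both of which are implicit in your reference to ``the procedure of Lemma \ref{lem:OndesPropag}.''
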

	
	We observe that since we consider $m\in\lc2,\ppp,dp\rc$ and $m^\prime\in\lc dp+1,\ppp,d(p+q)\rc$, Lemma \ref{lemCofac} implies that $\widetilde{g}_{m^\prime,m}^+$ can be holomorphically extended on the whole ball $B(1,\delta_1)$ and thus the term $\widetilde{g}_{m^\prime,m}^+(1)$ is well defined.
	
	\begin{proof}
		$\bullet$ We start by proving \eqref{lem:OndesReflechies:CsCu1}. We consider that $m\in I_{cs}^+$, $m^\prime \in I_{cu}^+$ and $\frac{j}{\alpha_l^+}-\frac{j_0}{\alpha_{l^\prime}^+}\in\left[\frac{n}{2},2n\right]$.
		
		Since the function $\widetilde{g}_{m^\prime,m}^+$ can be holomorphically extended on the whole ball $B(1,\varepsilon)$, using the expressions of $W_m^+$ and $\widetilde{\Ccc}_{m^\prime}^+$ given respectively by Lemma \ref{lem_choice_base} and \eqref{eg:CccTilde}, we have using Cauchy's formula that for any $\Gamma \in X$
		\begin{multline}\label{lem:OndesReflechies1}
			\int_{\Gamma_{in}(\eta)} e^\tau\widetilde{g}^+_{m^\prime,m}(e^\tau)\widetilde{\Ccc}_m^+(e^\tau,j_0,\textbf{e})\Pi(W_m^+(e^\tau,j))d\tau \\=\int_{\Gamma} e^{n\tau} \widetilde{g}^+_{m^\prime,m}(e^\tau)\zeta_m^+(e^\tau)^{j}\zeta_{m^\prime}^+(e^\tau)^{-j_0-1} e^\tau\Delta_{m^\prime}^+(e^\tau,j_0,\textbf{e})\Pi(V_m^+(e^\tau,j))d\tau.
		\end{multline}
		Using Cauchy's formula once again, \eqref{def:varpi} since the eigenvalues we consider are central and \eqref{def:R+}, we then have that
		\begin{multline}\label{lem:OndesReflechies2}
			-\frac{1}{2i\pi}\int_{\Gamma_{in}(\eta)} e^{n\tau} e^\tau\widetilde{g}_{m^\prime,m}^+(e^\tau)\widetilde{\Ccc}_m^+(e^\tau,j_0,\textbf{e})\Pi(W_m^+(e^\tau,j))d\tau  - \frac{\alpha_l^+}{\alpha_{l^\prime}^+}\widetilde{g}_{m^\prime,m}^+(1)R_{l^\prime,l}^+(n,j_0,j)\textbf{e}\\ =  E_1 +E_2 \rg_l^+ + \left(E_3  +E_4+E_5 +E_6 \right){\lg_{l^\prime}^+}^T\textbf{e} \rg_l^+
		\end{multline}
		where $E_1$ is a vector and $E_2,\ppp,E_6$ are complex scalars defined by
		\begin{align*}
			E_1 &= -\frac{1}{2i\pi}\int_{\Gamma_1} e^{n\tau} \widetilde{g}^+_{m^\prime,m}(e^\tau)e^{j\varpi_l^+(\tau)}e^{-(j_0+1)\varpi_{l^\prime}^+(\tau)} e^\tau\Delta_{m^\prime}^+(e^\tau,j_0,\textbf{e})\Pi(V_m^+(e^\tau,j)-R_m^+(e^\tau))d\tau,  \\
			E_2 &= -\frac{1}{2i\pi}\int_{\Gamma_2} e^{n\tau} \widetilde{g}^+_{m^\prime,m}(e^\tau)e^{j\varpi_l^+(\tau)}e^{-(j_0+1)\varpi_{l^\prime}^+(\tau)} e^\tau\left(\Delta_{m^\prime}^+(e^\tau,j_0,\textbf{e})-\frac{d\zeta_{m^\prime}^+}{dz}(e^\tau){\lg_{l^\prime}^+}^T\textbf{e}\right)d\tau,\\
			E_3 &=  -\frac{1}{2i\pi}\int_{\Gamma_3} e^{n\tau} e^{j\varpi_l^+(\tau)}e^{-j_0\varpi_{l^\prime}^+(\tau)} \left(\widetilde{g}_{m^\prime,m}^+(e^\tau)e^\tau\zeta_{m^\prime}^+(e^\tau)^{-1}\frac{d\zeta_{m^\prime}^+}{dz}(e^\tau)+\frac{\widetilde{g}_{m^\prime,m}^+(1)}{\alpha_{l^\prime}^+}\right) d\tau, \\
			E_4 &= \frac{\widetilde{g}_{m^\prime,m}^+(1)}{2i\pi\alpha_{l^\prime}^+}\int_{\Gamma_4} e^{n\tau} \left(e^{j\varpi_l^+(\tau)}-e^{j\varphi_l^+(\tau)}\right)e^{-j_0\varpi_{l^\prime}^+(\tau)}d\tau,\\
			E_5 &= \frac{\widetilde{g}_{m^\prime,m}^+(1)}{2i\pi\alpha_{l^\prime}^+}\int_{\Gamma_5} e^{n\tau}e^{j\varphi_l^+(\tau)} \left(e^{-j_0\varpi_{l^\prime}^+(\tau)}-e^{-j_0\varphi_{l^\prime}^+(\tau)}\right)d\tau,
		\end{align*}
		\begin{multline*}
			E_6 = \frac{\widetilde{g}_{m^\prime,m}^+(1)}{2i\pi\alpha_{l^\prime}^+}\int_{\Gamma_5} e^{n\tau}e^{j\varphi_l^+(\tau)} e^{-j_0\varphi_{l^\prime}^+(\tau)}d\tau \\ - \frac{\alpha_l^+}{\alpha_{l^\prime}^+}\widetilde{g}_{m^\prime,m}^+(1)\frac{1}{n^\frac{1}{2\mu}}H_{2\mu}\left(\frac{j}{n\alpha_l^+}\beta_l^+ -\frac{j_0}{n\alpha_{l^\prime}^+} \beta_{l^\prime}^+ \left(\frac{\alpha_l^+}{\alpha_{l^\prime}^+}\right)^{2\mu},\frac{n\alpha_l^++j_0\frac{\alpha_l^+}{\alpha_{l^\prime}^+}-j}{n^\frac{1}{2\mu}}\right),
		\end{multline*}
		and $\Gamma_1,\ppp,\Gamma_6$ are paths belonging to the set $X$. We just have to prove bounds on the terms $E_1,\ppp,E_6$. In particular, we will use good choices of paths $\Gamma_1,\ppp,\Gamma_6$ to optimize the bounds using Lemma \ref{lem:BornesGaussiennes}.
		
		$\blacktriangleright$ Using \eqref{in:Delta+}, \eqref{in:varpi} and Lemma \ref{lem_choice_base} which claims that the vectors $V_m^+(z,j)$ converge exponentially fast towards $R_m^+(e^\tau)$, we have that there exist two positive constants $C,c$ independent from $n$, $j_0$, $j$ and $\textbf{e}$ such that 
		$$|E_1| \leq Ce^{-c|j|}|\textbf{e}| \int_{\Gamma_1}\exp\left(n\Re(\tau)+\left(\frac{j}{\alpha_l^+}-\frac{j_0}{\alpha_{l^\prime}^+}\right)\left(-\Re(\tau)+A_R\Re(\tau)^{2\mu}-A_I\Im(\tau)^{2\mu}\right)\right)|d\tau|.$$
		
		$\blacktriangleright$ Using \eqref{in:DeltaInf+} and \eqref{in:varpi}, we have that there exist two positive constants $C,c$ independent from $n$, $j_0$, $j$ and $\textbf{e}$ such that 
		$$|E_2| \leq Ce^{-c|j_0|}|\textbf{e}| \int_{\Gamma_2}\exp\left(n\Re(\tau)+\left(\frac{j}{\alpha_l^+}-\frac{j_0}{\alpha_{l^\prime}^+}\right)\left(-\Re(\tau)+A_R\Re(\tau)^{2\mu}-A_I\Im(\tau)^{2\mu}\right)\right)|d\tau|.$$
		
		$\blacktriangleright$ We notice that $\zeta_{m^\prime}^+(1)=1$ and $\frac{d\zeta_{m^\prime}^+}{dz}(1)=-\frac{1}{\alpha_{\prime}^+}$. Using a Taylor expansion and \eqref{in:varpi}, we have that there exists a positive constant $C$ independent from $n$, $j_0$ and $j$ such that 
		$$|E_3| \leq C \int_{\Gamma_3}|\tau|\exp\left(n\Re(\tau)+\left(\frac{j}{\alpha_l^+}-\frac{j_0}{\alpha_{l^\prime}^+}\right)\left(-\Re(\tau)+A_R\Re(\tau)^{2\mu}-A_I\Im(\tau)^{2\mu}\right)\right)|d\tau|.$$
		
		$\blacktriangleright$ We observe that $\frac{j}{\alpha_l^+}$ and $-\frac{j_0}{\alpha_{l^\prime}^+}$ are positive and $\frac{j}{\alpha_l^+}-\frac{j_0}{\alpha_{l^\prime}^+}\in\left[\frac{n}{2},2n\right]$. Thus, we have that $\frac{j}{\alpha_l^+}\in[0,2n]$. We can then use Lemma \ref{lem:PassageVarpi-Varphi} and \eqref{in:varpi} to prove that there exists a positive constant $C$ independent from $n$, $j_0$ and $j$ such that
		$$|E_4| \leq C n \int_{\Gamma_4}|\tau|^{2\mu+1}\exp\left(n\Re(\tau)+\left(\frac{j}{\alpha_l^+}-\frac{j_0}{\alpha_{l^\prime}^+}\right)\left(-\Re(\tau)+A_R\Re(\tau)^{2\mu}-A_I\Im(\tau)^{2\mu}\right)\right)|d\tau|.$$
		Furthermore, we also have that and $-\frac{j_0}{\alpha_{l^\prime}^+}\in[0,2n]$. We can also use Lemma \ref{lem:PassageVarpi-Varphi} and \eqref{in:varphi} to prove that there exists a positive constant $C$ independent from $n$, $j_0$ and $j$ such that
		$$|E_5| \leq C n \int_{\Gamma_5}|\tau|^{2\mu+1}\exp\left(n\Re(\tau)+\left(\frac{j}{\alpha_l^+}-\frac{j_0}{\alpha_{l^\prime}^+}\right)\left(-\Re(\tau)+A_R\Re(\tau)^{2\mu}-A_I\Im(\tau)^{2\mu}\right)\right)|d\tau|.$$
		
		Using Lemma \ref{lem:BornesGaussiennes} which gives a good choices of path $\Gamma_1,\ppp,\Gamma_5\in X$ depending on $n$, $j_0$ and $j$ to handle the integrals in the terms above as well as Lemma \ref{lem:ComportementPrincGaussienne} to take care of the term $E_6$, there exist new constants $C,c>0$ independent from $n$, $j_0$, $j$ and $\textbf{e}$ such that 
		\begin{align*}
			|E_1|& \leq \frac{Ce^{-c|j|}|\textbf{e}|}{n^\frac{1}{2\mu}} \exp\left(-c\left(\frac{\left|n-\left(\frac{j}{\alpha_l^+}-\frac{j_0}{\alpha_{l^\prime}^+}\right)\right|}{n^\frac{1}{2\mu}}\right)^\frac{2\mu}{2\mu-1}\right), & |E_2|& \leq \frac{Ce^{-c|j_0|}|\textbf{e}|}{n^\frac{1}{2\mu}} \exp\left(-c\left(\frac{\left|n-\left(\frac{j}{\alpha_l^+}-\frac{j_0}{\alpha_{l^\prime}^+}\right)\right|}{n^\frac{1}{2\mu}}\right)^\frac{2\mu}{2\mu-1}\right), \\
			|E_3| &\leq \frac{C}{n^\frac{1}{\mu}} \exp\left(-c\left(\frac{\left|n-\left(\frac{j}{\alpha_l^+}-\frac{j_0}{\alpha_{l^\prime}^+}\right)\right|}{n^\frac{1}{2\mu}}\right)^\frac{2\mu}{2\mu-1}\right),& |E_4| &\leq \frac{C}{n^\frac{1}{\mu}} \exp\left(-c\left(\frac{\left|n-\left(\frac{j}{\alpha_l^+}-\frac{j_0}{\alpha_{l^\prime}^+}\right)\right|}{n^\frac{1}{2\mu}}\right)^\frac{2\mu}{2\mu-1}\right), \\
			|E_5| &\leq \frac{C}{n^\frac{1}{\mu}} \exp\left(-c\left(\frac{\left|n-\left(\frac{j}{\alpha_l^+}-\frac{j_0}{\alpha_{l^\prime}^+}\right)\right|}{n^\frac{1}{2\mu}}\right)^\frac{2\mu}{2\mu-1}\right), &|E_6| &\leq \frac{C}{n^\frac{1}{\mu}} \exp\left(-c\left(\frac{\left|n-\left(\frac{j}{\alpha_l^+}-\frac{j_0}{\alpha_{l^\prime}^+}\right)\right|}{n^\frac{1}{2\mu}}\right)^\frac{2\mu}{2\mu-1}\right).
		\end{align*}
		We have thus obtained \eqref{lem:OndesReflechies:CsCu1}.
		
		$\bullet$ We now focus on \eqref{lem:OndesReflechies:CsCu2}. We consider that $m\in I_{cs}^+$, $m^\prime \in I_{cu}^+$ and $\frac{j}{\alpha_l^+}-\frac{j_0}{\alpha_{l^\prime}^+}\notin\left[\frac{n}{2},2n\right]$. We observe that in particular, $R_{l^\prime,l}^+(n,j_0,j)=0$. Using \eqref{lem:OndesReflechies1}, \eqref{def:varpi} since the eigenvalue we consider is central, Lemma \ref{lem_choice_base} which claims that the vectors $V_m^+(z,j)$ are uniformly bounded for $z\in B(1,\delta_1)$ and $j\in\N$, \eqref{in:Delta+} and \eqref{in:varpi}, there exists a positive constant $C$ independent from $n$, $j_0$, $j$ and $\textbf{e}$ such that for all $\Gamma\in X$
		\begin{multline*}
			\left|-\frac{1}{2i\pi}\int_{\Gamma_{in}(\eta)} e^{n\tau} e^\tau\widetilde{g}_{m^\prime,m}^+(e^\tau)\widetilde{\Ccc}_m^+(e^\tau,j_0,\textbf{e})\Pi(W_m^+(e^\tau,j))d\tau  - \frac{\alpha_l^+}{\alpha_{l^\prime}^+}\widetilde{g}_{m^\prime,m}^+(1)R_{l^\prime,l}^+(n,j_0,j)\textbf{e}\right| \\ \leq C|\textbf{e}| \int_{\Gamma}\exp\left(n\Re(\tau)+\left(\frac{j}{\alpha_l^+}-\frac{j_0}{\alpha_{l^\prime}^+}\right)\left(-\Re(\tau)+A_R\Re(\tau)^{2\mu}-A_I\Im(\tau)^{2\mu}\right)\right)|d\tau|.
		\end{multline*}
		We then use Lemma \ref{lem:BornesGaussiennes} to prove that there exist two new positive constants $C,c$ independent from $n$, $j_0$, $j$ and $\textbf{e}$ such that
		$$ \left|-\frac{1}{2i\pi}\int_{\Gamma_{in}(\eta)} e^{n\tau} e^\tau\widetilde{g}_{m^\prime,m}^+(e^\tau)\widetilde{\Ccc}_m^+(e^\tau,j_0,\textbf{e})\Pi(W_m^+(e^\tau,j))d\tau  - \frac{\alpha_l^+}{\alpha_{l^\prime}^+}\widetilde{g}_{m^\prime,m}^+(1)R_{l^\prime,l}^+(n,j_0,j)\textbf{e}\right|\leq C|\textbf{e}|e^{-cn}.$$
		
		$\bullet$ We now focus on \eqref{lem:OndesReflechies:SsCu1} and \eqref{lem:OndesReflechies:SsCu2}. We consider that $m\in I_{ss}^+$, $m^\prime \in I_{cu}^+$. Using \eqref{lem:OndesReflechies1}, \eqref{inZetaSs} to bound $\zeta_m^+$, \eqref{def:varpi} since the index $m^\prime$ belongs to $I_{cu}^+$, Lemma \ref{lem_choice_base} which claims that the vectors $V_m^+(z,j)$ are uniformly bounded for $z\in B(1,\delta_1)$ and $j\in\N$, \eqref{in:Delta+} and \eqref{in:varpi}, there exists a positive constant $C$ independent from $n$, $j_0$, $j$ and $\textbf{e}$ such that for all $\Gamma\in X$
		\begin{multline*}
			\left|-\frac{1}{2i\pi}\int_{\Gamma_{in}(\eta)} e^{n\tau} e^\tau\widetilde{g}_{m^\prime,m}^+(e^\tau)\widetilde{\Ccc}_m^+(e^\tau,j_0,\textbf{e})\Pi(W_m^+(e^\tau,j))d\tau  - \frac{\alpha_l^+}{\alpha_{l^\prime}^+}\widetilde{g}_{m^\prime,m}^+(1)R_{l^\prime,l}^+(n,j_0,j)\textbf{e}\right| \\ \leq Ce^{-2c_\star|j|}|\textbf{e}| \int_{\Gamma}\exp\left(n\Re(\tau)+\left(-\frac{j_0}{\alpha_{l^\prime}^+}\right)\left(-\Re(\tau)+A_R\Re(\tau)^{2\mu}-A_I\Im(\tau)^{2\mu}\right)\right)|d\tau|.
		\end{multline*}
		We observe that $-\frac{j_0}{\alpha_{l^\prime}^+}$ is positive since $m^\prime$ belongs to $I_{cu}^+$. Using \eqref{lem:BornesGaussiennesRes3} when $-\frac{j_0}{\alpha_{l^\prime}^+}\in\left[\frac{n}{2},2n\right]$ and \eqref{lem:BornesGaussiennesRes1} and \eqref{lem:BornesGaussiennesRes2} else, we end up proving \eqref{lem:OndesReflechies:SsCu1} and \eqref{lem:OndesReflechies:SsCu2}.
		
		$\bullet$ We now focus on \eqref{lem:OndesReflechies:CsSu1} and \eqref{lem:OndesReflechies:CsSu2}. We consider that $m\in I_{cs}^+$, $m^\prime \in I_{su}^+$. Using \eqref{lem:OndesReflechies1}, \eqref{def:varpi} since the index $m$ belongs to $I_{cs}^+$ and Cauchy's formula, we have that 
		$$-\frac{1}{2i\pi}\int_{\Gamma_{in}(\eta)} e^{n\tau} e^\tau\widetilde{g}^+_{m^\prime,m}(e^\tau)\widetilde{\Ccc}_m^+(e^\tau,j_0,\textbf{e})\Pi(W_m^+(e^\tau,j))d\tau =E_1+E_2\rg_l^+$$
		where the vector $E_1$ and the complex scalar $E_2$ are defined by
		\begin{align*}
			E_1:= & \int_{\Gamma_1}e^{n\tau} e^\tau \widetilde{g}_{m^\prime,m}^+(e^\tau) e^{j\varpi_l^+(\tau)}\zeta_{m^\prime}^+(e^\tau)^{-j_0-1} \Delta^+_{m^\prime}(e^\tau,j_0,\textbf{e})\Pi(V_m^+(e^\tau,j)-R_m^+(e^\tau))d\tau\\
			E_2:= &  \int_{\Gamma_2}e^{n\tau} e^\tau \widetilde{g}_{m^\prime,m}^+(e^\tau) e^{j\varpi_l^+(\tau)}\zeta_{m^\prime}^+(e^\tau)^{-j_0-1} \Delta^+_{m^\prime}(e^\tau,j_0,\textbf{e})d\tau
		\end{align*}
		where $\Gamma_1,\Gamma_2$ are paths belonging to the set $X$. Using \eqref{inZetaSu} to bound $\zeta_{m^\prime}^+$, Lemma \ref{lem_choice_base} which claims that the vectors $V_m^+(z,j)$ converge exponentially fast towards $R_m^+(e^\tau)$, \eqref{in:Delta+} and \eqref{in:varpi}, we prove that there exist two positive constants $C,c$ independent from $n$, $j_0$, $j$, $\textbf{e}$, $\Gamma_1$ and $\Gamma_2$ such that
		\begin{align*}
			|E_1|\leq & C|\textbf{e}|e^{-c|j|}e^{-c|j_0|}\int_{\Gamma_1}\exp\left(n\Re(\tau)+\frac{j}{\alpha_l^+}\left(-\Re(\tau)+A_R\Re(\tau)^{2\mu}-A_I\Im(\tau)^{2\mu}\right)\right)|d\tau|,\\
			|E_2|\leq & C|\textbf{e}|e^{-c|j_0|} \int_{\Gamma_2}\exp\left(n\Re(\tau)+\frac{j}{\alpha_l^+}\left(-\Re(\tau)+A_R\Re(\tau)^{2\mu}-A_I\Im(\tau)^{2\mu}\right)\right)|d\tau|.
		\end{align*}
		Whilst observing that $\frac{j}{\alpha_l^+}$ is positive since $m$ belongs to $I_{cs}^+$, when $\frac{j}{\alpha_l^+}\notin\left[\frac{n}{2},2n\right]$, Lemma \ref{lem:BornesGaussiennes} allows us to prove exponential bounds with regard to $n$ on the terms $E_1$ and $E_2$ and to thus immediately conclude the proof of \eqref{lem:OndesReflechies:CsSu2}. When $\frac{j}{\alpha_l^+}\in\left[\frac{n}{2},2n\right]$, Lemma \ref{lem:BornesGaussiennes} allows us to choose $\Gamma_1$ and $\Gamma_2$ depending on $n$, $j_0$, $j$ and $\textbf{e}$ so that there exist new constants $C,c>0$ independent from $n$, $j_0$, $j$ and $\textbf{e}$ such that 
		\begin{align*}
			|E_1|& \leq \frac{C|\textbf{e}|e^{-c|j|}e^{-c|j_0|}}{n^\frac{1}{2\mu}} \exp\left(-c\left(\frac{\left|n-\frac{j}{\alpha_l^+}\right|}{n^\frac{1}{2\mu}}\right)^\frac{2\mu}{2\mu-1}\right) & |E_2|& \leq \frac{C|\textbf{e}|e^{-c|j_0|}}{n^\frac{1}{2\mu}} \exp\left(-c\left(\frac{\left|n-\frac{j}{\alpha_l^+}\right|}{n^\frac{1}{2\mu}}\right)^\frac{2\mu}{2\mu-1}\right).
		\end{align*}
		Since $j\in\left[\frac{n}{2},2n\right]$, we have that there exist two other constants $C,c>0$ independent from $n$, $j_0$, $j$ and $\textbf{e}$ such that
		$$|E_1|\leq C|\textbf{e}|e^{-cn}.$$
		This allows us to conclude \eqref{lem:OndesReflechies:CsSu1}.
		
		$\bullet$ There only remains to prove \eqref{lem:OndesReflechies:SsSu}. We observe using \eqref{lem:OndesReflechies1} with $\Gamma=\Gamma_d(\eta)\in X$, \eqref{inZetaSs} and \eqref{inZetaSu} to bound $\zeta_m^+$ and $\zeta_{m^\prime}^+$, \eqref{in:Delta+} and Lemma \ref{lem_choice_base} which claims that the vectors $V_m^+(z,j)$ are uniformly bounded for $z\in B(1,\delta_1)$ and $j\in\N$, we have that there exists a positive constant $C$ independent from $n$, $j_0$, $j$ and $\textbf{e}$ such that 
		$$ \left|-\frac{1}{2i\pi}\int_{\Gamma_{in}(\eta)} e^{n\tau} e^\tau\widetilde{g}^+_{m^\prime,m}(e^\tau)\widetilde{\Ccc}_m^+(e^\tau,j_0,\textbf{e})\Pi(W_m^+(e^\tau,j))d\tau\right|\leq C|\textbf{e}|e^{-2c_*(|j|+|j_0|)-n\eta}.$$
	\end{proof}

	\subsubsection{Transmitted waves}
	
	We now look at the transmitted waves. 
	
	\begin{lemma}\label{lem:OndesTransmises}
		We consider $m\in \lc dp+1, \ppp,d(p+q)-1\rc$ and $m^\prime\in \lc dp+1,\ppp,d(p+q)\rc$ and write them as $m=l+(k-1)d$ and $m^\prime=l^\prime+(k^\prime-1)d$ with $k,k^\prime\in\lc1,\ppp,p+q\rc$ and $l,l^\prime\in\lc1,\ppp,d\rc$. There exists a positive constant $c$ such that for all $n\in \N\backslash\lc0\rc$, $j_0\in \N$, $j\in-\N$ such that $j-j_0\in\lc-nq,\ppp,np\rc$ and $\textbf{e}\in\C^d$, we have:
		
		\begin{subequations}
			$\bullet$ If $m\in I_{cu}^-$, $m^\prime\in I_{cu}^+$ and $\frac{j}{\alpha_l^-}-\frac{j_0}{\alpha_{l^\prime}^+}\in\left[\frac{n}{2},2n\right]$, we have that:
			\begin{multline}\label{lem:OndesTransmises:CuCu1}
				\frac{1}{2i\pi}\int_{\Gamma_{in}(\eta)} e^{n\tau} e^\tau\widetilde{g}_{m^\prime,m}^+(e^\tau)\widetilde{\Ccc}_m^+(e^\tau,j_0,\textbf{e})\Pi(W_m^-(e^\tau,j))d\tau  - \frac{\alpha_l^-}{\alpha_{l^\prime}^+}\widetilde{g}_{m^\prime,m}^+(1)T_{l^\prime,l}^+(n,j_0,j)\textbf{e}\\ =  \exp\left(-c\left(\frac{\left|n-\left(\frac{j}{\alpha_l^-}-\frac{j_0}{\alpha_{l^\prime}^+}\right)\right|}{n^\frac{1}{2\mu}}\right)^\frac{2\mu}{2\mu-1}\right)\left(O\left(\frac{|\textbf{e}|e^{-c|j|}}{n^\frac{1}{2\mu}}\right)+O_\C\left(\frac{|\textbf{e}|e^{-c|j_0|}}{n^\frac{1}{2\mu}}\right)\rg_l^- + O_\C\left(\frac{1}{n^\frac{1}{\mu}}\right){\lg_{l^\prime}^+}^T\textbf{e}\rg_l^-\right).
			\end{multline}

			$\bullet$ If $m\in I_{cu}^-$, $m^\prime\in I_{cu}^+$ and $\frac{j}{\alpha_l^-}-\frac{j_0}{\alpha_{l^\prime}^+}\notin\left[\frac{n}{2},2n\right]$, we have that:
			\begin{equation}\label{lem:OndesTransmises:CuCu2}
				\frac{1}{2i\pi}\int_{\Gamma_{in}(\eta)} e^{n\tau} e^\tau\widetilde{g}_{m^\prime,m}^+(e^\tau)\widetilde{\Ccc}_m^+(e^\tau,j_0,\textbf{e})\Pi(W_m^-(e^\tau,j))d\tau  - \frac{\alpha_l^-}{\alpha_{l^\prime}^+}\widetilde{g}_{m^\prime,m}^+(1)T_{l^\prime,l}^+(n,j_0,j)\textbf{e} =  O\left(|\textbf{e}|e^{-cn}\right).
			\end{equation}
			
			$\bullet$ If $m\in I_{su}^-$, $m^\prime\in I_{cu}^+$ and $-\frac{j_0}{\alpha_{l^\prime}^+}\in\left[\frac{n}{2},2n\right]$, we have that:
			\begin{equation}\label{lem:OndesTransmises:SuCu1}
				\frac{1}{2i\pi}\int_{\Gamma_{in}(\eta)} e^{n\tau} e^\tau\widetilde{g}^+_{m^\prime,m}(e^\tau)\widetilde{\Ccc}_m^+(e^\tau,j_0,\textbf{e})\Pi(W_m^-(e^\tau,j))d\tau =  O\left(\frac{e^{-c|j|}|\textbf{e}|}{n^\frac{1}{2\mu}}\exp\left(-c\left(\frac{\left|n+\frac{j_0}{\alpha_{l^\prime}^+}\right|}{n^\frac{1}{2\mu}}\right)^\frac{2\mu}{2\mu-1}\right)\right).
			\end{equation}
			
			$\bullet$ If $m\in I_{su}^-$, $m^\prime\in I_{cu}^+$ and $-\frac{j_0}{\alpha_{l^\prime}^+}\notin\left[\frac{n}{2},2n\right]$, we have that:
			\begin{equation}\label{lem:OndesTransmises:SuCu2}
				\frac{1}{2i\pi}\int_{\Gamma_{in}(\eta)} e^{n\tau} e^\tau\widetilde{g}^+_{m^\prime,m}(e^\tau)\widetilde{\Ccc}_m^+(e^\tau,j_0,\textbf{e})\Pi(W_m^-(e^\tau,j))d\tau =  O\left(|\textbf{e}|e^{-cn}\right).
			\end{equation}
			
			$\bullet$ If $m\in I_{cu}^-$, $m^\prime\in I_{su}^+$ and $\frac{j}{\alpha_l^-}\in\left[\frac{n}{2},2n\right]$, we have that:
			\begin{multline}\label{lem:OndesTransmises:CuSu1}
				\frac{1}{2i\pi}\int_{\Gamma_{in}(\eta)} e^{n\tau} e^\tau\widetilde{g}^+_{m^\prime,m}(e^\tau)\widetilde{\Ccc}_m^+(e^\tau,j_0,\textbf{e})\Pi(W_m^-(e^\tau,j))d\tau \\= O(|\textbf{e}|e^{-cn})+O_\C\left(\frac{|\textbf{e}|e^{-c|j_0|}}{n^\frac{1}{2\mu}}\exp\left(-c\left(\frac{\left|n-\frac{j}{\alpha_l^-}\right|}{n^\frac{1}{2\mu}}\right)^\frac{2\mu}{2\mu-1}\right)\right)\rg_l^-.
			\end{multline}
			
			$\bullet$ If $m\in I_{cu}^-$, $m^\prime\in I_{su}^+$ and $\frac{j}{\alpha_l^-}\notin\left[\frac{n}{2},2n\right]$, we have that:
			\begin{equation}\label{lem:OndesTransmises:CuSu2}
				\frac{1}{2i\pi}\int_{\Gamma_{in}(\eta)} e^{n\tau} e^\tau\widetilde{g}^+_{m^\prime,m}(e^\tau)\widetilde{\Ccc}_m^+(e^\tau,j_0,\textbf{e})\Pi(W_m^-(e^\tau,j))d\tau =  O\left(|\textbf{e}|e^{-cn}\right).
			\end{equation}
			
			$\bullet$ If $m\in I_{su}^-$, $m^\prime\in I_{su}^+$, we have that:
			\begin{equation}\label{lem:OndesTransmises:SuSu}
				\frac{1}{2i\pi}\int_{\Gamma_{in}(\eta)} e^{n\tau} e^\tau\widetilde{g}^+_{m^\prime,m}(e^\tau)\widetilde{\Ccc}_m^+(e^\tau,j_0,\textbf{e})\Pi(W_m^-(e^\tau,j))d\tau =  O\left(|\textbf{e}|e^{-cn}\right).
			\end{equation}
		\end{subequations}
	\end{lemma}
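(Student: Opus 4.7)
The plan is to mimic step by step the proof of Lemma~\ref{lem:OndesReflechies}, the only structural differences being that $W_m^+(e^\tau,j)$ is replaced by $W_m^-(e^\tau,j)$ (because we now evaluate at $j<0$, and $W_m^-$ is the object with good control as $j\to-\infty$), while $\widetilde{\Ccc}_{m^\prime}^+(e^\tau,j_0,\vec e)$ and $\widetilde g^+_{m^\prime,m}(e^\tau)$ are kept (because $j_0\geq 0$). First I would use Lemma~\ref{lem_choice_base} and \eqref{eg:CccTilde} to rewrite, for any contour $\Gamma\in X$,
\begin{multline*}
\int_{\Gamma_{in}(\eta)} e^{n\tau}e^\tau\,\widetilde g^+_{m^\prime,m}(e^\tau)\,\widetilde{\Ccc}^+_{m^\prime}(e^\tau,j_0,\vec e)\,\Pi(W_m^-(e^\tau,j))\,d\tau\\
=\int_\Gamma e^{n\tau}e^\tau\,\widetilde g^+_{m^\prime,m}(e^\tau)\,\zeta_m^-(e^\tau)^j\,\zeta_{m^\prime}^+(e^\tau)^{-j_0-1}\,\Delta^+_{m^\prime}(e^\tau,j_0,\vec e)\,\Pi(V_m^-(e^\tau,j))\,d\tau,
\end{multline*}
noting that $\widetilde g^+_{m^\prime,m}$ is holomorphic on $B(1,\delta_1)$ by Lemma~\ref{lemCofac} (since $m\notin\{1,d(p+q)\}$), so no residue contribution is lost when freely deforming $\Gamma_{in}(\eta)$ inside $X$.

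For the central/central case $m\in I_{cu}^-$, $m^\prime\in I_{cu}^+$, I would use the holomorphic logarithms \eqref{def:varpi} to write $\zeta_m^-(e^\tau)=e^{\varpi_l^-(\tau)}$ and $\zeta_{m^\prime}^+(e^\tau)=e^{\varpi_{l^\prime}^+(\tau)}$, and then introduce the six-term splitting $E_1+E_2\rg_l^- + (E_3+E_4+E_5+E_6){\lg_{l^\prime}^+}^T\vec e\,\rg_l^-$ exactly as in the proof of \eqref{lem:OndesReflechies:CsCu1}, where $E_1$ comes from the exponential convergence of $V_m^-(e^\tau,j)$ to $R_m^-(e^\tau)$ (Lemma~\ref{lem_choice_base}), $E_2$ from the convergence of $\Delta^+_{m^\prime}(e^\tau,j_0,\vec e)$ to ${\zeta_{m^\prime}^+}^\prime(e^\tau){\lg_{l^\prime}^+}^T\vec e$ (bound \eqref{in:DeltaInf+}), $E_3$ from a Taylor expansion of the smooth prefactor using ${\zeta_{m^\prime}^+}^\prime(1)=-1/\alpha_{l^\prime}^+$, $E_4$ and $E_5$ from replacing $\varpi_l^-$ by $\varphi_l^-$ and $\varpi_{l^\prime}^+$ by $\varphi_{l^\prime}^+$ via Lemma~\ref{lem:PassageVarpi-Varphi}, and $E_6$ from the identification of the remaining oscillatory integral with the generalized Gaussian $H_{2\mu}$ via \eqref{lem:ComportementPrincGaussienneRes1}. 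Each $E_i$ is then estimated by picking an optimal $\Gamma_i\in X$ given by Lemma~\ref{lem:BornesGaussiennes}, applied with the ``phase exponent'' $x=\frac{j}{\alpha_l^-}-\frac{j_0}{\alpha_{l^\prime}^+}$ (which is nonnegative since $j\leq 0$ and $\alpha_l^-<0$ for $l\in\{1,\ldots,I-1\}$, and $-j_0/\alpha_{l^\prime}^+\geq 0$ for $l^\prime\in\{1,\ldots,I\}$). Exactly as for the reflected waves, one uses \eqref{lem:BornesGaussiennesRes3} in the regime $x\in[n/2,2n]$ to produce \eqref{lem:OndesTransmises:CuCu1}, and \eqref{lem:BornesGaussiennesRes1}--\eqref{lem:BornesGaussiennesRes2} in the complementary regime to produce the exponential bound \eqref{lem:OndesTransmises:CuCu2}.

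The remaining cases use the exponential bounds \eqref{inZetaSs} and \eqref{inZetaSu} on the stable/unstable eigenvalues, which provide the factors $e^{-c|j|}$ or $e^{-c|j_0|}$ needed in the statement. For $m\in I_{su}^-$ we have $|\zeta_m^-(e^\tau)|\geq e^{2c_*}$, so $|\zeta_m^-(e^\tau)^j|\leq e^{-2c_*|j|}$ (recall $j\leq 0$), which together with the holomorphic extension $\zeta_{m^\prime}^+(e^\tau)=e^{\varpi_{l^\prime}^+(\tau)}$ and the phase-exponent bound \eqref{in:varpi} with $x=-j_0/\alpha_{l^\prime}^+$ gives the Gaussian-type estimates \eqref{lem:OndesTransmises:SuCu1}--\eqref{lem:OndesTransmises:SuCu2} by Lemma~\ref{lem:BornesGaussiennes}. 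Symmetrically, for $m^\prime\in I_{su}^+$ we have $|\zeta_{m^\prime}^+(e^\tau)^{-j_0-1}|\leq e^{-2c_*|j_0|}$, which yields \eqref{lem:OndesTransmises:CuSu1}--\eqref{lem:OndesTransmises:CuSu2} via Lemma~\ref{lem:BornesGaussiennes} applied with phase exponent $j/\alpha_l^-\geq 0$. Finally, when both $m\in I_{su}^-$ and $m^\prime\in I_{su}^+$, choosing $\Gamma=\Gamma_d(\eta)$ and combining the two exponential gains with the uniform bound $|e^{n\tau}|=e^{-n\eta}$ on $\Gamma_d(\eta)$ directly yields \eqref{lem:OndesTransmises:SuSu}.

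The main obstacle, as in the reflected waves case, is purely bookkeeping: keeping track of the correct sign of the phase exponent $x$ (which must land in $[0,+\infty)$ for Lemma~\ref{lem:BornesGaussiennes} to apply) and correctly identifying which of $\alpha_l^-$ and $\alpha_{l^\prime}^+$ are negative so that the combinations $j/\alpha_l^-$ and $-j_0/\alpha_{l^\prime}^+$ are indeed nonnegative. Once this is settled, the estimates flow from the same toolbox used for the reflected waves, and the presence of $\widetilde g^+_{m^\prime,m}(1)$ in the leading term reflects the fact that an entering wave from the right state splits into a reflected component (Lemma~\ref{lem:OndesReflechies}) and a transmitted component (the present lemma), with amplitudes determined by this scattering coefficient evaluated at the spectral parameter $z=1$.
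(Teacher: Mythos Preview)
Your proposal is correct and follows essentially the same approach as the paper: the paper explicitly says the proof is ``sensibly the same one as for Lemma~\ref{lem:OndesReflechies}'' and only spells out the six-term decomposition $E_1+E_2\rg_l^-+(E_3+E_4+E_5+E_6){\lg_{l^\prime}^+}^T\vec e\,\rg_l^-$ for the central/central case \eqref{lem:OndesTransmises:CuCu1}, which is precisely the splitting you describe. Your identification of the sign bookkeeping (that $j/\alpha_l^-\geq 0$ and $-j_0/\alpha_{l^\prime}^+\geq 0$ in the relevant index ranges) and your treatment of the strictly stable/unstable cases via \eqref{inZetaSs}--\eqref{inZetaSu} match the paper's method for the analogous reflected-wave estimates.
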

	
	Just like in the case of the reflected waves, since we consider $m\in\lc dp+1,\ppp,d(p+q)-1\rc$ and $m^\prime\in\lc dp+1,\ppp,d(p+q)\rc$, Lemma \ref{lemCofac} implies that $\widetilde{g}_{m^\prime,m}^+$ can be holomorphically extended on the whole ball $B(1,\delta_1)$ and thus the term $\widetilde{g}_{m^\prime,m}^+(1)$ is well defined.
	
	\begin{proof}
		The proof of Lemma \ref{lem:OndesTransmises} is sensibly the same one as for Lemma \ref{lem:OndesReflechies} so the proof is left to the reader. Let us just point out that in order to prove \eqref{lem:OndesTransmises:CuCu1}, we have using Cauchy's formula that
		\begin{multline*}
			\frac{1}{2i\pi}\int_{\Gamma_{in}(\eta)} e^{n\tau} e^\tau\widetilde{g}_{m^\prime,m}^+(e^\tau)\widetilde{\Ccc}_m^+(e^\tau,j_0,\textbf{e})\Pi(W_m^-(e^\tau,j))d\tau  - \frac{\alpha_l^-}{\alpha_{l^\prime}^+}\widetilde{g}_{m^\prime,m}^+(1)T_{l^\prime,l}^+(n,j_0,j)\textbf{e}\\ =  E_1 +E_2 \rg_l^- + \left(E_3  +E_4+E_5 +E_6 \right){\lg_{l^\prime}^+}^T\textbf{e} \rg_l^-
		\end{multline*}
		where $E_1$ is a vector and $E_2,\ppp,E_6$ are complex scalars defined by
		\begin{align*}
			E_1 &= \frac{1}{2i\pi}\int_{\Gamma_1} e^{n\tau} \widetilde{g}^+_{m^\prime,m}(e^\tau)e^{j\varpi_l^-(\tau)}e^{-(j_0+1)\varpi_{l^\prime}^+(\tau)} e^\tau\Delta_{m^\prime}^+(e^\tau,j_0,\textbf{e})\Pi(V_m^-(e^\tau,j)-R_m^-(e^\tau))d\tau,  \\
			E_2 &= \frac{1}{2i\pi}\int_{\Gamma_2} e^{n\tau} \widetilde{g}^+_{m^\prime,m}(e^\tau)e^{j\varpi_l^-(\tau)}e^{-(j_0+1)\varpi_{l^\prime}^+(\tau)} e^\tau\left(\Delta_{m^\prime}^+(e^\tau,j_0,\textbf{e})-\frac{d\zeta_{m^\prime}^+}{dz}(e^\tau){\lg_{l^\prime}^+}^T\textbf{e}\right)d\tau,\\
			E_3 &= \frac{1}{2i\pi}\int_{\Gamma_3} e^{n\tau} e^{j\varpi_l^-(\tau)}e^{-j_0\varpi_{l^\prime}^+(\tau)} \left(\widetilde{g}_{m^\prime,m}^+(e^\tau)e^\tau\zeta_{m^\prime}^+(e^\tau)^{-1}\frac{d\zeta_{m^\prime}^+}{dz}(e^\tau)+\frac{\widetilde{g}_{m^\prime,m}^+(1)}{\alpha_{l^\prime}^+}\right) d\tau, \\
			E_4 &= -\frac{\widetilde{g}_{m^\prime,m}^+(1)}{2i\pi\alpha_{l^\prime}^+}\int_{\Gamma_4} e^{n\tau} \left(e^{j\varpi_l^-(\tau)}-e^{j\varphi_l^-(\tau)}\right)e^{-j_0\varpi_{l^\prime}^+(\tau)}d\tau\\
			E_5 &= -\frac{\widetilde{g}_{m^\prime,m}^+(1)}{2i\pi\alpha_{l^\prime}^+}\int_{\Gamma_5} e^{n\tau}e^{j\varphi_l^-(\tau)} \left(e^{-j_0\varpi_{l^\prime}^+(\tau)}-e^{-j_0\varphi_{l^\prime}^+(\tau)}\right)d\tau,\\
			E_6 &= -\frac{\widetilde{g}_{m^\prime,m}^+(1)}{2i\pi\alpha_{l^\prime}^+}\int_{\Gamma_5} e^{n\tau}e^{j\varphi_l^-(\tau)} e^{-j_0\varphi_{l^\prime}^+(\tau)}d\tau\\ 
			&\hspace{5cm}- \frac{\alpha_l^-}{\alpha_{l^\prime}^+}\widetilde{g}_{m^\prime,m}^+(1)\frac{1}{n^\frac{1}{2\mu}}H_{2\mu}\left(\frac{j}{n\alpha_l^-}\beta_l^- -\frac{j_0}{n\alpha_{l^\prime}^+} \beta_{l^\prime}^+ \left(\frac{\alpha_l^-}{\alpha_{l^\prime}^+}\right)^{2\mu},\frac{n\alpha_l^-+j_0\frac{\alpha_l^-}{\alpha_{l^\prime}^+}-j}{n^\frac{1}{2\mu}}\right),
		\end{align*}
		and $\Gamma_1,\ppp,\Gamma_6$ are paths belonging to the set $X$. We just have to prove bounds on the terms $E_1,\ppp,E_6$ just like in the proof of \eqref{lem:OndesReflechies:CsCu1}.
	\end{proof}
	
	\subsubsection{Unstable excited mode}
	
	\begin{lemma}\label{lem:OndesExcitedUnstable}
		There exist two positive constants $C,c$ such that for all $m^\prime \in I_{su}^+$, $n\in \N\backslash\lc0\rc$, $j_0\in \N$, $j\in \Z$ such that $j-j_0\in\lc-nq,\ppp,np\rc$ and $\textbf{e}\in\C^d$, we have
		
		\begin{subequations}
			$\bullet$ For $j\in\N$, we have that:
			\begin{multline}\label{lem:OndesExcitedUnstable:Res1}
				\left|-\frac{1}{2i\pi}\int_{\Gamma_{in}(\eta)}e^{n\tau}e^\tau\widetilde{g}_{m^\prime, 1}^+(e^\tau)\widetilde{\Ccc}^+_{m^\prime}(e^\tau,j_0,\textbf{e})\Pi(\Phi_1(e^\tau,j))d\tau +\zeta_{m^\prime}^+(1)^{-j_0-1}\Delta_{m^\prime}^+(1,j_0,\textbf{e})\mathrm{Res}(\widetilde{g}_{m^\prime}^+,1)V(j)\right|\\ \leq C|\textbf{e}|e^{-cn}.
			\end{multline}	
			
			$\bullet$ For $j\in-\N$, we have that:
			\begin{multline}\label{lem:OndesExcitedUnstable:Res2}
				\left|\frac{1}{2i\pi}\int_{\Gamma_{in}(\eta)}e^{n\tau}e^\tau\widetilde{g}_{m^\prime, d(p+q)}^+(e^\tau)\widetilde{\Ccc}^+_{m^\prime}(e^\tau,j_0,\textbf{e})\Pi(\Phi_{d(p+q)}(e^\tau,j))d\tau +\zeta_{m^\prime}^+(1)^{-j_0-1}\Delta_{m^\prime}^+(1,j_0,\textbf{e})\mathrm{Res}(\widetilde{g}_{m^\prime}^+,1)V(j)\right|\\ \leq C|\textbf{e}|e^{-cn}.
			\end{multline}
		\end{subequations}
	\end{lemma}
	
	We recall that the sequence $V$ is defined by \eqref{def:V}.
	
	\begin{proof}
		We are going to prove \eqref{lem:OndesExcitedUnstable:Res1}. We consider $m^\prime\in I_{su}^+$, $n\in\N\backslash\lc0\rc$, $j_0\in\N$. For $j\in\N$, using the residue theorem and the equality \eqref{eg:CccTilde}, we have
		\begin{multline*}
			-\frac{1}{2i\pi}\int_{\Gamma_{in}(\eta)} e^{n\tau} e^\tau \widetilde{g}^+_{m^\prime,1}(e^\tau) \widetilde{\Ccc}^+_{m^\prime}(e^\tau,j_0,\textbf{e})\Pi(\Phi_1(e^\tau,j))d \tau +\zeta^+_{m^\prime}(1)^{-j_0-1} \Delta_{m^\prime}^+(1,j_0,\textbf{e})\mathrm{Res}(\widetilde{g}_{m^\prime,1}^+,1)V(j) \\ = -\frac{1}{2i\pi}\int_{\Gamma_{d}(\eta)} e^{n\tau} \zeta^+_{m^\prime}(e^\tau)^{-j_0-1}e^\tau \widetilde{g}^+_{m^\prime,1}(e^\tau) \Delta^+_{m^\prime}(e^\tau,j_0,\textbf{e})\Pi(\Phi_1(e^\tau,j))d \tau   .
		\end{multline*} 
		Using \eqref{inZetaSu} to bound $\zeta_{m^\prime}^+$, \eqref{in:Delta+} to handle the term $\Delta_{m^\prime}^+$ and \eqref{in:decroissance_expo_Phi_1_et_d(p+q)} to bound $\Phi_1$, there exists another positive constant $C$ independent from $n$, $j_0$, $j$ and $\textbf{e}$ such that
		\begin{multline*}
			\left|-\frac{1}{2i\pi}\int_{\Gamma_{d}(\eta)} e^{n\tau} \zeta^+_{m^\prime}(e^\tau)^{-j_0-1}e^\tau \widetilde{g}^+_{m^\prime,1}(e^\tau) \Delta^+_{m^\prime}(e^\tau,j_0,\textbf{e})\Pi(\Phi_1(e^\tau,j))d \tau\right|\\ \leq C |\textbf{e}|\int_{\Gamma_d(\eta)}e^{n\Re(\tau)}|d\tau|\leq 2r_\varepsilon(\eta)C |\textbf{e}|e^{-n\eta}.
		\end{multline*} 
		We thus obtain \eqref{lem:OndesExcitedUnstable:Res1}. The proof of \eqref{lem:OndesExcitedUnstable:Res2} is fairly similar and is left to the reader.
	\end{proof}
	
	\subsubsection{Central excited mode}

		\begin{lemma}\label{lem:OndesExcitedCentral}
			We consider $m^\prime\in I_{cu}=I_{cu}^+$ and write it as $m^\prime=l^\prime+(k^\prime-1)d$ with $k^\prime\in\lc1,\ppp,p+q\rc$ and $l^\prime\in\lc1,\ppp,d\rc$. There exists a positive constant $c$ such that for all $n\in \N\backslash\lc0\rc$, $j_0\in \N$, $j\in \Z$ such that $j-j_0\in\lc-nq,\ppp,np\rc$ and $\textbf{e}\in\C^d$, we have:
			
			\begin{subequations}
				$\bullet$ For $-\frac{j_0}{\alpha_{l^\prime}^+}\in\left[\frac{n}{2},2n\right]$ and $j\geq0$, we have that:
				\begin{multline}\label{lem:OndesExcitedCentral:Res+1}
					-\frac{1}{2i\pi}\int_{\Gamma_{in}(\eta)}e^{n\tau}e^\tau\widetilde{g}_{m^\prime, 1}^+(e^\tau)\widetilde{\Ccc}^+_{m^\prime}(e^\tau,j_0,\textbf{e})\Pi(\Phi_1(e^\tau,j))d\tau \\
					-\left(\frac{1}{\alpha_{l^\prime}^+}E_{l^\prime}^+(n,j_0)\textbf{e} -\left(\Delta_{m^\prime}^+(1,j_0,\textbf{e})+\frac{{\lg_{l^\prime}^+}^T\textbf{e}}{\alpha_{l^\prime}^+}\right)\right)\mathrm{Res}(\widetilde{g}_{m^\prime,1}^+,1)V(j)= \\ 
					O\left(\frac{|\textbf{e}|e^{-c|j|}}{n^\frac{1}{2\mu}}\exp\left(-c\left(\frac{\left|n+\frac{j_0}{\alpha_l^+}\right|}{n^\frac{1}{2\mu}}\right)^\frac{2\mu}{2\mu-1}\right)\right)+O\left(|\textbf{e}|e^{-cn}\right).
				\end{multline}	
				
				$\bullet$ For $-\frac{j_0}{\alpha_{l^\prime}^+}\notin\left[\frac{n}{2},2n\right]$ and $j\geq0$, we have that:
				\begin{multline}\label{lem:OndesExcitedCentral:Res+2}
					-\frac{1}{2i\pi}\int_{\Gamma_{in}(\eta)}e^{n\tau}e^\tau\widetilde{g}_{m^\prime, 1}^+(e^\tau)\widetilde{\Ccc}^+_{m^\prime}(e^\tau,j_0,\textbf{e})\Pi(\Phi_1(e^\tau,j))d\tau \\
					-\left(\frac{1}{\alpha_{l^\prime}^+}E_{l^\prime}^+(n,j_0)\textbf{e} -\left(\Delta_{m^\prime}^+(1,j_0,\textbf{e})+\frac{{\lg_{l^\prime}^+}^T\textbf{e}}{\alpha_{l^\prime}^+}\right)\right)\mathrm{Res}(\widetilde{g}_{m^\prime,1}^+,1)V(j)=O(|\textbf{e}|e^{-cn}).
				\end{multline}	
				
				$\bullet$ For $-\frac{j_0}{\alpha_{l^\prime}^+}\in\left[\frac{n}{2},2n\right]$ and $j<0$, we have that:
				\begin{multline}\label{lem:OndesExcitedCentral:Res-1}
					\frac{1}{2i\pi}\int_{\Gamma_{in}(\eta)}e^{n\tau}e^\tau\widetilde{g}_{m^\prime, d(p+q)}^+(e^\tau)\widetilde{\Ccc}^+_{m^\prime}(e^\tau,j_0,\textbf{e})\Pi(\Phi_{d(p+q)}(e^\tau,j))d\tau\\
					-\left(\frac{1}{\alpha_{l^\prime}^+}E_{l^\prime}^+(n,j_0)\textbf{e} -\left(\Delta_{m^\prime}^+(1,j_0,\textbf{e})+\frac{{\lg_{l^\prime}^+}^T\textbf{e}}{\alpha_{l^\prime}^+}\right)\right)\mathrm{Res}(\widetilde{g}_{m^\prime,1}^+,1)V(j)= \\ 
					O\left(\frac{|\textbf{e}|e^{-c|j|}}{n^\frac{1}{2\mu}}\exp\left(-c\left(\frac{\left|n+\frac{j_0}{\alpha_l^+}\right|}{n^\frac{1}{2\mu}}\right)^\frac{2\mu}{2\mu-1}\right)\right)+O\left(|\textbf{e}|e^{-cn}\right).
				\end{multline}	
				
				$\bullet$ For $-\frac{j_0}{\alpha_{l^\prime}^+}\notin\left[\frac{n}{2},2n\right]$ and $j<0$, we have that:
				\begin{multline}\label{lem:OndesExcitedCentral:Res-2}
					\frac{1}{2i\pi}\int_{\Gamma_{in}(\eta)}e^{n\tau}e^\tau\widetilde{g}_{m^\prime, d(p+q)}^+(e^\tau)\widetilde{\Ccc}^+_{m^\prime}(e^\tau,j_0,\textbf{e})\Pi(\Phi_{d(p+q)}(e^\tau,j))d\tau\\
					-\left(\frac{1}{\alpha_{l^\prime}^+}E_{l^\prime}^+(n,j_0)\textbf{e} -\left(\Delta_{m^\prime}^+(1,j_0,\textbf{e})+\frac{{\lg_{l^\prime}^+}^T\textbf{e}}{\alpha_{l^\prime}^+}\right)\right)\mathrm{Res}(\widetilde{g}_{m^\prime,1}^+,1)V(j)= O(|\textbf{e}|e^{-cn}).
				\end{multline}	
			\end{subequations}
		\end{lemma}
		
		We recall once again that the sequence $V$ is defined by \eqref{def:V}.
		
		\begin{proof}
			We will focus on proving \eqref{lem:OndesExcitedCentral:Res+1} and \eqref{lem:OndesExcitedCentral:Res+2} as the proof of  \eqref{lem:OndesExcitedCentral:Res-1} and \eqref{lem:OndesExcitedCentral:Res-2} would be similar whilst observing that the equality \eqref{egPhi} is verified and that the definition \eqref{def:gtilde} of $\widetilde{g}^\pm_{m^\prime,m}$ and \eqref{eg:Cofac} imply that:
			$$\mathrm{Res}(\widetilde{g}_{m^\prime,d(p+q)}^+,1) = -\mathrm{Res}(\widetilde{g}_{m^\prime,1}^+,1).$$
			
			$\bullet$ \textbf{Proof of \eqref{lem:OndesExcitedCentral:Res+1}:}
			
			Using Cauchy's formula, \eqref{eg:CccTilde} and \eqref{def:varpi} since the eigenvalue $\zeta^+_{m^\prime}$ is central, we have
			\begin{multline}\label{lem:OndesExcitedCentral:1}
				-\frac{1}{2i\pi}\int_{\Gamma_{in}(\eta)}e^{n\tau}e^\tau\widetilde{g}_{m^\prime, 1}^+(e^\tau)\widetilde{\Ccc}^+_{m^\prime}(e^\tau,j_0,\textbf{e})\Pi(\Phi_1(e^\tau,j))d\tau \\
				-\left(\frac{1}{\alpha_{l^\prime}^+}E_{l^\prime}^+(n,j_0)\textbf{e} -\left(\Delta_{m^\prime}^+(1,j_0,\textbf{e})+\frac{{\lg_{l^\prime}^+}^T\textbf{e}}{\alpha_{l^\prime}^+}\right)\right)\mathrm{Res}(\widetilde{g}_{m^\prime,1}^+,1)V(j)\\  = E_1+E_2V(j)+(E_3+E_4+E_5)\Delta^+_{m^\prime}(1,j_0,\textbf{e}) V(j)+ E_6V(j)
			\end{multline}
			where
			\begin{align*}
				E_1 := & -\frac{1}{2i\pi} \int_{\Gamma_1} e^{n\tau}e^\tau\widetilde{g}^+_{m^\prime,1}(e^\tau)e^{-(j_0+1)\varpi^+_{l^\prime}(\tau)}\Delta^+_{m^\prime}(e^\tau,j_0,\textbf{e}) \Pi(\Phi_1(e^\tau,j)-\Phi_1(1,j)) d\tau,\\
				E_2 := & -\frac{1}{2i\pi} \int_{\Gamma_2} e^{n\tau}e^\tau\widetilde{g}^+_{m^\prime,1}(e^\tau)e^{-(j_0+1)\varpi^+_{l^\prime}(\tau)}\left(\Delta^+_{m^\prime}(e^\tau,j_0,\textbf{e}) -\Delta^+_{m^\prime}(1,j_0,\textbf{e}) \right)d\tau,\\
				E_3 := & -\frac{1}{2i\pi} \int_{\Gamma_3} e^{n\tau}e^{-j_0\varpi^+_{l^\prime}(\tau)}\left(e^\tau\widetilde{g}^+_{m^\prime,1}(e^\tau)\zeta^+_{m^\prime}(e^\tau)^{-1}-\frac{\mathrm{Res}(\widetilde{g}^+_{m^\prime,1},1)}{\tau} \right)d\tau,\\
				E_4 := & -\frac{\mathrm{Res}(\widetilde{g}^+_{m^\prime,1},1)}{2i\pi} \int_{\Gamma_4} e^{n\tau}\frac{e^{-j_0\varpi^+_{l^\prime}(\tau)}-e^{-j_0\varphi^+_{l^\prime}(\tau)}}{\tau}d\tau,\\
				E_5 := & -\mathrm{Res}(\widetilde{g}^+_{m^\prime,1},1)\left(\frac{1}{2i\pi} \int_{\Gamma_{in}(\eta)} e^{n\tau}\frac{e^{-j_0\varphi^+_{l^\prime}(\tau)}}{\tau}d\tau-E_{2\mu}\left(\beta^+_{l^\prime};\frac{n\alpha^+_{l^\prime}+j_0}{n^\frac{1}{2\mu}}\right)\right),\\
				E_6 := & \mathrm{Res}(\widetilde{g}^+_{m^\prime,1},1)\left(1-E_{2\mu}\left(\beta^+_{l^\prime},\frac{n\alpha^+_{l^\prime}+j_0}{n^\frac{1}{2\mu}}\right)\right)\left( \Delta^+_{m^\prime}(1,j_0,\textbf{e})+\frac{{\lg^+_{l^\prime}}^T\textbf{e}}{\alpha^+_{l^\prime}}\right),
			\end{align*}
			and $\Gamma_1,\ppp,\Gamma_4\in X$. Let us observe that, since the function $\widetilde{g}^+_{m^\prime,1}$ has a simple pole of order $1$ at $1$, we have the right to use the Cauchy's formula for the first 4 terms as the functions inside the integrals can be holomorphically extended on the whole ball $B(1,\varepsilon)$.
			
			$\blacktriangleright$ Using \eqref{in:Delta+} to bound $\Delta^+_{m^\prime}(e^\tau,j_0,\textbf{e})$, \eqref{lem:BornesVDelta:inPhi1} to bound $\widetilde{g}^+_{m^\prime,1}(e^\tau)\Pi\left(\Phi_1(e^\tau,j)-\Phi_1(1,j)\right)$ and \eqref{in:varpi}, there exists a constant $C>0$ independent from $n$, $j_0$, $j$, $\textbf{e}$ and $\Gamma_1$ such that 
			$$|E_1|\leq C|\textbf{e}|e^{-\frac{3c_*}{2}|j|}\int_{\Gamma_1}\exp\left(n\Re(\tau)-\frac{j_0}{\alpha^+_{l^\prime}}\left(-\Re(\tau)+A_R\Re(\tau)^{2\mu}-A_I\Im(\tau)^{2\mu}\right)\right) |d\tau|.$$ 
			
			$\blacktriangleright$ Using \eqref{lem:BornesVDelta:inDelta+} to bound $\widetilde{g}^+_{m^\prime,1}(e^\tau)\left(\Delta^+_{m^\prime}(e^\tau,j_0,\textbf{e})-\Delta^+_{m^\prime}(1,j_0,\textbf{e})\right)$ and \eqref{in:varpi}, there exists a constant $C>0$ independent from $n$, $j_0$, $j$, $\textbf{e}$ and $\Gamma_2$ such that 
			$$|E_2|\leq C|\textbf{e}|\int_{\Gamma_2}\exp\left(n\Re(\tau)-\frac{j_0}{\alpha^+_{l^\prime}}\left(-\Re(\tau)+A_R\Re(\tau)^{2\mu}-A_I\Im(\tau)^{2\mu}\right)\right) |d\tau|.$$ 
			
			$\blacktriangleright$ Using \eqref{in:varpi}, there exists a constant $C>0$ independent from $n$, $j_0$, $j$, $\textbf{e}$ and $\Gamma_3$ such that 
			$$|E_3|\leq C\int_{\Gamma_3}\exp\left(n\Re(\tau)-\frac{j_0}{\alpha^+_{l^\prime}}\left(-\Re(\tau)+A_R\Re(\tau)^{2\mu}-A_I\Im(\tau)^{2\mu}\right)\right) |d\tau|.$$ 
			
			$\blacktriangleright$ Using Lemma \ref{lem:BorneVarphiVarpi}, there exists a constant $C>0$ independent from $n$, $j_0$, $j$, $\textbf{e}$ and $\Gamma_4$ such that 
			$$|E_4|\leq Cn\int_{\Gamma_4}|\tau|^{2\mu}\exp\left(n\Re(\tau)-\frac{j_0}{\alpha^+_{l^\prime}}\left(-\Re(\tau)+A_R\Re(\tau)^{2\mu}-A_I\Im(\tau)^{2\mu}\right)\right) |d\tau|.$$ 
			
			Using Lemma \ref{lem:BornesGaussiennes} which gives a good choices of path $\Gamma_1,\ppp,\Gamma_4\in X$ depending on $n$, $j_0$ and $j$ to handle the integrals in the terms above, there exist new constants $C,c>0$ independent from $n$, $j_0$, $j$ and $\textbf{e}$ such that 
			\begin{align*}
				|E_1|& \leq \frac{Ce^{-c|j|}|\textbf{e}|}{n^\frac{1}{2\mu}} \exp\left(-c\left(\frac{\left|n+\frac{j_0}{\alpha_{l^\prime}^+}\right|}{n^\frac{1}{2\mu}}\right)^\frac{2\mu}{2\mu-1}\right), & |E_2|& \leq \frac{C|\textbf{e}|}{n^\frac{1}{2\mu}} \exp\left(-c\left(\frac{\left|n+\frac{j_0}{\alpha_{l^\prime}^+}\right|}{n^\frac{1}{2\mu}}\right)^\frac{2\mu}{2\mu-1}\right), \\
				|E_3| &\leq \frac{C}{n^\frac{1}{2\mu}} \exp\left(-c\left(\frac{\left|n+\frac{j_0}{\alpha_{l^\prime}^+}\right|}{n^\frac{1}{2\mu}}\right)^\frac{2\mu}{2\mu-1}\right),& |E_4| &\leq \frac{C}{n^\frac{1}{2\mu}} \exp\left(-c\left(\frac{\left|n+\frac{j_0}{\alpha_{l^\prime}^+}\right|}{n^\frac{1}{2\mu}}\right)^\frac{2\mu}{2\mu-1}\right).
			\end{align*}
			There remains to bound the terms $E_5$ and $E_6$. We use \eqref{lem:ComportementPrincGaussienneRes3} of Lemma \ref{lem:ComportementPrincGaussienne} to bound the term $E_5$ and obtain the existence of two positive constants $C,c$ such that:
			$$ 	|E_5| \leq \frac{C}{n^\frac{1}{2\mu}} \exp\left(-c\left(\frac{\left|n+\frac{j_0}{\alpha_{l^\prime}^+}\right|}{n^\frac{1}{2\mu}}\right)^\frac{2\mu}{2\mu-1}\right).$$
			We observe that \eqref{def:varpi} and the asymptotic expansion \eqref{eg:LienVarpiVarphi} imply that:
			$$ \frac{d\zeta_{m^\prime}^+}{dz}(1) =-\frac{1}{\alpha_{l^\prime}^+}.$$
			Then, using \eqref{in:DeltaInf+} and the fact that the function $E_{2\mu}(\beta^+_{l^\prime};\cdot)$ is bounded to handle $E_6$, we can also consider that we chose the constants $C,c$ so that:
			$$ |E_6| \leq C|\textbf{e}|e^{-c|j_0|}.$$
			Since $-\frac{j_0}{\alpha_{l^\prime}^+}\in\left[\frac{n}{2},2n\right]$, we can change the constants $C,c$ such that:
			$$|E_6| \leq C|\textbf{e}|e^{-cn}.$$

			Using \eqref{in:decroissance_expo_Phi_1_et_d(p+q)} and \eqref{in:Delta+} to bound $V(j)=\Pi(\Phi_1(1,j))$ and $\Delta^+_{m^\prime}$ in \eqref{lem:OndesExcitedCentral:1} and the several bounds on the terms $E_1,\hdots,E_6$ that we just proved, we can conclude the proof of \eqref{lem:OndesExcitedCentral:Res+1}.
			
			$\bullet$ \textbf{Proof of \eqref{lem:OndesExcitedCentral:Res+2}:}
			
			We will separate this proof into two parts.
			
			$\blacktriangleright$ Let us assume that $-\frac{j_0}{\alpha_{^\prime}^+}\in\left[0,\frac{n}{2}\right]$. We recall that $\widetilde{g}_{m^\prime,1}^+$ is a meromorphic function with a pole of order $1$ at $1$. Using the Residue Theorem, we have:
			\begin{multline*}
				-\frac{1}{2i\pi}\int_{\Gamma_{in}(\eta)}e^{n\tau}e^\tau\widetilde{g}_{m^\prime, 1}^+(e^\tau)\widetilde{\Ccc}^+_{m^\prime}(e^\tau,j_0,\textbf{e})\Pi(\Phi_1(e^\tau,j))d\tau\\
				-\left(\frac{1}{\alpha_{l^\prime}^+}E_{l^\prime}^+(n,j_0)\textbf{e} -\left(\Delta_{m^\prime}^+(1,j_0,\textbf{e})+\frac{{\lg_{l^\prime}^+}^T\textbf{e}}{\alpha_{l^\prime}^+}\right)\right)\mathrm{Res}(\widetilde{g}_{m^\prime,1}^+,1)V(j) \\ 
				=-\frac{1}{2i\pi}\int_{\Gamma_{d}(\eta)}e^{n\tau}e^\tau\widetilde{g}_{m^\prime, 1}^+(e^\tau)e^{-(j_0+1)\varpi^+_{l^\prime}(\tau)}\Delta^+_{m^\prime}(e^\tau,j_0,\textbf{e})\Pi(\Phi_1(e^\tau,j))d\tau\\
				-\frac{\mathrm{Res}(\widetilde{g}_{m^\prime,1}^+,1)}{\alpha_{l^\prime}^+}\left(1-E_{2\mu}\left(\beta^+_{l^\prime};\frac{n\alpha^+_{l^\prime}+j_0}{n^\frac{1}{2\mu}}\right)\right){\lg^+_{l^\prime}}^T\textbf{e}V(j).
			\end{multline*}
			We need to obtain exponential bounds on both terms on the right hand side of the equality above. First, we observe that since $-\frac{j_0}{\alpha^+_{l^\prime}}$ belongs to $\left[0,\frac{n}{2}\right]$, we have 
			$$\frac{n\alpha^+_{l^\prime}+j_0}{n^\frac{1}{2\mu}}\leq \frac{\alpha^+_{l^\prime}}{2}n^\frac{2\mu-1}{2\mu}. $$
			We have that $\alpha^+_{l^\prime}<0$ since $m^\prime$ belongs to $I_{cu}^+$ and thus, using \eqref{inE-} and \eqref{in:decroissance_expo_Phi_1_et_d(p+q)}, we have that there exist two positive constants $C,c$ such that for all $n\in\N\backslash\lc0\rc$, $j_0,j\in\N$, $\textbf{e}\in\C^d$ such that $-\frac{j_0}{\alpha_{^\prime}^+}\in\left[0,\frac{n}{2}\right]$:
			$$\left|\frac{\mathrm{Res}(\widetilde{g}_{m^\prime,1}^+,1)}{\alpha_{l^\prime}^+}\left(1-E_{2\mu}\left(\beta^+_{l^\prime};\frac{n\alpha^+_{l^\prime}+j_0}{n^\frac{1}{2\mu}}\right)\right){\lg^+_{l^\prime}}^T\textbf{e}V(j)\right|\leq C|\textbf{e}| e^{-c|j|}e^{-cn}.$$
			
			We now observe that using \eqref{in:Delta+}, \eqref{in:decroissance_expo_Phi_1_et_d(p+q)} and \eqref{in:varpi}, we can prove that there exist two positive constants $C,c$ such that such that for all $n\in\N\backslash\lc0\rc$, $j_0,j\in\N$, $\textbf{e}\in\C^d$ 
			\begin{multline*}
				\left| -\frac{1}{2i\pi}\int_{\Gamma_{d}(\eta)}e^{n\tau}e^\tau\widetilde{g}_{m^\prime, 1}^+(e^\tau)e^{-(j_0+1)\varpi^+_{l^\prime}(\tau)}\Delta^+_{m^\prime}(e^\tau,j_0,\textbf{e})\Pi(\Phi_1(e^\tau,j))d\tau \right| \\
				\leq C|\textbf{e}| e^{-c|j|} \int_{\Gamma_{d}(\eta)}\exp\left(n\Re(\tau)-\frac{j_0}{\alpha^+_{l^\prime}}\left(-\Re(\tau)+A_R\Re(\tau)^{2\mu}-A_I\Im(\tau)^{2\mu}\right)\right)|d\tau|.
			\end{multline*}
			Using \eqref{lem:BornesGaussiennesRes1}, we can find exponential bounds for the integral in the right hand term above. This allows us to conclude the proof of \eqref{lem:OndesExcitedCentral:Res+2} when $-\frac{j_0}{\alpha^+_{l^\prime}}$ belongs to $\left[0,\frac{n}{2}\right]$.
			
			$\blacktriangleright$ Let us assume that $-\frac{j_0}{\alpha_{^\prime}^+}\in\left[2n,+\infty\right[$. Since $-\frac{j_0}{\alpha^+_{l^\prime}}$ belongs to $\left[2n,+\infty\right[$, we have 
			$$\frac{n\alpha^+_{l^\prime}+j_0}{n^\frac{1}{2\mu}}\geq -\alpha^+_{l^\prime}n^\frac{2\mu-1}{2\mu}. $$
			We have that $\alpha^+_{l^\prime}<0$ since $m^\prime$ belongs to $I_{cu}^+$ and thus, using \eqref{inE+} and \eqref{in:decroissance_expo_Phi_1_et_d(p+q)}, we have that there exist two positive constants $C,c$ such that for all $n\in\N\backslash\lc0\rc$, $j_0,j\in\N$, $\textbf{e}\in\C^d$ with $-\frac{j_0}{\alpha_{^\prime}^+}\in\left[2n,+\infty\right[$:
			$$\left|\frac{\mathrm{Res}(\widetilde{g}_{m^\prime,1}^+,1)}{\alpha_{l^\prime}^+}E_{l^\prime}^+(n,j_0)\textbf{e} V(j)\right|=\left|\frac{\mathrm{Res}(\widetilde{g}_{m^\prime,1}^+,1)}{\alpha_{l^\prime}^+}E_{2\mu}\left(\beta^+_{l^\prime};\frac{n\alpha^+_{l^\prime}+j_0}{n^\frac{1}{2\mu}}\right){\lg^+_{l^\prime}}^T\textbf{e}V(j)\right|\leq C|\textbf{e}| e^{-c|j|}e^{-cn}.$$
			
			Furthermore, using \eqref{in:decroissance_expo_Phi_1_et_d(p+q)} to bound $V(j)$ as well as \eqref{in:DeltaInf+}, there exist two positive constants $C,c$ such that such that for all $n\in\N\backslash\lc0\rc$, $j_0,j\in\N$, $\textbf{e}\in\C^d$ with $-\frac{j_0}{\alpha_{l^\prime}^+}\in\left[2n,+\infty\right[$:
			$$\left|\left(\Delta_{m^\prime}^+(1,j_0,\textbf{e})+\frac{{\lg_{l^\prime}^+}^T\textbf{e}}{\alpha_{l^\prime}^+}\right)\mathrm{Res}(\widetilde{g}_{m^\prime,1}^+,1)V(j)\right|\leq C|\textbf{e}|e^{-c|j|}e^{-cn}. $$
			
			Finally, using \eqref{in:Delta+}, \eqref{in:decroissance_expo_Phi_1_et_d(p+q)} and \eqref{in:varpi}, we can prove that there exist two positive constants $C,c$ such that such that for all $n\in\N\backslash\lc0\rc$, $j_0,j\in\N$, $\textbf{e}\in\C^d$ 
			\begin{multline*}
				\left| -\frac{1}{2i\pi}\int_{\Gamma_{in}(\eta)}e^{n\tau}e^\tau\widetilde{g}_{m^\prime, 1}^+(e^\tau)\widetilde{\Ccc}^+_{m^\prime}(e^\tau,j_0,\textbf{e})\Pi(\Phi_1(e^\tau,j))d\tau \right| \\
				\leq C|\textbf{e}| e^{-c|j|} \int_{\Gamma_{in}(\eta)}\exp\left(n\Re(\tau)-\frac{j_0}{\alpha^+_{l^\prime}}\left(-\Re(\tau)+A_R\Re(\tau)^{2\mu}-A_I\Im(\tau)^{2\mu}\right)\right)|d\tau|.
			\end{multline*}
			Using \eqref{lem:BornesGaussiennesRes2}, we can find exponential bounds for the integral in the right hand term above. This allows us to conclude the proof of \eqref{lem:OndesExcitedCentral:Res+2} when $-\frac{j_0}{\alpha^+_{l^\prime}}$ belongs to $\left[2n,+\infty\right[$.
		\end{proof}

	\subsection{Proof of the decomposition \eqref{decompoGreen} of the temporal Green's function}\label{subsec:ConcluTh1}

		To conclude the proof of the decomposition \eqref{decompoGreen} of the temporal Green's function, we will start by using the expression \eqref{ExpGs} of the spatial Green's function and the calculations of Section \ref{subsec:Decompo} to obtain a first "preliminary" decomposition \eqref{decompoGreenTempo} of the Green's function. We will then have to perform what could be described as a \textit{mass analysis} of the temporal Green's function to finally obtain \eqref{decompoGreen}.
		
		\subsubsection{A preliminary decomposition}
		
		First, let us define the coefficients $C_{l^\prime,l}^{R,+}$, $C_{l^\prime,l}^{T,+}$ and $C_{l^\prime}^{E,+}$ and a sequence of line vectors $P_U$ that will appear in the calculations below. The constants defined here will also correspond to those in Theorem \ref{th:Green}.
		\begin{subequations}\label{coeff}
			\begin{itemize}
				\item The coefficients $C_{l^\prime,l}^{R,+}$ are given by:
				\begin{equation}
					\forall l^\prime\in\lc1, \ppp,I\rc, \forall l\in\lc I+1,\ppp,d\rc,\quad C_{l^\prime,l}^{R,+}:= \frac{\alpha_l^+}{\alpha_{l^\prime}^+}\widetilde{g}_{m^\prime, m}^+(1),
				\end{equation}
				where $m^\prime=l^\prime+dp\in I_{cu}^+$ and $m=l+d(p-1)\in I_{cs}^+$.
				\item The coefficients $C_{l^\prime,l}^{T,+}$ are given by:
				\begin{equation}
					\forall l^\prime\in\lc1, \ppp,I\rc, \forall l\in\lc 1,\ppp,I-1\rc,\quad C_{l^\prime,l}^{T,+}:= \frac{\alpha_l^-}{\alpha_{l^\prime}^+}\widetilde{g}_{m^\prime, m}^+(1),
				\end{equation}
				where $m^\prime=l^\prime+dp\in I_{cu}^+$ and $m=l+dp\in I_{cu}^-$.
				\item The coefficients $C_{l^\prime}^{E,+}$ are given by:
				\begin{equation}
					\forall l^\prime\in \lc 1,\ppp,I\rc,\quad  C_{l^\prime}^{E,+} := \frac{\mathrm{Res}(\widetilde{g}^+_{m^\prime,1},1)}{\alpha_{l^\prime}^+},
				\end{equation}
				where $m^\prime=l^\prime+dp\in I_{cu}^+$,
				\item Because of the linearity of $\Delta_m^\pm(z,j_0,\cdot)$ (see \eqref{def:NDelta}), we define the line vectors $P_U(j_0)\in\Mc_{1,d}(\C)$ by: 
				\begin{multline}\label{def:PU}
					\forall j_0\in \Z, \forall \textbf{e}\in\C^d,\quad  P_U(j_0)\textbf{e} := -\sum_{m^\prime\in I_{cu}^+} \left(\Delta_{m^\prime}^+(1,j_0,\textbf{e})+\frac{{\lg_{l^\prime}^+}^T\textbf{e}}{\alpha^+_{l^\prime}}\right)\mathrm{Res}(\widetilde{g}^+_{m^\prime,1},1)\\-\sum_{m^\prime\in I_{su}^+} \zeta_{m^\prime}^+(1)^{-j_0-1}\Delta_{m^\prime}^+(1,j_0,\textbf{e}) \mathrm{Res}(\widetilde{g}^+_{m^\prime,1},1).
				\end{multline}
			\end{itemize}
		\end{subequations}
		
		Using \eqref{in:DeltaInf+} and noticing that for $m^\prime\in I^+_{su}$ the eigenvalues $\zeta^+_{m^\prime}(1)$ belongs to the set $\D$, we observe that there exist two constants $C,c>0$ such that:
		\begin{equation}\label{in:PU}
			\forall j_0\in\N,\quad \left|P_U(j_0)\right|\leq Ce^{-c|j_0|}.
		\end{equation}
		
		Let us start by proving a decomposition of the temporal Green's function when $j\geq j_0+1$. The cases when $j\in\lc0,\ppp,j_0\rc$ and $j\leq -1$ would be handled similarly using \eqref{ExpGs: l geq 0 : 0 leq j leq l} and \eqref{ExpGs: l geq 0 : j < 0}. Using \eqref{egGSpaTemp2}, Lemma \ref{estGreenExt} and \eqref{ExpGs: l geq 0 : j geq l+1}, there exists a constant $c>0$ such that for $n\in\N\backslash\lc0\rc$, $j_0\in \N$, $j\geq j_0+1$ and $\textbf{e}\in\C^d$ which verify $j-j_0\in\lc-nq,\ppp,np\rc$:
		\begin{align*}
			\Gcc(n,j_0,j)\textbf{e}	=& \frac{1}{2i\pi}\int_{\Gamma_{in}(\eta)} e^{n\tau}e^\tau \Pi(W(e^\tau,j_0,j,\textbf{e}))d\tau+ \frac{1}{2i\pi}\int_{\Gamma_{out}(\eta)} e^{n\tau}e^\tau \Pi(W(e^\tau,j_0,j,\textbf{e}))d\tau \\
			=&  \sum_{m\in I_{ss}^+\cup I_{cs}^+} \left(-\frac{1}{2i\pi} \int_{\Gamma_{in}(\eta)} e^{n\tau}e^\tau \widetilde{\Ccc}^+_{m}(e^\tau,j_0,\textbf{e})\Pi\left(W^+_m(e^\tau,j)\right)d\tau\right) \\ &+\sum_{m\in I_{ss}^+\cup I_{cs}^+\backslash\lc1\rc} \sum_{m^\prime\in I_{cu}^+\cup I_{su}^+} \left(-\frac{1}{2i\pi} \int_{\Gamma_{in}(\eta)} e^{n\tau}e^\tau\widetilde{g}_{m^\prime,m}^+(e^\tau) \widetilde{\Ccc}^+_{m^\prime}(e^\tau,j_0,\textbf{e})\Pi\left(W^+_m(e^\tau,j)\right)d\tau\right)\\
			& + \sum_{m^\prime\in I_{cu}^+\cup I_{su}^+}  \left(-\frac{1}{2i\pi} \int_{\Gamma_{in}(\eta)} e^{n\tau}e^\tau\widetilde{g}_{m^\prime,1}^+(e^\tau) \widetilde{\Ccc}^+_{m^\prime}(e^\tau,j_0,\textbf{e})\Pi\left(\Phi_1(e^\tau,j)\right)d\tau\right) +O(|\textbf{e}|e^{-cn}).
		\end{align*}
		
		Thus, using Lemmas \ref{lem:OndesPropag}-\ref{lem:OndesExcitedCentral}, we obtain that for all $n\in\N\backslash\lc0\rc$, $j_0\in \N$, $j\geq j_0+1$ and $\textbf{e}\in\C^d$ which verify $j-j_0\in\lc-nq,\ppp,np\rc$:	
		\begin{align*}
			&\Gcc(n,j_0,j)\textbf{e}	\\
			=& \sum_{l=I+1}^d S_l^+(n,j_0,j)\textbf{e} +\sum_{l^\prime=1}^I\sum_{l=I+1}^d C_{l^\prime,l}^{R,+}R_{l^\prime,l}^+(n,j_0,j)\textbf{e} + \left(\sum_{l^\prime=1}^IC_{l^\prime}^{E,+}E_{l^\prime}^+(n,j_0)\textbf{e}+P_U^+(j_0)\textbf{e}\right)V(j)\\ &+ \sum_{l=I+1}^d\exp\left(-c\left(\frac{\left|n-\frac{j-j_0}{\alpha_l^+}\right|}{n^\frac{1}{2\mu}}\right)^\frac{2\mu}{2\mu-1}\right)\left(O\left(\frac{|\textbf{e}|e^{-c|j|}}{n^\frac{1}{2\mu}}\right)+O_\C\left(\frac{|\textbf{e}|e^{-c|j_0|}}{n^\frac{1}{2\mu}}\right)\rg_l^+ + O_\C\left(\frac{1}{n^\frac{1}{\mu}}\right){\lg_l^+}^T\textbf{e}\rg_l^+\right)\\
			&+\sum_{l^\prime=1}^I\sum_{l=I+1}^d \exp\left(-c\left(\frac{\left|n-\left(\frac{j}{\alpha_l^+}-\frac{j_0}{\alpha_{l^\prime}^+}\right)\right|}{n^\frac{1}{2\mu}}\right)^\frac{2\mu}{2\mu-1}\right)\left(O\left(\frac{|\textbf{e}|e^{-c|j|}}{n^\frac{1}{2\mu}}\right)+O_\C\left(\frac{|\textbf{e}|e^{-c|j_0|}}{n^\frac{1}{2\mu}}\right)\rg_l^+ + O_\C\left(\frac{1}{n^\frac{1}{\mu}}\right){\lg_{l^\prime}^+}^T\textbf{e}\rg_l^+\right)\\
			&+\sum_{l^\prime=1}^I O\left(\frac{|\textbf{e}|e^{-c|j|}}{n^\frac{1}{2\mu}}\exp\left(-c\left(\frac{\left|n+\frac{j_0}{\alpha_{l^\prime}^+}\right|}{n^\frac{1}{2\mu}}\right)^\frac{2\mu}{2\mu-1}\right)\right) +\sum_{l=I+1}^d  O_\C\left(\frac{|\textbf{e}|e^{-c|j_0|}}{n^\frac{1}{2\mu}}\exp\left(-c\left(\frac{\left|n-\frac{j}{\alpha_l^+}\right|}{n^\frac{1}{2\mu}}\right)^\frac{2\mu}{2\mu-1}\right)\right)\rg_l^+\\
			&+O(|\textbf{e}|e^{-cn}).
		\end{align*}
		
		Thus, observing that $S_l^+(n,j_0,j)$ for $l\in \lc1,\ppp,I\rc$ and $T_{l^\prime,l}^+(n,j_0,j)$ for $l^\prime\in\lc1,\ppp,I\rc$ and $l\in \lc1,\ppp,I-1\rc$ are equal to $0$ for $j$ larger than $j_0+1$, we thus obtain for $n\in\N\backslash\lc0\rc$, $j_0\in \N$ and $j\geq j_0+1$ which verify $j-j_0\in\lc-nq,\ppp,np\rc$:
		\begin{multline}\label{decompoGreenTempo}
			\Gcc(n,j_0,j) = \sum_{l=1}^d S_l^+(n,j_0,j) +\sum_{l^\prime=1}^I\left(\sum_{l=I+1}^d C^{R,+}_{l^\prime,l}R_{l^\prime,l}^+(n,j_0,j) + \sum_{l=1}^{I-1} C^{T,+}_{l^\prime,l}T_{l^\prime,l}^+(n,j_0,j)\right)\\
			+V(j)\left(\sum_{l^\prime=1}^IC^{E,+}_{l^\prime}E_{l^\prime}^+(n,j_0) +P_U(j_0)\right)+ \Rc(n,j_0,j)
		\end{multline}
		where the remainder $\Rc(n,j_0,j)$ has the form \eqref{def:ResTh1} described in Theorem \ref{th:Green}. Similar proofs using \eqref{ExpGs: l geq 0 : 0 leq j leq l} and \eqref{ExpGs: l geq 0 : j < 0} allow us to extend the description of the Green's function \eqref{decompoGreenTempo} respectively for $j\in\lbrace0,\hdots,j_0\rbrace$ and for $j\leq -1$.
		
		\subsubsection{Analysis of the mass of the temporal Green's function}
		
		To pass from the decomposition \eqref{decompoGreenTempo} above of the temporal Green's function to the decomposition \eqref{decompoGreen} announced in Theorem \ref{th:Green}, we just have to prove that the line vectors $P_U(j_0)$ are equal to $0$ for all $j_0\in\N$. We will prove this fact using a \textit{mass analysis} of the temporal Green's function $\Gcc(n,j_0,\cdot)$. 
		
		Firstly, the definition \eqref{def:linearizedScheme} of the operator $\Lcc$ and the definition \eqref{def:Ajk} of the matrices $A_{j,k}$ imply the following conservation of mass property:
		\begin{multline}\label{cons_mass}
			\forall h\in\ell^1(\Z,\C^d),\\ \sum_{j\in\Z}\left(\Lcc h\right)_j =\sum_{j\in\Z}\left(\sum_{k=-p}^qA_{j,k}h_{j+k}\right)=\sum_{j\in\Z}\left( h_j +\sum_{k=-p}^{q-1}B_{j,k}h_{j+k}-\sum_{k=-p}^{q-1}B_{j+1,k}h_{j+1+k}\right)  =\sum_{j\in\Z} h_j.
		\end{multline}
		As a consequence, one can use the conservation of mass \eqref{cons_mass} with the definition \eqref{defGreenTempo} of the temporal Green's function and Lemma \ref{lem:GreenTempoOutside} to obtain that the mass of the temporal Green's function is conserved:
		\begin{equation}\label{mass_equality_Green}
			\forall j_0\in\Z,\forall n\in\N,\quad Id =  \sum_{j=j_0-nq}^{j_0+np} \Gcc(n,j_0,j).
		\end{equation}
		We observe that \eqref{mass_equality_Green} and the decomposition \eqref{decompoGreenTempo} imply that for all $\textbf{e}\in\C^d$, $j_0\in\N$ and $n\in\N\backslash\lbrace0\rbrace$:
		\begin{align}
			 \begin{split}
			 	\textbf{e}& = \sum_{l=1}^d\left(\sum_{j=j_0-nq}^{j_0+np}S_l^+(n,j_0,j)\textbf{e}\right)
				+\sum_{l^\prime=1}^I\sum_{l=I+1}^d C^{R,+}_{l^\prime,l}\left(\sum_{j=j_0-nq}^{j_0+np}R_{l^\prime,l}^+(n,j_0,j)\textbf{e}\right) \\
				&+ \sum_{l^\prime=1}^I\sum_{l=1}^{I-1} C^{T,+}_{l^\prime,l}\left(\sum_{j=j_0-nq}^{j_0+np}T_{l^\prime,l}^+(n,j_0,j)\textbf{e}\right) +\left(\sum_{l^\prime=1}^IC^{E,+}_{l^\prime}E_{l^\prime}^+(n,j_0)\textbf{e} +P_U(j_0)\textbf{e}\right)\left(\sum_{j=j_0-nq}^{j_0+np}V(j)\right)\\
				&+ \sum_{j=j_0-nq}^{j_0+np}\Rc(n,j_0,j)\textbf{e}.
			 \end{split}\label{mass_equality_2}
		\end{align}
		The equality \eqref{mass_equality_2} essentially expresses the mass of the temporal Green's function as the sum of the masses of the waves in the decomposition \eqref{decompoGreenTempo}. We now recall the so-called Liu-Majda condition proved in Lemma \ref{lem:LiuMajda}:
		\begin{equation}\label{LiuMajda}
			\det\left(\rg_1^-,\hdots,\rg_{I-1}^-,\sum_{j\in\Z}V(j),\rg_{I+1}^+,\hdots,\rg_d^+\right)\neq0.
		\end{equation}
		We can thus define the projector $\widehat{\Pi}$ on the vector space $\mathrm{Span}\left(\sum_{j\in\Z}V(j)\right)$ along the vector space spanned by the vectors $\rg_1^-,\hdots,\rg_{I-1}^-,\rg_{I+1}^+,\hdots,\rg_{d}^+$. 
		
		We observe that the terms $R^+_{l^\prime,l}(n,j_0,j)\textbf{e}$ and $T^+_{l^\prime,l}(n,j_0,j)\textbf{e}$ in \eqref{mass_equality_2} are vectors in the vector space spanned by the vectors $\rg_1^-,\hdots,\rg_{I-1}^-,\rg_{I+1}^+,\hdots,\rg_d^+$ by the definitions \eqref{def:R+}-\eqref{def:T+} of $R_{l^\prime,l}^+$ and $T_{l^\prime,l}^+$. This is also the case for the terms $S^+_{l}(n,j_0,j)\textbf{e}$ when $l$ belongs to $\lbrace I+1,\hdots,d\rbrace$ by the definition \eqref{def:S+} of $S_{l}^+$. Thus, by applying the projector $\widehat{\Pi}$ to the equality \eqref{mass_equality_2}, we have that for all $\textbf{e}\in\C^d$, $j_0\in\N$ and $n\in\N\backslash\lbrace0\rbrace$:
		\begin{multline}\label{mass_equality_3}
			\widehat{\Pi}\textbf{e} =  \sum_{l=1}^I\sum_{j=j_0-nq}^{j_0+np}\widehat{\Pi}\left(S_l^+(n,j_0,j)\textbf{e}\right) +\left(\sum_{l^\prime=1}^IC^{E,+}_{l^\prime}E_{l^\prime}^+(n,j_0)\textbf{e} +P_U(j_0)\textbf{e}\right)\widehat{\Pi}\left(\sum_{j=j_0-nq}^{j_0+np}V(j)\right)\\ + \sum_{j=j_0-nq}^{j_0+np}\widehat{\Pi}\left(\Rc(n,j_0,j)\textbf{e}\right).
		\end{multline}
		We claim that:
		\begin{subequations}\label{limites_termes_mass}
			\begin{align}
				\forall l^\prime\in\lbrace1,\hdots,I\rbrace,\forall j_0\in\N,\forall \textbf{e} \in\C^d,\quad&& C^{E,+}_{l^\prime} E^+_{l^\prime}(n,j_0)\textbf{e}\widehat{\Pi}\left(\sum^{j_0+np}_{j=j_0-nq}V(j)\right) &\underset{n\rightarrow+\infty}\rightarrow C^{E,+}_{l^\prime}{\lg_{l^\prime}^+}^T\textbf{e}\sum_{j\in\Z}V(j),\label{limites_termes_mass1}\\
				\forall j_0\in\N,\forall \textbf{e} \in\C^d,\quad&& P_U(j_0)\textbf{e} \widehat{\Pi}\left(\sum^{j_0+np}_{j=j_0-nq}V(j)\right)&\underset{n\rightarrow+\infty}\rightarrow P_U(j_0)\textbf{e}\sum_{j\in\Z}V(j),\label{limites_termes_mass2}\\
				\forall l\in\lbrace1,\hdots,I\rbrace,\forall j_0\in\N,\forall \textbf{e} \in\C^d,\quad&& \sum_{j=j_0-nq}^{j_0+np} \widehat{\Pi}\left(S^+_{l}(n,j_0,j)\textbf{e}\right) &\underset{n\rightarrow+\infty}\rightarrow 0, \label{limites_termes_mass3}\\
				\forall j_0\in\N,\forall \textbf{e} \in\C^d,\quad&&\sum_{j=j_0-nq}^{j_0+np} \widehat{\Pi}\left(\Rc(n,j_0,j)\textbf{e}\right)&\underset{n\rightarrow+\infty}\rightarrow0.\label{limites_termes_mass4}
			\end{align}
		\end{subequations}
		The proof of those limits will be detailed at the end of the Appendix (Section \ref{sec:Appendix}). By letting $n$ grow towards $+\infty$ in \eqref{mass_equality_3}, the limits \eqref{limites_termes_mass} imply that for all $\textbf{e}\in\C^d$ and $j_0\in\N$:
		\begin{equation}\label{mass_equality_4}
			\widehat{\Pi}\textbf{e} = \left(\sum_{l^\prime=1}^IC^{E,+}_{l^\prime}{\lg_{l^\prime}^+}^T +P_U(j_0)\right)\textbf{e}\sum_{j\in\Z}V(j).
		\end{equation}
		We notice that \eqref{mass_equality_4} implies that, for all $\textbf{e}\in\C^d$, the sequence: 
		$$\left(P_U(j_0)\textbf{e}\left(\sum_{j\in\Z}V(j)\right)\right)_{j_0\in\N}$$
		is constant. Because of the exponential decay \eqref{in:PU} of the sequence of line vectors $\left(P_U(j_0)\right)_{j_0\in\N}$, we thus obtain that:
		\begin{equation}\label{Mass_eq3}
			\forall \textbf{e}\in\C^d,\forall j_0\in\N, \quad  P_U(j_0)\textbf{e}\left(\sum_{j\in\Z}V(j)\right)=0.
		\end{equation}
		Noticing that the Liu-Majda condition \eqref{LiuMajda} gives us $\sum_{j\in\Z}V(j)\neq0$, the equality \eqref{Mass_eq3} implies that:
		\begin{equation}\label{eg:PU}
			\forall j_0\in\N,\quad P_U(j_0)=0.
		\end{equation}
		This allows us to conclude the proof of the decomposition \eqref{decompoGreen} of the temporal Green's function using \eqref{decompoGreenTempo}, at least when $j_0\geq0$. A similar proof for $j_0<0$ can be performed once a temporary decomposition of the Green's function \eqref{decompoGreenTempo} in this case has been deduced.

		\subsection{Proof of the decomposition \texorpdfstring{\eqref{decompoDerGreen}}{} of discrete derivative of the temporal Green's function}\label{subsec:decompoDer}
		
		In order to obtain the decomposition \eqref{decompoDerGreen} of discrete derivative of the temporal Green's function, we must follow a similar path as for the proof of the decomposition \eqref{decompoGreen} of the temporal Green's function. This requires the introduction of some additional lemmas (Lemmas \ref{lem:OndesPropagDer}-\ref{lem:OndesExcitedCentralDer}) which are "discrete derivative versions" of Lemmas \ref{lem:OndesPropag}-\ref{lem:OndesExcitedCentral}. We will start by stating those different lemmas and detail their proofs. We will finally clarify how to conclude the proof of \eqref{decompoDerGreen} of the discrete derivative of the temporal Green's function.
		
		\subsubsection{Discrete derivative version of lemmas on outgoing, incoming, reflected and transmitted waves}
		
		In this subsection, we start by stating the three Lemmas \ref{lem:OndesPropagDer}, \ref{lem:OndesReflechiesDer} and \ref{lem:OndesTransmisesDer} which are similar to the Lemmas \ref{lem:OndesPropag}, \ref{lem:OndesReflechies} and \ref{lem:OndesTransmises}.
		
		\begin{lemma}[Discrete derivative version of Lemma \ref{lem:OndesPropag} on outgoing and incoming waves]\label{lem:OndesPropagDer}
			\hfill
			
			We consider $m\in\lc1,\ppp,d(p+q)\rc$ and write it as $m=l+(k-1)d$ with $k\in\lc1,\ppp,p+q\rc$ and $l\in\lc1,\ppp,d\rc$. There exists a constant $c>0$ such that for all $n\in \N\backslash\lc0\rc$, $j_0\in\N\backslash\lc0\rc$, $j\in\N$ such that $j-j_0\in\lc-nq-1,\ppp,np\rc$ and $\textbf{e}\in\C^d$ we have:
			
			\begin{subequations}\label{lem:OndesPropagResDer}
				$\bullet$ If $m\in I_{cs}^+\cup I_{cu}^+$ and $\frac{j-j_0}{\alpha_l^+}\in \left[\frac{n}{2},2n\right]$, we have that:
				\begin{multline}\label{lem:OndesPropagResDer1}
					\int_{\Gamma_{in}(\eta)} e^{n\tau} e^\tau\left(\widetilde{\Ccc}_m^+(e^\tau,j_0,\textbf{e})-\widetilde{\Ccc}_m^+(e^\tau,j_0-1,\textbf{e})\right)\Pi(W_m^+(e^\tau,j))d\tau  \\=  \exp\left(-c\left(\frac{\left|n-\left(\frac{j-j_0}{\alpha_l^+}\right)\right|}{n^\frac{1}{2\mu}}\right)^\frac{2\mu}{2\mu-1}\right)\left(O\left(\frac{|\textbf{e}|e^{-c|j|}}{n^\frac{1}{2\mu}}\right) +O_{\C}\left(\frac{|\textbf{e}|e^{-c|j_0|}}{n^\frac{1}{2\mu}}\right)\rg_l^+ + O_\C\left(\frac{1}{n^\frac{1}{\mu}}\right){\lg_l^+}^T\textbf{e}\rg_l^+\right).
				\end{multline}
				
				$\bullet$ If $m\in I_{cs}^+\cup  I_{cu}^+$, $\frac{j-j_0}{\alpha_l^+}\notin \left[\frac{n}{2},2n\right]$ and $\frac{j-j_0}{\alpha_l^+}\geq 0$, we have that:
				\begin{equation}\label{lem:OndesPropagResDer2}
					\int_{\Gamma_{in}(\eta)} e^{n\tau} e^\tau\left(\widetilde{\Ccc}_m^+(e^\tau,j_0,\textbf{e})-\widetilde{\Ccc}_m^+(e^\tau,j_0-1,\textbf{e})\right)\Pi(W_m^+(e^\tau,j))d\tau = O\left(|\textbf{e}|e^{-cn}\right).
				\end{equation}
			\end{subequations}
		\end{lemma}
		
		\begin{lemma}[Discrete derivative version of Lemma \ref{lem:OndesReflechies} on reflected waves]\label{lem:OndesReflechiesDer}
			\hfill 
			
			We consider $m\in \lc2, \ppp,dp\rc$ and $m^\prime\in \lc dp+1,\ppp,d(p+q)\rc$ and write them as $m=l+(k-1)d$ and $m^\prime=l^\prime+(k^\prime-1)d$ with $k,k^\prime\in\lc1,\ppp,p+q\rc$ and $l,l^\prime\in\lc1,\ppp,d\rc$. There exists a positive constant $c$ such that for all $n\in \N\backslash\lc0\rc$, $j_0\in\N\backslash\lc0\rc$, $j\in \N$ such that $j-j_0\in\lc-nq-1,\ppp,np\rc$ and $\textbf{e}\in\C^d$, we have:
			
			\begin{subequations}
				
				$\bullet$ If $m\in I_{cs}^+$, $m^\prime\in I_{cu}^+$ and $\frac{j}{\alpha_l^+}-\frac{j_0}{\alpha_{l^\prime}^+}\in\left[\frac{n}{2},2n\right]$, we have that:
				\begin{multline}\label{lem:OndesReflechiesDer:CsCu1}
					\int_{\Gamma_{in}(\eta)} e^{n\tau} e^\tau\widetilde{g}_{m^\prime,m}^+(e^\tau)\left(\widetilde{\Ccc}_m^+(e^\tau,j_0,\textbf{e})-\widetilde{\Ccc}_m^+(e^\tau,j_0-1,\textbf{e})\right)\Pi(W_m^+(e^\tau,j))d\tau \\ =  \exp\left(-c\left(\frac{\left|n-\left(\frac{j}{\alpha_l^+}-\frac{j_0}{\alpha_{l^\prime}^+}\right)\right|}{n^\frac{1}{2\mu}}\right)^\frac{2\mu}{2\mu-1}\right)\left(O\left(\frac{|\textbf{e}|e^{-c|j|}}{n^\frac{1}{2\mu}}\right)+O_\C\left(\frac{|\textbf{e}|e^{-c|j_0|}}{n^\frac{1}{2\mu}}\right)\rg_l^+ + O_\C\left(\frac{1}{n^\frac{1}{\mu}}\right){\lg_{l^\prime}^+}^T\textbf{e}\rg_l^+\right).
				\end{multline}
				
				$\bullet$ If $m\in I_{cs}^+$, $m^\prime\in I_{cu}^+$ and $\frac{j}{\alpha_l^+}-\frac{j_0}{\alpha_{l^\prime}^+}\notin\left[\frac{n}{2},2n\right]$, we have that:
				\begin{equation}\label{lem:OndesReflechiesDer:CsCu2}
					\int_{\Gamma_{in}(\eta)} e^{n\tau} e^\tau\widetilde{g}_{m^\prime,m}^+(e^\tau)\left(\widetilde{\Ccc}_m^+(e^\tau,j_0,\textbf{e})-\widetilde{\Ccc}_m^+(e^\tau,j_0-1,\textbf{e})\right)\Pi(W_m^+(e^\tau,j))d\tau=  O(|\textbf{e}|e^{-cn}).
				\end{equation}
			\end{subequations}
		\end{lemma}
		
		\begin{lemma}[Discrete derivative version of Lemma \ref{lem:OndesTransmises} on transmitted waves]\label{lem:OndesTransmisesDer}
			\hfill 
			
			We consider $m\in \lc dp+1, \ppp,d(p+q)-1\rc$ and $m^\prime\in \lc dp+1,\ppp,d(p+q)\rc$ and write them as $m=l+(k-1)d$ and $m^\prime=l^\prime+(k^\prime-1)d$ with $k,k^\prime\in\lc1,\ppp,p+q\rc$ and $l,l^\prime\in\lc1,\ppp,d\rc$. There exists a positive constant $c$ such that for all $n\in \N\backslash\lc0\rc$, $j_0\in \N\backslash\lc0\rc$, $j\in-\N$ such that $j-j_0\in\lc-nq-1,\ppp,np\rc$ and $\textbf{e}\in\C^d$, we have:
			
			\begin{subequations}
				$\bullet$ If $m\in I_{cu}^-$, $m^\prime\in I_{cu}^+$ and $\frac{j}{\alpha_l^-}-\frac{j_0}{\alpha_{l^\prime}^+}\in\left[\frac{n}{2},2n\right]$, we have that:
				\begin{multline}\label{lem:OndesTransmisesDer:CuCu1}
					\int_{\Gamma_{in}(\eta)} e^{n\tau} e^\tau\widetilde{g}_{m^\prime,m}^+(e^\tau)\left(\widetilde{\Ccc}_m^+(e^\tau,j_0,\textbf{e})-\widetilde{\Ccc}_m^+(e^\tau,j_0-1,\textbf{e})\right)\Pi(W_m^-(e^\tau,j))d\tau\\ =  \exp\left(-c\left(\frac{\left|n-\left(\frac{j}{\alpha_l^-}-\frac{j_0}{\alpha_{l^\prime}^+}\right)\right|}{n^\frac{1}{2\mu}}\right)^\frac{2\mu}{2\mu-1}\right)\left(O\left(\frac{|\textbf{e}|e^{-c|j|}}{n^\frac{1}{2\mu}}\right)+O_\C\left(\frac{|\textbf{e}|e^{-c|j_0|}}{n^\frac{1}{2\mu}}\right)\rg_l^- + O_\C\left(\frac{1}{n^\frac{1}{\mu}}\right){\lg_{l^\prime}^+}^T\textbf{e}\rg_l^-\right).
				\end{multline}

				$\bullet$ If $m\in I_{cu}^-$, $m^\prime\in I_{cu}^+$ and $\frac{j}{\alpha_l^-}-\frac{j_0}{\alpha_{l^\prime}^+}\notin\left[\frac{n}{2},2n\right]$, we have that:
				\begin{equation}\label{lem:OndesTransmisesDer:CuCu2}
					\int_{\Gamma_{in}(\eta)} e^{n\tau} e^\tau\widetilde{g}_{m^\prime,m}^+(e^\tau)\left(\widetilde{\Ccc}_m^+(e^\tau,j_0,\textbf{e})-\widetilde{\Ccc}_m^+(e^\tau,j_0-1,\textbf{e})\right)\Pi(W_m^-(e^\tau,j))d\tau =  O\left(|\textbf{e}|e^{-cn}\right).
				\end{equation}
			\end{subequations}
		\end{lemma}
		
		\begin{proof}\textbf{of Lemmas \ref{lem:OndesPropagDer}, \ref{lem:OndesReflechiesDer} and \ref{lem:OndesTransmisesDer}}
			
			The proofs of Lemmas \ref{lem:OndesPropagDer}, \ref{lem:OndesReflechiesDer} and \ref{lem:OndesTransmisesDer} are fairly similar to those of Lemmas \ref{lem:OndesPropag}-\ref{lem:OndesTransmises} with some slight adjustments. We will prove in details the result of Lemma \ref{lem:OndesPropagDer} and the other Lemmas \ref{lem:OndesReflechiesDer} and \ref{lem:OndesTransmisesDer} will be left to the reader.
			
			We consider $m\in I_{cs}^+\cup I_{cu}^+$. We recall that $\varpi_l^+$ is defined using \eqref{def:varpi}. Using Cauchy's formula and the expressions of $W_m^+$ and $\widetilde{\Ccc}_m^+$ given respectively by Lemma \ref{lem_choice_base} and \eqref{eg:CccTilde}, we have that:
			\begin{equation*}
				\int_{\Gamma_{in}(\eta)} e^{n\tau} e^\tau\left(\widetilde{\Ccc}_m^+(e^\tau,j_0,\textbf{e})-\widetilde{\Ccc}_m^+(e^\tau,j_0-1,\textbf{e})\right)\Pi(W_m^+(e^\tau,j))d\tau=E_1+E_2\rg_l^++E_3{\lg_l^+}^T\textbf{e}\rg_l^+
			\end{equation*}
			where the vector $E_1$ and the scalars $E_2$ and $E_3$ are defined as:
			$$E_1 := \int_{\Gamma_1} e^{n\tau} e^{(j-j_0)\varpi_l^+(\tau)} e^\tau\left(e^{-\varpi_l^+(\tau)}\Delta_m^+(e^\tau,j_0,\textbf{e})- \Delta_m^+(e^\tau,j_0-1,\textbf{e})\right)\Pi\left(V_m^+(e^\tau,j)-R_m^+(e^\tau)\right)d\tau,$$
			\begin{multline*}
				E_2 := \int_{\Gamma_2} e^{n\tau} e^{(j-j_0)\varpi_l^+(\tau)} e^\tau\left(e^{-\varpi_l^+(\tau)}\left(\Delta_m^+(e^\tau,j_0,\textbf{e})-\frac{d\zeta_m^+}{dz}(e^\tau){\lg_l^+}^T\textbf{e}\right)\right.\\
				\left.-\left(\Delta_m^+(e^\tau,j_0-1,\textbf{e})-\frac{d\zeta_m^+}{dz}(e^\tau){\lg_l^+}^T\textbf{e}\right)\right)d\tau,
			\end{multline*}
			$$E_3:= \int_{\Gamma_3} e^{n\tau} e^{(j-j_0)\varpi_l^+(\tau)} e^\tau\left(e^{-\varpi_l^+(\tau)}-1\right)\frac{d\zeta_m^+}{dz}(e^\tau)d\tau,$$
			and $\Gamma_1$, $\Gamma_2$ and $\Gamma_3$ are paths belonging to the set $X$. 
			
			$\blacktriangleright$ Using \eqref{in:varpi} to bound the function $\varpi_l^+$ and Lemma \ref{lem_choice_base} to bound $V_m^+(e^\tau,j)-R_m^+(e^\tau)$, we prove that there exist two positive constants $C,c$ independent from $n$, $j_0$, $j$ and $\textbf{e}$ such that: 
			$$|E_1| \leq Ce^{-c|j|}|\textbf{e}| \int_{\Gamma_1}\exp\left(n\Re(\tau)+\left(\frac{j-j_0}{\alpha_l^+}\right)\left(-\Re(\tau)+A_R\Re(\tau)^{2\mu}-A_I\Im(\tau)^{2\mu}\right)\right)|d\tau|.$$
			
			$\blacktriangleright$ Using \eqref{in:varpi} to bound the function $\varpi_l^+$ and  \eqref{in:DeltaInf+}, we prove that there exist two positive constants $C,c$ independent from $n$, $j_0$, $j$ and $\textbf{e}$ such that: 
			$$|E_2| \leq Ce^{-c|j_0|}|\textbf{e}| \int_{\Gamma_2}\exp\left(n\Re(\tau)+\left(\frac{j-j_0}{\alpha_l^+}\right)\left(-\Re(\tau)+A_R\Re(\tau)^{2\mu}-A_I\Im(\tau)^{2\mu}\right)\right)|d\tau|.$$
			
			$\blacktriangleright$ Using \eqref{in:varpi} to bound the function $\varpi_l^+$ and a Taylor expansion, we have that there exist two positive constants $C,c$ independent from $n$, $j_0$, $j$ and $\textbf{e}$ such that 
			$$|E_3| \leq C \int_{\Gamma_3}|\tau|\exp\left(n\Re(\tau)+\left(\frac{j-j_0}{\alpha_l^+}\right)\left(-\Re(\tau)+A_R\Re(\tau)^{2\mu}-A_I\Im(\tau)^{2\mu}\right)\right)|d\tau|.$$
			
			Using Lemma \ref{lem:BornesGaussiennes} which gives a good choices of path $\Gamma_1,\Gamma_2,\Gamma_3\in X$ depending on $n$, $j_0$ and $j$ to handle the integrals in the terms, there exist new constants $C,c>0$ independent from $n$, $j_0$, $j$ and $\textbf{e}$ such that if $\frac{j-j_0}{\alpha_l^+}\in \left[\frac{n}{2},2n\right]$:
			\begin{align*}
				|E_1|\leq \frac{Ce^{-c|j|}|\textbf{e}|}{n^\frac{1}{2\mu}}\exp\left(-c\left(\frac{\left|n-\frac{j-j_0}{\alpha_l^+}\right|}{n^\frac{1}{2\mu}}\right)^\frac{2\mu}{2\mu-1}\right) \quad &\text{ and } \quad |E_2|\leq \frac{Ce^{-c|j_0|}|\textbf{e}|}{n^\frac{1}{2\mu}}\exp\left(-c\left(\frac{\left|n-\frac{j-j_0}{\alpha_l^+}\right|}{n^\frac{1}{2\mu}}\right)^\frac{2\mu}{2\mu-1}\right)\\
				|E_3|\leq \frac{C}{n^\frac{1}{\mu}}\exp\left(-c\left(\frac{\left|n-\frac{j-j_0}{\alpha_l^+}\right|}{n^\frac{1}{2\mu}}\right)^\frac{2\mu}{2\mu-1}\right)&
			\end{align*}
			and if $\frac{j-j_0}{\alpha_l^+}\notin \left[\frac{n}{2},2n\right]$:
			\begin{align*}
				|E_1|\leq C|\textbf{e}|e^{-cn},\quad |E_2|\leq C|\textbf{e}|e^{-cn}\quad \text{ and } \quad |E_3|\leq Ce^{-cn}.
			\end{align*}
			We have thus obtained the results \eqref{lem:OndesPropagResDer}.
		\end{proof}
		
		\subsubsection{Discrete derivative version of the lemma on the central excited mode}
		
		Just like in the previous section, we state Lemma \ref{lem:OndesExcitedCentralDer} which is a similar discrete derivative version of Lemma \ref{lem:OndesExcitedCentral}.
		
		\begin{lemma}[Discrete derivative version of Lemma \ref{lem:OndesExcitedCentral} on the central excited mode]\label{lem:OndesExcitedCentralDer}
			\hfill 
			
			We consider $m^\prime\in I_{cu}^+$ and write it as $m^\prime=l^\prime+(k^\prime-1)d$ with $k^\prime\in\lc1,\ppp,p+q\rc$ and $l^\prime\in\lc1,\ppp,d\rc$. There exists a positive constant $c$ such that for all $n\in \N\backslash\lc0\rc$, $j_0\in \N\backslash\lc0\rc$, $j\in \Z$ such that $j-j_0\in\lc-nq-1,\ppp,np\rc$ and $\textbf{e}\in\C^d$, we have:
			
			\begin{subequations}
				$\bullet$ For $-\frac{j_0}{\alpha_{l^\prime}^+}\in\left[\frac{n}{2},2n\right]$ and $j\geq0$, we have that:
				\begin{multline}\label{lem:OndesExcitedCentralDer:Res+1}
					\int_{\Gamma_{in}(\eta)}e^{n\tau}e^\tau\widetilde{g}_{m^\prime, 1}^+(e^\tau)\left(\widetilde{\Ccc}^+_{m^\prime}(e^\tau,j_0,\textbf{e})-\widetilde{\Ccc}^+_{m^\prime}(e^\tau,j_0-1,\textbf{e})\right)\Pi(\Phi_1(e^\tau,j))d\tau = \\ 
					O\left(\frac{|\textbf{e}|e^{-c|j|}}{n^\frac{1}{2\mu}}\exp\left(-c\left(\frac{\left|n+\frac{j_0}{\alpha_l^+}\right|}{n^\frac{1}{2\mu}}\right)^\frac{2\mu}{2\mu-1}\right)\right)+O\left(|\textbf{e}|e^{-cn}\right).
				\end{multline}	
				
				$\bullet$ For $-\frac{j_0}{\alpha_{l^\prime}^+}\notin\left[\frac{n}{2},2n\right]$ and $j\geq0$, we have that:
				\begin{equation}\label{lem:OndesExcitedCentralDer:Res+2}
					\int_{\Gamma_{in}(\eta)}e^{n\tau}e^\tau\widetilde{g}_{m^\prime, 1}^+(e^\tau)\left(\widetilde{\Ccc}^+_{m^\prime}(e^\tau,j_0,\textbf{e})-\widetilde{\Ccc}^+_{m^\prime}(e^\tau,j_0-1,\textbf{e})\right)\Pi(\Phi_1(e^\tau,j))d\tau = O(|\textbf{e}|e^{-cn}).
				\end{equation}	
				
				$\bullet$ For $-\frac{j_0}{\alpha_{l^\prime}^+}\in\left[\frac{n}{2},2n\right]$ and $j<0$, we have that:
				\begin{multline}\label{lem:OndesExcitedCentralDer:Res-1}
					\int_{\Gamma_{in}(\eta)}e^{n\tau}e^\tau\widetilde{g}_{m^\prime, d(p+q)}^+(e^\tau)\left(\widetilde{\Ccc}^+_{m^\prime}(e^\tau,j_0,\textbf{e})-\widetilde{\Ccc}^+_{m^\prime}(e^\tau,j_0-1,\textbf{e})\right)\Pi(\Phi_{d(p+q)}(e^\tau,j))d\tau  = \\ 
					O\left(\frac{|\textbf{e}|e^{-c|j|}}{n^\frac{1}{2\mu}}\exp\left(-c\left(\frac{\left|n+\frac{j_0}{\alpha_l^+}\right|}{n^\frac{1}{2\mu}}\right)^\frac{2\mu}{2\mu-1}\right)\right)+O\left(|\textbf{e}|e^{-cn}\right).
				\end{multline}	
				
				$\bullet$ For $-\frac{j_0}{\alpha_{l^\prime}^+}\notin\left[\frac{n}{2},2n\right]$ and $j<0$, we have that:
				\begin{equation}\label{lem:OndesExcitedCentralDer:Res-2}
					\int_{\Gamma_{in}(\eta)}e^{n\tau}e^\tau\widetilde{g}_{m^\prime, d(p+q)}^+(e^\tau)\left(\widetilde{\Ccc}^+_{m^\prime}(e^\tau,j_0,\textbf{e})-\widetilde{\Ccc}^+_{m^\prime}(e^\tau,j_0-1,\textbf{e})\right)\Pi(\Phi_{d(p+q)}(e^\tau,j))d\tau  =	O(|\textbf{e}|e^{-cn}).
				\end{equation}	
			\end{subequations}
		\end{lemma}
		
		\begin{proof}\textbf{of Lemma \ref{lem:OndesExcitedCentralDer}}
			
			We consider $m^\prime\in I_{cu}^+$ and write it as $m^\prime=l^\prime+(k^\prime-1)d$ with $k^\prime\in\lc1,\ppp,p+q\rc$ and $l^\prime\in\lc1,\ppp,d\rc$. We start by proving that there exist two positive constants $C,c$ such that:
			\begin{multline}\label{in:E2mu}
				\forall n\in\N\backslash\lc0\rc,\forall j_0\in\Z,\quad \left|E_{2\mu}\left(\beta_{l^\prime}^+;\frac{n\alpha^+_{l^\prime}+j_0-1}{n^\frac{1}{2\mu}}\right)-E_{2\mu}\left(\beta_{l^\prime}^+;\frac{n\alpha^+_{l^\prime}+j_0}{n^\frac{1}{2\mu}}\right)\right| \\ \leq \frac{C}{n^\frac{1}{2\mu}}\exp\left(-c\left(\frac{\left|n+\frac{j_0}{\alpha^+_{l^\prime}}\right|}{n^\frac{1}{2\mu}}\right)^\frac{2\mu}{2\mu-1}\right).
			\end{multline}
			Since $-H_{2\mu}(\beta^+_{l^\prime};\cdot)$ is the derivative of the function $E_{2\mu}(\beta^+_{l^\prime};\cdot)$, using \eqref{inH}, there exist two positive constants $C,c$ such that:
			\begin{multline*}
				\forall n\in\N\backslash\lc0\rc,\forall j_0\in\Z,\quad \left|E_{2\mu}\left(\beta_{l^\prime}^+;\frac{n\alpha^+_{l^\prime}+j_0-1}{n^\frac{1}{2\mu}}\right)-E_{2\mu}\left(\beta_{l^\prime}^+;\frac{n\alpha^+_{l^\prime}+j_0}{n^\frac{1}{2\mu}}\right)\right| \\ \leq \frac{C}{n^\frac{1}{2\mu}}\sup_{t\in\left[\frac{n\alpha^+_{l^\prime}+j_0-1}{n^\frac{1}{2\mu}},\frac{n\alpha^+_{l^\prime}+j_0}{n^\frac{1}{2\mu}}\right]}\exp\left(-c|t|^\frac{2\mu}{2\mu-1}\right).
			\end{multline*}
			Then, there exist two new positive constants $C,c$ such that \eqref{in:E2mu} is verified. Using the definition \eqref{def:E+} of $E^+_{l^\prime}$ and the bound \eqref{decExpoV} of $V(j)$, the inequality \eqref{in:E2mu} implies that there exist two positive constants $C,c$ such that for all $n\in\N\backslash\lc0\rc$, $j_0\in\N\backslash\lc0\rc$, $j\in\Z$ such that $j-j_0\in\lc-nq,\ppp,np\rc$ and $\textbf{e}\in\C^d$, we have that:
			
			\begin{subequations}\label{lem:OndesCentrales:in}
				$\bullet$ For $-\frac{j_0}{\alpha^+_{l^\prime}}\in\left[\frac{n}{2},2n\right]$:
				\begin{equation}
					\left|\left(E^+_{l^\prime}(n,j_0)-E^+_{l^\prime}(n,j_0-1)\right)\textbf{e}V(j)\right|\leq \frac{C|\textbf{e}|e^{-c|j|}}{n^\frac{1}{2\mu}}\exp\left(-c\left(\frac{\left|n+\frac{j_0}{\alpha^+_{l^\prime}}\right|}{n^\frac{1}{2\mu}}\right)^\frac{2\mu}{2\mu-1}\right).
				\end{equation}
				
				$\bullet$ For $-\frac{j_0}{\alpha^+_{l^\prime}}\notin\left[\frac{n}{2},2n\right]$:
				\begin{equation}
					\left|\left(E^+_{l^\prime}(n,j_0)-E^+_{l^\prime}(n,j_0-1)\right)\textbf{e}V(j)\right|\leq C|\textbf{e}|e^{-cn}.
				\end{equation}
			\end{subequations}
			
			Thus, using \eqref{lem:OndesCentrales:in} and the result of Lemma \ref{lem:OndesExcitedCentral} for $j_0$ and $j_0-1$, we conclude the proof of Lemma \ref{lem:OndesExcitedCentralDer}.
		\end{proof}
		
		\subsubsection{Concluding the proof of the decomposition \eqref{decompoDerGreen}}
		
		There just remains to conclude the proof of the decomposition \eqref{decompoDerGreen} of the discrete derivative of the temporal Green's function. We will give details on the case when $j\geq j_0+1$. The cases when $j\in\lc0,\ppp,j_0\rc$ and $j<0$ would be handled similarly using \eqref{ExpGs: l geq 0 : 0 leq j leq l} and \eqref{ExpGs: l geq 0 : j < 0} rather than \eqref{ExpGs: l geq 0 : j geq l+1}
		
		Using \eqref{egGSpaTemp2}, Lemma \ref{estGreenExt} and \eqref{ExpGs: l geq 0 : j geq l+1}, there exists a constant $c>0$ such that for $n\in\N\backslash\lc0\rc$, $j_0\in \N\backslash\lc0\rc$, $j\geq j_0+1$ and $\textbf{e}\in\C^d$ which verify $j-j_0\in\lc-nq-1,\ppp,np\rc$:
		\begin{align*}
			&\left(\Gcc(n,j_0,j)-\Gcc(n,j_0-1,j)\right)\textbf{e}\\
			=& \frac{1}{2i\pi}\int_{\Gamma_{in}(\eta)} e^{n\tau}e^\tau \Pi(W(e^\tau,j_0,j,\textbf{e}))d\tau-\frac{1}{2i\pi}\int_{\Gamma_{in}(\eta)} e^{n\tau}e^\tau \Pi(W(e^\tau,j_0-1,j,\textbf{e}))d\tau\\
			& + \frac{1}{2i\pi}\int_{\Gamma_{out}(\eta)} e^{n\tau}e^\tau \Pi(W(e^\tau,j_0,j,\textbf{e}))d\tau- \frac{1}{2i\pi}\int_{\Gamma_{out}(\eta)} e^{n\tau}e^\tau \Pi(W(e^\tau,j_0-1,j,\textbf{e}))d\tau \\
			=& \textcolor{dartmouthgreen}{\sum_{m\in I_{cs}^+} \left(-\frac{1}{2i\pi} \int_{\Gamma_{in}(\eta)} e^{n\tau}e^\tau \left(\widetilde{\Ccc}^+_{m}(e^\tau,j_0,\textbf{e})-\widetilde{\Ccc}^+_{m}(e^\tau,j_0-1,\textbf{e})\right)\Pi\left(W^+_m(e^\tau,j)\right)d\tau\right)} \\
			& + \textcolor{dartmouthgreen}{\sum_{m\in I_{cs}^+} \sum_{m^\prime\in I_{cu}^+} \left(-\frac{1}{2i\pi} \int_{\Gamma_{in}(\eta)} e^{n\tau}e^\tau\widetilde{g}_{m^\prime,m}^+(e^\tau) \left(\widetilde{\Ccc}^+_{m^\prime}(e^\tau,j_0,\textbf{e})-\widetilde{\Ccc}^+_{m^\prime}(e^\tau,j_0-1,\textbf{e})\right)\Pi\left(W^+_m(e^\tau,j)\right)d\tau\right)}\\
			& + \textcolor{dartmouthgreen}{\sum_{m^\prime\in I_{cu}^+}  \left(-\frac{1}{2i\pi} \int_{\Gamma_{in}(\eta)} e^{n\tau}e^\tau\widetilde{g}_{m^\prime,1}^+(e^\tau) \left(\widetilde{\Ccc}^+_{m^\prime}(e^\tau,j_0,\textbf{e})-\widetilde{\Ccc}^+_{m^\prime}(e^\tau,j_0-1,\textbf{e})\right)\Pi\left(\Phi_1(e^\tau,j)\right)d\tau\right)}\\
			&+\textcolor{blue}{\sum_{m\in I_{ss}^+} \left(-\frac{1}{2i\pi} \int_{\Gamma_{in}(\eta)} e^{n\tau}e^\tau \left(\widetilde{\Ccc}^+_{m}(e^\tau,j_0,\textbf{e})-\widetilde{\Ccc}^+_{m}(e^\tau,j_0-1,\textbf{e})\right)\Pi\left(W^+_m(e^\tau,j)\right)d\tau\right)} \\
			& + \textcolor{blue}{\sum_{m\in I_{ss}^+\backslash\lc1\rc} \sum_{m^\prime\in I_{cu}^+} \left(-\frac{1}{2i\pi} \int_{\Gamma_{in}(\eta)} e^{n\tau}e^\tau\widetilde{g}_{m^\prime,m}^+(e^\tau) \left(\widetilde{\Ccc}^+_{m^\prime}(e^\tau,j_0,\textbf{e})-\widetilde{\Ccc}^+_{m^\prime}(e^\tau,j_0-1,\textbf{e})\right)\Pi\left(W^+_m(e^\tau,j)\right)d\tau\right)}\\
			& + \textcolor{blue}{\sum_{m\in I_{cs}^+} \sum_{m^\prime\in I_{su}^+} \left(-\frac{1}{2i\pi} \int_{\Gamma_{in}(\eta)} e^{n\tau}e^\tau\widetilde{g}_{m^\prime,m}^+(e^\tau) \left(\widetilde{\Ccc}^+_{m^\prime}(e^\tau,j_0,\textbf{e})-\widetilde{\Ccc}^+_{m^\prime}(e^\tau,j_0-1,\textbf{e})\right)\Pi\left(W^+_m(e^\tau,j)\right)d\tau\right)}\\
			& + \textcolor{blue}{\sum_{m\in I_{ss}^+\backslash\lc1\rc} \sum_{m^\prime\in I_{su}^+} \left(-\frac{1}{2i\pi} \int_{\Gamma_{in}(\eta)} e^{n\tau}e^\tau\widetilde{g}_{m^\prime,m}^+(e^\tau) \left(\widetilde{\Ccc}^+_{m^\prime}(e^\tau,j_0,\textbf{e})-\widetilde{\Ccc}^+_{m^\prime}(e^\tau,j_0-1,\textbf{e})\right)\Pi\left(W^+_m(e^\tau,j)\right)d\tau\right)}\\
			&+ \textcolor{blue}{\sum_{m^\prime\in I_{su}^+}  \left(-\frac{1}{2i\pi} \int_{\Gamma_{in}(\eta)} e^{n\tau}e^\tau\widetilde{g}_{m^\prime,1}^+(e^\tau) \left(\widetilde{\Ccc}^+_{m^\prime}(e^\tau,j_0,\textbf{e})-\widetilde{\Ccc}^+_{m^\prime}(e^\tau,j_0-1,\textbf{e})\right)\Pi\left(\Phi_1(e^\tau,j)\right)d\tau\right)}+O(|\textbf{e}|e^{-cn}).
		\end{align*}
		
		Using Lemmas \ref{lem:OndesPropagDer}-\ref{lem:OndesExcitedCentralDer} to bound the first three green terms and Lemmas \ref{lem:OndesPropag}-\ref{lem:OndesExcitedCentral} to bound the remaining blue terms, we obtain the decomposition \eqref{decompoDerGreen} with an additional term 
		$$\left(P_U(j_0)-P_U(j_0-1)\right)V(j) $$
		where the line vectors $P_U(j_0)$ are defined by \eqref{def:PU}. However, we have proved in \eqref{eg:PU} that those line vectors are equal to $0$. We have thus finally obtained the decomposition \eqref{decompoDerGreen} of the discrete temporal Green's function when $j\geq j_0+1$. The decomposition \eqref{decompoDerGreen} for $j\in\lc0,\ppp, j_0\rc$ and $j<0$ would be handled similarly.

	\section{Numerical example}\label{sec:Num}
	
	In the present section, we will prove that several hypotheses of the article are verified under some CFL condition for the modified Lax-Friedrichs scheme. We will then apply the modified Lax-Friedrichs scheme to Burgers equation and numerically observe the result of Theorem \ref{th:Green} in the scalar case (i.e. $d=1$). Let us observe that there exists an example of numerical simulation for the gas dynamics system ($d=3$) and for the modified Lax-Friedrichs scheme in \cite[Section 5]{Godillon}. 
	
	\subsection*{Some hypotheses verified for the modified Lax-Friedrichs scheme}
	
	Let us start by considering a one-dimensional system of conservation laws \eqref{def:EDP} with $\Uc=\R^d$ and two states $u^-$ and $u^+$ in $\R^d$ which satisfy the Rankine-Hugoniot condition \eqref{eq:RankineHugioniot} and Hypothesis \ref{H:Lax} (i.e. it is a Lax shock). We now consider as an example the modified Lax-Friedrichs scheme for which the numerical flux is defined by:
	$$\forall \nu\in]0,+\infty[,\forall u_{-1},u_0\in\R^d, \quad F(\nu;u_{-1},u_0):=\frac{f(u_{-1})+f(u_0)}{2}+D(u_{-1}-u_0)$$
	where $D$ is a positive constant. The integer $p$ and $q$ are then both equal to $1$. We immediately observe that the consistency condition \eqref{cond:consistency} is verified. We consider a constant $\nug>0$ which corresponds to the ratio between the space and time steps and which satisfies the CFL condition \eqref{cond:CFL}. The discrete evolution operator $\Nc$ is then defined for $u\in(\R^d)^\Z$ by:
	\begin{equation}\label{num:MLF}
		\forall j\in\Z,\quad (\Nc(u))_j := u_j - \nug\left(\frac{f(u_{j+1})-f(u_{j-1})}{2}+D\left(-u_{j+1}+2u_j-u_{j-1}\right)\right).
	\end{equation}
	We have that the matrices $B_k^\pm$ defined by \eqref{def:Bk} are in the present case:
	$$B_{-1}^\pm=\nug\left(\frac{1}{2}df(u^\pm)+DId\right) \quad \text{ and } \quad B_{0}^\pm=\nug\left(\frac{1}{2}df(u^\pm)-DId\right).$$ 
	Hypothesis \ref{H:VPAk} is immediately verified. Furthermore, if we assume that:
	$$\forall l\in\lc1,\ppp,d\rc,\quad |\lambg_l^\pm|<2D $$ 
	then a part of Hypothesis \ref{H:inv} is verified. 
	
	We will now prove that Hypothesis \ref{H:F} on the spectrum of the operator $\Lcc^\pm$ is verified under conditions on $\nug$ and $D$. For $l\in \lc1,\ppp,d\rc$, we have that the functions $\Fc^\pm_l$ defined by \eqref{def:Fcl} verify:
	$$\forall \kappa\in\C\backslash\lc0\rc,\quad \Fc_l^\pm(\kappa)= (1-2D\nug) +D\nug\left(\kappa+\kappa^{-1}\right)-\frac{\lambg_l^\pm\nug}{2}\left(\kappa-\kappa^{-1}\right)$$
	and thus:
	$$\forall \xi\in\R,\quad \Fc_l^\pm\left(e^{i\xi}\right)= (1-2D\nug)+2D\nug\cos(\xi)-i\lambg_l^\pm\nug\sin(\xi).$$
	Therefore, under the assumption that:
	$$\forall l\in\lc1,\ppp,d\rc,\quad |\lambg_l^\pm|\nug <2D\nug<1,$$
	then, we have that:
	$$\forall \xi\in\R,\quad \left|\Fc_l^\pm\left(e^{i\xi}\right)\right|^2= (1-2D\nug)^2+4D\nug(1-2D\nug)\cos(\xi)+(\lambg_l^\pm\nug)^2 +\left((2D\nug)^2-(\lambg_l^\pm\nug)^2\right)\cos^2(\xi)\leq1$$
	with an equality if and only if $\xi\in2\pi\Z$. Furthermore, the asymptotic expansion \eqref{F} is verified for $\mu=1$ and we have that:
	$$\forall l\in\lc1,\ppp,d\rc,\quad \beta_l^\pm:=\frac{2D\nug-(\lambg_l^\pm\nug)^2}{2}\in]0,+\infty[.$$
	Thus, Hypothesis \ref{H:F} is verified.

	Let us conclude this section by proving that Hypothesis \ref{H:Mpm1} is verified. We consider $l\in\lc1,\ppp,d\rc$. For $\kappa \in \C\backslash\lc0\rc$, we have that:
	$$\Fc_l^\pm(\kappa)= 1\Leftrightarrow \left(D\nug-\frac{\lambg_l^\pm\nug}{2}\right)\kappa^2 - 2D\nug\kappa +\left(D\nug+\frac{\lambg_l^\pm\nug}{2}\right)=0.$$
	The equation on the right-hand side above has two solutions: 
	$$1\quad \text{ and } \quad \frac{2D\nug +\lambg_l^\pm\nug}{2D\nug -\lambg_l^\pm\nug}\neq 1.$$
	Thus, Hypothesis \ref{H:Mpm1} is verified. 
	
	The last hypotheses remaining to prove depend on the existence of a SDSP associated with the shock we consider and are therefore fairly situational.
		
	\begin{figure}
		\centering
		\includegraphics[width=0.7\textwidth]{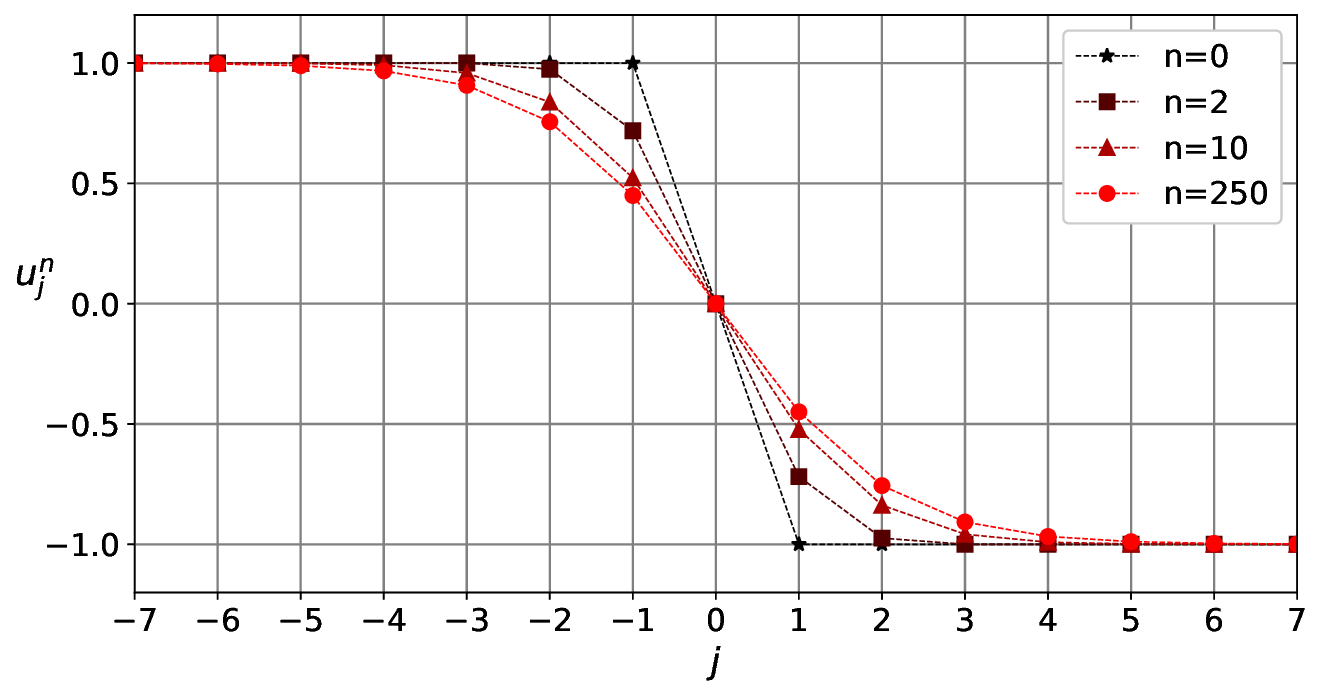}
		\caption{Solution $u^n$ of the numerical scheme \eqref{num:MLF} for the initial condition \eqref{condini}. The solution $u^n$ seems to converge in time (quite fast) towards a stationary discrete shock profile $\dsp$.}
		\label{fig:SDSP}
	\end{figure}	
		
	\subsection*{Application to Burgers equation}

	In the present section, we consider the scalar ($d=1$) conservation law \eqref{def:EDP} where the flux $f$ is defined by:
	$$\forall u\in\R,\quad f(u):= \frac{u^2}{2}.$$
	This corresponds to the so-called Burgers equation. For the shock, we consider the states
	$$u^-=1\quad \text{ and } \quad u^+=-1.$$
	The Rankine-Hugoniot condition \eqref{eq:RankineHugioniot} and Hypothesis \ref{H:Lax} are verified. Thus, the shock associated with the states $u^-$ and $u^+$ is a stationary Lax shock.

	\begin{figure}
		\centering
		\begin{subfigure}{0.45\textwidth}
			\includegraphics[width=\textwidth]{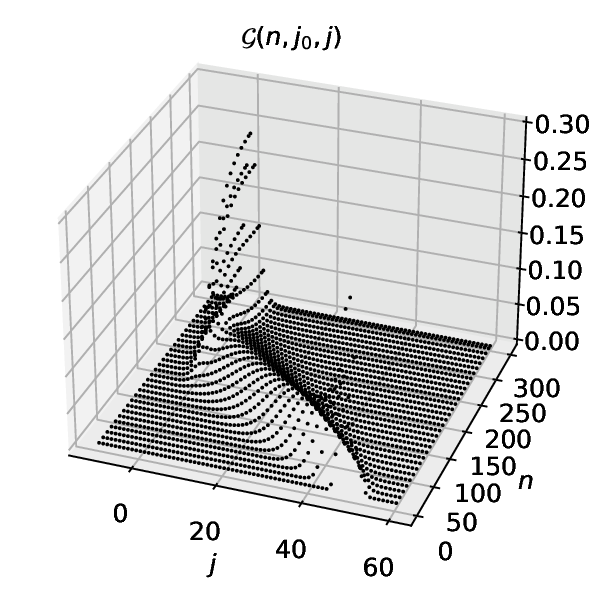}
		\end{subfigure}
		\hfill
		\centering
		\begin{subfigure}{0.37\textwidth}
			\includegraphics[width=\textwidth]{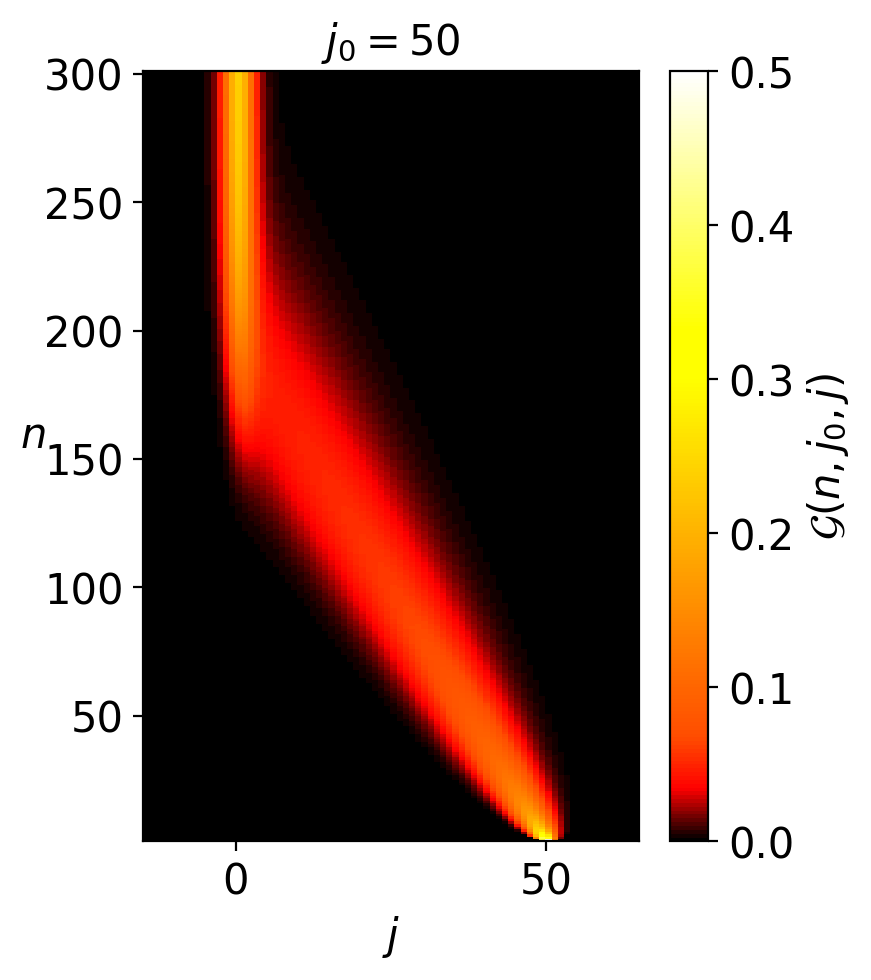}
		\end{subfigure}
		\caption{Representation of the Green's function $\Gcc(n,j_0,j)$ for $j_0=50$. The initial time $n=0$ has been removed for readability since the Green's function is equal to the sequence $\delta_{j_0}$ at this time step.}
		\label{fig:Green_50}
	\end{figure}
	
	For the numerical scheme, we consider the modified Lax-Friedrichs scheme defined above by \eqref{num:MLF} with the positive constants $D$ and $\nug$ satisfying:
	$$ \nug<2D\nug<1.$$
	The section above implies that several of the hypotheses we introduced in the article are verified. From now on, we choose $\nug=0.25$ and $D=1$.

	We want to know whether Hypothesis \ref{H:SDSP} is verified, i.e. if there exists a stationary discrete shock profile associated with the shock we are considering. In Figure \ref{fig:SDSP}, we display the solution $u^n$ of the numerical scheme \eqref{def:SchemeNum} for the initial condition
	\begin{equation}\label{condini}
		\forall j \in\Z,\quad u^0_j:=\lc\begin{array}{cc}1 & \text{ if }j\leq-1,\\0 & \text{ if }j=0,\\-1 & \text{ if }j\geq1.\end{array}\right.
	\end{equation}
	We observe that the solution $u^n$ seems to converge in time (quite fast) towards a fixed point $\dsp$ of the discrete evolution operator $\Nc$. Furthermore, it seems as if there exist two positive constants $C,c$ such that:
	$$\forall j\in\N,\quad \begin{array}{c}
		|\dsp_j-u^+| = |\dsp_j+1| \leq Ce^{-c|j|} ,\\
		|\dsp_{-j}-u^-| = |\dsp_{-j}-1| \leq Ce^{-c|j|} .
	\end{array}$$

	Hypotheses \ref{H:SDSP} and \ref{H:CVexpo} would then be verified. We can then compute the Green's function $\Gcc(n,j_0,j)$ defined by \eqref{defGreenTempo} associated with the operator $\Lcc$. In Figures \ref{fig:Green_50} and \ref{fig:Green_-30}, we display the Green's function $\Gcc(n,j_0,j)$ for $j_0=50$ and $-30$. The behavior displayed by the Green's function fits the result of Theorem \ref{th:Green}. For small times, the Green's function resembles a Gaussian wave traveling along the characteristics of the conservation law \eqref{def:EDP}. Since we are considering Lax shocks, the waves are traveling towards the shock location at $j=0$. When they reach the shock location, there only remains a fixed solution of the operator $\Lcc$ which corresponds to the progressive construction of the component of the Green's function along the vector subspace $\ker(Id_{\ell^2}-\Lcc)$.
	
	\begin{figure}
		\centering
		\begin{subfigure}{0.45\textwidth}
			\includegraphics[width=\textwidth]{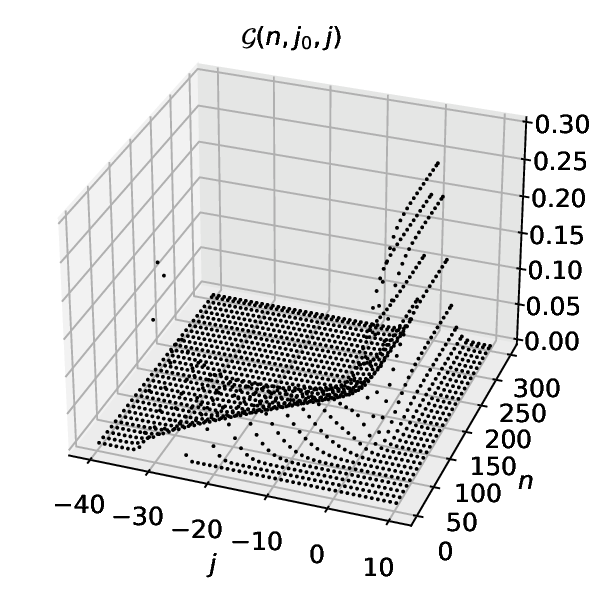}
		\end{subfigure}
		\hfill
		\centering
		\begin{subfigure}{0.37\textwidth}
			\includegraphics[width=\textwidth]{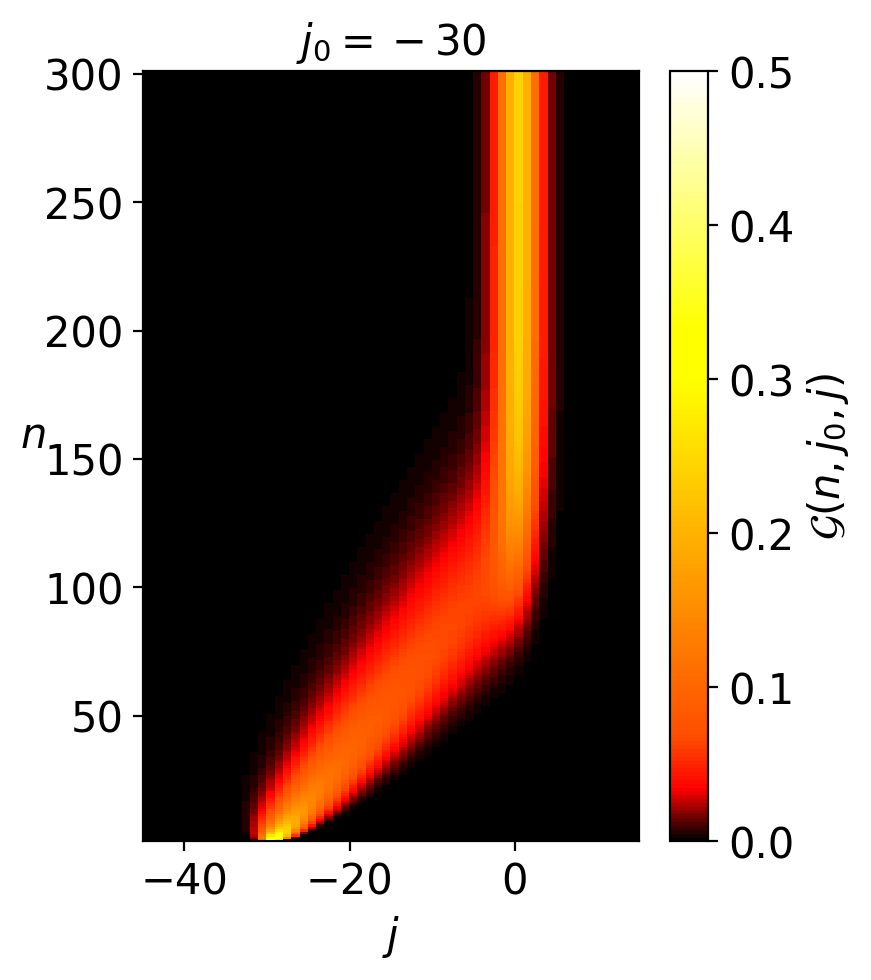}
		\end{subfigure}
		\caption{Representation of the Green's function $\Gcc(n,j_0,j)$ for $j_0=-30$. The initial time $n=0$ has been removed for readability purposes since the Green's function is equal to the sequence $\delta_{j_0}$ at this time step.}
		\label{fig:Green_-30}
	\end{figure}

	\section{Appendix}\label{sec:Appendix}
	
	\subsection*{Proof of Lemma \ref{lemDecExp}}
	
	We define recursively 
	$$\forall j\in \N, \quad z^0_j:=y_j$$
	and
	\begin{equation}\label{egZn}
		\forall n\in \N, \forall j\in \N, \quad z^{n+1}_j:=C_He^{-c_H j}+ \Theta\sum_{k=0}^{+\infty}e^{-c_H|j-1-k|}z^n_k.
	\end{equation}
	We then prove recursively that for all $n\in \N$, the sequence $z^n$ is bounded, has non negative coefficients and
	\begin{equation}\label{lemDecExp:Recu}
		\left\|z^n\right\|_{\infty}\leq \frac{C_H}{1-\theta} \quad \text{ and } \quad \forall j \in \N, \quad z^n_j\leq z^{n+1}_j.
	\end{equation}
	
		Let us point out that proving recursively that the sequence $z^n$ is bounded actually implies that the sequence $z^{n+1}$ is also well-defined by \eqref{egZn}.
		
		Let us initialize our recursive proof. We already assumed that the sequence $z^0=y$ is bounded and has non negative coefficients. We thus want to prove \eqref{lemDecExp:Recu} for $n=0$. We start by noticing that the inequality \eqref{inY} immediately implies the second inequality of \eqref{lemDecExp:Recu}. Using the definition \eqref{def:theta2} of the constant $\theta$ and that the sequence $z^0$ is bounded and has non negative coefficients, we notice that :
		$$\forall j\in\Z,\quad\Theta\sum_{k=0}^{+\infty}e^{-c_H|j-1-k|}z^0_k \leq \Theta\frac{1+e^{-c_H}}{1-e^{-c_H}}\left\|z^0\right\|_{\infty}=\theta\left\|z^0\right\|_{\infty}. $$
		Thus, using the inequality \eqref{inY}, we have that:
		\begin{equation}\label{lemDecExp:Ini}
			\forall j \in\Z, \quad z^0_j\leq C_H + \theta \left\|z^0\right\|_\infty.
		\end{equation}
		Since $z^0$ has non negative coefficients, we obtain:
		$$ \left\|z^0\right\|_\infty\leq C_H + \theta \left\|z^0\right\|_\infty$$
		which allows us to conclude the proof of \eqref{lemDecExp:Recu} for $n=0$.
		
		We now consider $n\in \N$ for which the recursive property is verified we will prove that the recursive property for $n+1$ is satisfied. First, using the equality \eqref{egZn} and the fact that the sequence $z^n$ has non negative coefficients, we immediately have that the sequence $z^{n+1}$ has non negative coefficients. Furthermore, using a similar proof as for \eqref{lemDecExp:Ini}, we have for all $j\in\N$: 
		$$z_j^{n+1}\leq C_H +\theta \left\| z^n\right\|_\infty$$
		and thus using \eqref{lemDecExp:Recu}:
		$$ z_j^{n+1}\leq \frac{C_H}{1-\theta}.$$
		Thus, we have that $z^{n+1}\in\ell^\infty(\N)$ and: 
		$$\left\|z^{n+1}\right\|_\infty\leq \frac{C_H}{1-\theta}.$$
		
	Finally, we observe that using the equality \eqref{egZn} for $n$ and $n+1$, we have:
	$$z^{n+2}_j-z^{n+1}_j=\Theta\sum_{k=0}^{+\infty}e^{-c_H|j-1-k|}\underset{\geq 0}{\underbrace{(z^{n+1}_k-z^n_k)}}\geq 0.$$
	This concludes the recurrence.
	
	We now observe that, for $p,q \in \N\backslash\lc0\rc$, using the equality \eqref{egZn}, we have
	$$z^p_j-z^q_j=\Theta\sum_{k=0}^{+\infty}e^{-c_H|j-1-k|}(z^{p-1}_k-z^{q-1}_k).$$
	This implies that
	$$\left\|z^p-z^q\right\|_\infty\leq \theta \left\|z^{p-1}-z^{q-1}\right\|_\infty.$$
	Thus, we have that
	$$\forall p\geq q\geq 0, \quad \left\| z^p-z^q\right\|_\infty\leq \theta^q\left\|z^{p-q}-y\right\|_\infty\leq \theta^q\frac{2C_H}{1-\theta}.$$
	Since $\theta<1$, the sequence $(z^n)_{n\in\N}$ is a Cauchy sequence of $\ell^\infty(\N)$, thus it converges towards a sequence $z^\infty\in\ell^\infty(\N)$. Since we have $y_j \leq z^n_j$ for all $n,j\in \N$, we obviously have 
	$$\forall j \in \N, \quad y_j\leq z^\infty_j.$$
	Also, the equality \eqref{egZn} implies that
	\begin{equation}\label{egZinf}
		\forall j \in \N, \quad z^\infty_j=C_He^{-c_H j}+\Theta\sum_{k=0}^{+\infty}e^{-c_H|j-1-k|}z^\infty_k.
	\end{equation}
	Thus, there just remain to prove that there exists only one bounded sequence that satisfies \eqref{egZinf} and that it has the form $z^\infty=(\rho r^j)_{j\in\N}$ where $\rho$ and $r$ satisfies the properties we expected. 
	
	We write \eqref{egZinf} for $j$, $j+1$ and $j+2$ and reassemble the terms $z^\infty_j$, $z^\infty_{j+1}$ and $z^\infty_{j+2}$ on the left side. We then have
	\begin{subequations}\label{eg:lem}
		\begin{multline}
			\left(1-e^{-c_H}\Theta\right)z^\infty_j -e^{-2c_H}\Theta z^\infty_{j+1} -e^{-3c_H}\Theta z^\infty_{j+2}  \\= C_He^{-c_Hj}+\Theta\left(\sum_{k=0}^{j-1}e^{-c_H|j-1-k|}z^\infty_k +\sum_{k=j+3}^{+\infty}e^{-c_H|j-1-k|}z^\infty_k\right)
		\end{multline}
		\begin{multline}
			-\Theta z^\infty_j +\left(1-e^{-c_H}\Theta\right)z^\infty_{j+1} -e^{-2c_H}\Theta z^\infty_{j+2}  \\= e^{-c_H}C_He^{-c_Hj}+\Theta\left(e^{-c_H}\sum_{k=0}^{j-1}e^{-c_H|j-1-k|}z^\infty_k +e^{c_H}\sum_{k=j+3}^{+\infty}e^{-c_H|j-1-k|}z^\infty_k\right)
		\end{multline}
		\begin{multline}
			-e^{-c_H}\Theta z^\infty_j -\Theta z^\infty_{j+1} +\left(1-e^{-c_H}\Theta\right)z^\infty_{j+2} \\ = e^{-2c_H}C_He^{-c_Hj}+\Theta\left(e^{-2c_H}\sum_{k=0}^{j-1}e^{-c_H|j-1-k|}z^\infty_k +e^{2c_H}\sum_{k=j+3}^{+\infty}e^{-c_H|j-1-k|}z^\infty_k\right).
		\end{multline}
	\end{subequations}
	
	We consider three scalars $\alpha, \beta,\gamma\in \R$ such that
	$$\lc\begin{array}{cc}\alpha+\beta e^{-c_H}+\gamma e^{-2c_H}&=0\\
		\alpha+\beta e^{c_H}+\gamma e^{2c_H}&=0.\end{array}\right.$$
	A solution is $\alpha=\gamma=1$ and $\beta=-\frac{e^{2c_H}-e^{-2c_H}}{e^{c_H}-e^{-c_H}}=-\frac{\sinh(2c_H)}{\sinh(c_H)}=-2\cosh(c_H)$. We then have that 
	\begin{align*}
		\alpha-\Theta\left(\alpha e^{-c_H}+\beta+\gamma e^{-c_H}\right) &=1-\theta\frac{e^{\frac{c_H}{2}}-e^{-\frac{c_H}{2}}}{e^{\frac{c_H}{2}}+e^{-\frac{c_H}{2}}}\left(2e^{-c_H}-2\cosh(c_H)\right)\\
		&=1+2\theta\sinh(c_H)\frac{\sinh\left(\frac{c_H}{2}\right)}{\cosh\left(\frac{c_H}{2}\right)} \\
		& =1+4\theta\sinh^2\left(\frac{c_H}{2}\right).
	\end{align*}
	Multiplying the equalities \eqref{eg:lem} respectively by $\alpha,\beta,\gamma$ and summing them, we obtain that
	$$\forall j \in \N, \quad z^\infty_{j+2}-2\cosh(c_H)z^\infty_{j+1}+\left(1+4\theta\sinh^2\left(\frac{c_H}{2}\right)\right)z^\infty_j=0.$$
	We are thus led to study the polynomial 
	\begin{equation}\label{def:Poly}
		P:=X^2-2\cosh(c_H)X+\left(1+4\theta\sinh^2\left(\frac{c_H}{2}\right)\right).
	\end{equation}
	Its discriminant is
	$$\Delta = 4\sinh^2(c_H)-16\theta\sinh^2\left(\frac{c_H}{2}\right) = 16\sinh^2\left(\frac{c_H}{2}\right)\left(\cosh^2\left(\frac{c_H}{2}\right)-\theta\right)>0.$$
	Its roots are 
	$$r_\pm:=\cosh(c_H)\pm2\sinh\left(\frac{c_H}{2}\right)\sqrt{\cosh^2\left(\frac{c_H}{2}\right)-\theta}.$$
	We observe that evaluating the polynomial $P$ at $0$ and $1$ gives us
	$$P(0)=1+4\theta\sinh^2\left(\frac{c_H}{2}\right)>0$$
	and
	$$P(1)=2-2\cosh(c_H)+4\theta\sinh^2\left(\frac{c_H}{2}\right) = -4(1-\theta)\sinh^2\left(\frac{c_H}{2}\right)<0.$$
	Thus, $r_+>1$ and $r:=r_-\in]0,1[$. We have that $\rho,\tilde{\rho}\in\R$ such that
	$$\forall j \in \N, \quad z^\infty_j=\rho r^j +  \tilde{\rho}r_+^j.$$
	Since the sequence $z^\infty$ is bounded, we have that $\tilde{\rho}=0$, i.e.
	$$\forall j \in \N, \quad z^\infty_j=\rho r^j .$$
	Let us now compute the value of $\rho$. Using the equality \eqref{egZinf}, we obtain for $j\in\Z$:
	\begin{align*}
		\rho r^j &= C_He^{-c_Hj} + \Theta\rho\sum_{k=0}^{+\infty}e^{-c_H|j-1-k|}r^k\\
		&=C_He^{-c_Hj} + \Theta\rho\left(e^{-c_H(j-1)}\frac{(re^{c_H})^j-1}{re^{c_H}-1}+e^{c_H(j-1)}\frac{(re^{-c_H})^j}{1-re^{-c_H}}\right)\\
		& = e^{-c_Hj}\left(C_H-\Theta\rho\frac{e^{c_H}}{re^{c_H}-1}\right) + r^j \Theta\rho\left(\frac{e^{c_H}}{re^{c_H}-1}+\frac{e^{-c_H}}{1-re^{-c_H}}\right).
	\end{align*}
	Using the definition \eqref{def:theta2} of $\theta$ and the fact that $r$ is a root of the polynomial $P$ defined by \eqref{def:Poly}, we have that:
	\begin{align*}
		\Theta\left(\frac{e^{c_H}}{re^{c_H}-1}+\frac{e^{-c_H}}{1-re^{-c_H}}\right) & = \theta \frac{\sinh\left(\frac{c_H}{2}\right)}{\cosh\left(\frac{c_H}{2}\right)}\frac{2\sinh(c_H)}{-r^2+2r\cosh(c_H)-1}\\
		& = \theta \frac{\sinh\left(\frac{c_H}{2}\right)}{\cosh\left(\frac{c_H}{2}\right)}\frac{2\sinh(c_H)}{4\theta\sinh^2\left(\frac{c_H}{2}\right)}\\
		&=1.
	\end{align*} 
	Thus, we obtain:
	$$\rho r^j = e^{-c_Hj}\left(C_H-\Theta\rho\frac{e^{c_H}}{re^{c_H}-1}\right) +\rho r^j$$
	i.e.
	$$C_H=\Theta\rho\frac{e^{c_H}}{re^{c_H}-1}.$$
	Therefore, the scalar $\rho$ verifies:
	$$\rho=\frac{C_H}{\Theta}(r-e^{-c_H}).$$
	To conclude, we observe that $r>e^{-c_H}$. Indeed, we have that:
	\begin{align*}
		r&=\cosh(c_H)-2\sinh\left(\frac{c_H}{2}\right)\sqrt{\cosh^2\left(\frac{c_H}{2}\right)-\theta}\\
		& > \cosh(c_H)-2\sinh\left(\frac{c_H}{2}\right)\cosh\left(\frac{c_H}{2}\right)\\
		&= \cosh(c_H)-\sinh(c_H)\\
		&=e^{-c_H}.
	\end{align*}
	Therefore, $r$ belongs to the interval $]e^{-c_H},1[$ and $\rho$ is positive.
	
	\subsection*{Proof of Lemma \ref{lem:BornesGaussiennes}}
	
	$\bullet$ \textbf{Proof of \eqref{lem:BornesGaussiennesRes1}}
	
	We consider $n\in\N\backslash\lc0\rc$ and $x\in\left[0,\frac{n}{2}\right]$. Noticing that $\Gamma_d(\eta)\subset B(0,\varepsilon)$ and using \eqref{condEta}, we have
	\begin{align*}
		&\int_{\Gamma_d(\eta)} |\tau|^k\exp\left(n\Re(\tau ) +x (-\Re(\tau) +A_R\Re(\tau)^{2\mu} - A_I \Im(\tau)^{2\mu})\right) |d\tau| \\
		\leq& \varepsilon^k  \int_{-r_\varepsilon(\eta)}^{r_\varepsilon(\eta)} \exp\left(-(n-x)\eta +xA_R \eta^{2\mu}-xA_It^{2\mu}\right)dt\\
		\leq&  2r_\varepsilon(\eta) \varepsilon^k \exp\left(-\frac{n}{2}\left(\eta-A_R\eta^{2\mu}\right)\right)\\
		\leq&  2r_\varepsilon(\eta) \varepsilon^k \exp\left(-\frac{n\eta}{4}\right).
	\end{align*}
	
	$\bullet$ \textbf{Proof of \eqref{lem:BornesGaussiennesRes2}:}
	
	We consider $n\in\N\backslash\lc0\rc$ and $x\in\left[2n,+\infty\right[$. We will separate the integral on the path $\Gamma_{in}(\eta)$ using the paths $\Gamma_{in}^0(\eta)$ and $\Gamma_{in}^\pm(\eta)$ introduced in \eqref{def:Paths}.
	
	$\blacktriangleright$ Noticing that $\Gamma_d(\eta)\subset B(0,\varepsilon)$ and that $n-x\leq-\frac{x}{2}$, we have using condition \eqref{condEta} 
	\begin{align*}
		&\int_{\Gamma_{in}^0(\eta)}|\tau|^k\exp\left(n\Re(\tau)+x(-\Re(\tau)+A_R \Re(\tau)^{2\mu}-A_I\Im(\tau)^{2\mu})\right)|d\tau|\\
		\leq& \varepsilon^k\int_{-r_\varepsilon(\eta)}^{r_\varepsilon(\eta)}\exp\left((n-x)\eta+xA_R\eta^{2\mu}-xA_It^{2\mu}\right)dt\\
		\leq& 2 r_\varepsilon(\eta)\varepsilon^k \exp\left(-x\left(\frac{\eta}{2}-A_R\eta^{2\mu}\right)\right)\\
		\leq& 2 r_\varepsilon(\eta)\varepsilon^k \exp\left(-2n\left(\frac{\eta}{2}-A_R\eta^{2\mu}\right)\right).
	\end{align*}
	We have proved exponential bounds on this first term.
	
	$\blacktriangleright$ We have that using \eqref{condEta2} and that $x\geq 2n$
	\begin{align*}
		&\int_{\Gamma_{in}^\pm(\eta)}|\tau|^k\exp\left(n\Re(\tau)+x(-\Re(\tau)+A_R \Re(\tau)^{2\mu}-A_I\Im(\tau)^{2\mu})\right)|d\tau|\\
		\leq&\varepsilon^k \int_{-\eta}^{\eta}\exp\left((n-x)t+xA_Rt^{2\mu}-xA_Ir_\varepsilon(\eta)^{2\mu}\right)dt\\
		\leq& 2\eta \varepsilon^k \exp\left((x-n)\eta +xA_R\eta^{2\mu}-xA_Ir_\varepsilon(\eta)^{2\mu} \right)\\
		\leq&  2\eta \varepsilon^k \exp\left(x\left(\eta+A_R\eta^{2\mu}-A_Ir_\varepsilon(\eta)^{2\mu}\right)\right)\\
		\leq &2\eta\varepsilon^k \exp\left(-A_Ir_\varepsilon(\eta)^{2\mu}n\right).
	\end{align*}
	We have proved exponential bounds on this second term.
	
	We can then easily conclude the proof of \eqref{lem:BornesGaussiennesRes2}
	
	$\bullet$ \textbf{Proof of \eqref{lem:BornesGaussiennesRes3}:}
	
	We consider $n\in\N\backslash\lc0\rc$ and $x\in\left[\frac{n}{2},2n\right]$. We start by observing that
	\begin{equation}\label{Expo-Gaussian}
		\forall x\in\left[\frac{n}{2},2n\right],\quad \left(\frac{|n-x|}{n^\frac{1}{2\mu}}\right)^\frac{2\mu}{2\mu-1} \leq n.
	\end{equation}
	Thus, obtaining exponential bounds on certain terms when $x\in[\frac{n}{2},2n]$ would also allow to conclude on the proof of \eqref{lem:BornesGaussiennesRes3}.
	
	We will now follow a strategy developed in \cite{ZH} in a continuous setting, which has also been used in \cite{Godillon,C-F,C-FIBVP,CoeuLLT,CoeuIBVP} in the discrete case, and introduce a family of parameterized curves.
	
	We recall that we introduced in \eqref{defPsi} the function $\Psi$ defined by 
	$$\forall \tau_p\in\R, \quad \Psi(\tau_p):= \tau_p-A_R{\tau_p}^{2\mu}.$$
	and that we chose $\varepsilon$ small enough so that the function $\Psi$ is continuous and strictly increasing on $]-\infty,\varepsilon]$. We can therefore introduce for $\tau_p\in[-\eta,\varepsilon]$ the curve $\Gamma_p$ defined by
	$$\Gamma_{p} := \lc \tau\in \C, -\eta\leq \Re(\tau)\leq \tau_p, \quad \Re(\tau) - A_R \Re(\tau)^{2\mu} +  A_I \Im(\tau)^{2\mu}= \Psi(\tau_p)\rc.$$
	It is a symmetric curve with respect to the axis $\R$ which intersects this axis on the point $\tau_p$. If we introduce $\ell_{p}= \left(\frac{\Psi(\tau_p)-\Psi(-\eta)}{A_I}\right)^\frac{1}{2\mu}$, then $-\eta +i\ell_{p}$ and $-\eta -i\ell_{p}$ are the end points of $\Gamma_{p}$. We can also introduce a parametrization of this curve by defining $\gamma_{p}:[-\ell_{p}, \ell_{p}]\rightarrow \C$ such that 
	\begin{equation}\label{param}
		\forall \tau_p\in\left[-\eta,\varepsilon\right], \forall t\in[-\ell_{p},\ell_{p}],\quad \Im(\gamma_{p}(t))=t, \quad \Re(\gamma_{p}(t))=h_{p}(t):=\Psi^{-1}\left(\Psi(\tau_p)-A_It^{2\mu}\right).
	\end{equation}
	
	The above parametrization immediately yields that there exists a constant $C>0$ such that 
	\begin{equation}\label{hp}
		\forall \tau_p \in[-\eta,\varepsilon], \forall t \in[-\ell_{p},\ell_{p}], \quad |h_{p}^\prime(t)|\leq C.
	\end{equation}
	Also, there exists a constant $c_p>0$ such that 
	\begin{equation}
		\forall \tau_p\in[-\eta,\varepsilon], 	\forall \tau \in \Gamma_{p}, \quad \Re(\tau)-\tau_p\leq -c_p \Im(\tau)^{2\mu}. \label{ine_Re}
	\end{equation}
	
	For $\tau\in \Gamma_{p}$, it follows from \eqref{ine_Re} that
	\begin{equation}\label{estClas}
		n\Re(\tau)+ x \left(-\Re(\tau)+A_R\Re(\tau)^{2\mu}-A_I\Im(\tau)^{2\mu}\right) \leq -nc_p \Im(\tau)^{2\mu}+ \left(n-x\right)\tau_p +xA_R\tau_p^{2\mu}. 
	\end{equation}	
	There remains to make an appropriate choice of $\tau_p$ depending on $n$ and $x$ that minimizes the right-hand side of the inequality \eqref{estClas} whilst the paths $\Gamma_{p}$ have to remain within the ball $B(0,\varepsilon)$. We recall that when we fixed our choice of width $\eta$, we defined a radius $\varepsilon_\#\in]0,\varepsilon[$ such that $-\eta+il_{extr}\in B(0,\varepsilon)$ where the real number $l_{extr}$ is defined by \eqref{defIextr}. This implies that the curve $\Gamma_{p}$ associated with $\tau_p=\varepsilon_\#$ intersects the axis $-\eta+i\R$ within $B(0,\varepsilon)$. We let 
	$$\zeta=\frac{x-n}{2\mu n}, \quad \gamma=\frac{xA_R}{n}, \quad \rho\left(\frac{\zeta}{\gamma}\right)=\mathrm{sgn}(\zeta)\left(\frac{|\zeta|}{\gamma}\right)^\frac{1}{2\mu-1}.$$
	We observe that the condition $x\geq\frac{n}{2}$ implies 
	\begin{equation}
		\gamma\geq \frac{A_R}{2}.\label{ineg_gamma}
	\end{equation}
	Coming back to inequality \eqref{estClas}, we now have
	\begin{equation}\label{estClas2}
		n\Re(\tau)+ x \left(-\Re(\tau)+A_R\Re(\tau)^{2\mu}-A_I\Im(\tau)^{2\mu}\right) \leq -nc_p \Im(\tau)^{2\mu}+n(\gamma \tau_p^{2\mu}-2\mu\zeta\tau_p).
	\end{equation}
	Our limiting estimates will come from the case where $\zeta$ is close to $0$. 
	
	Then, we take 
	$$\tau_p:=\lc\begin{array}{ccc}
		\rho\left(\frac{\zeta}{\gamma}\right), & \text{ if }\rho\left(\frac{\zeta}{\gamma}\right)\in[-\frac{\eta}{2},\varepsilon_\#],& \text{(Case \textbf{A})}\\
		\varepsilon_\#, & \text{ if }\rho\left(\frac{\zeta}{\gamma}\right)>\varepsilon_\#, &\text{(Case \textbf{B})}\\
		-\frac{\eta}{2}, & \text{ if }\rho\left(\frac{\zeta}{\gamma}\right)<-\frac{\eta}{2}.&\text{(Case \textbf{C})}\\
	\end{array}\right.$$
	The case \textbf{A} corresponds to the choice to minimize the right-hand side of \eqref{estClas2} since $\rho\left(\frac{\zeta}{\gamma}\right)$ is the unique real root of the polynomial
	$$\gamma X^{2\mu-1}=\zeta.$$
	The cases \textbf{B} and \textbf{C} allow the path $\Gamma_{p}$ to stay within $B(0,\varepsilon)$.
	
	We now define the paths:
	\begin{align*}
		\Gamma_{p,res}:=&\lc-\eta +it,\quad t\in[-r_{\varepsilon}(\eta),-\ell_{p}]\cup[\ell_{p},r_{\varepsilon}(\eta)]\rc,\\
		\Gamma_{p,in}:=&\Gamma_{p}\cup\Gamma_{p,res},
	\end{align*}
	where the function $r_\varepsilon$ is defined by \eqref{defreps}. We observe that $\Gamma_{p,in}$ belongs to the set of paths $X$. We will decompose the integral 
	$$\int_{\Gamma_{p,in}} |\tau|^k\exp(n\Re(\tau)+x\left(-\Re(\tau)+A_R\Re(\tau)^{2\mu}-A_I\Im(\tau)^{2\mu}\right)) |d\tau|$$
	using the paths $\Gamma_{p}$ and $\Gamma_{p,res}$ and we will then bound each term.
	
	$\blacktriangleright$ Let us assume that $x$ and $n$ are such that we are in Case \textbf{A}. Since $\tau_p=\rho\left(\frac{\zeta}{\gamma}\right)$ is the unique root of $\gamma X^{2\mu-1}-\zeta$, we have:
	\begin{equation}\label{egInterm:A}
		\gamma\tau_p^{2\mu}-2\mu\zeta\tau_p = -(2\mu-1)\gamma\tau_p^{2\mu} \leq0.
	\end{equation} 		
	Thus, the inequality \eqref{estClas2} becomes for $\tau\in \Gamma_p$ 
	\begin{equation*}
		n\Re(\tau)+ x \left(-\Re(\tau)+A_R\Re(\tau)^{2\mu}-A_I\Im(\tau)^{2\mu}\right) \leq -nc_p \Im(\tau)^{2\mu}-(2\mu-1)\gamma n\tau_p^{2\mu}.
	\end{equation*}
	Therefore, we have
	\begin{multline*}
		\int_{\Gamma_p} |\tau|^k\exp\left(n\Re(\tau)+ x \left(-\Re(\tau)+A_R\Re(\tau)^{2\mu}-A_I\Im(\tau)^{2\mu}\right)\right)|d\tau| \\ \leq \int_{\Gamma_p} |\tau|^k\exp\left(-nc_p\Im(\tau)^{2\mu}\right)|d\tau|\exp\left(-(2\mu-1)\gamma n\tau_p^{2\mu}\right).
	\end{multline*}
	Using the parametrization \eqref{param} and the inequality \eqref{hp}, we have that
	$$\int_{\Gamma_p} |\tau|^k\exp\left(-nc_p\Im(\tau)^{2\mu}\right)|d\tau|\lesssim \int_{-\ell_p}^{-\ell_p} (|\tau_p|^k+t^k)e^{-nc_p t^{2\mu}}dt.$$
	The change of variables $u=n^\frac{1}{2\mu}t$ and the fact that the functions $y\geq0\mapsto y^k\exp\left(-\frac{2\mu-1}{2}\gamma y^{2\mu}\right)$ are uniformly bounded with respect to $\gamma\geq \frac{A_R}{2}$ imply
	\begin{align*}
		\int_{-\ell_p}^{-\ell_p} |t|^ke^{-nc_p t^{2\mu}}dt& \lesssim \frac{1}{n^\frac{k+1}{2\mu}}\\
		\int_{-\ell_p}^{-\ell_p} |\tau_p|^ke^{-nc_p t^{2\mu}}dt& \lesssim \frac{1}{n^\frac{k+1}{2\mu}}\exp\left(\frac{2\mu-1}{2}\gamma n \tau_p^{2\mu}\right).
	\end{align*}
	Thus,
	$$ \int_{\Gamma_p} |\tau|^k\exp\left(n\Re(\tau)+ x \left(-\Re(\tau)+A_R\Re(\tau)^{2\mu}-A_I\Im(\tau)^{2\mu}\right)\right)|d\tau| \lesssim\frac{1}{n^\frac{k+1}{2\mu}}\exp\left(-\frac{2\mu-1}{2}\gamma n \tau_p^{2\mu}\right) .$$
	Furthermore, since we are in the Case \textbf{A}
	$$-\frac{2\mu-1}{2}\gamma n \tau_p^{2\mu}= -\frac{2\mu-1}{2(2\mu A_R)^\frac{2\mu}{2\mu-1}}A_R\left(\frac{|n-x|}{x^\frac{1}{2\mu}}\right)^\frac{2\mu}{2\mu-1}.$$
	Therefore, there exist two positive constants $C,c$ independent from $n$ and $x$ such that if we are in Case \textbf{A}, 
	$$\int_{\Gamma_p} |\tau|^k\exp\left(n\Re(\tau)+x\left(-\Re(\tau)+A_R\Re(\tau)^{2\mu}-A_I\Im(\tau)^{2\mu}\right)\right) |d\tau| \leq \frac{C}{n^\frac{k+1}{2\mu}} \exp\left(-c\left(\frac{|n-x|}{x^\frac{1}{2\mu}}\right)^\frac{2\mu}{2\mu-1}\right).$$
	Since $x\in[\frac{n}{2},2n]$, this gives us two new constants $C,c$ independent from $n$ and $x$ such that if we are in Case \textbf{A} and $x\in[\frac{n}{2},2n]$, then as expected
	$$\int_{\Gamma_p} |\tau|^k\exp\left(n\Re(\tau)+x\left(-\Re(\tau)+A_R\Re(\tau)^{2\mu}-A_I\Im(\tau)^{2\mu}\right)\right) |d\tau| \leq \frac{C}{n^\frac{k+1}{2\mu}} \exp\left(-c\left(\frac{|n-x|}{n^\frac{1}{2\mu}}\right)^\frac{2\mu}{2\mu-1}\right).$$
	
	$\blacktriangleright$ Let us assume that $x$ and $n$ are such that we are in Case \textbf{B}. Since $\tau_p=\varepsilon_\#<\rho\left(\frac{\zeta}{\gamma}\right)$, we have 
	$$-\zeta\leq -\gamma \varepsilon_\#^{2\mu-1}$$
	and thus using \eqref{ineg_gamma}
	\begin{equation}\label{egInterm:B}
		\gamma\tau_p^{2\mu}-2\mu\zeta\tau_p \leq - (2\mu-1) \gamma\varepsilon_\#^{2\mu}\leq - \frac{2\mu-1}{2}A_R\varepsilon_\#^{2\mu}.
	\end{equation} 		
	Therefore, the inequality \eqref{estClas2} becomes for $\tau\in \Gamma_p$
	\begin{align*}
		n\Re(\tau)+ x \left(-\Re(\tau)+A_R\Re(\tau)^{2\mu}-A_I\Im(\tau)^{2\mu}\right)& \leq -nc_p \Im(\tau)^{2\mu}-\frac{2\mu-1}{2}A_Rn  \varepsilon_\#^{2\mu}\\
		&\leq -\frac{2\mu-1}{2}\varepsilon_\#^{2\mu}A_R n.
	\end{align*}
	We conclude that there exist two positive constants $C,c$ independent from $n$ and $x$ such that if we are in Case \textbf{B}, 
	$$\int_{\Gamma_p} |\tau|^k\exp\left(n\Re(\tau)+x\left(-\Re(\tau)+A_R\Re(\tau)^{2\mu}-A_I\Im(\tau)^{2\mu}\right)\right) |d\tau| \leq C e^{-cn}.$$
	Using \eqref{Expo-Gaussian} if necessary, we obtain the bound expected in the statement of the lemma.
	
	$\blacktriangleright$ Let us assume that $x$ and $n$ are such that we are in Case \textbf{C}. Since $\tau_p=-\frac{\eta}{2}>\rho\left(\frac{\zeta}{\gamma}\right)$, we have 
	$$\zeta\leq -\gamma\left(\frac{\eta}{2}\right)^{2\mu-1} $$
	and thus using \eqref{ineg_gamma}
	\begin{equation}\label{egInterm:C}
		\gamma\tau_p^{2\mu}-2\mu\zeta\tau_p=\gamma\left(\frac{\eta}{2}\right)^{2\mu}+2\mu\zeta\frac{\eta}{2}\leq - (2\mu-1) \gamma\left(\frac{\eta}{2}\right)^{2\mu}\leq -\frac{2\mu-1}{2}A_R\left(\frac{\eta}{2}\right)^{2\mu}.
	\end{equation} 		
	Therefore, the inequality \eqref{estClas2} becomes for $\tau\in \Gamma_p$
	\begin{align*}
		n\Re(\tau)+ x \left(-\Re(\tau)+A_R\Re(\tau)^{2\mu}-A_I\Im(\tau)^{2\mu}\right) & \leq -nc_p \Im(\tau)^{2\mu}-\frac{2\mu-1}{2}A_Rn \left(\frac{\eta}{2}\right)^{2\mu}\\
		& \leq-\frac{2\mu-1}{2} \left(\frac{\eta}{2}\right)^{2\mu}A_R n .
	\end{align*}
	We then conclude that there exist two positive constants $C,c$ independent from $n$ and $x$ such that if we are in Case \textbf{C}, 
	$$\int_{\Gamma_p} |\tau|^k\exp\left(n\Re(\tau)+x\left(-\Re(\tau)+A_R\Re(\tau)^{2\mu}-A_I\Im(\tau)^{2\mu}\right)\right) |d\tau| \leq C e^{-cn}.$$
	Using \eqref{Expo-Gaussian} if necessary, we obtain the bound expected in the statement of the lemma.
	
	$\blacktriangleright$ We recall that $-\eta \pm \ell_p$ belongs to $\Gamma_p$. For $\tau\in \Gamma_{p,res}$, we have that
	$$\Re(\tau)=-\eta \quad \text{ and } \quad |\Im(\tau)|\geq\ell_p.$$
	Thus,
	\begin{align*}
		n \Re(\tau) + x(-\Re(\tau) +A_R\Re(\tau)^{2\mu} - A_I\Im(\tau)^{2\mu}) & \leq -n\eta +x(\eta +A_R \eta^{2\mu} -A_I \ell_p^{2\mu})\\
		& \leq -n\eta +x (-\tau_p +A_R \tau_p^{2\mu})\\
		& \leq -n(\eta+\tau_p) + n(\gamma\tau_p^{2\mu} -2\mu\zeta\tau_p).
	\end{align*}
	In each cases \textbf{A}, \textbf{B} and \textbf{C}, we have that 
	$$\eta+\tau_p\geq \frac{\eta}{2} \quad \text{ and } \quad \gamma\tau_p^{2\mu} -2\mu\zeta\tau_p\leq 0.$$
	Therefore, for all $\tau\in\Gamma_{p,res}$,
	$$n \Re(\tau) + x(-\Re(\tau) +A_R\Re(\tau)^{2\mu} - A_I\Im(\tau)^{2\mu}) \leq -n\frac{\eta}{2}.$$
	We then conclude that
	$$\int_{\Gamma_{p,res}} |\tau|^k\exp\left(n\Re(\tau)+x\left(-\Re(\tau)+A_R\Re(\tau)^{2\mu}-A_I\Im(\tau)^{2\mu}\right)\right) |d\tau| \leq 2\pi\varepsilon^k e^{-n\frac{\eta}{2}}.$$
	Using \eqref{Expo-Gaussian} if necessary, we obtain the bound expected in the statement of the lemma.
	
	Combining all the results we encountered, we easily conclude the proof of \eqref{lem:BornesGaussiennesRes3}.
	
	\subsection*{Proof of Lemma \ref{lem:ComportementPrincGaussienne}}
	
	We will prove the statement of Lemma \ref{lem:ComportementPrincGaussienne} with $s=s^\prime=+$ in order to alleviate the notations.
	
	$\bullet$ \textbf{Proof of \eqref{lem:ComportementPrincGaussienneRes1}:} 
	
	$\blacktriangleright$ We start by defining the paths
	$$\Gamma_0:=\lc it, t\in[-r_\varepsilon(\eta),r_\varepsilon(\eta)]\rc \quad \text{ and } \quad \Gamma_{comp}^\pm:= \lc t \pm i r_\varepsilon(\eta), t\in[-\eta,0]\rc.$$
	For all $\Gamma\in X$, Cauchy's formula implies that
	\begin{align*}
		&\left|\int_\Gamma \exp\left(n\tau+x\alpha_l^+\varphi_l^+(\tau)+y\alpha_{l^\prime}^+\varphi_{l^\prime}^+(\tau)\right)d\tau - \int_{\Gamma_0} \exp\left(n\tau+x\alpha_l^+\varphi_l^+(\tau)+y\alpha_{l^\prime}^+\varphi_{l^\prime}^+(\tau)\right)d\tau\right|\\
		\leq& \left|\int_{\Gamma_{comp}^+} \exp\left(n\tau+x\alpha_l^+\varphi_l^+(\tau)+y\alpha_{l^\prime}^+\varphi_{l^\prime}^+(\tau)\right)d\tau\right|+\left|\int_{\Gamma_{comp}^-} \exp\left(n\tau+x\alpha_l^+\varphi_l^+(\tau)+y\alpha_{l^\prime}^+\varphi_{l^\prime}^+(\tau)\right)d\tau\right|.
	\end{align*}
	We observe that \eqref{in:varphi} implies that
	\begin{multline*}
		\left|\int_{\Gamma_{comp}^\pm} \exp\left(n\tau+x\alpha_l^+\varphi_l^+(\tau)+y\alpha_{l^\prime}^+\varphi_{l^\prime}^+(\tau)\right)d\tau\right| \\ \leq \int_{-\eta}^0 \exp\left((n-(x+y))t + (x+y) A_Rt^{2\mu} - (x+y) A_Ir_\varepsilon(\eta)^{2\mu}\right)dt.
	\end{multline*}
	Using \eqref{condEta2Reformulee} since $t\in[-\eta,0]$ and $x+y\in\left[\frac{n}{2},2n\right]$, we have that
	\begin{equation*}
		(n-(x+y))t + (x+y) A_Rt^{2\mu} - (x+y) A_Ir_\varepsilon(\eta)^{2\mu}\leq -\frac{A_Ir_\varepsilon(\eta)^{2\mu}}{4}n.
	\end{equation*}
	Combining the observations above, we have thus proved that for all path $\Gamma\in X$, $n\in\N\backslash\lc0\rc$, $x,y\in[0,+\infty[$ such that $x+y\in\left[\frac{n}{2},2n\right]$
	\begin{multline}\label{lem:ComportementPrincGaussienne1}
		\left|\int_\Gamma \exp\left(n\tau+x\alpha_l^+\varphi_l^+(\tau)+y\alpha_{l^\prime}^+\varphi_{l^\prime}^+(\tau)\right)d\tau - \int_{\Gamma_0} \exp\left(n\tau+x\alpha_l^+\varphi_l^+(\tau)+y\alpha_{l^\prime}^+\varphi_{l^\prime}^+(\tau)\right)d\tau\right| \\ \leq 2\eta \exp\left(-\frac{A_Ir_\varepsilon(\eta)^{2\mu}}{4}n\right).
	\end{multline}
	
	$\blacktriangleright$ Since $x+y\geq \frac{n}{2}\geq \frac{1}{2}$, we observe that 
	\begin{equation*}
		\int_{r_\varepsilon(\eta)}^{+\infty} \exp\left(-(x+y) A_I t^{2\mu}\right)dt \\ \leq \exp\left(- \frac{A_Ir_\varepsilon(\eta)^{2\mu}}{4} n\right)\int_{r_\varepsilon(\eta)}^{+\infty} \exp\left(- \frac{A_I t^{2\mu}}{4}\right)dt  .
	\end{equation*}
	Therefore, if we introduce the path 
	$$\Gamma_0^\infty:=\lc it, t\in\R\rc$$
	then, using \eqref{in:varphi}, the integral 
	$$\int_{\Gamma_0^\infty} \exp\left(n\tau+x\alpha_l^+\varphi_l^+(\tau)+y\alpha_{l^\prime}^+\varphi_{l^\prime}^+(\tau)\right)d\tau$$
	is defined and we have that
	\begin{multline}\label{lem:ComportementPrincGaussienne2}
		\left|\int_{\Gamma_0} \exp\left(n\tau+x\alpha_l^+\varphi_l^+(\tau)+y\alpha_{l^\prime}^+\varphi_{l^\prime}^+(\tau)\right)d\tau-\int_{\Gamma_0^\infty} \exp\left(n\tau+x\alpha_l^+\varphi_l^+(\tau)+y\alpha_{l^\prime}^+\varphi_{l^\prime}^+(\tau)\right)d\tau\right| \\ \leq 2\int_{r_\varepsilon(\eta)}^{+\infty} \exp\left(- \frac{A_I t^{2\mu}}{4}\right)dt \exp\left(- \frac{A_Ir_\varepsilon(\eta)^{2\mu}}{4} n\right).
	\end{multline}
	
	$\blacktriangleright$ Using the change of variables $u=\frac{n^\frac{1}{2\mu}}{\alpha_l^+}t$, we obtain
	\begin{align*}
		\frac{1}{2i\pi} \int_{\Gamma_0^\infty} \exp\left(n\tau+x\alpha_l^+\varphi_l^+(\tau)+y\alpha_{l^\prime}^+\varphi_{l^\prime}^+(\tau)\right)d\tau &= \frac{1}{2\pi}\int_{-\infty}^{+\infty}\exp\left(it(n-(x+y))-\left(\frac{x\beta_l^+}{{\alpha_l^+}^{2\mu}}+\frac{y\beta_{l^\prime}^+}{{\alpha_{l^\prime}^+}^{2\mu}}\right)t^{2\mu}\right)dt\\
		& = \frac{|\alpha_l^+|}{n^\frac{1}{2\mu}} H_{2\mu}\left(\frac{x}{n}\beta_l^++\frac{y}{n}\beta_{l^\prime}^+\left(\frac{\alpha_l^+}{\alpha_{l^\prime}^+}\right)^{2\mu};\frac{\alpha_l^+(n-(x+y))}{n^\frac{1}{2\mu}}\right).
	\end{align*}
	Combining \eqref{lem:ComportementPrincGaussienne1}, \eqref{lem:ComportementPrincGaussienne2} and the observation above, we obtain the inequality \eqref{lem:ComportementPrincGaussienneRes1}.
	
	$\bullet$ \textbf{Proof of \eqref{lem:ComportementPrincGaussienneRes2}:}
	
	We observe using the change of variables $u=\frac{x^\frac{1}{2\mu}}{\alpha_l^+}t$ that 
	\begin{align*}
		\frac{1}{2i\pi} \int_{\Gamma_0^\infty} \exp\left(n\tau+x\alpha_l^+\varphi_l^+(\tau)\right)d\tau &= \frac{1}{2\pi}\int_{-\infty}^{+\infty}\exp\left(it(n-x)-\frac{x\beta_l^+}{{\alpha_l^+}^{2\mu}}t^{2\mu}\right)dt\\
		& = \frac{|\alpha_l^+|}{x^\frac{1}{2\mu}} H_{2\mu}\left(\beta_l^+;\frac{\alpha_l^+(n-x)}{x^\frac{1}{2\mu}}\right).
	\end{align*}
	Combining \eqref{lem:ComportementPrincGaussienne1}, \eqref{lem:ComportementPrincGaussienne2} and the observation above, we have proved that there exist two constants $C,c>0$ such that 
	\begin{equation}\label{lem:ComportementPrincGaussienne3}
		\forall x\in \left[\frac{n}{2},2n\right] ,\quad \left|\frac{1}{2i\pi}\int_\Gamma \exp\left(n\tau+x\alpha_l^+\varphi_l^+(\tau)\right)d\tau-\frac{|\alpha_l^+|}{x^\frac{1}{2\mu}} H_{2\mu}\left(\beta_l^+;\frac{\alpha_l^+(n-x)}{x^\frac{1}{2\mu}}\right)\right|\leq Ce^{-cn}.
	\end{equation}
	Using \eqref{Expo-Gaussian}, we can obtain the same generalized Gaussian bound as the one expected in \eqref{lem:ComportementPrincGaussienneRes2}.
	
	$\blacktriangleright$ We observe that
	\begin{multline}\label{lem:ComportementPrincGaussienne4}
		\frac{1}{x^\frac{1}{2\mu}} H_{2\mu}\left(\beta_l^+;\frac{\alpha_l^+(n-x)}{x^\frac{1}{2\mu}}\right) -\frac{1}{n^\frac{1}{2\mu}} H_{2\mu}\left(\beta_l^+;\frac{\alpha_l^+(n-x)}{n^\frac{1}{2\mu}}\right) \\= \frac{1}{x^\frac{1}{2\mu}} \left(H_{2\mu}\left(\beta_l^+;\frac{\alpha_l^+(n-x)}{x^\frac{1}{2\mu}}\right) - H_{2\mu}\left(\beta_l^+;\frac{\alpha_l^+(n-x)}{n^\frac{1}{2\mu}}\right)\right) + H_{2\mu}\left(\beta_l^+;\frac{\alpha_l^+(n-x)}{n^\frac{1}{2\mu}}\right)\left(\frac{1}{x^\frac{1}{2\mu}}-\frac{1}{n^\frac{1}{2\mu}}\right).
	\end{multline}
	We want to prove generalized Gaussian bounds for the two terms on the right hand side of \eqref{lem:ComportementPrincGaussienne4}. Applying the mean value inequality and \eqref{inH}, we have that there exist two constants $C,c>0$ such that for all $x\in[\frac{n}{2},2n]$ 
	$$\left|\frac{1}{x^\frac{1}{2\mu}} \left(H_{2\mu}\left(\beta_l^+;\frac{\alpha_l^+(n-x)}{x^\frac{1}{2\mu}}\right) - H_{2\mu}\left(\beta_l^+;\frac{\alpha_l^+(n-x)}{n^\frac{1}{2\mu}}\right)\right)\right|\leq \frac{C}{n^\frac{1}{2\mu}} |n-x| \left|\frac{1}{x^\frac{1}{2\mu}}-\frac{1}{n^\frac{1}{2\mu}}\right| \exp\left(-c\left(\frac{|n-x|}{n^\frac{1}{2\mu}}\right)^\frac{2\mu}{2\mu-1}\right).$$
	Since $x\in\left[\frac{n}{2},2n\right]$, we also have using the mean value inequality that
	\begin{equation}\label{lem:ComportementPrincGaussienne5}
		\left|\frac{1}{x^\frac{1}{2\mu}}-\frac{1}{n^\frac{1}{2\mu}}\right| \leq \frac{|n-x|}{2\mu}\sup_{t\in[x,n]}\frac{1}{|t|^{1+\frac{1}{2\mu}}}\leq \frac{2^\frac{1}{2\mu}}{\mu}\frac{|n-x|}{ n^{1+\frac{1}{2\mu}}}.
	\end{equation}
	Therefore, since the function $y\mapsto y^2\exp\left(-\frac{c}{2}y^\frac{2\mu}{2\mu-1}\right)$ is bounded, there exist two new constants $C,c>0$ such that for all $x\in[\frac{n}{2},2n]$ 
	\begin{equation}\label{lem:ComportementPrincGaussienne6}
		\left|\frac{1}{x^\frac{1}{2\mu}} \left(H_{2\mu}\left(\beta_l^+;\frac{\alpha_l^+(n-x)}{x^\frac{1}{2\mu}}\right) - H_{2\mu}\left(\beta_l^+;\frac{\alpha_l^+(n-x)}{n^\frac{1}{2\mu}}\right)\right)\right|\leq \frac{C}{n} \exp\left(-c\left(\frac{|n-x|}{n^\frac{1}{2\mu}}\right)^\frac{2\mu}{2\mu-1}\right).
	\end{equation}
	
	We have thus proved generalized Gaussian bounds for the first term of the right hand side in \eqref{lem:ComportementPrincGaussienne4}. We now focus on the second term. Using \eqref{inH}, \eqref{lem:ComportementPrincGaussienne5} and the fact that, for any constant $c>0$, the function $y\mapsto y\exp\left(-cy^\frac{2\mu}{2\mu-1}\right)$ is bounded, we have that there exist two constants $C,c>0$ such that
	\begin{equation}\label{lem:ComportementPrincGaussienne7}
		\left|H_{2\mu}\left(\beta_l^+;\frac{\alpha_l^+(n-x)}{n^\frac{1}{2\mu}}\right)\left(\frac{1}{x^\frac{1}{2\mu}}-\frac{1}{n^\frac{1}{2\mu}}\right)\right| \leq \frac{C}{n}\exp\left(-c\left(\frac{|n-x|}{n^\frac{1}{2\mu}}\right)^\frac{2\mu}{2\mu-1}\right).
	\end{equation}
	
	Combining \eqref{lem:ComportementPrincGaussienne4}, \eqref{lem:ComportementPrincGaussienne6} and \eqref{lem:ComportementPrincGaussienne7}, we have proved generalized Gaussian bounds for the difference 
	$$ \frac{1}{x^\frac{1}{2\mu}} H_{2\mu}\left(\beta_l^+;\frac{\alpha_l^+(n-x)}{x^\frac{1}{2\mu}}\right) -\frac{1}{n^\frac{1}{2\mu}} H_{2\mu}\left(\beta_l^+;\frac{\alpha_l^+(n-x)}{n^\frac{1}{2\mu}}\right) .$$
	With \eqref{lem:ComportementPrincGaussienne3}, we easily conclude the proof of \eqref{lem:ComportementPrincGaussienneRes2}.
	
	$\bullet$ \textbf{Proof of \eqref{lem:ComportementPrincGaussienneRes3}:}
	
	$\blacktriangleright$ We observe that \eqref{in:varphi} implies that
	\begin{equation*}
		\left|\int_{\Gamma_{in}^\pm(\eta)} \frac{\exp\left(n\tau+x\alpha_l^+\varphi_l^+(\tau)\right)}{\tau}d\tau\right| \leq \frac{1}{r_\varepsilon(\eta)}\int_{-\eta}^\eta \exp\left((n-x)t + x A_Rt^{2\mu} - x A_Ir_\varepsilon(\eta)^{2\mu}\right)dt.
	\end{equation*}
	Using \eqref{condEta2Reformulee} since $t\in[-\eta,\eta]$ and $x\in\left[\frac{n}{2},2n\right]$, we have that
	\begin{equation*}
		(n-x)t + x A_Rt^{2\mu} - x A_Ir_\varepsilon(\eta)^{2\mu}\leq -\frac{A_Ir_\varepsilon(\eta)^{2\mu}}{4}n.
	\end{equation*}
	Using the observations above and Cauchy's formula, we have thus proved that for all $n\in\N\backslash\lc0\rc$, $x\in\left[\frac{n}{2},2n\right]$ and paths $\Gamma\in X$
	\begin{equation}\label{lem:ComportementPrincGaussienne:Res3:1}
		\left|\int_{\Gamma_{in}(\eta)} \frac{\exp\left(n\tau+x\alpha_l^+\varphi_l^+(\tau)\right)}{\tau}d\tau-\int_{\Gamma_{in}^0(\eta)} \frac{\exp\left(n\tau+x\alpha_l^+\varphi_l^+(\tau)\right)}{\tau}d\tau\right| \leq \frac{4\eta}{r_\varepsilon(\eta)} \exp\left(-\frac{A_Ir_\varepsilon(\eta)^{2\mu}}{4}n\right).
	\end{equation}
	
	$\blacktriangleright$ Since $x\geq \frac{n}{2}\geq \frac{1}{2}$, we observe that 
	\begin{multline*}
		\int_{r_\varepsilon(\eta)}^{+\infty} \frac{\exp\left((n-x)\eta +xA_R\eta^{2\mu}-xA_It^{2\mu}\right)}{|\eta+it|}dt \\ \\ \leq  \frac{1}{\eta}\underset{<+\infty}{\underbrace{\int_{r_\varepsilon(\eta)}^{+\infty} \exp\left(- \frac{A_I t^{2\mu}}{8}\right)dt}} \exp\left((n-x)\eta +xA_R\eta^{2\mu}-x\frac{3}{4}A_Ir_\varepsilon(\eta)^{2\mu}\right) .
	\end{multline*}
	Furthermore, using \eqref{condEta2Reformulee} since $x\in\left[\frac{n}{2},2n\right]$, we have that
	$$ \exp\left((n-x)\eta +xA_R\eta^{2\mu}-x\frac{3}{4}A_Ir_\varepsilon(\eta)^{2\mu}\right) \leq \exp\left(-\frac{A_Ir_\varepsilon(\eta)^{2\mu}}{8}n\right) .$$
	Therefore, if we introduce the path 
	$$\Gamma_{in}^\infty(\eta):=\lc \eta+ it, t\in\R\rc$$
	then, using \eqref{in:varphi}, the integral 
	$$\int_{\Gamma_{in}^\infty(\eta)} \frac{\exp\left(n\tau+x\alpha_l^+\varphi_l^+(\tau)\right)}{\tau}d\tau$$
	is defined and we have that
	\begin{multline}\label{lem:ComportementPrincGaussienne:Res3:2}
		\left|\int_{\Gamma_{in}^0(\eta)} \frac{\exp\left(n\tau+x\alpha_l^+\varphi_l^+(\tau)\right)}{\tau}d\tau-\int_{\Gamma_{in}^\infty(\eta)} \frac{\exp\left(n\tau+x\alpha_l^+\varphi_l^+(\tau)\right)}{\tau}d\tau\right| \\ \leq \frac{2}{\eta}\int_{r_\varepsilon(\eta)}^{+\infty} \exp\left(- \frac{A_I t^{2\mu}}{8}\right)dt \exp\left(-\frac{A_Ir_\varepsilon(\eta)^{2\mu}}{8}n\right) .
	\end{multline}
	
	$\blacktriangleright$ We observe that using the change of variables $t= -\frac{|\alpha_l^+|}{x^\frac{1}{2\mu}}u$, we have
	\begin{align*}
		\frac{1}{2i\pi} \int_{\Gamma_{in}^\infty(\eta)} \frac{\exp\left(n\tau+x\alpha_l^+\varphi_l^+(\tau)\right)}{\tau}d\tau & = \frac{1}{2i\pi} \int_{-\infty}^{+\infty} \frac{\exp\left(i(n-x)(t-i\eta) -x\frac{\beta^+_l}{{\alpha_l^+}^{2\mu}}(\eta+it)^{2\mu}\right)}{t-i\eta}dt\\
		& = - \frac{1}{2i\pi} \int_{-\infty}^{+\infty} \frac{\exp\left(i\frac{-|\alpha_l^+|(n-x)}{x^\frac{1}{2\mu}}\left(u+i\frac{x^\frac{1}{2\mu}\eta}{|\alpha_l^+|}\right) -\beta^+_l\left(u+i\frac{x^\frac{1}{2\mu}\eta}{|\alpha_l^+|}\right) ^{2\mu}\right)}{u+i\frac{x^\frac{1}{2\mu}\eta}{|\alpha_l^+|}}du.
	\end{align*}
	Furthermore, we can prove that 
	\begin{equation*}
		\forall s\in]0,+\infty[,\forall x\in\R,\quad -\frac{1}{2i\pi}  \int_{-\infty}^{+\infty} \frac{\exp\left(ix\left(u+is\right) -\beta^+_l\left(u+is\right) ^{2\mu}\right)}{u+is}du = E_{2\mu}\left(\beta^+_l;x\right).
	\end{equation*}
	The proof is done in \cite[(5.65)]{CoeuIBVP}. Therefore, 
	$$\frac{1}{2i\pi} \int_{\Gamma_{in}^\infty(\eta)} \frac{\exp\left(n\tau+x\alpha_l^+\varphi_l^+(\tau)\right)}{\tau}d\tau = E_{2\mu}\left(\beta^+_l;\frac{-|\alpha_l^+|(n-x)}{x^\frac{1}{2\mu}}\right).$$
	Combining this observation with \eqref{lem:ComportementPrincGaussienne:Res3:1}, \eqref{lem:ComportementPrincGaussienne:Res3:2} and \eqref{Expo-Gaussian}, we have that there exist two positive constants $C,c$ such that for all $n\in \N\backslash\lc0\rc$ and $x\in \left[\frac{n}{2},2n\right]$
	$$ \left|\frac{1}{2i\pi} \int_{\Gamma_{in}(\eta)} \frac{\exp\left(n\tau+x\alpha_l^+\varphi_l^+(\tau)\right)}{\tau}d\tau - E_{2\mu}\left(\beta^+_l;\frac{-|\alpha_l^+|(n-x)}{x^\frac{1}{2\mu}}\right)\right|\leq \frac{C}{n^\frac{1}{2\mu}}\exp\left(-c\left(\frac{|n-x|}{n^\frac{1}{2\mu}}\right)^\frac{2\mu}{2\mu-1}\right).$$
	
	$\blacktriangleright$ We notice that $\partial_x E_{2\mu}\left(\beta_l^+;\cdot\right) = -H_{2\mu}\left(\beta_l^+;\cdot\right)$. Therefore, we have using the mean value inequality and \eqref{inH} that there exist two positive constants $C,c$ such that for all $n\in\N\backslash\lc0\rc$ and $x\in\left[\frac{n}{2},2n\right]$
	$$\left|E_{2\mu}\left(\beta^+_l;\frac{-|\alpha_l^+|(n-x)}{x^\frac{1}{2\mu}}\right)-E_{2\mu}\left(\beta^+_l;\frac{-|\alpha_l^+|(n-x)}{n^\frac{1}{2\mu}}\right)\right| \leq C |n-x| \left|\frac{1}{x^\frac{1}{2\mu}}-\frac{1}{n^\frac{1}{2\mu}}\right| \exp\left(-c\left(\frac{|n-x|}{n^\frac{1}{2\mu}}\right)^\frac{2\mu}{2\mu-1}\right).$$ 
	Using \eqref{lem:ComportementPrincGaussienne5} and the fact that $y\mapsto y^2\exp\left(-\frac{c}{2}y^\frac{2\mu}{2\mu-1}\right)$ is bounded, we have that there exist two new positive constants $C,c$ such that for all $n\in\N\backslash\lc0\rc$ and $x\in\left[\frac{n}{2},2n\right]$
	$$\left|E_{2\mu}\left(\beta^+_l;\frac{-|\alpha_l^+|(n-x)}{x^\frac{1}{2\mu}}\right)-E_{2\mu}\left(\beta^+_l;\frac{-|\alpha_l^+|(n-x)}{n^\frac{1}{2\mu}}\right)\right| \leq \frac{C}{n^{1-\frac{1}{2\mu}}}\exp\left(-\frac{c}{2}\left(\frac{|n-x|}{n^\frac{1}{2\mu}}\right)^\frac{2\mu}{2\mu-1}\right) . $$
	This allows us to conclude the proof of \eqref{lem:ComportementPrincGaussienneRes3}.

		\subsection*{Proof of the limits \eqref{limites_termes_mass} in Section \ref{subsec:ConcluTh1}}
		
		We recall that the linear operator $\widehat{\Pi}$ is the projector on $\mathrm{Span}\left(\sum_{j\in\Z}V(j)\right)$ along the vector space spanned by $\rg_1^-,\hdots,\rg_{I-1}^-,\rg_{I+1}^+,\hdots,\rg_d^+$.
		
		\textbf{$\bullet$ Proof of \eqref{limites_termes_mass1}:}
		
		Let us consider $l^\prime\in\lbrace1,\hdots,I\rbrace$, $j_0\in\Z$ and $\textbf{e}\in\C^d$. Since the sequence $V$ decays exponentially at $\pm\infty$ fast (see \eqref{decExpoV}) and the linear operator $\widehat{\Pi}$ is continuous, we have that:
		\begin{equation}\label{limSumV}
			\widehat{\Pi}\left(\sum_{j=j_0-nq}^{j_0+np}V(j)\right) \underset{n\rightarrow+\infty}\rightarrow \sum_{j\in\Z}V(j).
		\end{equation}
		We also observe that $\alpha_{l^\prime}^+=\nug \lambg_{l^\prime}^+<0$ and thus using the limit \eqref{eq:E_en_-infty} of the function $E_{2\mu}$ and the definition \eqref{def:E+} of the matrix $E_{l^\prime}^\pm(n,j_0)$, we have: 
		\begin{equation}\label{limEl+}
			E_{l^\prime}^\pm(n,j_0) = E_{2\mu}\left(\beta_{l^\prime}^+; \frac{n\alpha^+_{l^\prime}+j_0}{n^\frac{1}{2\mu}}\right){\lg^+_{l^\prime}}^T\underset{n\rightarrow+\infty}{\rightarrow} {\lg^+_{l^\prime}}^T.
		\end{equation}
		Thus, combining \eqref{limSumV} and \eqref{limEl+}, we obtain \eqref{limites_termes_mass1}:
		$$C^{E,+}_{l^\prime} E_{l^\prime}^\pm(n,j_0) \textbf{e} \widehat{\Pi}\left(\sum_{j=j_0-nq}^{j_0+np}V(j)\right) \underset{n\rightarrow+\infty}{\rightarrow} C^{E,+}_{l^\prime} {\lg^+_{l^\prime}}^T \textbf{e}\sum_{j\in\Z}V(j).$$
		
		\textbf{$\bullet$ Proof of \eqref{limites_termes_mass2}:}
		
		Using \eqref{limSumV}, we immediately obtain \eqref{limites_termes_mass2}:
		$$P_U(j_0)\textbf{e}\widehat{\Pi}\left(\sum^{j_0+np}_{j=j_0-nq}V(j)\right)\underset{n\rightarrow+\infty}\rightarrow P_U(j_0)\textbf{e}\sum_{j\in\Z}V(j).$$
		
		\textbf{$\bullet$ Proof of \eqref{limites_termes_mass3}:}
		
		We consider $l\in\lbrace1,\hdots,I\rbrace$, $j_0\in\N$ and $\textbf{e}\in\C^d$. Using the definition \eqref{def:S+} of the matrix $S_l^+(n,j_0,j)$ and the inequality \eqref{inH} of the function $H_{2\mu}$, there exists a constant $c$ such that for all $n\in\N\backslash\lbrace0\rbrace$ such that:
		\begin{equation}\label{limGauss}
			\sum_{j=j_0-nq}^{j_0+np} \widehat{\Pi}\left(S^+_{l}(n,j_0,j)\textbf{e}\right) =\sum_{j=j_0-nq}^{j_0+np} \ind_{j\geq0} O\left(\frac{1}{n^\frac{1}{2\mu}}\exp\left(-c\left(\frac{\left|n-\left(\frac{j-j_0}{\alpha^+_l}\right)\right|}{n^\frac{1}{2\mu}}\right)^\frac{2\mu}{2\mu-1}\right)\right). 
		\end{equation}
		Since $\alpha^+_l=\nug\lambg^+_l<0$, we have that:
		$$\forall n\in\N\backslash\lbrace0\rbrace,\forall j_0\in\N,\forall j\in\N,\quad n\geq -\frac{2j_0}{\alpha^+_l} \; \Rightarrow \; \frac{n}{2}\leq n-\left(\frac{j-j_0}{\alpha^+_l}\right).  $$
		Therefore, for $n\in\N\backslash\lbrace0\rbrace$ large enough such that $n\geq -\frac{2j_0}{\alpha^+_l}$, we have:
		\begin{align}\label{limGauss2}
			\begin{split}
				\sum_{j=j_0-nq}^{j_0+np} \ind_{j\geq0} O\left(\frac{1}{n^\frac{1}{2\mu}}\exp\left(-c\left(\frac{\left|n-\left(\frac{j-j_0}{\alpha^+_l}\right)\right|}{n^\frac{1}{2\mu}}\right)^\frac{2\mu}{2\mu-1}\right)\right)  & = \sum_{j=j_0-nq}^{j_0+np} \ind_{j\geq0} O\left(\frac{1}{n^\frac{1}{2\mu}}\exp\left(-\frac{c}{2^\frac{2\mu}{2\mu-1}}n\right)\right) \\&= O\left(n^\frac{2\mu-1}{2\mu}\exp\left(-\frac{c}{2^\frac{2\mu}{2\mu-1}}n\right)\right).
			\end{split}
		\end{align}
		Thus, combining \eqref{limGauss} and \eqref{limGauss2}, we obtain \eqref{limites_termes_mass3}:
		$$\sum_{j=j_0-nq}^{j_0+np} \widehat{\Pi}\left(S^+_{l}(n,j_0,j)\textbf{e}\right) \underset{n\rightarrow+\infty}\rightarrow 0.$$
		
		\textbf{$\bullet$ Proof of \eqref{limites_termes_mass4}:}
		
		We consider $j_0\in\N$ and $\textbf{e}\in\C^d$. Using the expressions \eqref{def:ResTh} of the fast decaying residual $\Rc(n,j_0,j)$ and applying the projector $\widehat{\Pi}$, there exists a constant $c>0$ such that for $n\in\N\backslash\lbrace0\rbrace$ and $j\in\Z$:
		
		\begin{subequations}\label{limRes}
			$\bullet$ For $j\geq0$ such that $j-j_0\in\lc-nq,\ppp,np\rc$:
			\begin{align}
				\begin{split}
					&\widehat{\Pi}\left(\Rc(n,j_0,j)\textbf{e}\right) = O(e^{-cn}) \\
					& +\sum_{l=1}^I O\left(\frac{1}{n^\frac{1}{2\mu}}\exp\left(-c\left(\frac{\left|n-\left(\frac{j-j_0}{\alpha_l^+}\right)\right|}{n^\frac{1}{2\mu}}\right)^\frac{2\mu}{2\mu-1}\right)\right) +\sum_{l=I+1}^d O\left(\frac{e^{-c|j|}}{n^\frac{1}{2\mu}}\exp\left(-c\left(\frac{\left|n-\left(\frac{j-j_0}{\alpha_l^+}\right)\right|}{n^\frac{1}{2\mu}}\right)^\frac{2\mu}{2\mu-1}\right)\right)\\
					& + \sum_{l^\prime=1}^I\sum_{l=I+1}^d
					O\left(\frac{e^{-c|j|}}{n^\frac{1}{2\mu}}\exp\left(-c\left(\frac{\left|n-\left(\frac{j}{\alpha_l^+}-\frac{j_0}{\alpha_{l^\prime}^+}\right)\right|}{n^\frac{1}{2\mu}}\right)^\frac{2\mu}{2\mu-1}\right)\right) + \sum_{l^\prime=1}^I O\left(\frac{e^{-c|j|}}{n^\frac{1}{2\mu}}\exp\left(-c\left(\frac{\left|n+\frac{j_0}{\alpha_{l^\prime}^+}\right|}{n^\frac{1}{2\mu}}\right)^\frac{2\mu}{2\mu-1}\right)\right),
				\end{split}\label{limRes1}
			\end{align}
			
			$\bullet$ For $j<0$ such that $j-j_0\in\lc-nq,\ppp,np\rc$:
			\begin{align}
				\begin{split}
					\widehat{\Pi}\left(\Rc(n,j_0,j)\textbf{e}\right) =& O(e^{-cn}) +\sum_{l^\prime=1}^I\sum_{l=1}^{I-1} O\left(\frac{e^{-c|j|}}{n^\frac{1}{2\mu}}\exp\left(-c\left(\frac{\left|n-\left(\frac{j}{\alpha_l^-}-\frac{j_0}{\alpha_{l^\prime}^+}\right)\right|}{n^\frac{1}{2\mu}}\right)^\frac{2\mu}{2\mu-1}\right)\right) \\
					& + \sum_{l^\prime=1}^I O\left(\frac{e^{-c|j|}}{n^\frac{1}{2\mu}}\exp\left(-c\left(\frac{\left|n+\frac{j_0}{\alpha_{l^\prime}^+}\right|}{n^\frac{1}{2\mu}}\right)^\frac{2\mu}{2\mu-1}\right)\right).
				\end{split}\label{limRes2}
			\end{align}
		\end{subequations}
		
		We also claim that we can prove the following limits : 
		\begin{subequations}\label{limTermesRes}
			\begin{align}
				&\sum_{j=j_0-nq}^{j_0+np} O\left(e^{-cn}\right) = O(ne^{-cn}) \underset{n\rightarrow+\infty}\rightarrow 0,\label{limTermesRes1}\\
				\forall l\in\lbrace1,\hdots,I\rbrace, \quad& \sum_{j=j_0-nq}^{j_0+np}\ind_{j\geq0}O\left(\frac{1}{n^\frac{1}{2\mu}}\exp\left(-c\left(\frac{\left|n-\left(\frac{j-j_0}{\alpha_l^+}\right)\right|}{n^\frac{1}{2\mu}}\right)^\frac{2\mu}{2\mu-1}\right)\right) \underset{n\rightarrow+\infty}\rightarrow 0,\label{limTermesRes2}\\
				\forall l^\prime\in\lbrace1,\hdots,I\rbrace,\quad & \sum_{j=j_0-nq}^{j_0+np} O\left(\frac{e^{-c|j|}}{n^\frac{1}{2\mu}}\exp\left(-c\left(\frac{\left|n+\frac{j_0}{\alpha_{l^\prime}^+}\right|}{n^\frac{1}{2\mu}}\right)^\frac{2\mu}{2\mu-1}\right)\right)  \underset{n\rightarrow+\infty}\rightarrow 0,\label{limTermesRes3}\\
				\forall l\in\lbrace I+1,\hdots,d\rbrace, \quad & \sum_{j=j_0-nq}^{j_0+np}\ind_{j\geq0}O\left(\frac{e^{-c|j|}}{n^\frac{1}{2\mu}}\exp\left(-c\left(\frac{\left|n-\left(\frac{j-j_0}{\alpha_l^+}\right)\right|}{n^\frac{1}{2\mu}}\right)^\frac{2\mu}{2\mu-1}\right)\right) \underset{n\rightarrow+\infty}\rightarrow 0,\label{limTermesRes4}\\
				\forall l^\prime\in\lbrace1,\hdots,I\rbrace,\forall l\in\lbrace I+1,\hdots,d\rbrace,\quad & \sum_{j=j_0-nq}^{j_0+np}\ind_{j\geq0} O\left(\frac{e^{-c|j|}}{n^\frac{1}{2\mu}}\exp\left(-c\left(\frac{\left|n-\left(\frac{j}{\alpha_l^+}-\frac{j_0}{\alpha_{l^\prime}^+}\right)\right|}{n^\frac{1}{2\mu}}\right)^\frac{2\mu}{2\mu-1}\right)\right)  \underset{n\rightarrow+\infty}\rightarrow 0,\label{limTermesRes5}\\
				\forall l^\prime\in\lbrace1,\hdots,I\rbrace,\forall l\in\lbrace 1,\hdots,I-1\rbrace,\quad & \sum_{j=j_0-nq}^{j_0+np}\ind_{j<0} O\left(\frac{e^{-c|j|}}{n^\frac{1}{2\mu}}\exp\left(-c\left(\frac{\left|n-\left(\frac{j}{\alpha_l^-}-\frac{j_0}{\alpha_{l^\prime}^+}\right)\right|}{n^\frac{1}{2\mu}}\right)^\frac{2\mu}{2\mu-1}\right)\right)   \underset{n\rightarrow+\infty}\rightarrow 0.\label{limTermesRes6}
			\end{align}
		\end{subequations}
		
		Combining \eqref{limRes} and \eqref{limTermesRes}, we immediately obtain the expected limit \eqref{limites_termes_mass4}:
		$$\sum_{j=j_0-nq}^{j_0+np} \widehat{\Pi}\left(\Rc(n,j_0,j)\textbf{e}\right) \underset{n\rightarrow+\infty}\rightarrow 0.$$
		There just remains to actually prove \eqref{limTermesRes}. The first limit \eqref{limTermesRes1} is immediate. The second one \eqref{limTermesRes2} has already been proved via \eqref{limGauss2}. The limit \eqref{limTermesRes3} can be easily obtained via the following calculations for $ l^\prime\in\lbrace1,\hdots,I\rbrace$:
		$$ \sum_{j=j_0-nq}^{j_0+np} O\left(\frac{e^{-c|j|}}{n^\frac{1}{2\mu}}\exp\left(-c\left(\frac{\left|n+\frac{j_0}{\alpha_{l^\prime}^+}\right|}{n^\frac{1}{2\mu}}\right)^\frac{2\mu}{2\mu-1}\right)\right)= O\left(\frac{1}{n^\frac{1}{2\mu}}\exp\left(-c\left(\frac{\left|n+\frac{j_0}{\alpha_{l^\prime}^+}\right|}{n^\frac{1}{2\mu}}\right)^\frac{2\mu}{2\mu-1}\right)\right) \underset{n\rightarrow+\infty}\rightarrow 0.$$
		
		The last limits \eqref{limTermesRes4}-\eqref{limTermesRes6} can be obtained using a similar proof. We will thus prove \eqref{limTermesRes5} and leave the proof of the two remaining limits \eqref{limTermesRes4} and \eqref{limTermesRes6} for the interested reader.  We consider $l^\prime\in\lbrace1,\hdots I\rbrace$, $l\in\lbrace I+1,\hdots,d\rbrace$ and $j_0\in\N$. We observe that $\alpha^+_{l^\prime}<0$ and $\alpha_l^+>0$. Let us consider $n\in\N\backslash\lbrace0\rbrace$ large enough so that:
		\begin{equation}\label{inj0}
			j_0\leq -\frac{n\alpha^+_{l^\prime}}{2}.
		\end{equation}
		We separate our analysis in two parts:
		\begin{subequations}\label{limTermesRes5:Sep}
			\begin{itemize}
				\item For $j\in\N$ such that $j\leq\frac{n\alpha^+_l}{4}$, since $n$ is supposed to be large enough so that \eqref{inj0} is verified, we have that:
				$$n-\left(\frac{j}{\alpha^+_l}-\frac{j_0}{\alpha^+_{l^\prime}}\right)\geq \frac{n}{4}. $$
				As a consequence, we have in this case:
				\begin{equation}
					O\left(\frac{e^{-c|j|}}{n^\frac{1}{2\mu}}\exp\left(-c\left(\frac{\left|n-\left(\frac{j}{\alpha_l^+}-\frac{j_0}{\alpha_{l^\prime}^+}\right)\right|}{n^\frac{1}{2\mu}}\right)^\frac{2\mu}{2\mu-1}\right)\right) = O\left(\frac{1}{n^\frac{1}{2\mu}}\exp\left(-\frac{c}{4^\frac{2\mu}{2\mu-1}}n\right)\right).
				\end{equation}
				\item For $j\in\N$ such that $j\geq\frac{n\alpha^+_l}{4}$, we immediately have that:
				\begin{equation}
					O\left(\frac{e^{-c|j|}}{n^\frac{1}{2\mu}}\exp\left(-c\left(\frac{\left|n-\left(\frac{j}{\alpha_l^+}-\frac{j_0}{\alpha_{l^\prime}^+}\right)\right|}{n^\frac{1}{2\mu}}\right)^\frac{2\mu}{2\mu-1}\right)\right) = O\left(\frac{1}{n^\frac{1}{2\mu}}\exp\left(-\frac{c\alpha^+_l}{4}n\right)\right).
				\end{equation}
			\end{itemize}
		\end{subequations}
		Combining the two cases in \eqref{limTermesRes5:Sep}, we easily obtain \eqref{limTermesRes5} by observing that there exists a constant $\tilde{c}>0$ such that:
		$$\sum_{j=j_0-nq}^{j_0+np}\ind_{j\geq0} O\left(\frac{e^{-c|j|}}{n^\frac{1}{2\mu}}\exp\left(-c\left(\frac{\left|n-\left(\frac{j}{\alpha_l^+}-\frac{j_0}{\alpha_{l^\prime}^+}\right)\right|}{n^\frac{1}{2\mu}}\right)^\frac{2\mu}{2\mu-1}\right)\right)  = O\left(n^\frac{2\mu-1}{2\mu}e^{-\tilde{c}n}\right)\underset{n\rightarrow +\infty}\rightarrow 0. $$

	\printbibliography
\end{document}